\documentclass{amsart}
\usepackage[margin=1in]{geometry}
\usepackage[foot]{amsaddr}
\usepackage[leqno]{amsmath}
\usepackage{amssymb}
\usepackage[dvipsnames]{xcolor}
\usepackage{enumitem}
\usepackage{graphicx}
\usepackage{placeins}
\usepackage[caption=false]{subfig}

\usepackage{pifont}

\usepackage{accents}

\usepackage[hypertexnames=false,colorlinks=true,breaklinks=true,bookmarks=true,urlcolor=blue,citecolor=blue,linkcolor=blue,bookmarksopen=false,draft=false]{hyperref}

\pdfoutput=1 

\allowdisplaybreaks

\date{\small\today}

\vbadness=\maxdimen

\DeclareMathOperator{\Diag}{Diag}
\DeclareMathOperator{\diag}{diag}
\DeclareMathOperator{\ldet}{ldet}

\DeclareMathOperator{\tr}{tr}
\DeclareMathOperator{\rank}{rank}

\theoremstyle{plain}
\newtheorem{theorem}{Theorem} 
\newtheorem{lemma}[theorem]{Lemma}
\newtheorem{proposition}[theorem]{Proposition}
\newtheorem{corollary}[theorem]{Corollary}

\theoremstyle{remark}
\newtheorem{remark}[theorem]{Remark}
\newtheorem{example}[theorem]{Example}

\makeatletter
\renewcommand*{\top}{%
  {\mathpalette\@transpose{}}%
}
\newcommand*{\@transpose}[2]{%
  \raisebox{\depth}{$\m@th#1\scriptscriptstyle\mathsf{T}$}%
}
\makeatother

\newcommand{\zopthing}{z_{\mbox{\normalfont\protect\tiny GNLP-Id}}}
\newcommand{\zop}{\hyperlink{zoptarget}{\zopthing}}

\newcommand{\zopcompthing}{z_{\overline{\mbox{\normalfont\protect\tiny GNLP-Id}}}}
\newcommand{\zopcomp}{\hyperlink{zopcomptarget}{\zopcompthing}}

\newcommand{\zspectralthing}{z_{\mbox{\normalfont\protect\tiny $\mathcal{S}$}}}
\newcommand{\zspectral}{\hyperlink{zspectraltarget}{\zspectralthing}}

\newcommand{\zspectralthingL}{z_{\mbox{\normalfont\protect\tiny $\mathcal{LS}$}}}
\newcommand{\zspectralL}{\hyperlink{zspectraltargetL}{\zspectralthingL}}

\newcommand{\zgammathing}{z_{\mbox{\protect\tiny $\Gamma$}}}
\newcommand{\zgamma}{\hyperlink{zgammatarget}{\zgammathing}}

\newcommand{\zgscalegammathing}{z_{\mbox{\protect\tiny $\Gamma_{\Upsilon}$}}}

\newcommand{\zcmespthing}{z_{\mbox{\normalfont\protect\tiny CMESP}}}

\newcommand{\zcgmespthing}{z_{\mbox{\normalfont\protect\tiny CGMESP}}}
\newcommand{\zcgmesp}{\hyperlink{zcgmesptarget}{\zcgmespthing}}

\newcommand{\GMESP}{\hyperlink{GMESPtarget}{\GMESPsym}}
\newcommand{\GMESPsym}{
\mbox{\rm{GMESP}}}

\newcommand{\MESP}{\hyperlink{MESPtarget}{\MESPsym}}
\newcommand{\MESPsym}{
\mbox{\rm{MESP}}}

\newcommand{\zglinxthing}{z_{\mbox{\normalfont\protect\tiny glinx}}}
\newcommand{\zglinx}{\hyperlink{zglinxtarget}{\zglinxthing}}

\newcommand{\Pnst}{\hyperlink{PnstTarget}{\mathcal{P}(n,s,t)}}
\newcommand{\Pwedgenst}{\hyperlink{PwedgenstTarget}{\mathcal{P}^{\wedge}(n,s,t)}}
\newcommand{\Psubsetnst}{\hyperlink{PsubsetnstTarget}{\mathcal{P}^{\scriptscriptstyle\subset}(n,s,t)}}
\newcommand{\Unst}{\hyperlink{UnstTarget}{\mathcal{U}(n,s,t)}}
\newcommand{\Xnst}{\hyperlink{XnstTarget}{\mathcal{X}(n,s,t)}}

\newcommand{\zscaleglinxfraksoc}{z_{\mbox{\normalfont\protect\tiny glinx$_{\mbox{\tiny $\gamma$}}$}}^{\scriptscriptstyle \subset}
}

\newcommand{\zscalehatglinxfraksoc}{z_{\mbox{\normalfont\protect\tiny glinx$_{\mbox{\tiny $\gamma$}}$}}^{\scriptscriptstyle \subset}
}

\newcommand{\zscaleglinxfrakpsd}{z_{\mbox{\normalfont\protect\tiny glinx$_{\mbox{\tiny $\gamma$}}$}}^\wedge
}
\newcommand{\zglinxfrakpsd}{z_{\mbox{\normalfont\protect\tiny glinx}}^\wedge
}

\newcommand{\zbarglinxthing}{z_{\overline{\mbox{\normalfont\protect\tiny glinx}}}}

\newcommand{\zscaleglinxthing}{z_{\mbox{\normalfont\protect\tiny glinx$_{\mbox{\tiny $\gamma$}}$}}}
\newcommand{\zscaleglinx}{\hyperlink{zscaleglinxtarget}{\zscaleglinxthing}}

\newcommand{\fscaleglinxthing}{f^{\mbox{\normalfont\protect\tiny{sc}}}_{\mbox{\normalfont\protect\tiny glinx}}}
\newcommand{\fscaleglinx}{\hyperlink{fscaleglinxtarget}{\fscaleglinxthing}}

\newcommand{\fgscaleglinxthing}{f^{\mbox{\normalfont\protect\tiny{gsc}}}_{\mbox{\normalfont\protect\tiny glinx}}}
\newcommand{\fgscaleglinx}{\hyperlink{fgscaleglinxtarget}{\fgscaleglinxthing}}

\newcommand{\fgscalegfactthing}{f^{\mbox{\normalfont\protect\tiny{gsc}}}_{\mbox{\normalfont\protect\tiny $\Gamma$}}}
\newcommand{\fgscalegfact}{\hyperlink{fgscalegfacttarget}{\fgscalegfactthing}}

\newcommand{\zgscaleglinxthing}{z_{\mbox{\normalfont\protect\tiny glinx$_{\mbox{\tiny $\Upsilon$}}$}}}
\newcommand{\zgscaleglinx}{\hyperlink{zgscaleglinxtarget}{\zgscaleglinxthing}}

\newcommand{\zlinxthing}{z_{\mbox{\normalfont\protect\tiny linx}}}

\newcommand{\zbarlinxthing}{z_{\overline{\mbox{\normalfont\protect\tiny linx}}}}

\newcommand{\znlpidthing}{z_{\mbox{\normalfont\protect\tiny NLP-Id}}}

\newcommand{\zbarnlpidthing}{z_{\overline{\mbox{\normalfont\protect\tiny NLP-Id}}}}

\newcommand{\zgnlpidfrakSOC}{z_{\mbox{\normalfont\protect\tiny GNLP-Id}}^{\scriptscriptstyle \subset}
}

\newcommand{\zgnlpidcompfrakSOC}{z_{\overline{\mbox{\normalfont\protect\tiny GNLP-Id}}}^{\scriptscriptstyle \subset}
}

\begin{document}

\title[Extended-variable relaxations for CGMESP]{Extended-variable relaxations for the constrained\\ generalized maximum-entropy sampling problem}

\author{Gabriel Ponte}
\address{G. Ponte: University of Michigan, {\rm gabponte@umich.edu}, ORCID: 0000-0002-8878-6647}

\author{Kurt Anstreicher}
\address{K. Anstreicher: University of Iowa, {\rm kurt-anstreicher@uiowa.edu}, ORCID: 0000-0002-6306-9940}

\author{Marcia Fampa}
\address{M. Fampa: Universidade Federal do Rio de Janeiro, {\rm fampa@cos.ufrj.br}, ORCID: 0000-0002-6254-1510}

\author{Jon Lee}
\address{J. Lee: University of Michigan, {\rm jonxlee@umich.edu}, ORCID: 0000-0002-8190-1091}


\begin{abstract}
The \emph{constrained generalized maximum-entropy sampling problem} (CGMESP) is to select an order-$s$ principal submatrix from an order-$n$ covariance matrix, subject to some linear side constraints, so as to maximize the product of its $t$ greatest eigenvalues, $0<t\leq s <n$. GMESP refers to 
the version with no side constraints. 
Introduced more than 25 years ago,
CGMESP is a natural generalization of two fundamental problems in statistical design theory:
(i) constrained maximum-entropy sampling problem (CMESP);
(ii) binary D-optimality (D-Opt). 
In the general case, it can be
motivated by   
a selection problem in the context of principal component analysis (PCA).

We present novel non-convex extended variable formulations 
for CGMESP. Using these formulations as points of departure, we present, first non-convex and then convex,
continuous relaxations for CGMESP.
We demonstrate many relations between different upper bounds for CGMESP, 
including upper bounds from
the literature and our new upper bounds.
We investigate the behavior of our relaxations related to the constraints linking 
the natural variables with the extended variables.
We propose and investigate a generalized scaling technique for bound improvement. 
In the context of branch-and-bound, we determine the better of two natural branching techniques 
for fixing variables to zero. 
Finally, we present numerical experiments illustrating the
value of our methods.

\medskip
\noindent {\bf Keywords.} maximum-entropy sampling, MESP, binary D-optimality, principal component analysis, convex relaxation, mixed-integer nonlinear optimization,  branch-and-bound

\end{abstract}

\maketitle


\section{Introduction}

Let $C$ be a symmetric positive-semidefinite matrix with rows/columns
indexed from $N:=\{1,2,\ldots,n\}$, with $n >1$ and $\rank(C):=r$.
For integers $t$ and $s$, such that $0<t\leq r$ and $t\leq s<n$, 
let $T:= \{1,\dots,t\}$, $S(x)$ denote the support of $x\in\{0,1\}^n$,  $C_{S(x),S(x)}$ denote the principal submatrix indexed by $S(x)$,  $\mathbf{e}$ denote an all-ones vector, $\lambda_\ell(X)$ denote the $\ell$-th greatest eigenvalue of a symmetric matrix $X$, 
and $\ldet$ the natural logarithm of the determinant. 
For $m\geq 0$, we let $A\in\mathbb{R}^{m\times n}$ and $b\in\mathbb{R}^m$. 
We formally define
the \emph{constrained generalized maximum-entropy sampling problem} (see  \cite{WilliamsPhD,LeeLind2020}).
\begin{equation}\tag{CGMESP}\label{CGMESP}
\begin{array}{ll}
\hypertarget{zcgmesptarget}{\zcgmespthing}(C,s,t,A,b):=&\max \left\{\sum_{\ell \in T} \log( \lambda_\ell(C_{S(x),S(x)})) \,:\, \mathbf{e}^\top x =s,~ Ax\leq b,~ x\in\{0,1\}^n\right\},
\end{array}
\end{equation}
and the closely related \emph{maximum-entropy sampling problem} as
\begin{equation*}\tag{CMESP}\label{CMESP}
\hypertarget{zcmesptarget}{\zcmespthing}(C,s,A,b):= \max \left\{\ldet(C_{S(x),S(x)}) ~:~\mathbf{e}^\top x =s,~ Ax\leq b,~ x\in\{0,1\}^n\right\}.
\end{equation*}
Side constraints $Ax\leq b$ occur naturally in many applications.
But here and throughout, $\hypertarget{GMESPtarget}{\GMESPsym}$ and $\hypertarget{MESPtarget}{\MESPsym}$
refer to the specializations where 
there are no $Ax\leq b$ constraints (i.e., $m=0$). 

We can see GMESP as motivated by 
a selection problem in the context of PCA (principal component analysis); see, for example,  \cite{PCA} and the references therein, for the important topic of PCA.
Specifically, GMESP amounts to 
selecting a subvector of size $s$ from
 a Gaussian $n$-vector, so that geometric mean of the variances associated with the $t$ largest principal components is maximized.
 Linking this back to MESP, we
 can see that problem as selecting a subvector of size $s$ from
 a Gaussian $n$-vector, so that the geometric mean of the variances associated with \emph{all} principal components is maximized. 
 We use the geometric mean of the variances, so as to encourage a selection
where all $t$ of them are large and similar in value; we note that maximizing the geometric mean is equivalent to maximizing the log of the product.  
 
 Expanding on our motivation for GMESP, we assume that 
 we are in a setting where we have $n$ observable Gaussian random variables, with
 a possibly low-rank covariance matrix. We assume that observations are costly, and so we 
 want to select $s \ll n$  for observation. Even the $s$ selected random variables 
 may have a low-rank covariance matrix. 
 Posterior to the selection, we would then carry out
PCA on the associated order-$s$ covariance matrix,
with the aim of identifying the most informative $t < s$ latent/hidden random variables,
where we define \emph{most informative} as corresponding to maximizing the geometric mean of the variances of the 
$t$ dominant principal components. 

\medskip

\noindent {\bf Brief literature review.} 
\ref{CGMESP} was introduced by 
\cite{WilliamsPhD}, with that work eventually published as
\cite{LeeLind2020}. These works introduced the problem
as a common generalization of \ref{CMESP} and the closely related binary D-optimality (0/1 D-Opt) problem; see \cite{MESP2DOPT} for
recent developments on the relationship between these two problems.
\cite{WilliamsPhD,LeeLind2020} 
described
a branch-and-bound (B\&B) approach, and 
presented a ``Lagrangian spectral bound''
for \ref{CGMESP}. The problem had lain dormant for
a long time, while for the special cases 
of \ref{CGMESP} and 0/1 D-Opt there was a
lot of activity (see \cite{FLbook,FL_update,PonteFampaLeeMPB,MESP2DOPT,hugedoptmohit,PFLXadmm,li2025augmented}, and the many references in these works). Recently, 
\cite{SEA_proceedings,GMESP_Alg} introduced 
a ``generalized factorization bound'' for \ref{CGMESP}.
This is the first convex-optimization based relaxation for the problem.
They found that
their approach gives good upper bounds when $s-t$ is very small, and they left handling other cases as a significant challenge. 
\medskip 

\noindent {\bf Organization and contributions.} 
In Section \ref{sec:extform}, we present novel non-convex extended variable formulations of 
for \ref{CGMESP}. 
Using these formulations as points of departure, 
in Section \ref{sec:contrel}, we present, first non-convex and finally convex continuous relaxations for \ref{CGMESP};
convex
continuous relaxations yield efficiently-computable 
upper bounds for \ref{CGMESP}.
In Section \ref{sec:compare-bounds}, we present results demonstrating some relations between different upper bounds for \ref{CGMESP}, including bounds from
the literature and the new bounds proposed in this manuscript. 
In Section \ref{sec:value}, we investigate the behavior of 
our relaxations related to the constraints linking the natural variables ($x)$ with the extended variables.
In Section \ref{sec:gscale}, we propose and investigate a generalized scaling technique for bound improvement. 
In Section \ref{sec:BB}, in the context of B\&B, we determine the better of two natural branching techniques for fixing variables to zero. 
In Section \ref{sec:numexp}, we present numerical experiments illustrating the
value of our methods.
In Section \ref{sec:outlook}, we give some directions for further exploration.
Appendices \ref{app:a}--\ref{app:construct_dgfact} collect some material that would distract from the main flow.

\medskip

\noindent {\bf Notation.}
 Throughout, we denote any all-zero square matrix simply by $0$, while we denote any all-zero (column) vector by $\mathbf{0}$.
We denote any all-one vector
by $\mathbf{e}$, any \hbox{$i$-th} standard unit vector by $\mathbf{e}_i$\,,  
 and the identity matrix of order $n$ by $I_n$\,.
 We let $\mathbb{S}^n$  (resp., $\mathbb{S}^n_+$~, $\mathbb{S}^n_{++}$)
 denote the set of symmetric (resp., positive-semidefinite, positive-definite) matrices of order $n$. 
 We let $\Diag(x)$ denote the $n\times n$ diagonal matrix with diagonal elements given by the components of $x\in \mathbb{R}^n$, and we let $\diag(X)$ denote the $n$-vector with elements given by the diagonal elements of $X\in\mathbb{R}^{n\times n}$. 
  For matrix $X$,  $X_{R,S}$\, is the submatrix  with row (resp., column) indices $R$ ($S$);
 if $R$ (resp., $S$) is the entire row (column) index set, we write $X_{\cdot S}$ ($X_{R\cdot}$); we write $i$ for $\{i\}$. 
We let $\tr(\cdot)$ denote the trace. 
For matrices $X_1$ and $X_2$ having compatible shapes, we have the 
Hadamard (i.e., element-wise) product $X_1\circ X_2$,
and the Frobenius inner product  $X_1\bullet X_2:=\tr(X_1^\top X_2)$.
For $X\in\mathbb{S}^n$ and $i\in N$, we let $\lambda_i(X)$ denote its $i$-th greatest eigenvalue of $X$, and we write  $\lambda(X):=(\lambda_1(X),\lambda_2(X),\ldots,\allowbreak \lambda_n(X))^\top$. Specifically for $C \in \mathbb{S}^n_{+}$ (which plays a special role for us), we define $\lambda_{\max} := \lambda_{1}(C)$,  $\lambda_{\min} := \lambda_{n}(C)$, and  $\mu_{\max}$ ($\mu_{\min}$) as the multiplicity of $\lambda_{\max}$ ($\lambda_{\min}$) as an eigenvalue of $C$. 


\section{Extended-variable formulations of \ref{CGMESP}}\label{sec:extform}

In this section, we derive two alternative formulations for \ref{CGMESP} that will be useful in the development of upper-bounding methods, namely \ref{QCGMESP} and \ref{QcompCGMESP}. To provide intuition for their construction, we begin by introducing the  extended-variable formulation \ref{OCGMESP}, which is based on eigenvector orthogonality.  

Let $\hypertarget{UnstTarget}$
\begin{align*}
&\mathcal{U}(n,s,t) := \left\{ (x,U)\in \{0,1\}^n \times  \mathbb{R}^{n \times t} \!~:~  \mathbf{e}^{\top}x=s,~  
~U^\top U=I_t\,,~ \|U_{i\cdot}\|_2 \leq x_i\,,\,i\in N\right\},\nonumber
\end{align*}
and consider
\begin{equation}\label{OCGMESP}\tag{O-CGMESP}
    \max \left\{\ldet (U^\top C U)  \,:\, Ax\leq b,~ (x,U) \in \Unst\right\}. 
\end{equation}

\begin{theorem}\label{thm:OCGMESP}Let $\hat{x}$ be a feasible solution to {\rm\ref{CGMESP}} and let 
\begin{equation*} 
\hat{U}(\hat{x}):={\rm{argmax}}\left\{\ldet (U^\top C U)  \,:\, A\hat{x}\leq b,~ (\hat{x},U) \in \Unst\right\}.
\end{equation*}
For $\hat{U}\in\hat{U}(\hat{x})$, the objective value of $\hat{x}$ in  {\rm\ref{CGMESP}} is the same as the objective value of $(\hat{x},\hat{U})$ in
 {\rm\ref{OCGMESP}}.
\end{theorem}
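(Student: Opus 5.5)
Fix $\hat x$ feasible for \ref{CGMESP} and write $S:=S(\hat x)$, so $|S|=s$. The plan is to reduce the inner maximization over $U$ to a Ky Fan--type statement for $C_{S,S}$.

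First, the constraint $\|U_{i\cdot}\|_2\le \hat x_i$ forces $U_{i\cdot}=\mathbf{0}$ for $i\notin S$. Hence every feasible $U$ has the form $U_{S\cdot}=V$ and $U_{N\setminus S,\cdot}=0$, where $V\in\mathbb{R}^{s\times t}$ satisfies $V^\top V=I_t$. The row-norm bounds $\|V_{i\cdot}\|_2\le 1$ are then automatic, since $V$ has orthonormal columns and so $VV^\top$ is an orthogonal projector with diagonal entries in $[0,1]$. For such $U$ we have $U^\top C U = V^\top C_{S,S} V$. The inner problem is therefore exactly
\[
\max\left\{\ldet(V^\top C_{S,S}V) : V\in\mathbb{R}^{s\times t},~V^\top V=I_t\right\},
\]
where $\ldet$ is taken as $-\infty$ on singular matrices. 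This maximum is attained because the Stiefel manifold is compact and $\det$ is continuous, so $\hat U(\hat x)\neq\emptyset$.

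Second, we show this maximum equals $\sum_{\ell\in T}\log\lambda_\ell(C_{S,S})$. For the upper bound, Cauchy interlacing gives $\lambda_\ell(V^\top C_{S,S}V)\le\lambda_\ell(C_{S,S})$ for $\ell\in T$, since $V^\top C_{S,S}V$ is a compression of $C_{S,S}$ by an isometry. All these eigenvalues are nonnegative because $C\succeq 0$, so
\[
\det(V^\top C_{S,S} V)=\prod_{\ell\in T}\lambda_\ell(V^\top C_{S,S}V)\le\prod_{\ell\in T}\lambda_\ell(C_{S,S}).
\]
Taking logs preserves the inequality, including when either side is $-\infty$. For attainment, take $V$ to be orthonormal eigenvectors of $C_{S,S}$ for its $t$ largest eigenvalues. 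Then $V^\top C_{S,S}V=\Diag(\lambda_1,\dots,\lambda_t)$, which achieves equality.

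Consequently any maximizer $\hat U\in\hat U(\hat x)$ has objective $\ldet(\hat U^\top C\hat U)=\sum_{\ell\in T}\log\lambda_\ell(C_{S,S})$, which is the \ref{CGMESP} value of $\hat x$.

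The step needing the most care is the degenerate case $\lambda_t(C_{S,S})=0$. There both sides equal $-\infty$, every feasible $U$ is a maximizer, and the identity holds under the convention $\log 0=-\infty$. Apart from this, the argument is routine: the main tools are the interlacing inequality and the observation that the row-norm constraints exactly encode the support restriction.
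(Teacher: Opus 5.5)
Your proof is correct and follows essentially the same route as the paper. Both restrict to $U_{S\cdot}$ with orthonormal columns, bound $\ldet(U^\top C U)$ above by $\sum_{\ell\in T}\log\lambda_\ell(C_{S,S})$ via the Poincar\'e separation (Cauchy interlacing) theorem, and attain this value with top-$t$ eigenvectors. The differences are minor: you pad eigenvectors of $C_{S,S}$ with zeros instead of taking eigenvectors of $\Diag(\hat x)C\Diag(\hat x)$, which avoids the paper's argument that the off-support rows vanish (an argument that tacitly needs $\lambda_t(C_{S,S})>0$), and you also handle nonemptiness of $\hat U(\hat x)$ and the degenerate case explicitly.
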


\begin{proof}
It suffices to prove the result for any particular element of $\hat{U}(\hat{x})$ because all $\tilde{U}\in\hat{U}(\hat{x})$ lead to the same objective values of $(\hat x, \tilde U)$ in \ref{OCGMESP}. 

Consider a spectral decomposition  $\Diag(\hat x)C \Diag(\hat x):= \hat{V}\hat{\Lambda}\hat{V}^\top$, where $\hat{\Lambda}:=\Diag(\hat\lambda_1,\hat\lambda_2,\ldots,\hat\lambda_n)$, with $\hat\lambda_1\geq \hat\lambda_2\geq \ldots\geq \hat\lambda_n$\,, 
and $\hat{V}\hat{V}^\top=\hat{V}^\top\hat{V}=I_n$\,. 
Next, we prove the result for $\hat{U}:=\hat{V}_{\cdot T}$\,, 
and, subsequently confirm that $\hat{U} \in \hat{U}(\hat{x})$.
 
 Note that for $\hat{U}:=\hat{V}_{\cdot T}$\,, we have that $\hat{U} = \Diag(\hat x)\hat{U}$, because the rows of $\hat{U}$ corresponding  to the zero components of $\hat x$ are all zero. Otherwise, the identity $\Diag(\hat x)C \Diag(\hat x)= \hat{V}\hat{\Lambda}\hat{V}^\top$ would not hold,  as $\Diag(\hat x)C \Diag(\hat x)$ has zero rows and columns corresponding to the zero components of $\hat x$, and the columns of $\hat{U}$ are eigenvectors of $\Diag(\hat x)C \Diag(\hat x)$ that correspond to  positive eigenvalues. Therefore $(\hat{x},\hat{U})$ is a feasible solution for \ref{OCGMESP}. 

Moreover, we have
\begin{equation}\label{exac_rel}
  \hat{U}^\top C \hat{U}= \hat{U}^\top \Diag(\hat x)C \Diag(\hat x)\hat{U} =  \hat{U}^\top\hat{V}\hat{\Lambda}\hat{V}^\top \hat{U} = \hat{\Lambda}_{T,T}\,.
\end{equation}
Note that the positive eigenvalues of $C_{S(\hat x),S(\hat x)}$ and $ \Diag(\hat x)C \Diag(\hat x)$  are the same. From \eqref{exac_rel}, we see that their $t$ largest eigenvalues  are precisely the eigenvalues of   $\hat{U}^\top C \hat{U}$. Therefore, we have that $\ldet(\hat{U}^\top C \hat{U})=\textstyle\sum_{\ell\in T} \log(\lambda_\ell(C_{S(\hat x),S(\hat x)}))$, that is  $(\hat{x},\hat{U})$ has the same objective value as $\hat{x}$ has in {\rm\ref{CGMESP}}.

Finally, consider any matrix $U \in \mathbb{R}^{n\times t}$,  such that $(U,\hat{x})$ is a feasible solution to \ref{OCGMESP}, and note that $\ldet(U^\top C U) = \ldet({U}_{S(\hat x)\cdot}^\top C_{S(\hat x),S(\hat x)} {U}_{S(\hat x)\cdot})$. By the Poincaré separation theorem (see, e.g., \cite[Corollary 4.3.16]{HJBook}), we have
\begin{align*}
&\lambda_\ell(C_{S(\hat x),S(\hat x)}) \geq \lambda_\ell({U}_{S(\hat x)\cdot}^\top C_{S(\hat x),S(\hat x)} {U}_{S(\hat x)\cdot}) = \lambda_\ell({U}^\top C {U}) ,~~ \ell\in T\Rightarrow\\
&\textstyle\sum_{\ell\in T} \log(\lambda_\ell(C_{S(\hat x),S(\hat x)})) \geq \ldet(U^\top C U). 
\end{align*}
We conclude that $\hat{U}\in \hat{U}(\hat{x})$, and  the results follows. 
\end{proof}

 \begin{remark}\label{rem:ocgmesp} 
 We note that if $(\hat{x},\hat{U})$ is an optimal solution to \ref{OCGMESP}, then $\hat{U}\in \hat{U}(\hat{x})$, where $\hat{U}(\hat{x})$ is defined in Theorem \ref{thm:OCGMESP}. Therefore, by  Theorem \ref{thm:OCGMESP}, it follows that  \ref{OCGMESP} constitutes a valid formulation of \ref{CGMESP}, and $\zcgmesp(C,s,t,A,b)=\ldet(\hat{U}^\top C \hat{U})$.
 \end{remark}

Next, we develop the related extended-variable 
formulation \ref{QCGMESP}, which will ultimately lead  to
useful non-convex continuous relaxations of \ref{CGMESP}. 
Analogously to Theorem \ref{thm:OCGMESP}, Theorem \ref{thm:QCGMESP} establishes that \ref{QCGMESP} is a valid formulation of \ref{OCGMESP}, and, consequently, also of \ref{CGMESP}.

Let $\hypertarget{XnstTarget}$
\begin{align*}
&\mathcal{X}(n,s,t) := \left\{ (x,X)\in \{0,1\}^n\times \mathbb{S}^{n} \!~:~  \mathbf{e}^{\top}x=s,
~X\succeq 0,~X^2 = X,~\tr(X) = t,
~ \|X_{i\cdot}\|_2 \leq x_i\,,~i\in N\right\}, 
\end{align*}
and consider
\begin{equation}\label{QCGMESP}\tag{Q-CGMESP}
\textstyle    \max \left\{\sum_{\ell=1}^t \log(\lambda_\ell(C^{1/2}XC^{1/2}))  \,:\, Ax\leq b,~ (x,X) \in \Xnst\right\}.
\end{equation}

\begin{theorem}\label{thm:QCGMESP}\phantom{.}

\begin{enumerate}
\item[\rm($i$)]  Let $(\hat{x},\hat{X})$ be a feasible solution of {\rm\ref{QCGMESP}}. Then there exists $\hat{U}$ such that $(\hat{x},\hat{U})$ is a feasible solution of
 {\rm\ref{OCGMESP}} with the same objective value as $(\hat{x},\hat{X})$ has in {\rm\ref{QCGMESP}}.
\item[\rm($ii$)] Let $(\hat{x},\hat{U})$ be a feasible solution of
 {\rm\ref{OCGMESP}}. Then $(\hat{x},\hat{U}\hat{U}^\top)$ is a feasible solution of  {\rm\ref{QCGMESP}} with the same objective value as $(\hat{x},\hat{U})$ has in {\rm\ref{OCGMESP}}.
\end{enumerate}
\end{theorem}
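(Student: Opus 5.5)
The plan is to pass between $X$ and $U$ through the factorization of an orthogonal projector, and to compare the two objectives using the fact that $U^\top C U$ and $C^{1/2}UU^\top C^{1/2}$ share their nonzero eigenvalues.

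For part ($i$), let $(\hat x,\hat X)$ be feasible for \ref{QCGMESP}. Since $\hat X\in\mathbb{S}^n$ with $\hat X^2=\hat X$, every eigenvalue of $\hat X$ lies in $\{0,1\}$. Because $\tr(\hat X)=t$, exactly $t$ eigenvalues equal $1$. A spectral decomposition therefore gives $\hat X=\hat U\hat U^\top$ with $\hat U\in\mathbb{R}^{n\times t}$ and $\hat U^\top\hat U=I_t$.

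Next I would check the row constraints. We have $\|\hat X_{i\cdot}\|_2^2=(\hat X^2)_{ii}=\hat X_{ii}=\|\hat U_{i\cdot}\|_2^2$. Moreover $\hat X_{ii}\in[0,1]$, since $\hat X$ is a projector. Hence
\[
\|\hat U_{i\cdot}\|_2=\sqrt{\hat X_{ii}}=\|\hat X_{i\cdot}\|_2\le \hat x_i .
\]
This holds whichever way one reads it: if $\hat x_i=0$ the row is zero, and if $\hat x_i=1$ the bound is automatic. So $(\hat x,\hat U)\in\Unst$, and $A\hat x\le b$ is unchanged, so $(\hat x,\hat U)$ is feasible for \ref{OCGMESP}.

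For the objective in ($i$), put $M:=C^{1/2}\hat U\in\mathbb{R}^{n\times t}$. Then $C^{1/2}\hat XC^{1/2}=MM^\top$ and $\hat U^\top C\hat U=M^\top M$. These two matrices have the same nonzero eigenvalues with multiplicities, and $MM^\top$ has rank at most $t$. Hence $\lambda_\ell(C^{1/2}\hat XC^{1/2})=\lambda_\ell(\hat U^\top C\hat U)$ for $\ell\in T$, which gives
\[
\textstyle\sum_{\ell=1}^t\log\lambda_\ell(C^{1/2}\hat XC^{1/2})=\ldet(\hat U^\top C\hat U).
\]
This equality also holds when some of these eigenvalues are $0$, in which case both sides are $-\infty$.

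For part ($ii$), let $(\hat x,\hat U)$ be feasible for \ref{OCGMESP} and set $\hat X:=\hat U\hat U^\top$. Then $\hat X$ is symmetric and $\hat X\succeq 0$. Also $\hat X^2=\hat U(\hat U^\top\hat U)\hat U^\top=\hat X$ and $\tr(\hat X)=\tr(\hat U^\top\hat U)=t$. For the row norms, note that $\hat X_{i\cdot}=\hat U_{i\cdot}\hat U^\top$, and $\hat U^\top$ has orthonormal rows. Therefore $\|\hat X_{i\cdot}\|_2=\|\hat U_{i\cdot}\|_2\le\hat x_i$, so $(\hat x,\hat X)$ is feasible for \ref{QCGMESP}. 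Equality of the objective values follows from the same $MM^\top$ versus $M^\top M$ argument as in ($i$).

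I do not expect a genuine obstacle. The only point needing care is the row-norm identity $\|\hat X_{i\cdot}\|_2=\|\hat U_{i\cdot}\|_2$, which rests on idempotency, and stating the eigenvalue-sharing fact cleanly, including the degenerate case where $\hat U^\top C\hat U$ is singular.
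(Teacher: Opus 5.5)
Your proposal is correct and follows essentially the same route as the paper: factor the projector as $\hat X=\hat U\hat U^\top$ with $\hat U^\top\hat U=I_t$, obtain $\|\hat U_{i\cdot}\|_2=\|\hat X_{i\cdot}\|_2$ from idempotency, and match the objectives via the shared nonzero spectra of $C^{1/2}\hat U\hat U^\top C^{1/2}$ and $\hat U^\top C\hat U$. Your explicit treatment of the singular case, where both sides are $-\infty$, is a small addition that the paper leaves implicit.
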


\begin{proof}\phantom{.}

\noindent ($i$):~ Because $\hat{X}\succeq 0$ and $\hat{X}^2 = \hat{X}$, we have that $\lambda(\hat{X}) \in \{0,1\}^n$. Moreover, because $\tr(\hat{X}) = t$, we have that $\rank(\hat{X}) = t$. Let $\Phi\left(\begin{smallmatrix}
        I_t & 0\\
        0 & 0
    \end{smallmatrix}\right)\Phi^\top$ be a real Schur decomposition of $\hat{X}$ and define $\hat{U}:= \Phi_{\cdot T}$\,. Then, $\hat{U}\hat{U}^\top = \hat{X}$ and $\hat{U}^\top \hat{U} = I_t$\,. Moreover, for $i \in N$, 
        \[
        \|\hat{U}_{i\cdot}\|_2^2 = (\hat{U}\hat{U}^\top)_{ii} = \hat{X}_{ii} = \hat{X}^2_{ii} = \|\hat{X}_{i\cdot}\|_2^2\,.
        \]
        Then, we conclude that 
        \[
        \|\hat{U}_{i\cdot}\|_2 = \|\hat{X}_{i\cdot}\|_2 \leq \hat{x}_i\,.
        \]
    Therefore,  $(\hat{x},\hat{U})\in \Unst$.
    Finally,
    \[
    \textstyle \sum_{\ell=1}^t \log(\lambda_\ell(C^{1/2}\hat{X}C^{1/2})) = \sum_{\ell=1}^t \log(\lambda_\ell(C^{1/2}\hat{U}\hat{U}^\top C^{1/2}))= \sum_{\ell=1}^t \log(\lambda_\ell(\hat{U}^\top C\hat{U}))= \ldet(\hat{U}^\top C\hat{U}).  
    \]

 \noindent ($ii$):~ Let $\hat{X}:=\hat{U}\hat{U}^\top$. Then         \begin{itemize}
        \item $\hat{X}\succeq 0$.
        \item $\hat{X}^2 = \hat{U}\hat{U}^\top \hat{U}\hat{U}^\top =\hat{U}I_t\hat{U}^\top = \hat{X}$.
        \item $\tr(\hat{X}) = \tr(\hat{U}\hat{U}^\top) = \tr(\hat{U}^\top\hat{U}) = \tr(I_t) = t$.
        \item For $i \in N$, we have that 
        $
        \|\hat{X}_{i\cdot}\|_2^2 = (\hat{X}\hat{X}^\top)_{ii} = (\hat{X}^2)_{ii} = \hat{X}_{ii}\,,
        $
        and because $\hat{X}=\hat{U}\hat{U}^\top$, we have that $\hat{X}_{ii}=(\hat{U}\hat{U}^\top)_{ii}=\|\hat{U}_{i\cdot}\|_2^2$\,. Then, we conclude that 
        $
        \|\hat{X}_{i\cdot}\|_2= \|\hat{U}_{i\cdot}\|_2 \leq \hat{x}_i\,.
        $
    \end{itemize}
Therefore,  $(\hat{x},\hat{X})\in \Xnst$.
    Finally,
    \[
   \textstyle \ldet(\hat{U}^\top C\hat{U}) =\sum_{\ell=1}^t \log(\lambda_\ell(\hat{U}^\top C\hat{U})) = \sum_{\ell=1}^t \log(\lambda_\ell(C^{1/2}\hat{U}\hat{U}^\top C^{1/2})) =
    \textstyle \sum_{\ell=1}^t \log(\lambda_\ell(C^{1/2}\hat{X}C^{1/2})).  \eqno\qed
    \]
\renewcommand{\qedsymbol}{}
\end{proof}

Finally, we consider 
a closely-related
formulation which has the same constraints
as \ref{QCGMESP}, but a different objective function. We define  the \emph{companion formulation} of \ref{QCGMESP} as
\begin{equation}\label{QcompCGMESP} 
\tag{$\overline{\mbox{Q-CGMESP}}$}
\begin{array}{ll}
&\textstyle    \max \left\{\sum_{\ell=1}^{n-t} \log(\lambda_\ell (C^{-1/2}(I_n-X)C^{-1/2})) + \ldet(C) \,:\, Ax\leq b,~ (x,X) \in \Xnst\right\}.
\end{array}
\end{equation}
We recall (see e.g., \cite{FLbook}) the \emph{complementary formulation} of \ref{CMESP}, given by 
\begin{equation}\label{CMESPc}
\tag{$\overline{\mbox{CMESP}}$}
\max \left\{\ldet\left(C^{-1}_{S(x),S(x)}\right)+\ldet(C) ~:~\mathbf{e}^\top x =n-s,~ A(\mathbf{e}-x)\leq b,~ x\in\{0,1\}^n\right\}.
\end{equation}
Notice that \ref{CMESPc} is an instance of \ref{CMESP} with different data. Also, $\hat x$ is feasible for \ref{CMESP} if and only if $\mathbf{e}-\hat x$ is feasible for \ref{CMESPc}, and moreover the objective value of 
$\hat x$ in \ref{CMESP} is equal to the  objective value of 
$\mathbf{e}-\hat x$ in \ref{CMESPc}.
Despite that, the same bounding method applied to \ref{CMESP} and \ref{CMESPc} may produce different bounds. 

Next, Proposition \ref{cor:tsmesp} demonstrates  
that the pair of problems \ref{QCGMESP} and \ref{QcompCGMESP} degenerate to the strongly related pair of problems \ref{CMESP} and \ref{CMESPc}\,. Furthermore, in Theorem \ref{thm:QbarCGMESP} we establish that, as is the case for \ref{QCGMESP}, the formulation \ref{QcompCGMESP} is also a valid formulation for \ref{CGMESP}.

\begin{proposition}\label{cor:tsmesp}
    For $t = s$, {\rm\ref{QCGMESP}} is precisely  {\rm\ref{CMESP}}, 
    and {\rm\ref{QcompCGMESP}} is  precisely {\rm\ref{CMESPc}} after substituting $x$ by $\mathbf{e}-x$.
\end{proposition}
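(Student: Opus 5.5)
The plan is to show that when $t=s$ the extended variable $X$ is completely determined by $x$, namely $X=\Diag(x)$. Both statements then reduce to identifying the nonzero spectrum of a matrix of the form $M^{1/2}\Diag(y)M^{1/2}$ with that of a principal submatrix of $M$.

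\textbf{Step 1: $X=\Diag(x)$.} Let $(x,X)\in\Xnst$ with $t=s$, and write $S:=S(x)$, so $|S|=s$.
\begin{itemize}
\item For $i\notin S$ we have $x_i=0$, so $\|X_{i\cdot}\|_2\le x_i$ forces $X_{i\cdot}=\mathbf{0}$. By symmetry of $X$, also $X_{\cdot i}=\mathbf{0}$. Hence $X$ is zero outside the block $X_{S,S}$.
\item The block $Y:=X_{S,S}$ then satisfies $Y\succeq 0$, $Y^2=Y$ and $\tr(Y)=s$.
\item So $Y$ is an orthogonal projection of order $s$ whose eigenvalues lie in $\{0,1\}$ and sum to $s$. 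All eigenvalues therefore equal $1$, and $Y=I_s$.
\end{itemize}
Thus $X=\Diag(x)$. Conversely, $(x,\Diag(x))$ satisfies every constraint of $\Xnst$: it is PSD and idempotent, its trace is $s=t$, and its row norms equal $x_i$. So the feasible sets of \ref{QCGMESP} and \ref{CMESP} correspond bijectively via $x\mapsto(x,\Diag(x))$. This is the only step with real content; the main care needed is the symmetry argument that kills the columns outside $S$.

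\textbf{Step 2: the objective of \ref{QCGMESP}.} With $X=\Diag(x)$, write $C^{1/2}\Diag(x)C^{1/2}=(C^{1/2}_{\cdot S})(C^{1/2}_{\cdot S})^\top$. Its nonzero eigenvalues coincide with those of
\[
(C^{1/2}_{\cdot S})^\top C^{1/2}_{\cdot S}=C_{S,S}.
\]
Since the matrix has rank at most $s$, the sum of the logs of its $s$ largest eigenvalues equals $\ldet(C_{S,S})$. This includes the degenerate case where both sides are $-\infty$, when $C_{S,S}$ is singular. Hence \ref{QCGMESP} is exactly \ref{CMESP}.

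\textbf{Step 3: the objective of \ref{QcompCGMESP}.} Here $C$ is implicitly positive definite, since $C^{-1/2}$ appears. Now $I_n-X=\Diag(\mathbf{e}-x)$. Set $y:=\mathbf{e}-x$ and $\bar S:=S(y)$, so $|\bar S|=n-s$.
\begin{itemize}
\item The same argument as in Step 2, applied with $C^{-1/2}$ in place of $C^{1/2}$, shows that the $n-s$ largest eigenvalues of $C^{-1/2}\Diag(y)C^{-1/2}$ are the eigenvalues of $(C^{-1})_{\bar S,\bar S}$.
\item Hence the objective equals $\ldet\bigl((C^{-1})_{S(y),S(y)}\bigr)+\ldet(C)$.
\end{itemize}
Rewriting the constraints in terms of $y$:
\begin{itemize}
\item $\mathbf{e}^\top x=s$ becomes $\mathbf{e}^\top y=n-s$;
\item $Ax\le b$ becomes $A(\mathbf{e}-y)\le b$;
\item $y\in\{0,1\}^n$.
\end{itemize}
This is precisely \ref{CMESPc} after the substitution $x\to\mathbf{e}-x$.
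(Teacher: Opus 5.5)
Your proof is correct, and it differs from the paper's mainly in how the key fact $X=\Diag(x)$ is obtained.

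The paper argues entrywise from the diagonal. $X\succeq 0$ and the row-norm constraints give $0\le X_{ii}\le \|X_{i\cdot}\|_2\le x_i$. Then $\tr(X)=s=\mathbf{e}^\top x$ forces $X_{ii}=x_i$ for every $i$. After that, $X_{ii}^2+\sum_{j\neq i}X_{ij}^2=\|X_{i\cdot}\|_2^2\le x_i^2=X_{ii}^2$ kills all off-diagonal entries. This argument uses neither $X^2=X$ nor $x\in\{0,1\}^n$. The paper reuses it verbatim for Proposition~\ref{prop:P_teqs}, to show that the convex relaxation $\mathcal{P}(n,s,s)$ also collapses to $X=\Diag(x)$.

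Your route uses the row-norm constraints only to confine $X$ to the $S\times S$ block. You then invoke idempotency and $\tr(X)=s$ on an order-$s$ block to get $X_{S,S}=I_s$. This is clean, but it is tied to the integer set $\mathcal{X}(n,s,t)$ and would not transfer to $\mathcal{P}(n,s,t)$.

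In exchange, you make explicit what the paper compresses into ``both results directly follow'':
\begin{itemize}
\item feasibility of $(x,\Diag(x))$, which gives the converse direction;
\item the identification of the $s$ largest eigenvalues of $C^{1/2}\Diag(x)C^{1/2}$ with the spectrum of $C_{S,S}$ via $AA^\top$ versus $A^\top A$, including the singular ($-\infty$) case;
\item the parallel computation with $C^{-1/2}$ and $\Diag(\mathbf{e}-x)$ for the companion formulation.
\end{itemize}
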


\begin{proof}
Let $(\hat x, \hat X)\in \mathcal{X}(n,s,t)$. From  $\hat X\succeq 0$ and $\|\hat X_{i\cdot}\|_2 \leq \hat x_i$\,, we have that $0\leq \hat X_{ii}\leq \hat x_i$\,, for $i\in N$. Moreover, from  $\mathbf{e}^\top \hat x = s$ and $\tr(\hat X) = s$, we have then that  $\hat X_{ii} = \hat x_i$ for $i \in N$. Therefore, $\hat X_{ij} = 0$ for $j \in N$ with $j\neq i$, that is, $\hat X=\Diag(\hat x)$. Both results directly follow.  
\end{proof}

\begin{theorem}\label{thm:QbarCGMESP}\phantom{.} 
Let $(\hat{x},\hat{X})$ be a feasible solution of {\rm\ref{QCGMESP}} and {\rm\ref{QcompCGMESP}}\,. The  
objective values of $(\hat{x},\hat{X})$ in both problems are the same.
\end{theorem}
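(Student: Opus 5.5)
The plan is to show that, for an orthogonal projector $\hat X$ of rank $t$, the nonzero spectrum of $C^{-1/2}(I_n-\hat X)C^{-1/2}$ is determined by that of $C^{1/2}\hat X C^{1/2}$ through $C$. Note that the companion objective involves $C^{-1/2}$ and $\ldet(C)$, so $C\succ 0$ is implicitly assumed there (i.e., $r=n$); we work under that assumption. Feasibility gives $\hat X\succeq0$, $\hat X^2=\hat X$, $\tr(\hat X)=t$. As in the proof of Theorem \ref{thm:QCGMESP}($i$), write $\hat X=\hat U\hat U^\top$ with $\hat U^\top\hat U=I_t$, and complete $\hat U$ to an orthogonal matrix $[\hat U,\hat W]$, so that $I_n-\hat X=\hat W\hat W^\top$ with $\hat W\in\mathbb{R}^{n\times(n-t)}$ and $\hat W^\top\hat W=I_{n-t}$.

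Next I will reduce both objectives to determinants of compressions. As already shown in the proof of Theorem \ref{thm:QCGMESP}, the nonzero eigenvalues of $C^{1/2}\hat U\hat U^\top C^{1/2}$ coincide with those of $\hat U^\top C\hat U$, which is positive definite. Hence the first objective equals $\ldet(\hat U^\top C\hat U)$. Similarly, $C^{-1/2}\hat W\hat W^\top C^{-1/2}$ has the same nonzero eigenvalues as $\hat W^\top C^{-1}\hat W\succ0$, which has order $n-t$. So the second objective equals $\ldet(\hat W^\top C^{-1}\hat W)+\ldet(C)$.

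The key step is the identity
\[
\ldet(\hat U^\top C\hat U)=\ldet(\hat W^\top C^{-1}\hat W)+\ldet(C),
\]
the classical Jacobi complementary-minor relation. To prove it, set $Q:=[\hat U,\hat W]$ and $M:=Q^\top CQ$, so that $\det M=\det C$ and $M^{-1}=Q^\top C^{-1}Q$. Then $\hat U^\top C\hat U=M_{11}$ is the leading $t\times t$ block of $M$, and $\hat W^\top C^{-1}\hat W=(M^{-1})_{22}$ is the trailing block of $M^{-1}$. By the Schur-complement formula, $(M^{-1})_{22}=(M_{22}-M_{21}M_{11}^{-1}M_{12})^{-1}$. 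Moreover, $\det M=\det M_{11}\cdot\det(M_{22}-M_{21}M_{11}^{-1}M_{12})$, which gives $\det M_{11}=\det M\cdot\det((M^{-1})_{22})$. Taking logarithms yields the identity, and hence equality of the two objective values.

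The main obstacle is only bookkeeping: the log-eigenvalue sums must range over exactly the $t$ (respectively $n-t$) positive eigenvalues. This holds because $C^{1/2}\hat XC^{1/2}$ has rank $t$ and $C^{-1/2}(I_n-\hat X)C^{-1/2}$ has rank $n-t$, both by invertibility of $C$. Notice that the linking constraints involving $\hat x$ play no role in the argument.
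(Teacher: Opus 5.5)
Your proof is correct. Its first step matches the paper's: both reduce the two objectives, using the fact that $AB$ and $BA$ share nonzero eigenvalues, to $\ldet(\hat U^\top C\hat U)$ and $\ldet(\hat V^\top C^{-1}\hat V)+\ldet(C)$, where $\hat V\hat V^\top=I_n-\hat X$ (your $\hat W$).

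The routes diverge when proving the identity between these two quantities. The paper takes a spectral decomposition $C=\Phi\Lambda\Phi^\top$ and sets $\tilde\Lambda:=\Lambda/\lambda_{\max}$. It introduces $H:=\Phi(I_n-\tilde\Lambda)^{1/2}$, so that $HH^\top=I_n-C/\lambda_{\max}$; this is the same factor that later defines GNLP-Id. It then runs a chain of Sylvester determinant-identity swaps. It passes from $\ldet(I_t-\hat U^\top HH^\top\hat U)$ to an $n\times n$ determinant, uses $H^\top H=I_n-\tilde\Lambda$ and $I_n-\hat U\hat U^\top=\hat V\hat V^\top$, factors out $\tilde\Lambda$, and swaps back to an $(n-t)\times(n-t)$ determinant. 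Along the way it tracks $\log(\lambda_{\max})$ terms until they cancel.

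You instead rotate to the orthonormal basis $Q=[\hat U,\hat W]$. You then recognize the identity as Jacobi's complementary-minor relation $\det M_{11}=\det M\cdot\det\bigl((M^{-1})_{22}\bigr)$. This follows in two lines from the Schur-complement determinant formula and the block-inverse formula, with $M_{11}\succ 0$ guaranteed by $C\succ 0$.

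Your argument is shorter and needs neither an eigendecomposition of $C$ nor the $\lambda_{\max}$ normalization. It also shows the result is simply a statement about complementary diagonal blocks of $C$ and $C^{-1}$ in a rotated basis. The paper's computation is heavier, but it is phrased in the $H$-factorization language that underlies the NLP-Id-type bounds, which connects it to the complementation machinery used elsewhere in the paper.

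You are right that $C\succ 0$ is implicitly required for the companion formulation. The paper's proof also uses it tacitly, through $C^{-1}$ and $\tilde\Lambda^{-1}$.
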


\begin{proof}
    Because $(\hat x,\hat X)\in\Xnst$, we can easily verify that $I_n-\hat X\succeq 0$, $(I_n-\hat X)^2=I_n-\hat X$, $\tr(I_n-\hat X)=n-t$, and, therefore, $\rank(I_n-\hat X)=n-t$.
   Then, let $(\hat x,\hat U)\in \Unst$ 
   such that $\hat{U}\hat{U}^\top=\hat X$ (note that the existence of these elements is guaranteed by the proof of Theorem \ref{thm:QCGMESP}($i$)), and choose $\hat{V}\in\mathbb{R}^{n\times (n-t)}$ such that  $\hat{V}\hat{V}^\top=I_n-\hat X$ and $\hat{V}^\top \hat{V} = I_{n-t}$\,.

    Note that 
    \begin{align*}
     &\textstyle\sum_{\ell=1}^t \log(\lambda_\ell (C^{1/2}\hat{X}C^{1/2})) = \sum_{\ell=1}^t \log(\lambda_\ell (C^{1/2}\hat{U}\hat{U}^\top C^{1/2}))= \ldet(\hat{U}^\top C \hat{U}), \mbox{ and}\\
    &\textstyle\sum_{\ell=1}^{n-t} \log(\lambda_\ell (C^{-1/2}(I_n-\hat{X})C^{-1/2})) = \sum_{\ell=1}^{n-t} \log(\lambda_\ell (C^{-1/2}\hat{V}\hat{V}^\top C^{-1/2}))= \ldet(\hat{V}^\top C^{-1}\hat{V}).
    \end{align*}

Let $\Phi\Lambda\Phi^\top$ be any
 real Schur decomposition of $C$. Let 
 $
 \textstyle\tilde\Lambda := (1/\lambda_{\max})\Lambda$ and  $H_{n\times n}:= \Phi(I_n-\tilde\Lambda)^{\scriptscriptstyle 1/2}$. Then, we have
    \begin{align*} 
    \ldet({\hat U}^\top C {\hat U}) &=\ldet({\hat U}^\top (1/{\lambda_{\max}})C {\hat U}) +  t\log(\lambda_{\max}) \\
    &= \ldet({\hat U}^\top (I_n-HH^\top) {\hat U}) +  t\log(\lambda_{\max})\\
    &=\ldet(I_t -{\hat U}^\top HH^\top {\hat U}) + t\log(\lambda_{\max})\\
    &=\ldet(I_n -H^\top {\hat U}{\hat U}^\top H) + t\log(\lambda_{\max}) \\
    &=\ldet(H^\top H \!+\! {\tilde\Lambda} \!-\!H^\top \hat{U}\hat{U}^\top H) +t\log(\lambda_{\max})\\
    &= \ldet(H^\top (I_n\!-\!\hat{U}\hat{U}^\top) H \!+\! \tilde\Lambda)  +t\log(\lambda_{\max})\\
    &= \ldet((I_n-\tilde\Lambda)^{1/2}\Phi^\top \hat{V}\hat{V}^\top \Phi(I_n-{\tilde\Lambda})^{1/2} + \tilde\Lambda ) +t\log(\lambda_{\max})\\
    &= \ldet({\tilde\Lambda}^{-1/2}(I_n-\tilde\Lambda)^{1/2}\Phi^\top \hat{V}\hat{V}^\top \Phi(I_n-\tilde\Lambda)^{1/2}{\tilde\Lambda}^{-1/2} + I_n) + \ldet(\tilde\Lambda) +t\log(\lambda_{\max})\\
    &= \ldet(({\tilde\Lambda}^{-1}-I_n)^{1/2}\Phi^\top \hat{V}\hat{V}^\top\Phi({\tilde\Lambda}^{-1}-I_n)^{1/2}+ I_n) + \ldet(C) - n\log(\lambda_{\max}) +t\log(\lambda_{\max})\\
    &= \ldet(\hat{V}^\top \Phi({\tilde\Lambda}^{-1}-I_n) \Phi^\top \hat{V} + I_{n-t}) + \ldet(C) - n\log(\lambda_{\max})+t\log(\lambda_{\max})\\
    &= \ldet(\hat{V}^\top \Phi{\tilde\Lambda}^{-1}\Phi^\top\hat{V} -\hat{V}^\top\hat{V} + I_{n-t}) + \ldet(C) - n\log(\lambda_{\max})+t\log(\lambda_{\max})\\
    &=\ldet(\hat{V}^\top C^{-1} \hat{V}) + \ldet(C),
\end{align*}
where we use the following identities: $HH^\top=I_n - (1/{\lambda_{\max}})C$, $H^\top H=I_n - \tilde\Lambda$,  $\hat{U}^\top \hat{U} = I_t$\,, $\hat{V}^\top \hat{V} = I_{n-t}$\,,  and $\hat{V}\hat{V}^\top = I_n - \hat{U}\hat{U}^\top$\,. 
\end{proof}

Motivated by the successful use of upper-bounding methods for both \ref{CMESP} and \ref{CMESPc}, we will investigate upper-bounding methods for both \ref{QCGMESP} and \ref{QcompCGMESP}, to get upper-bounds for \ref{CGMESP}. Notice that because \ref{CMESPc} is an instance of \ref{CMESP} with different data, any upper-bounding method for one can be applied to the other. For \ref{CGMESP} our work is more involved; we will aim to generalize each bounding method for \ref{CMESP}
in two different ways: to handle each of \ref{QCGMESP} and \ref{QcompCGMESP}.


\section{Continuous relaxations of \ref{QCGMESP}}\label{sec:contrel} 

Letting$\hypertarget{PnstTarget}$
\begin{align*}
&\mathcal{P}(n,s,t) := \left\{ (x,X)\in \mathbb{R}^n\times \mathbb{S}^{n} \!~:~  x \in [0,1]^n,~\mathbf{e}^{\top}x=s, 
~0\preceq X\preceq \Diag(x)\,,~\tr(X) = t,~
~\|X_{i\cdot}\|_2 \leq x_i\,,~i \in N \right\},
\end{align*}
we obtain a convex relaxation of the set $\Xnst$:
\begin{proposition}\label{prop:calP}
    $\Xnst \subset \Pnst$.
\end{proposition}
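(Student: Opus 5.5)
Take $(x,X)\in\Xnst$ and check each defining condition of $\Pnst$ in turn. Most are immediate: $x\in\{0,1\}^n\subset[0,1]^n$, and $\mathbf{e}^\top x=s$, $X\succeq 0$, $\tr(X)=t$ and $\|X_{i\cdot}\|_2\le x_i$ for $i\in N$ are shared by both sets. The only condition to prove is $X\preceq \Diag(x)$.

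For this I would use the support structure that $x$ forces on $X$. Let $Z:=\{i\in N: x_i=0\}$ and $S:=S(x)$. For $i\in Z$, the constraint $\|X_{i\cdot}\|_2\le x_i=0$ gives $X_{i\cdot}=\mathbf{0}^\top$. By symmetry, column $i$ of $X$ is also zero. So, after permuting indices, $X$ is block diagonal with blocks $X_{S,S}$ and $0$, while $\Diag(x)$ is block diagonal with blocks $I_{|S|}$ and $0$. Therefore
\[
\Diag(x)-X=\begin{pmatrix} I_{|S|}-X_{S,S} & 0\\ 0 & 0\end{pmatrix},
\]
and it suffices to show $X_{S,S}\preceq I_{|S|}$.

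This follows from idempotency. Since $X\succeq 0$ and $X^2=X$, every eigenvalue of $X$ lies in $\{0,1\}$, as already noted in the proof of Theorem~\ref{thm:QCGMESP}($i$). Hence $X\preceq I_n$. Taking the principal submatrix indexed by $S$ preserves the Loewner order, so $X_{S,S}\preceq I_{|S|}$. Combined with the block form above, this gives $0\preceq X\preceq\Diag(x)$, and hence $(x,X)\in\Pnst$.

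The only step needing care is the bound $X\preceq\Diag(x)$, because $X\preceq I_n$ alone does not suffice. The zero rows and columns coming from the norm constraints on $Z$ are what reduce it to the bound on the principal block.
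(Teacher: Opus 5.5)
Your proof is correct and follows essentially the same route as the paper's: idempotency plus $X\succeq 0$ gives $X\preceq I_n$, hence $X_{S,S}\preceq I_{|S|}$. The vanishing rows forced by $\|X_{i\cdot}\|_2\le x_i=0$ then give the block form $X=\left(\begin{smallmatrix} X_{S,S} & 0\\ 0 & 0\end{smallmatrix}\right)\preceq\Diag(x)$.
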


\begin{proof}
Suppose that $(\hat x,\hat X)\in \Xnst$.
    Because $\hat X\succeq 0$ and $\hat X=\hat X^2$, then $\lambda(\hat{X}) \in \{0,1\}^n$, which implies $\hat{X} \preceq I_n$\,. Let $S:=S(\hat x)$. Then, we have that $\hat{X}_{S,S}\preceq I_S$\,. Finally, because $\|\hat X_{i\cdot}\|_2 =0$, for all $i\in N\setminus S$, $\hat X=\left(\begin{smallmatrix}
        \hat X_{S,S} & 0\\
        0 & 0
    \end{smallmatrix}\right) \preceq \Diag(\hat x)$. The result follows.
\end{proof}

Next, we see that when $t=s$, that is, in  
the special case of \ref{CMESP}, the set $\Pnst$
degenerates to a set that is equivalent to the
feasible region of the continuous relaxation of \ref{CMESP}.

\begin{proposition}\label{prop:P_teqs}
$\mathcal{P}(n,s,s)=\{(x,\Diag(x))\in \mathbb{R}^n\times \mathbb{S}^{n}  ~:~  x \in [0,1]^n,~\mathbf{e}^{\top}x=s\}$.
\end{proposition}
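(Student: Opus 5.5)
We prove the two inclusions separately. The argument copies the one used for Proposition \ref{cor:tsmesp}, except that integrality of $x$ is no longer available, so we must lean on the linear-matrix-inequality constraint $X\preceq\Diag(x)$.

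\emph{The inclusion ``$\subseteq$''.} Take $(\hat x,\hat X)\in\mathcal{P}(n,s,s)$. Since $0\preceq \hat X\preceq \Diag(\hat x)$, comparing diagonal entries gives
\[
0\le \hat X_{ii}\le \hat x_i \quad\text{for all } i\in N.
\]
We also have $\tr(\hat X)=s=\mathbf{e}^\top\hat x$. Summing the inequalities $\hat x_i-\hat X_{ii}\ge 0$ therefore gives zero, so each term vanishes and $\hat X_{ii}=\hat x_i$ for every $i$.

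Next set $D:=\Diag(\hat x)-\hat X$. This matrix is positive semidefinite and has zero diagonal. A positive-semidefinite matrix with a zero diagonal entry has the whole corresponding row and column equal to zero: every $2\times 2$ principal minor is nonnegative, so $D_{ij}^2\le D_{ii}D_{jj}=0$. Hence $D=0$, which means $\hat X=\Diag(\hat x)$. The remaining constraints, $\hat x\in[0,1]^n$ and $\mathbf{e}^\top\hat x=s$, are already part of the definition of $\mathcal{P}(n,s,s)$.

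\emph{The inclusion ``$\supseteq$''.} Take $x\in[0,1]^n$ with $\mathbf{e}^\top x=s$, and set $X:=\Diag(x)$. We check each constraint of $\mathcal{P}(n,s,s)$:
\begin{itemize}
\item $X\succeq 0$, because $x\ge\mathbf{0}$;
\item $X\preceq\Diag(x)$ holds trivially, with equality;
\item $\tr(X)=\mathbf{e}^\top x=s=t$;
\item $\|X_{i\cdot}\|_2=x_i\le x_i$ for each $i$, since row $i$ of $X$ has the single nonzero entry $x_i\ge 0$.
\end{itemize}
So $(x,\Diag(x))\in\mathcal{P}(n,s,s)$.

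The only step that needs care is the off-diagonal argument in the first inclusion. In Proposition \ref{cor:tsmesp} this step relied on $\|X_{i\cdot}\|_2\le x_i$ together with the binary structure of $x$. Here $x$ may be fractional, so the row-norm constraint alone would not force the off-diagonal entries to vanish. The zero-diagonal property of the positive-semidefinite matrix $\Diag(x)-X$ is what closes this gap.
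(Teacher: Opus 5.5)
Your proof is correct, but it closes the argument with a different constraint than the paper does.

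The paper never uses $X\preceq\Diag(x)$. From the row-norm constraints it gets $X_{ii}\le\|X_{i\cdot}\|_2\le x_i$. The trace condition then forces $X_{ii}=x_i$. The same row-norm constraint then gives $\sum_{j\ne i}X_{ij}^2\le x_i^2-X_{ii}^2=0$.

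You never use the row-norm constraints. You get both $X_{ii}\le x_i$ and the vanishing off-diagonal entries from $\Diag(x)-X\succeq 0$, via the $2\times 2$-minor argument.

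So the two proofs establish different strengthenings:
\begin{itemize}
\item Yours shows the same description holds for $\mathcal{P}^{\wedge}(n,s,s)$. This is the version relevant when the row-norm constraints are dropped, as in Section~\ref{sec:numexp}. For instance, it is what justifies that at $\kappa=0$ the SOC-free scaled glinx bound reported there coincides with the linx bound.
\item The paper's shows it for $\mathcal{P}^{\scriptscriptstyle\subset}(n,s,s)$, where $X\preceq\Diag(x)$ is weakened to $X\preceq I_n$.
\end{itemize}
You also check the reverse inclusion explicitly, which the paper leaves implicit.

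Your closing paragraph, however, is mistaken. The argument of Proposition~\ref{cor:tsmesp} does not use integrality of $x$. For fractional $x$, once $X_{ii}=x_i$ is known, the row-norm constraint $\|X_{i\cdot}\|_2\le x_i$ does force the rest of row $i$ to vanish, exactly as above. So the positive-semidefiniteness of $\Diag(x)-X$ is not closing a gap; it is simply an alternative route.
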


\begin{proof}
       From  $X\succeq 0$ and $\|X_{i\cdot}\|_2 \leq x_i$\,, we have that $0\leq X_{ii}\leq x_i$\,, for $i\in N$. Moreover, from  $\mathbf{e}^\top x = s$ and $\tr(X) = s$, we have then that  $X_{ii} = x_i$ for $i \in N$. Therefore, $X_{ij} = 0$ for $j \in N$ with $j\neq i$. The result follows.    
\end{proof}

We consider now non-convex continuous relaxations of 
the extended-variable formulations \ref{QCGMESP} and \ref{QcompCGMESP}\,, respectively, given by
\begin{equation*} 
\textstyle
    \max \left\{\sum_{\ell=1}^t \log(\lambda_\ell(C^{1/2}XC^{1/2}))  \,:\, Ax\leq b,~ (x,X) \in \Pnst\right\}
\end{equation*}
and
\begin{equation*}
\textstyle
    \max \left\{\sum_{\ell=1}^{n-t} \log(\lambda_\ell(C^{-1/2}(I_n-X)C^{-1/2})) + \ldet(C)  \,:\, Ax\leq b,~ (x,X) \in \Pnst\right\}.
\end{equation*}
We wish to emphasize that the non-convexity of these formulations is only in the non-concavity of the objective functions.  We think of these relaxations as preliminary
steps toward obtaining true convex-programming relaxations.
In the remainder of this section, we investigate how 
 the ``linx'' and ``NLP-Id'' relaxations for \ref{CMESP}, convex-programming relaxations from the literature, as well as variable-fixing strategies based on their Lagrangian duals, can be generalized for \ref{CGMESP}.  
Throughout, if we have a relaxation of a formulation, we call the relaxation
\emph{exact} if the objective value of every feasible solution for the formulation
is the same as its objective value for the relaxation. 


\subsection{Generalizing linx}\label{sec:linx}
Next, we define generalizations of the so-called ``linx bound'' for \ref{CMESP} (see \cite{Kurt_linx}).
First, we define these in their unscaled forms. Later, we introduce
a scaling parameter which is quite important in practice. 
For $C\in\mathbb{S}^n_{+}$\,, we define the \emph{generalized linx bound} as 
\begin{equation}\label{glinx}\tag{glinx}  
\begin{array}{ll}
\hypertarget{zglinxtarget}{\zglinxthing}(C,s,t,A,b) :=\max \left\{\textstyle \frac{1}{2}\ldet(CXC + I_n - X) \,:\, Ax\leq b,~ (x,X) \in \Pnst\right\},
\end{array}
\end{equation}
and, for $C\in\mathbb{S}^n_{++}$\,, we also define the \emph{companion generalized linx bound} as

\begin{align*}\label{gbarlinx} 
\tag{$\overline{\mbox{glinx}}$}  
&\hypertarget{zbarglinxtarget}{\zbarglinxthing}(C,s,t,A,b) :=\max \left\{\textstyle \frac{1}{2}\ldet(C^{-1}(I_n-X)C^{-1} + X)+\ldet(C) ~:~ \right.\\
&\qquad\qquad\qquad\qquad\qquad\qquad \left.Ax\leq b,~ (x,X) \in \Pnst\right\}.
\end{align*}

\begin{theorem} 
\phantom{.}
\begin{enumerate}
 \item[$(i)$] For $C\in\mathbb{S}^n_{+}$\,, {\rm\ref{glinx}} is an exact convex relaxation of {\rm\ref{QCGMESP}}.
 \item[$(ii)$] For $C\in\mathbb{S}^n_{++}$\,, {\rm\ref{gbarlinx}}  is an exact convex relaxation of {\rm\ref{QcompCGMESP}}.
\end{enumerate}
\end{theorem}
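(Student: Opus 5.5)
Three things must be shown for each part: the feasible region contains that of the formulation, the objective is concave and well defined there, and the two objectives agree on every feasible point of the formulation.

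\emph{Relaxation.} The feasible region of \ref{glinx} and \ref{gbarlinx} is $\{(x,X)\in\Pnst : Ax\le b\}$. By Proposition~\ref{prop:calP}, $\Xnst\subset\Pnst$, so it contains the common feasible region of \ref{QCGMESP} and \ref{QcompCGMESP}. The set $\Pnst$ is convex: it is cut out by linear constraints, two linear matrix inequalities and second-order-cone constraints.

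\emph{Concavity and well-definedness.} For $(i)$, the map $X\mapsto CXC+I_n-X$ is affine. On $\Pnst$ we have $0\preceq X\preceq \Diag(x)\preceq I_n$, so $I_n-X\succeq 0$. Also $CXC\succeq 0$, and $CXC+I_n-X\succ 0$: if $v^\top(CXC+I_n-X)v=0$, then $Xv=v$ and $X^{1/2}Cv=0$. Together these give $v^\top Cv = v^\top CXv\ldots$; I expect to have to argue this carefully, or else interpret $\ldet$ as $-\infty$ on singular matrices, which is harmless for a max. Since $\ldet$ is concave on $\mathbb{S}^n_{++}$, its composition with an affine map is concave. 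Part $(ii)$ is the same argument with $X\mapsto C^{-1}(I_n-X)C^{-1}+X$, which is affine since $C\succ0$.

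\emph{Exactness for $(i)$.} Fix $(\hat x,\hat X)\in\Xnst$. Then $\hat X$ is an orthogonal projector of rank $t$. As in the proof of Theorem~\ref{thm:QbarCGMESP}, write $\hat X=\hat U\hat U^\top$ and $I_n-\hat X=\hat V\hat V^\top$ with $Q:=[\hat U,\hat V]$ orthogonal. Then
\[
Q^\top(C\hat XC+I_n-\hat X)Q=\begin{pmatrix}\hat U^\top C\hat U\hat U^\top C\hat U & \hat U^\top C\hat U\hat U^\top C\hat V\\ \hat V^\top C\hat U\hat U^\top C\hat U & \hat V^\top C\hat U\hat U^\top C\hat V+I_{n-t}\end{pmatrix}
=\begin{pmatrix}M\\ \hat V^\top C\hat U\end{pmatrix}\begin{pmatrix}M & \hat U^\top C\hat V\end{pmatrix}+\begin{pmatrix}0&0\\0&I_{n-t}\end{pmatrix},
\]
where $M:=\hat U^\top C\hat U$.

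If $M$ is singular, both sides equal $-\infty$: the top-left block $M^2$ is singular, and a determinant-of-Gram argument gives $\ldet=-\infty$. Otherwise, take the Schur complement with respect to the top-left block $M^2$. The complement is $\hat V^\top C\hat U\hat U^\top C\hat V+I-\hat V^\top C\hat U M M^{-2}M\hat U^\top C\hat V=I_{n-t}$. Hence
\[
\ldet(C\hat XC+I_n-\hat X)=\ldet(M^2)=2\,\ldet(\hat U^\top C\hat U),
\]
and halving gives exactly the \ref{OCGMESP} value. By Theorem~\ref{thm:QCGMESP}, that value equals the \ref{QCGMESP} objective $\sum_{\ell\le t}\log\lambda_\ell(C^{1/2}\hat XC^{1/2})$.

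\emph{Exactness for $(ii)$.} The identical block computation, with the roles of $\hat U,\hat V$ swapped and $C$ replaced by $C^{-1}$, gives
\[
\tfrac12\ldet\bigl(C^{-1}(I_n-\hat X)C^{-1}+\hat X\bigr)=\ldet(\hat V^\top C^{-1}\hat V).
\]
Adding $\ldet(C)$, this equals the \ref{QcompCGMESP} objective, by the identity displayed in the proof of Theorem~\ref{thm:QbarCGMESP}.

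The main obstacle is the Schur-complement step, including the degenerate case where $M$ is singular; the remaining steps are bookkeeping.
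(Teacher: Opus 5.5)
Your proof is correct and is essentially the paper's. The paper uses the same orthogonal $Q=(\hat U\ \hat V)$ reduction but reads the determinant off the factorization $Q^\top(C\hat XC+I_n-\hat X)Q=\mathcal{M}\mathcal{M}^\top$ with block lower-triangular $\mathcal{M}=\left(\begin{smallmatrix}\hat U^\top C\hat U & 0\\ \hat V^\top C\hat U & I_{n-t}\end{smallmatrix}\right)$ (your displayed sum is exactly this product), which gives $\ldet(C\hat XC+I_n-\hat X)=2\ldet(\hat U^\top C\hat U)$ at once and makes your Schur-complement case split on singular $\hat U^\top C\hat U$ unnecessary.

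One correction: drop the attempt to show $CXC+I_n-X\succ 0$ on $\mathcal{P}(n,s,t)$, because it is false for singular $C$. For example, $n=3$, $s=2$, $t=1$, $C=\Diag(0,1,1)$, $x=(1,1,0)^\top$, $X=\mathbf{e}_1\mathbf{e}_1^\top$ gives $CXC+I_n-X=\Diag(0,1,1)$. Instead, adopt the convention $\ldet=-\infty$ on singular matrices, as you suggest.
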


\begin{proof} \phantom{.}

\noindent ($i$):~ It is clear that the objective function of \ref{glinx} is concave.
    Moreover, from Proposition \ref{prop:calP}, we have that the feasible region of \ref{QCGMESP} is contained in the feasible region of \ref{glinx}. Now, suppose that $(\hat x,\hat X)$ is a feasible solution to \ref{QCGMESP}.
   Let $(\hat x,\hat U)\in \Unst$  
   such that $\hat{U}\hat{U}^\top=\hat X$ (the existence of these elements is guaranteed by the proof of Theorem \ref{thm:QCGMESP}($i$)), and choose $\hat{V}\in\mathbb{R}^{n\times (n-t)}$ such that   $Q :=\begin{pmatrix}
        \hat{U} \!\!&\!\! \hat{V}
    \end{pmatrix}$ 
    satisfies $Q^\top Q=I_n$\,.
    Noting that $\hat{V}\hat{V}^\top=I_n-\hat{X}$, we have that 
    \begin{align*}
        \ldet(C\hat{X}C + I_n -\hat{X})&= \ldet(Q^\top(C\hat{X}C + I_n -\hat{X})Q)\\
        &=\ldet(Q^\top C\hat{X}C Q + Q^\top (I_n -\hat{X})Q)\\
        &=\ldet(Q^\top C\hat{U}\hat{U}^\top C Q + Q^\top \hat{V}\hat{V}^\top Q).
    \end{align*}
    Let 
    \[M:=
    \begin{pmatrix}
            \hat{U}^\top C \hat{U} & 0\\
            \hat{V}^\top C \hat{U} &  I_{n-t}
        \end{pmatrix}.
    \]
    Then, we have
    \begin{align*}
        Q^\top C\hat{U}\hat{U}^\top C Q + Q^\top \hat{V}\hat{V}^\top Q &=\begin{pmatrix}
            (\hat{U}^\top C \hat{U})(\hat{U}^\top C \hat{U}) & (\hat{U}^\top C \hat{U})(\hat{U}^\top C \hat{V})\\
            (\hat{V}^\top C \hat{U})(\hat{U}^\top C \hat{U})  & (\hat{V}^\top C \hat{U})(\hat{U}^\top C \hat{V}) + I_{n-t}
         \end{pmatrix} = MM^\top.
    \end{align*}
    As $\ldet(M) = \ldet(\hat{U}^\top C\hat{U})$,  we conclude that 
\[
\textstyle \ldet(C\hat{X}C + I_n -\hat{X}) = 2\ldet(\hat{U}^\top C \hat{U})
=2\sum_{\ell=1}^t \log(\lambda_\ell(C^{1/2}\hat{U}\hat{U}^\top C^{1/2}))= 2\textstyle\sum_{\ell=1}^t \log(\lambda_\ell(C^{1/2}\hat{X}C^{1/2})).
\]
The result follows.

\noindent ($ii$): ~Similar to item $(i)$.
\end{proof}

Next, we will demonstrate that the {\rm\ref{glinx}}
and the {\rm\ref{gbarlinx}}
bounds generalize the linx and complementary 
linx bounds for \ref{CMESP} proposed in  \cite{Kurt_linx},  given by  
\begin{align*}\label{linx}\tag{linx}
\textstyle
& \hypertarget{zlinxtarget}{\zlinxthing}(C,s,A,b):=
\max \left\{\textstyle \frac{1}{2}(\ldet\left( C\Diag(x) C + \Diag(\mathbf{e}\!-\!x) \right)) ~:~ Ax\leq b,~ \mathbf{e}^\top x=s,~
x\in[0,1]^n
\right\}, 
\end{align*}

\begin{align*}\label{complinx}
\tag{$\overline{\mbox{linx}}$}
\textstyle
& \hypertarget{zbarlinxtarget}{\zbarlinxthing}(C,s,A,b):=
\max \left\{\textstyle \frac{1}{2}(\ldet\left( C^{-1}\Diag(x) C^{-1} + \Diag(\mathbf{e}\!-\!x) \right)) +\ldet(C) ~:~ \right.\\
& \qquad\qquad\qquad\qquad\qquad\qquad \left. A(\mathbf{e}-x)\leq b,~ \mathbf{e}^\top x=n-s,~
x\in[0,1]^n
\right\}.
\end{align*}

\begin{corollary} 
    For $t = s$, the {\rm\ref{glinx}} bound is precisely the {\rm\ref{linx}} bound for {\rm\ref{CMESP}}
    and the {\rm\ref{gbarlinx}} bound is precisely the {\rm\ref{complinx}} bound for {\rm\ref{CMESP}}.
\end{corollary}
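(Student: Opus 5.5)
The plan is to reduce everything to Proposition \ref{prop:P_teqs}. When $t=s$, that proposition says the feasible set $\mathcal{P}(n,s,s)$ is exactly $\{(x,\Diag(x)) : x\in[0,1]^n,\ \mathbf{e}^\top x=s\}$. So in \ref{glinx} we may eliminate $X$ by substituting $X=\Diag(x)$. The side constraints $Ax\le b$ involve only $x$, so they are unaffected. The objective becomes $\frac12\ldet(C\Diag(x)C+I_n-\Diag(x))=\frac12\ldet(C\Diag(x)C+\Diag(\mathbf{e}-x))$. This gives exactly the objective and feasible region of \ref{linx}, so $\zglinxthing(C,s,s,A,b)=\zlinxthing(C,s,A,b)$.

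For the companion bound, I again use Proposition \ref{prop:P_teqs} to set $X=\Diag(x)$ in \ref{gbarlinx}. This gives
\[
\max\Bigl\{\tfrac12\ldet\bigl(C^{-1}\Diag(\mathbf{e}-x)C^{-1}+\Diag(x)\bigr)+\ldet(C) \,:\, Ax\le b,\ \mathbf{e}^\top x=s,\ x\in[0,1]^n\Bigr\}.
\]
I then change variables to $y:=\mathbf{e}-x$, an affine bijection of $[0,1]^n$ onto itself, mirroring the substitution in Proposition \ref{cor:tsmesp}. Under it, $\mathbf{e}^\top x=s$ becomes $\mathbf{e}^\top y=n-s$, and $Ax\le b$ becomes $A(\mathbf{e}-y)\le b$. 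The objective becomes $\frac12\ldet(C^{-1}\Diag(y)C^{-1}+\Diag(\mathbf{e}-y))+\ldet(C)$, which is the objective of \ref{complinx} in the variable $y$. Hence $\zbarglinxthing(C,s,s,A,b)=\zbarlinxthing(C,s,A,b)$.

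There is no real obstacle; the only care needed is the bookkeeping of the complementation in the second part. Specifically, I must check that the constraint $A(\mathbf{e}-x)\le b$ in \ref{complinx} matches the image of $Ax\le b$ under $x\mapsto \mathbf{e}-x$, and that $X$ and $I_n-X$ swap roles correctly. That is, $\Diag(\mathbf{e}-x)$ must sit inside $C^{-1}(\cdot)C^{-1}$ and $\Diag(x)$ must be the additive term, before renaming.
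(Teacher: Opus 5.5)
Your proposal is correct and follows the paper's proof, which consists only of invoking Proposition~\ref{prop:P_teqs}. You also write out the substitution $X=\Diag(x)$ and the change of variables $y=\mathbf{e}-x$ that turns the reduced \ref{gbarlinx} into the form of \ref{complinx}, with $A(\mathbf{e}-y)\le b$ and $\mathbf{e}^\top y=n-s$; the paper leaves these steps implicit, and your bookkeeping of them is accurate.
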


\begin{proof}
     The result follows from Proposition \ref{prop:P_teqs}.
\end{proof}

\cite{Kurt_linx} demonstrated that $\zlinxthing(C,s,A,b)=\zbarlinxthing(C,s,A,b)$, and therefore complementing does not lead to different bounds (see also \cite[Proposition 3.3.16]{FLbook}). Next, we establish a similar result for \ref{glinx} and \ref{gbarlinx}.

\begin{proposition}\label{thm:compglinf}\phantom{.} 
Let $(\hat{x},\hat{X})$ be a feasible solution of {\rm\ref{glinx}} and {\rm\ref{gbarlinx}}. The  
objective values of $(\hat{x},\hat{X})$ in both problems are the same.
\end{proposition}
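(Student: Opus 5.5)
The claim is the identity
\[
\tfrac12\ldet(CXC+I_n-X)=\tfrac12\ldet(C^{-1}(I_n-X)C^{-1}+X)+\ldet(C)
\]
for every $X$, with $C\in\mathbb{S}^n_{++}$. The constraints $Ax\le b$, $(x,X)\in\Pnst$ play no role, since the two problems share the same feasible region. The plan is to factor $C$ out of the left-hand matrix directly, with no spectral decomposition or orthonormal completion as in Theorem~\ref{thm:QbarCGMESP}.

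\textbf{Step 1: factor.} Write
\[
CXC+I_n-X \;=\; C\bigl(X + C^{-1}(I_n-X)C^{-1}\bigr)C .
\]
This holds because $C\,C^{-1}(I_n-X)C^{-1}\,C=I_n-X$ and $C\,X\,C=CXC$.

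\textbf{Step 2: take determinants.} Multiplicativity of the determinant gives
\[
\det(CXC+I_n-X)=\det(C)^2\,\det\bigl(C^{-1}(I_n-X)C^{-1}+X\bigr).
\]
Both sides are positive or both are zero, so the $\ldet$ identity holds in the extended sense (value $-\infty$ on both sides when singular). Taking logarithms and halving yields the displayed identity, hence equality of the objective values of $(\hat x,\hat X)$ in \ref{glinx} and \ref{gbarlinx}.

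\textbf{Main obstacle.} The only point needing care is that $\ldet$ be well defined on both sides. For feasible $X$ we have $0\preceq X\preceq \Diag(x)\preceq I_n$, so both $CXC+I_n-X$ and $C^{-1}(I_n-X)C^{-1}+X$ are positive semidefinite. The factorization in Step 1 shows one is singular exactly when the other is, so no further argument is required.
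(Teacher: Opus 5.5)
Your proof is correct and takes the same approach as the paper, which simply asserts the identity $\tfrac12\ldet(CXC+I_n-X)=\tfrac12\ldet(C^{-1}(I_n-X)C^{-1}+X)+\ldet(C)$ for feasible points; your factorization $CXC+I_n-X=C\bigl(X+C^{-1}(I_n-X)C^{-1}\bigr)C$ is the one-line justification the paper leaves implicit. Your observation that both matrices are singular simultaneously, so the identity also holds in the extended sense, is a small but welcome addition.
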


\begin{proof}
    It suffices to note that for any feasible solution  $(\hat x,\hat X)$ for \ref{glinx},  we have that  
    \begin{align*}
        \textstyle\frac{1}{2}\ldet(C\hat{X}C + I_n -\hat{X})&=\textstyle\frac{1}{2}\ldet(C^{-1}(I_n -\hat{X})C^{-1} + \hat{X}) + \ldet(C).
    \end{align*}
    Therefore the optimal value of \ref{glinx} is the same as the optimal value of \ref{gbarlinx}. 
\end{proof}

In what follows, we formulate the Lagrangian dual of \ref{glinx}.
Then, using standard methodology (see e.g., \cite{yamagishi2026dualpathfixingstrategyapplication}),
we indicate how to  fix variables for \ref{CGMESP} based on  knowledge of a lower bound for \ref{CGMESP} and feasible solutions for \ref{eq:dual_glinx}. 

\begin{theorem}\label{thm:dual-glinx}
    The Lagrangian dual problem of {\rm\ref{glinx}} is
\begin{equation}\label{eq:dual_glinx}\tag{Dglinx}
\begin{array}{rrl}
&\min &-\frac{1}{2}\ldet(2\Theta) + \tr(\Theta) + \nu^\top \mathbf{e} + \pi^\top b + \tau s + \xi t - \frac{n}{2}\\
&\text{\rm s.t.} 
&\upsilon - \nu - A^\top \pi - \tau\mathbf{e} + \eta + \diag(Z)= 0;\\[2.5pt]
&&\frac{1}{2}(W+W^\top) - \Omega +Z - C\Theta C + \Theta + \xi I_n = 0;\\[2.5pt]
&&\|W_{i\cdot}\|_2\leq \eta_i\,,~ \forall~ i\in N;\\[2.5pt]
&&\Theta \succ 0,\upsilon \geq 0, \nu \geq 0, \pi\geq 0, \eta \geq 0, Z \succeq 0, \Omega \succeq 0;\\
&&\tau \in \mathbb{R}, \xi \in \mathbb{R},W \in \mathbb{R}^{n\times n}.
\end{array}
\end{equation}
\end{theorem}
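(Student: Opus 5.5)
The plan is to derive \ref{eq:dual_glinx} by the standard route: introduce an auxiliary matrix variable for the argument of $\ldet$, dualize every constraint of \ref{glinx}, and compute the dual function in closed form. Concretely, I would rewrite \ref{glinx} as
\[
\max\left\{\tfrac12\ldet(Y) \,:\, Y = CXC + I_n - X,~ Ax\leq b,~ (x,X)\in\Pnst\right\},
\]
with $Y\in\mathbb{S}^n$ free apart from the implicit domain $Y\succ 0$. Every constraint of $\Pnst$ is then given a multiplier:
\begin{itemize}
\item $\Theta\in\mathbb{S}^n$ for $Y = CXC+I_n-X$;
\item $\upsilon\ge 0$ for $x\ge\mathbf{0}$ and $\nu\ge 0$ for $\mathbf{e}-x\ge\mathbf{0}$;
\item $\pi\ge 0$ for $b-Ax\ge\mathbf{0}$;
\item $\tau\in\mathbb{R}$ for $\mathbf{e}^\top x=s$ and $\xi\in\mathbb{R}$ for $\tr(X)=t$;
\item $\Omega\succeq 0$ for $X\succeq 0$ and $Z\succeq 0$ for $\Diag(x)-X\succeq 0$;
\item a pair $(\eta_i, w_i)$ in the (self-dual) second-order cone, i.e., $\|w_i\|_2\le\eta_i$, for each constraint $\|X_{i\cdot}\|_2\le x_i$.
\end{itemize}
I would stack the row vectors $w_i^\top$ into a matrix $W\in\mathbb{R}^{n\times n}$.

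The Lagrangian then reads
\begin{align*}
L ={}& \tfrac12\ldet(Y) - \Theta\bullet Y + \Theta\bullet(CXC+I_n-X) + \upsilon^\top x + \nu^\top(\mathbf{e}-x) + \pi^\top(b-Ax)\\
&+ \tau(s-\mathbf{e}^\top x) + \xi(t-\tr(X)) + \Omega\bullet X + Z\bullet(\Diag(x)-X) + \textstyle\sum_i\left(\eta_i x_i - W_{i\cdot}X_{i\cdot}^\top\right).
\end{align*}
The sign on the $W$ term is chosen so as to match the stated form; this is harmless because the second-order cone is invariant under $W\mapsto -W$. The dual function is $\sup_{Y,x,X} L$, and it separates into three pieces.

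\emph{The $Y$-part.} This is the only nonlinear computation and the step needing care. I would show that $\sup_{Y\succ0}\{\tfrac12\ldet(Y)-\Theta\bullet Y\}$ is $+\infty$ unless $\Theta\succ 0$. When $\Theta\succ0$, the first-order condition $\tfrac12Y^{-1}=\Theta$ gives $Y=(2\Theta)^{-1}$. The supremum is then $-\tfrac12\ldet(2\Theta)-\tfrac n2$. This produces the terms $-\tfrac12\ldet(2\Theta)$ and $-\tfrac n2$ in the dual objective, together with the constraint $\Theta\succ0$.

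\emph{The $(x,X)$-part.} The remaining part of $L$ is affine in $(x,X)$, where $x\in\mathbb{R}^n$ and $X\in\mathbb{S}^n$ are now free because every constraint has been dualized. Its supremum is finite, and equals the constant term, exactly when the linear coefficients vanish.
\begin{itemize}
\item The coefficient of $x$ is $\upsilon-\nu-A^\top\pi-\tau\mathbf{e}+\eta+\diag(Z)$; setting it to zero gives the first constraint.
\item For the coefficient of $X$, I would use $\sum_i W_{i\cdot}X_{i\cdot}^\top = W\bullet X = \tfrac12(W+W^\top)\bullet X$, since $X$ is symmetric. The coefficient is then $C\Theta C-\Theta-\xi I_n+\Omega-Z-\tfrac12(W+W^\top)$. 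Setting it to zero and multiplying by $-1$ gives the second constraint.
\end{itemize}

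\emph{Constants.} The constant terms left over are $\tr(\Theta)$, $\nu^\top\mathbf{e}$, $\pi^\top b$, $\tau s$ and $\xi t$. Together with the $Y$-part these assemble into the stated dual objective. The sign restrictions listed above, plus $\|W_{i\cdot}\|_2\le\eta_i$, give the remaining constraints of \ref{eq:dual_glinx}.

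The main obstacle, which is mild, is bookkeeping: fixing sign conventions (especially for $W$ and for the symmetrization of the coefficient of $X$) so that the result matches the displayed form exactly. If strong duality is also desired, I would note that Slater's condition holds for generic data, e.g.\ $x=(s/n)\mathbf{e}$, $X=(t/n)I_n$ when $t<s$ and $Ax<b$ is strictly satisfiable. However, the theorem only asserts the form of the Lagrangian dual, so I would not need strong duality here.
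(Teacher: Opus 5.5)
Your proposal is correct and produces exactly \eqref{eq:dual_glinx}. It differs from the paper in how it handles the row-norm constraints.

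\emph{The paper's route.} It keeps $\|X_{i\cdot}\|_2\le x_i$ as scalar convex inequalities with multipliers $\eta_i\ge 0$, so its Lagrangian contains the nonsmooth term $-\sum_i\eta_i\|X_{i\cdot}\|_2$. The supremum over $X$ is then evaluated with Lemma \ref{lem:appendix_sup_Xsym}. That lemma writes $\eta_i\|X_{i\cdot}\|_2$ as a support function $\sup_{\|W_{i\cdot}\|_2\le\eta_i}W_{i\cdot}X_{i\cdot}^\top$. It then uses compactness of $\{\tfrac12(W+W^\top)\}$ and a separating-hyperplane argument to show the supremum is $0$ or $+\infty$. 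So in the paper, $W$ appears only as an existential certificate produced by that lemma.

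\emph{Your route.} You treat each constraint as membership of $(x_i,X_{i\cdot}^\top)$ in the self-dual second-order cone and give it a vector multiplier $(\eta_i,W_{i\cdot}^\top)$. This makes the $(x,X)$-part of the Lagrangian affine. The dual function is then read off by setting the coefficients to zero, and no separation or minimax step is needed.

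\emph{Comparison.} Your derivation is shorter and more elementary. The paper's lemma is in effect the proof that the scalar-multiplier Lagrangian dual and the conic Lagrangian dual coincide for this problem. You do not need that fact, because you define the dual conically from the start.

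Your Slater remark is also a fair refinement of the paper. The paper justifies writing $\min$ (dual attainment) with the point $((s/n)\mathbf{e},(t/n)I_n)$. That point is strictly feasible only when $t<s$; for $t=s$, $\Pnst$ forces $X=\Diag(x)$, so $\Diag(x)-X\succ 0$ is impossible. The point must also satisfy $Ax\le b$. This affects only attainment, not the form of the dual.
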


\begin{proof}
    See Appendix \ref{app:proof_dual_glinx}.
\end{proof}

We note that a similar Lagrangian-dual construction can be derived for \ref{gbarlinx}; however, it is unnecessary due to Proposition \ref{thm:compglinf}.

\begin{theorem} 
 Let 
\begin{itemize}
    \item\!${\rm{LB}}$ be the objective-function value of a feasible solution for {\rm\ref{CGMESP}},
    \item $(\hat\Theta,\hat\upsilon,\hat\nu,\hat\pi,\hat\tau,\hat\xi,\hat Z,\hat \Omega,\hat\eta)$ be a feasible solution for {\rm\ref{eq:dual_glinx}} 
    with objective-function value $\hat{\zeta}$.
\end{itemize}
Then, for every optimal solution $x^*$ 
for {\rm\ref{CGMESP}}, we have:
\[
\begin{array}{ll}
x_j^*=0, ~ \forall\,j\in N \mbox{ such that } \hat{\zeta}-{\rm LB}<  \hat{\upsilon}_j\,,\\
x_j^*=1, ~ \forall\,j\in N \mbox{ such that } 
\hat{\zeta}-{\rm LB} < \hat{\nu}_j\,.
\end{array}
\]
\end{theorem}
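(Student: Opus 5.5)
The plan is to use standard Lagrangian-duality variable fixing. The multipliers $\upsilon$ and $\nu$ correspond to $x\geq \mathbf{0}$ and $x\leq \mathbf{e}$ in \ref{glinx}. Adding the constraint $x_j=1$ (resp.\ $x_j=0$) to \ref{glinx} produces a restricted relaxation whose dual objective drops by $\hat\upsilon_j$ (resp.\ $\hat\nu_j$) when evaluated at the same dual point.

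First, I write the Lagrangian of \ref{glinx} exactly as in the derivation of Theorem~\ref{thm:dual-glinx} (Appendix~\ref{app:proof_dual_glinx}). The multiplier assignments are: $\upsilon$ for $-x\leq \mathbf{0}$, $\nu$ for $x\leq\mathbf{e}$, $\pi$ for $Ax\leq b$, $\tau$ for $\mathbf{e}^\top x=s$, $\xi$ for $\tr(X)=t$, $Z$ for $X\preceq \Diag(x)$, $\Omega$ for $X\succeq 0$, and $(W,\eta)$ for the second-order cones $\|X_{i\cdot}\|_2\le x_i$. Weak duality then gives, for every $(x,X)\in\Pnst$ with $Ax\le b$,
\[
\tfrac12\ldet(CXC+I_n-X)\;\le\;\hat\zeta-\hat\upsilon^\top x-\hat\nu^\top(\mathbf{e}-x),
\]
where the omitted terms (slacks of $Ax\le b$, $X\succeq0$, $X\preceq\Diag(x)$, and the cones) are nonnegative. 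To get this I bound $\frac12\ldet(M)\le -\frac12\ldet(2\Theta)+\Theta\bullet M-\frac n2$ for any $\Theta\succ0$ (the Fenchel inequality for $\ldet$). I then substitute $M=CXC+I_n-X$ and use the two equality constraints of \ref{eq:dual_glinx} to cancel every term that is linear in $X$ and $x$. What remains is the complementary terms $\hat\upsilon^\top x$ and $\hat\nu^\top(\mathbf{e}-x)$ together with nonnegative inner products such as $\hat\pi^\top(b-Ax)$, $\hat\Omega\bullet X$, $\hat Z\bullet(\Diag(x)-X)$, and $\sum_i(\hat\eta_i x_i-\hat W_{i\cdot}X_{i\cdot}^\top)\ge0$ (by Cauchy--Schwarz).

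Next, let $x^*$ be optimal for \ref{CGMESP}. Take $X^*=\hat U\hat U^\top$ as given by Theorem~\ref{thm:OCGMESP} and Theorem~\ref{thm:QCGMESP}. Then $(x^*,X^*)$ is feasible for \ref{glinx} by Proposition~\ref{prop:calP}. By the exactness of \ref{glinx} (part $(i)$ of the theorem above), its \ref{glinx} objective equals $\zcgmesp\ge{\rm LB}$. Applying the inequality gives
\[
{\rm LB}\le \hat\zeta-\hat\upsilon^\top x^*-\hat\nu^\top(\mathbf{e}-x^*).
\]
Since every term in the sums is nonnegative, we get $\hat\upsilon_j x_j^*\le\hat\zeta-{\rm LB}$ for each $j$. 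If $x_j^*=1$, this contradicts $\hat\zeta-{\rm LB}<\hat\upsilon_j$, so $x_j^*=0$. The symmetric argument with $\hat\nu_j(1-x_j^*)$ yields $x_j^*=1$.

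The main obstacle is keeping the signs and multiplier roles consistent with the dual in Theorem~\ref{thm:dual-glinx}. In particular, I must check that the first equality constraint ($\upsilon-\nu-A^\top\pi-\tau\mathbf{e}+\eta+\diag(Z)=0$) makes the coefficient of $x$ vanish exactly, and that the constant terms $\nu^\top\mathbf{e}+\pi^\top b+\tau s+\xi t$ reassemble into $\hat\zeta$. I would verify this by recomputing the Lagrangian directly rather than relying on memory of the appendix. I would also note that the $\hat W$ component of the dual solution is implicitly part of the given feasible solution.
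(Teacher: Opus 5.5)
Your proposal is correct and takes the approach the paper intends. The paper gives no separate proof and simply invokes the standard Lagrangian variable-fixing methodology: weak duality for the glinx relaxation through the dual of Theorem~\ref{thm:dual-glinx}, isolating the nonnegative terms $\hat\upsilon^\top x^*$ and $\hat\nu^\top(\mathbf{e}-x^*)$, evaluated at the exact lift $(x^*,\hat U\hat U^\top)$ of an optimal solution. The sign bookkeeping you flagged works out. The coefficient of $x$ vanishes by the first dual constraint, the $X$-terms collapse to $-\hat\Omega\bullet X-\hat Z\bullet(\Diag(x)-X)-(\hat\eta^\top x-\hat W\bullet X)\le 0$, and the constants reassemble into $\hat\zeta$.
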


It is well-known that the \ref{linx} bound can be improved with a scaling parameter
(see \cite{Kurt_linx}), so we develop a scaled version of our \ref{glinx} bound. 
Considering that $\zcgmesp(C,s,t,A,b) = \zcgmesp(\gamma C, s, t,A,b) - t\log(\gamma)$ for $\gamma > 0$, we define
\[
\hypertarget{fscaleglinxtarget}{\fscaleglinxthing}
(X;\gamma):=\textstyle\frac{1}{2}\left(\ldet ( L(X;\gamma) ) - t\log(\gamma)\right),
\]
where $L(X;\gamma) := \gamma CXC + I_n - X$. 

We note that for every $\gamma>0$ and $(\hat x, \hat X) \in \Xnst$,
$\fscaleglinxthing
(\hat X;\gamma)$ is equal to the objective value of $\hat X$ in \ref{glinx}.
Therefore, we have 
the \emph{scaled glinx bound} 
\begin{equation}\label{scaleglinx}\tag{glinx$_\gamma$}  
\begin{array}{ll}
\hypertarget{zscaleglinxtarget}{\zscaleglinxthing}(C,s,t,A,b) :=\max \left\{\fscaleglinx(X;\gamma) \,:\, Ax\leq b,~ (x,X) \in \Pnst\right\}.
\end{array}
\end{equation}
As we have for \ref{linx}, the \ref{glinx} bound is very sensitive to the choice of scaling factor. Therefore a good selection of $\gamma$ is very important in order to obtain a good bound. 
In \cite{Mixing}, it was shown that the objective function of \ref{linx} is convex in the logarithm of the scaling parameter $\gamma$. This result was used to propose a BFGS-based algorithm for optimizing the choice of $\gamma$. In the following, we extend this analysis to \ref{glinx} and discuss how to select its scaling parameter $\gamma$  so as to obtain the strongest possible bound.

\begin{remark}
    We do not present the  scaling for \ref{gbarlinx} because, by multiplying the matrix $C$ by $\gamma$, we can interpret the optimization over $(x,X) \in \Pnst$ as simply solving \ref{glinx} (or \ref{gbarlinx}) with a modified matrix $C \in \mathbb{S}^n_+$\,. Because Theorem \ref{thm:compglinf} demonstrates that \ref{glinx} and \ref{gbarlinx} coincide for every $(x,X) \in \Pnst$, it is straightforward to conclude that any scaling for \ref{gbarlinx} can be achieved via the reciprocal scaling for \ref{glinx}.
\end{remark} 

Let $\psi := \log(\gamma)$, and we formulate the problem 
\begin{equation}\label{min_h_psi_glinx}
    \min_{\psi}\{h(\psi)\},
\end{equation}
where 
$
h(\psi) := \fscaleglinx(X^*;\exp(\psi)),
$
and where $(x^*,X^*) := (x^*(\psi),X^*(\psi))$ is a maximizer of \ref{scaleglinx}\,, with $\gamma(=\exp(\psi))$ fixed.

\begin{theorem}\label{thm:glinx-convex-log-gamma}
The function $h(\psi)$ is convex in $\psi \in \mathbb{R}$.
\end{theorem}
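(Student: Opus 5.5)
The plan is to write $h$ as a pointwise supremum of functions of $\psi$ that are each convex, and conclude by the standard fact that a pointwise supremum of convex functions is convex. First note that $(x^*(\psi),X^*(\psi))$ maximizes \ref{scaleglinx} with $\gamma=\exp(\psi)$, and the feasible set $\{(x,X)\in\Pnst : Ax\le b\}$ does not depend on $\psi$. Hence
\[
h(\psi)=\max\left\{\tfrac12\left(\ldet\left(e^{\psi}CXC+I_n-X\right)-t\psi\right) \,:\, Ax\leq b,~(x,X)\in\Pnst\right\}.
\]
So it suffices to show that, for each fixed feasible $(x,X)$, the map
\[
g_X(\psi):=\tfrac12\left(\ldet\left(e^{\psi}CXC+I_n-X\right)-t\psi\right)
\]
is convex in $\psi$, with the value $-\infty$ allowed. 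The term $-t\psi/2$ is affine, so only the log-determinant needs attention.

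For fixed feasible $X$, set $A:=CXC$ and $B:=I_n-X$. Since $0\preceq X\preceq \Diag(x)\preceq I_n$ on $\Pnst$, both $A\succeq 0$ and $B\succeq 0$. Let $M(\psi):=e^{\psi}A+B$.

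If $M(\psi)$ is singular for one $\psi$, then it is singular for every $\psi$, because $\ker(e^\psi A+B)=\ker A\cap\ker B$ when $A,B\succeq 0$. Such an $X$ therefore contributes the constant $-\infty$ and can be discarded from the supremum.

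Otherwise $M(\psi)\succ 0$ for all $\psi$, and I will compute derivatives of $\phi(\psi):=\ldet M(\psi)$ using $\frac{d}{d\psi}\ldet M=\tr(M^{-1}M')$ and $\frac{d}{d\psi}M^{-1}=-M^{-1}M'M^{-1}$, where $M'=e^\psi A$. This gives
\[
\phi'(\psi)=\tr\left(M^{-1}e^\psi A\right),\qquad \phi''(\psi)=\tr\left(M^{-1}e^\psi A\right)-\tr\left(M^{-1}e^\psi A\,M^{-1}e^\psi A\right).
\]
Now define $P:=M^{-1/2}e^{\psi}AM^{-1/2}$. Then $\phi''=\tr(P)-\tr(P^2)=\sum_i\lambda_i(P)\bigl(1-\lambda_i(P)\bigr)$. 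Since $0\preceq e^\psi A\preceq e^\psi A+B=M$, we have $0\preceq P\preceq I_n$. Every eigenvalue of $P$ therefore lies in $[0,1]$, so $\phi''\ge0$ and $g_X$ is convex.

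Taking the supremum over the fixed feasible set then gives convexity of $h$. The supremum is finite, and is attained as assumed in the definition of $h$: it is bounded above by compactness of $\Pnst$ and continuity, and it is not $-\infty$ whenever some feasible $X$ yields a nonsingular $M$, for instance one coming from a feasible solution of \ref{QCGMESP} with positive objective.

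The main obstacle is the second-derivative sign. The key step there is recognizing that $B=I_n-X\succeq0$ gives exactly $P\preceq I_n$, and this is where the constraint $X\preceq \Diag(x)\preceq I_n$ in $\Pnst$ is essential. The remaining work is bookkeeping: the possible $-\infty$ values, and the fact that $h$ equals the optimal-value function regardless of which maximizer is chosen.
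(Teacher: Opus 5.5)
Your proposal is correct and follows essentially the same route as the paper: you write $h$ as a pointwise maximum over the $\psi$-independent feasible set, differentiate $\ldet(e^{\psi}CXC+I_n-X)$ twice, and show that the second derivative equals $\tfrac12\tr\bigl(P(I_n-P)\bigr)\ge 0$, where $0\preceq P\preceq I_n$ follows from $X\preceq I_n$. The only differences are cosmetic: you differentiate directly in $\psi$ instead of first in $\gamma$ and then applying the chain rule, and you justify that singularity does not depend on $\psi$ via $\ker(e^{\psi}A+B)=\ker A\cap\ker B$, which the paper only asserts.
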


\begin{proof}
$h(\psi)$ is the point-wise maximum of $\fscaleglinx(X;\exp(\psi))$ over feasible $(x,X)$ for \ref{scaleglinx}\,. That is, $h(\psi) = \max\{\fscaleglinx(\exp(\psi);x)\,:\, Ax\leq b, (x,X) \in \Pnst\}$. Then, to conclude that $h$ is convex, it suffices to demonstrate that 
 $\fscaleglinx(\exp(X;\psi))$ is convex in $\psi$, for all fixed $(x,X)$ in the feasible set of \ref{scaleglinx}\,, such that $CXC + I -X\succ 0$ (which holds if and only if $L(X;\exp(\psi)) \succ 0$ for all $\psi \in \mathbb{R}$).
We have that
\begin{align*}
    \textstyle\frac{\partial }{\partial \gamma} \fscaleglinx(X;\gamma)&=  \textstyle \tfrac{1}{2}L(  X;\gamma)^{-1}\bullet ( C X C ) - \tfrac{t}{2\gamma}=  \tfrac{1}{2\gamma} (L(X;\gamma)^{-1}\bullet ( X - I  ) + n - t)
\end{align*}
and
\begin{align*}
    \textstyle\frac{\partial^2 }{\partial \gamma^2} \fscaleglinx(X;\gamma)
&=\textstyle- \frac{1}{2\gamma^2} (L(X;\gamma)^{-1}\bullet ( X - I_n  ) + n - t)\\
&\quad\qquad \textstyle+ \textstyle\frac{1}{2\gamma} (I_n-X)\bullet(L(X;\gamma)^{-1}( CXC)L(X;\gamma)^{-1})\\
&= \textstyle\frac{t}{2\gamma^2}-\tfrac{1}{2}\tr(L(X;\gamma)^{-1} CXCL(X;\gamma)^{-1}CXC).
\end{align*}
Therefore
\[
\textstyle\frac{\partial }{\partial \psi} \fscaleglinx(X;\gamma)=  \textstyle\gamma\frac{\partial }{\partial \gamma} \fscaleglinx(X;\gamma) = \tfrac{1}{2}\left(L(X;\gamma)^{-1}\bullet ( X - I_n ) + n - t\right),
\]
and 
\begin{align*}
    \textstyle\frac{\partial^2 }{\partial \psi^2} \fscaleglinx(&X;\gamma)=  \textstyle\gamma\frac{\partial }{\partial \gamma} \fscaleglinx(X;\gamma)  + \gamma^2\frac{\partial^2 }{\partial \gamma^2} \fscaleglinx(X;\gamma)\nonumber\\
&= \tfrac{\gamma}{2}\tr(L(X;\gamma)^{-1}CXC) - \tfrac{\gamma^2}{2}\tr(L(X;\gamma)^{-1}CXCL(X;\gamma)^{-1}CXC). 
\end{align*}

Now, it remains to demonstrate that
$\frac{\partial^2 f}{\partial\psi^2}(\exp(\psi), X^*) \geq 0, \; \forall \psi.$ Let 
\[
M(\psi,X^*) := \exp(\psi) L(\exp(\psi),  X)^{\scriptscriptstyle -1/2}CXCL(\exp(\psi),  X)^{\scriptscriptstyle -1/2},
\]
and note that 
\begin{align*}
    \textstyle\frac{\partial^2 }{\partial \psi^2} \fscaleglinx(X;\exp(\psi)) &= \tfrac{1}{2}\tr(M(\psi,X^*)(I-M(\psi,X^*)))\\ 
    &= \tfrac{1}{2}\tr(M(\psi,X^*)^{\scriptscriptstyle 1/2}(I-M(\psi,X^*))M(\psi,X^*)^{\scriptscriptstyle 1/2}).
\end{align*}
From the definition of $\fscaleglinx(\exp(X;\psi))$, it is easy to see that $0 \preceq M(\psi,X^*) \preceq I_n$\,. Then, we conclude that $\frac{\partial^2 }{\partial \psi^2} \fscaleglinx(X;\exp(\psi)) \geq 0$.
\end{proof}

Considering Theorem \ref{thm:glinx-convex-log-gamma}, we can apply, for example, a quasi-Newton method to solve \eqref{min_h_psi_glinx}, see \cite[Section 4]{Mixing} and \cite[Section 3.3.5]{FLbook} for more details. 


\subsection{Generalizing NLP-Id} 
Next, we define generalizations of the so-called ``NLP-Id bound'' for \ref{CMESP} (see \cite{AFLW_Using}).
 For $C\in \mathbb{S}^n_+$\,, consider a factorization $I_n - (1/{\lambda_{\max}})C = HH^\top$,
with $H\in \mathbb{R}^{n\times p}$, for some $p$ satisfying $n-\mu_{\max}\le p \le n$. For $C\in \mathbb{S}^n_{++}$\,, consider a factorization $(1/{\lambda_{\min}})C -I_n= GG^\top$,
with $G\in \mathbb{R}^{n\times q}$, for some $q$ satisfying $n-\mu_{\min}\le q \le n$.
For $C\in\mathbb{S}^n_{+}$\,, we define the \emph{generalized NLP-Id bound} as 
\begin{equation}\label{GNLP-Id}\tag{GNLP-Id}  
\begin{array}{ll}
&\hypertarget{zoptarget}{\zopthing}(C,s,t,A,b;H) := \max \left\{\textstyle \ldet(I_p -H^\top X H) + t\log(\lambda_{\max}) ~:~ \right.\\
&\qquad\qquad\qquad\qquad\qquad\qquad  \left. Ax\leq b,\,(x,X) \in \Pnst\right\},
\end{array}
\end{equation}
and, for $C\in\mathbb{S}^n_{++}$\,,  we define the \emph{companion generalized NLP-Id bound} as
\begin{equation}\label{GNLP-Id-comp}
\tag{$\overline{\mbox{GNLP-Id}}$}
\begin{array}{ll}
&\hypertarget{zopcomptarget}{\zopcompthing}(C,s,t,A,b;G) :=\max \left\{\textstyle \ldet(I_q +G^\top X G) + t\log(\lambda_{\min})  ~:~ \right.\\
&\qquad\qquad\qquad\qquad\qquad\qquad  \left. Ax\leq b,\, (x,X) \in \Pnst\right\}.
\end{array}
\end{equation}

\begin{theorem} 
\phantom{.}
     \begin{enumerate}
         \item[\rm($i$)] For $C\in \mathbb{S}^n_+$\,, {\rm\ref{GNLP-Id}} is an exact convex relaxation of {\rm\ref{QCGMESP}}. 
         \item[\rm($ii$)]  For $C \in \mathbb{S}^n_{++}$\,, {\rm\ref{GNLP-Id-comp}} is an exact convex relaxation of {\rm\ref{QcompCGMESP}}. 
     \end{enumerate}
\end{theorem}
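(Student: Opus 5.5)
Each part splits into three steps, as in the proof for \ref{glinx}: concavity of the objective on $\Pnst$, containment of the feasible region of the formulation in that of the relaxation, and equality of objective values at every feasible point of the formulation.

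\textbf{Part $(i)$.} The map $X\mapsto I_p-H^\top XH$ is affine, and $\ldet$ is concave on $\mathbb{S}^p_{++}$. Hence the objective of \ref{GNLP-Id} is concave wherever it is finite (with value $-\infty$ elsewhere). The feasible region $\{Ax\le b\}\cap\Pnst$ is convex, and by Proposition \ref{prop:calP} it contains the feasible region of \ref{QCGMESP}.

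For exactness, take $(\hat x,\hat X)$ feasible for \ref{QCGMESP}. By the proof of Theorem \ref{thm:QCGMESP}$(i)$ we may write $\hat X=\hat U\hat U^\top$ with $\hat U^\top\hat U=I_t$. Sylvester's determinant identity gives
\[
\ldet(I_p-H^\top\hat U\hat U^\top H)=\ldet(I_t-\hat U^\top HH^\top\hat U).
\]
Since $HH^\top=I_n-(1/\lambda_{\max})C$ and $\hat U^\top\hat U=I_t$, we have $I_t-\hat U^\top HH^\top\hat U=(1/\lambda_{\max})\hat U^\top C\hat U$. Therefore the objective equals $\ldet(\hat U^\top C\hat U)-t\log\lambda_{\max}+t\log\lambda_{\max}=\ldet(\hat U^\top C\hat U)$. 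By the final display of the proof of Theorem \ref{thm:QCGMESP}$(i)$, this is the \ref{QCGMESP} objective value. This computation repeats the first lines of the proof of Theorem \ref{thm:QbarCGMESP}, except that $H$ is now $n\times p$; Sylvester's identity is insensitive to the shape. If $\hat U^\top C\hat U$ is singular, both sides equal $-\infty$.

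\textbf{Part $(ii)$.} Concavity holds for the same reason, because $X\mapsto I_q+G^\top XG$ is affine, and containment again follows from Proposition \ref{prop:calP}. For exactness, Sylvester's identity and $GG^\top=(1/\lambda_{\min})C-I_n$ give
\[
\ldet(I_q+G^\top\hat U\hat U^\top G)=\ldet(I_t+\hat U^\top GG^\top\hat U)=\ldet\bigl((1/\lambda_{\min})\hat U^\top C\hat U\bigr).
\]
Adding $t\log\lambda_{\min}$ yields $\ldet(\hat U^\top C\hat U)$, which is the \ref{QCGMESP} value. By Theorem \ref{thm:QbarCGMESP}, this equals the objective value of $(\hat x,\hat X)$ in \ref{QcompCGMESP}. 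Both formulations share the constraint set $\{Ax\le b\}\cap\Xnst$, so exactness for \ref{QcompCGMESP} follows.

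\textbf{Main obstacle.} The only delicate point is justifying the transfer in part $(ii)$ through Theorem \ref{thm:QbarCGMESP} instead of computing directly with the $C^{-1/2}(I_n-X)C^{-1/2}$ objective. A direct computation would require the long chain of identities used in that theorem's proof. Invoking the theorem avoids it, and it applies because $C\in\mathbb{S}^n_{++}$ is assumed in part $(ii)$, which is what the theorem needs. I would also verify that the ranges of $p$ and $q$ matter only for the existence of the factorizations and play no role in the argument.
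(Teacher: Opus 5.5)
Your proposal is correct and follows the paper's proof essentially step for step. You establish concavity and containment via Proposition \ref{prop:calP}, write $\hat X=\hat U\hat U^\top$, and use the Sylvester identity (which the paper applies implicitly) to reduce both objectives to $\ldet(\hat U^\top C\hat U)$; part $(ii)$ is transferred to \ref{QcompCGMESP} through Theorem \ref{thm:QbarCGMESP} exactly as the paper does, and your explicit handling of the singular case is only a minor added detail.
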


\begin{proof}
It is clear that the objective functions of \ref{GNLP-Id} and \ref{GNLP-Id-comp} are concave.  Moreover, from Proposition \ref{prop:calP}, we have that the feasible region of \ref{QCGMESP} (and of \ref{QcompCGMESP}) is contained in the feasible region of \ref{GNLP-Id} (and of \ref{GNLP-Id-comp}).

\medskip
\noindent ($i$):~
    Now, suppose that $(\hat x,\hat X)$ is a feasible solution to \ref{QCGMESP}.
     Let $(\hat x,\hat U)\in \Unst$
   such that $\hat{U}\hat{U}^\top=\hat X$ (the existence of these elements is guaranteed by the proof of Theorem \ref{thm:QCGMESP}($i$)).
   Then we have 
     \begin{align*} 
     \ldet(I_p -H^\top \hat{X} H) + t\log(\lambda_{\max}) &= \ldet(I_p -H^\top \hat{U}\hat{U}^\top H) + t\log(\lambda_{\max}) \\
    &= \ldet(I_t -\hat{U}^\top HH^\top \hat{U}) + t\log(\lambda_{\max})\\
    &=\ldet(\hat{U}^\top (I_n-HH^\top) \hat{U}) +  t\log(\lambda_{\max})\\
    &=\textstyle \ldet(\hat{U}^\top C \hat{U})=\sum_{\ell=1}^t \log(\lambda_\ell(C^{1/2}\hat{U}\hat{U}^\top C^{1/2}))=\textstyle\sum_{\ell=1}^t \log(\lambda_\ell(C^{1/2}\hat{X}C^{1/2})).
\end{align*}
The result in item $(i)$ follows.
\smallskip

\noindent ($ii$):~
Similarly to ($i$),  suppose that $(\hat x,\hat X)$ is a feasible solution to 
\ref{QcompCGMESP} with finite objective value. Let $(\hat x,\hat U)\in \Unst$  such that $\hat{U}\hat{U}^\top=\hat X$.
   Then we have 
     \begin{align*} 
     \ldet(I_q + G^\top \hat{X} G) + t\log(\lambda_{\min}) &= \ldet(\hat{U}^\top (I_n+GG^\top) \hat{U}) +  t\log(\lambda_{\min})\\
     &=\ldet(\hat{U}^\top C \hat{U})\\
&=\textstyle\sum_{\ell=1}^t \log(\lambda_\ell(C^{1/2}\hat{X}C^{1/2}))\\ &=\textstyle\sum_{\ell=1}^{n-t} \log(\lambda_\ell(C^{-1/2}(I-\hat{X})C^{-1/2}))  + \ldet(C),
\end{align*}
where the last equation follows from Theorem \ref{thm:QbarCGMESP}. 
\end{proof}

\begin{proposition}\label{prop:GNLP-Id-invariant}
\phantom{.}
\begin{enumerate}
    \item[\rm($i$)] $\zop(C,s,t,A,b;H)$ does not depend on the chosen  $H$.
    \item[\rm($ii$)] $\zopcomp(C,s,t,A,b;G)$ does not depend on the chosen  $G$.
\end{enumerate}
\end{proposition}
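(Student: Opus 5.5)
The plan is to show that, for each fixed feasible $(x,X)$, the objective value $\ldet(I_p - H^\top X H)$ is the same for every factorization $HH^\top = I_n - (1/\lambda_{\max})C$. Since the feasible set $\{(x,X): Ax\le b,\ (x,X)\in\Pnst\}$ does not involve $H$, invariance of the objective pointwise gives invariance of the optimal value. The same argument, with $G$ in place of $H$, will handle part ($ii$).

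The key tool is Sylvester's determinant identity $\det(I_p - H^\top Y) = \det(I_n - YH^\top)$ for $H,Y\in\mathbb{R}^{n\times p}$. Taking $Y := XH$ gives
\[
\det(I_p - H^\top X H) = \det(I_n - XHH^\top) = \det\bigl(I_n - X(I_n - (1/\lambda_{\max})C)\bigr).
\]
The right-hand side depends only on $C$ and $X$. Taking logarithms and adding $t\log(\lambda_{\max})$, the objective of \ref{GNLP-Id} at $(x,X)$ is a function of $(C,X)$ alone, whatever the value of $p$ in $[n-\mu_{\max},n]$.

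One detail needs care: $\ldet$ should be interpreted with value $-\infty$ when the determinant is nonpositive. The two determinants above are equal as real numbers, so the extended-valued objectives coincide on every feasible point. In fact $I_p - H^\top X H \succeq 0$ automatically, because $X\preceq \Diag(x)\preceq I_n$ gives $H^\top X H \preceq H^\top H$, and $\lambda(H^\top H)\subset[0,1]$ since the nonzero eigenvalues of $H^\top H$ and $HH^\top$ coincide. Thus the determinant is nonnegative, and the $\ldet$ values agree including the $-\infty$ case.

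For ($ii$) the same step gives $\det(I_q + G^\top X G) = \det(I_n + XGG^\top) = \det\bigl(I_n + X((1/\lambda_{\min})C - I_n)\bigr)$, which is independent of $G$. Here positive semidefiniteness is immediate, because $I_q + G^\top X G \succeq I_q$. I do not expect a real obstacle. The only subtle point is ensuring that equality of determinants transfers to equality of $\ldet$ values, including at singular points, which the semidefiniteness remarks above handle.
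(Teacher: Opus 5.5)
Your proposal is correct and takes essentially the same approach as the paper: both apply the identity $\det(I_p - AB)=\det(I_n - BA)$ to rewrite the objective as a function of $C$ and $X$ alone. The paper uses the symmetric form $\ldet(I_n - X^{1/2}HH^\top X^{1/2})$ where you use $\det(I_n - XHH^\top)$, which makes no difference, and your extra remark about the $-\infty$ case is a harmless refinement the paper omits.
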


\begin{proof}
\phantom{.}

\noindent ($i$):~
    The objective value of $(\hat{x},\hat{X})$ in \ref{GNLP-Id} is given by 
    \begin{align*}
        \ldet(I_p - H^\top \hat{X} H) + t\log(\lambda_{\max})&= \ldet(I_n - \hat{X}^{\scriptscriptstyle 1/2} H  H^\top \hat{X}^{\scriptscriptstyle 1/2} ) + t\log(\lambda_{\max})\nonumber\\
        &= \ldet(I_n + \hat{X}^{\scriptscriptstyle 1/2}((1/\lambda_{\max}) C-I_n) \hat{X}^{\scriptscriptstyle 1/2}) + t\log(\lambda_{\max})\,, 
    \end{align*}
   which only depends on $C$.
   \smallskip
   
\noindent ($ii$):~
    The objective value of $(\hat{x},\hat{X})$ in \ref{GNLP-Id-comp} is given by 
    \[
        \ldet(I_q + G^\top \hat{X} G) + t\log(\lambda_{\min})=  \ldet(I_n - \hat{X}^{\scriptscriptstyle 1/2}(I_n-(1/\lambda_{\min}) C) \hat{X}^{\scriptscriptstyle 1/2}) + t\log(\lambda_{\min})\,, 
    \]
    which only depends on $C$.
\end{proof}

\begin{remark}
   In view of Proposition~\ref{prop:GNLP-Id-invariant}, and for ease of notation, we omit the arguments $H$ and $G$ in $\zop$ and $\zopcomp$\,, respectively, throughout the remainder of the paper whenever their specification is not required. 
\end{remark}

Next, we demonstrate that the {\rm\ref{GNLP-Id}}
and {\rm\ref{GNLP-Id-comp}}
bounds generalize the NLP-Id and complementary 
NLP-Id bounds for \ref{CMESP} proposed in  \cite{AFLW_Using},  given by 
\begin{equation}\label{NLP-Id}\tag{NLP-Id}  
\begin{array}{ll}
&\hypertarget{znlpidtarget}{\znlpidthing}(C,s,A,b) := \max \left\{\textstyle \ldet(I_n + \Diag(x)^{\scriptscriptstyle 1/2}((1/\lambda_{\max}) C-I_n)\Diag(x)^{\scriptscriptstyle 1/2}) + s\log(\lambda_{\max})  \,:\right.\\
& \qquad\qquad\qquad\qquad\qquad\qquad~~\left.Ax\leq b,\, \mathbf{e}^\top x=s,\, x\in [0,1]^n\right\},
\end{array}
\end{equation}
\begin{equation}\label{NLP-Id-comp}
\tag{$\overline{\mbox{NLP-Id}}$} 
\begin{array}{ll}
\textstyle
& \hypertarget{zbarnlpidtarget}{\zbarnlpidthing}(C,s,A,b):=
\max \left\{\textstyle \ldet(I_n + \Diag(x)^{\scriptscriptstyle 1/2}(\lambda_{\min} C^{-1}-I_n)\Diag(x)^{\scriptscriptstyle 1/2})+\ldet(C)\right.\\
& \qquad\qquad\qquad\qquad\qquad\left.  + (s-n)\log(\lambda_{\min})~:~  A(\mathbf{e}-x)\leq b,~ \mathbf{e}^\top x=n-s,~
x\in[0,1]^n
\right\}.
\end{array}
\end{equation}

\begin{corollary} 
    For $t = s$, the {\rm\ref{GNLP-Id}} bound is precisely the {\rm\ref{NLP-Id}} bound for {\rm\ref{CMESP}}
    and the {\rm\ref{GNLP-Id-comp}} bound is precisely the {\rm\ref{NLP-Id-comp}} bound for {\rm\ref{CMESP}}.
\end{corollary}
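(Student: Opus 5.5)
For $t=s$, Proposition \ref{prop:P_teqs} collapses the feasible set: $\mathcal{P}(n,s,s)$ consists exactly of the pairs $(x,\Diag(x))$ with $x\in[0,1]^n$ and $\mathbf{e}^\top x=s$. So I will substitute $X=\Diag(x)$ into each objective and check that the resulting problem in $x$ alone is \ref{NLP-Id}, resp.\ \ref{NLP-Id-comp}. The constraint $Ax\le b$ carries over unchanged. By Proposition \ref{prop:GNLP-Id-invariant} we may work with the $H$- and $G$-free forms of the objectives.

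\emph{First claim.} By the proof of Proposition \ref{prop:GNLP-Id-invariant}($i$), the \ref{GNLP-Id} objective at $(x,\Diag(x))$ is
$$\ldet\big(I_n+\Diag(x)^{1/2}((1/\lambda_{\max})C-I_n)\Diag(x)^{1/2}\big)+s\log(\lambda_{\max}).$$
With the constraints $Ax\le b$, $\mathbf{e}^\top x=s$, $x\in[0,1]^n$, this is literally \ref{NLP-Id}.

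\emph{Second claim.} By Proposition \ref{prop:GNLP-Id-invariant}($ii$), the \ref{GNLP-Id-comp} objective at $(x,\Diag(x))$ is
$$\ldet\big(I_n+\Diag(x)^{1/2}(\tilde C-I_n)\Diag(x)^{1/2}\big)+s\log(\lambda_{\min}),\qquad \tilde C:=(1/\lambda_{\min})C\succ 0.$$
I then substitute $y:=\mathbf{e}-x$. The constraints become $A(\mathbf{e}-y)\le b$, $\mathbf{e}^\top y=n-s$, $y\in[0,1]^n$, which are exactly those of \ref{NLP-Id-comp}. It remains to prove the following determinant identity, which I expect to be the only real work:
$$\det\big(I_n+\Diag(x)^{1/2}(\tilde C-I_n)\Diag(x)^{1/2}\big)=\det(\tilde C)\,\det\big(I_n+\Diag(y)^{1/2}(\tilde C^{-1}-I_n)\Diag(y)^{1/2}\big).$$
I will derive it by the following chain.
\begin{itemize}
\item By $\det(I+AB)=\det(I+BA)$, the left side equals $\det(I_n+(\tilde C-I_n)\Diag(x))=\det(\Diag(y)+\tilde C\Diag(x))$.
\item Factoring out $\tilde C$ gives $\det(\tilde C)\det(\tilde C^{-1}\Diag(y)+\Diag(x))$.
\item Writing $\Diag(x)=I_n-\Diag(y)$, this is $\det(\tilde C)\det(I_n+(\tilde C^{-1}-I_n)\Diag(y))$.
\item Applying $\det(I+AB)=\det(I+BA)$ once more yields the symmetric form on the right side.
\end{itemize}

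Taking logarithms and using $\ldet(\tilde C)=\ldet(C)-n\log(\lambda_{\min})$, the objective becomes
$$\ldet\big(I_n+\Diag(y)^{1/2}(\lambda_{\min}C^{-1}-I_n)\Diag(y)^{1/2}\big)+\ldet(C)+(s-n)\log(\lambda_{\min}),$$
which is precisely the objective of \ref{NLP-Id-comp}. So the two problems have identical optimal values, which proves the second claim.
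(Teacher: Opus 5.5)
Your proof is correct and follows essentially the same route as the paper: collapse $\mathcal{P}(n,s,s)$ to $X=\Diag(x)$ via Proposition~\ref{prop:P_teqs}, complement $x\mapsto \mathbf{e}-x$, and verify the log-determinant identity by using $\Diag(\mathbf{e}-x)=I_n-\Diag(x)$ and pulling out $\ldet(C/\lambda_{\min})$. The only difference is cosmetic. You obtain that identity from $\det(I+AB)=\det(I+BA)$ applied to nonsymmetric products such as $(\tilde C-I_n)\Diag(x)$, whereas the paper works in the eigenbasis of $C$ with symmetric square-root factors; your version is slightly shorter and needs no spectral decomposition.
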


\begin{proof}
     It is easy to see that for the \ref{NLP-Id} case the result follows from Proposition \ref{prop:P_teqs}.       In the \ref{NLP-Id-comp} case, we first rewrite its formulation in the following equivalent form for clarity.
     \begin{align}\label{NLP-Id-comp-b} 
\textstyle
& 
\max \left\{\textstyle \ldet(I_n + \Diag(\mathbf{e}-x)^{\scriptscriptstyle 1/2}(\lambda_{\min} C^{-1}-I_n)\Diag(\mathbf{e}-x)^{\scriptscriptstyle 1/2})+\ldet(C) + (s-n)\log(\lambda_{\min})~:~ \right.\\
& \qquad\qquad\qquad\qquad\left. Ax\leq b,~ \mathbf{e}^\top x=s,~
x\in[0,1]^n
\right\}.\nonumber
\end{align}
     Now let $\hat{x}$ be a feasible solution for \eqref{NLP-Id-comp-b} and let $\Phi\Lambda\Phi^\top$ be any
 real Schur decomposition of $C$.  Then, we have
    \begin{align*}
        &\ldet(I_n + \Diag(\mathbf{e}-\hat{x})^{\scriptscriptstyle 1/2}(\lambda_{\min} C^{-1}-I_n)\Diag(\mathbf{e}-\hat{x})^{\scriptscriptstyle 1/2})+\ldet(C)+(s-n)\log(\lambda_{\min})\\
        &~= \ldet(I_n - (I_n - \lambda_{\min}\Lambda^{-1})^{\scriptscriptstyle 1/2}\Phi^\top\Diag(\mathbf{e}-\hat{x})\Phi(I_n - \lambda_{\min}\Lambda^{-1})^{\scriptscriptstyle 1/2})+\ldet((1/\lambda_{\min})\Lambda) + s\log(\lambda_{\min})\\
        &~= \ldet((1/\lambda_{\min})\Lambda - ((1/\lambda_{\min})\Lambda - I_n)^{\scriptscriptstyle1/2}\Phi^\top\Diag(\mathbf{e}-\hat{x})\Phi((1/\lambda_{\min})\Lambda - I_n)^{\scriptscriptstyle 1/2})+ s\log(\lambda_{\min})\\
        &~= \ldet((1/\lambda_{\min})\Lambda - ((1/\lambda_{\min})\Lambda - I_n) + ((1/\lambda_{\min})\Lambda - I_n)^{\scriptscriptstyle1/2}\Phi^\top\Diag(\hat{x})\Phi((1/\lambda_{\min})\Lambda - I_n)^{\scriptscriptstyle 1/2})+ s\log(\lambda_{\min})\\
        &~= \ldet(I_n + ((1/\lambda_{\min})\Lambda - I_n)^{\scriptscriptstyle1/2}\Phi^\top\Diag(\hat{x})\Phi((1/\lambda_{\min})\Lambda - I_n)^{\scriptscriptstyle 1/2})+ s\log(\lambda_{\min})\\
        &~= \ldet(I_n+\Diag(\hat{x})^{\scriptscriptstyle 1/2}((1/\lambda_{\min})C - I_n)\Diag(\hat{x})^{\scriptscriptstyle 1/2}) + s\log(\lambda_{\min})\\
        &~= \ldet(I_n+\Diag(\hat{x})^{\scriptscriptstyle 1/2}GG^\top\Diag(\hat{x})^{\scriptscriptstyle 1/2}) + s\log(\lambda_{\min})\\
        &~= \ldet(I_q+G^\top\Diag(\hat{x})G) + s\log(\lambda_{\min})\,.
    \end{align*}
     The result follows from Proposition \ref{prop:P_teqs}.
\end{proof}

\begin{remark}
We note that the construction of the convex relaxations \ref{GNLP-Id} and \ref{GNLP-Id-comp} of  \ref{CGMESP} relies on  specific scalings of the covariance matrix $C$. Consequently, in  contrast to \ref{glinx}, optimizing a scaling factor to strengthen the bound is not applicable to these relaxations. 
\end{remark}

Next, we formulate the Lagrangian dual of \ref{GNLP-Id} and \ref{GNLP-Id-comp},
and using standard methodology (see e.g., \cite{yamagishi2026dualpathfixingstrategyapplication}),
we indicate how to  fix variables for \ref{CGMESP} based on  knowledge of a lower bound for \ref{CGMESP} and feasible solutions for \ref{GNLP-Id} and \ref{GNLP-Id-comp}.

\begin{theorem}\label{thm:dual-GNLP-Id}
    The Lagrangian dual problem of {\rm\ref{GNLP-Id}} is
    \begin{equation}\label{eq:dual_op}\tag{DGNLP-Id}
\begin{array}{rrl}
&\min &-\ldet(\Theta) + \tr(\Theta)  + \nu^\top \mathbf{e} + \pi^\top b + \tau s + \xi t - p\\
&\text{\rm s.t.} 
&\upsilon - \nu - A^\top \pi - \tau\mathbf{e} + \eta + \diag(Z)= 0;\\[2.5pt]
&&\frac{1}{2}(W+W^\top) - \Omega +Z +H\Theta H^\top + \xi I_n = 0;\\[2.5pt]
&&\|W_{i\cdot}\|_2\leq \eta_i\,,~ \forall~ i\in N;\\[2.5pt]
&&\Theta \succ 0,\upsilon \geq 0, \nu \geq 0, \pi\geq 0, \eta \geq 0, Z \succeq 0, \Omega \succeq 0;\\
&&\tau \in \mathbb{R}, \xi \in \mathbb{R},W \in \mathbb{R}^{n\times n}.
\end{array}
\end{equation}
\end{theorem}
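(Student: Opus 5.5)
The dual is computed by standard Lagrangian duality, following the pattern used for \ref{eq:dual_glinx} in Theorem \ref{thm:dual-glinx}. First I would rewrite \ref{GNLP-Id} with an auxiliary matrix variable, maximizing $\ldet(Y)+t\log(\lambda_{\max})$ subject to $Y = I_p - H^\top X H$. I would then attach multipliers as follows:
\begin{itemize}
\item $\Theta\in\mathbb{S}^p$ to the defining equation $Y = I_p - H^\top X H$;
\item $\upsilon\geq 0$ to $x\geq \mathbf{0}$ and $\nu\geq 0$ to $x\leq \mathbf{e}$;
\item $\pi\geq 0$ to $Ax\leq b$, $\tau$ to $\mathbf{e}^\top x=s$, and $\xi$ to $\tr(X)=t$;
\item $\Omega\succeq 0$ to $X\succeq 0$ and $Z\succeq 0$ to $\Diag(x)-X\succeq 0$;
\item a pair $(W_{i\cdot},\eta_i)$ in the (self-dual) second-order cone to each constraint $\|X_{i\cdot}\|_2\le x_i$, contributing $\eta_i x_i - W_{i\cdot}X_{i\cdot}^\top$.
\end{itemize}
Since $X$ is symmetric, the total coefficient of $X$ from the cone terms is $-\tfrac12(W+W^\top)$.

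Next I would write the Lagrangian and maximize it over the primal variables. In $x$ the Lagrangian is linear, and boundedness requires its coefficient to vanish. This gives $\upsilon-\nu-A^\top\pi-\tau\mathbf{e}+\eta+\diag(Z)=\mathbf{0}$, with the appropriate sign convention, where $\diag(Z)$ arises from $Z\bullet\Diag(x)$. In $X$ the Lagrangian is also linear, with coefficient
\[
-H\Theta H^\top - \xi I_n + \Omega - Z - \tfrac12(W+W^\top),
\]
where the term $-H\Theta H^\top$ comes from $\Theta\bullet(-H^\top X H)$. Setting this coefficient to zero yields the second equality constraint $\tfrac12(W+W^\top)-\Omega+Z+H\Theta H^\top+\xi I_n=0$.

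The remaining free maximization is over $Y\succ 0$, namely $\sup_Y\{\ldet(Y)-\Theta\bullet Y\}$. This is finite only when $\Theta\succ 0$, in which case it is attained at $Y=\Theta^{-1}$ with value $-\ldet(\Theta)-p$. The constant terms are $\Theta\bullet I_p=\tr(\Theta)$ and $\nu^\top\mathbf{e}+\pi^\top b+\tau s+\xi t$. Collecting everything gives exactly the objective of \ref{eq:dual_op}, up to the additive constant $t\log(\lambda_{\max})$. The stated objective omits this constant, just as \ref{eq:dual_glinx} omits its scaling constant, and I would note this in the same way the paper treats the glinx case.

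The step most likely to cause trouble is getting the signs consistent, so that the two equality constraints come out exactly as displayed, including the sign in front of $H\Theta H^\top$. I would fix signs by choosing the Lagrangian so that each multiplier's term is nonnegative on the primal feasible set; this is what makes the dual an upper bound. I would then check the result against Theorem \ref{thm:dual-glinx}, replacing $-C\Theta C+\Theta$ there by $H\Theta H^\top$. Weak duality holds by construction, and strong duality would follow from Slater's condition whenever a strictly feasible point exists, for example $x=(s/n)\mathbf{e}$ and $X=(t/n)I_n$ when there are no side constraints. The appendix argument for \ref{eq:dual_glinx} carries over essentially verbatim.
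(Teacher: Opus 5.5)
Your derivation is correct and gives exactly \ref{eq:dual_op}. Its skeleton matches the paper's proof: an auxiliary matrix for $I_p-H^\top XH$ whose supremum gives $-\ldet(\Theta)-p$ for $\Theta\succ 0$, a vanishing linear coefficient in $x$, and a vanishing coefficient in $X$.

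The one genuine difference is how you handle the row-norm constraints. You dualize $\|X_{i\cdot}\|_2\le x_i$ directly as a second-order-cone constraint with self-dual multipliers $(\eta_i,W_{i\cdot})$. The Lagrangian is then linear in $X$, and the second equality constraint follows just by setting the coefficient of $X$ to zero.

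The paper instead puts scalar multipliers $\eta_i\ge 0$ on $x_i-\|X_{i\cdot}\|_2\ge 0$, so its Lagrangian carries the nonsmooth term $-\sum_i\eta_i\|X_{i\cdot}\|_2$. It then needs Lemma \ref{lem:appendix_sup_Xsym} to evaluate the supremum over $X$. That lemma combines the support-function form of the norm with a compactness and separating-hyperplane argument. It shows the supremum is $0$ exactly when some $W$ with $\|W_{i\cdot}\|_2\le\eta_i$ satisfies $\tfrac12(W+W^\top)=\Omega-Z-H\Theta H^\top-\xi I_n$, and $+\infty$ otherwise.

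What each route buys:
\begin{itemize}
\item Your conic route makes that lemma unnecessary, since $W$ is a multiplier from the start.
\item The paper's route is the literal Lagrangian dual with scalar multipliers. Its lemma is precisely what shows the two dual functions agree, i.e., that $\sup_X\inf_W$ and $\inf_W\sup_X$ coincide here.
\end{itemize}

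A few smaller remarks:
\begin{itemize}
\item Your $x$-condition correctly includes $\diag(Z)$. The paper's appendix drops this term in its supremum over $x$, although the displayed theorem has it.
\item Your point about $t\log(\lambda_{\max})$ is right: the paper's proof dualizes $\max\ldet(E)$, so the displayed dual value must be increased by $t\log(\lambda_{\max})$ to bound $\zop$. However, your comparison with \ref{eq:dual_glinx} is off, because unscaled \ref{glinx} has no additive constant to omit.
\item The point $((s/n)\mathbf{e},(t/n)I_n)$, which the paper also uses, is strictly feasible only when $s>t$; when $s=t$ we get $\Diag(x)-X=0$. This does not affect the statement, which needs no strong duality.
\end{itemize}
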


\begin{proof}
    See Appendix \ref{app:proof_dual_GNLP_Id}.
\end{proof}

\begin{theorem}\label{thm:dual-GNLP-Id-comp}
    The Lagrangian dual problem of {\rm\ref{GNLP-Id-comp}} is
    \begin{equation}\label{eq:dual_op_comp}
\tag{$\overline{\mbox{DGNLP-Id}}$}
\begin{array}{rrl}
&\min &-\ldet(\Theta) + \tr(\Theta)  + \nu^\top \mathbf{e} + \pi^\top b + \tau s + \xi t - q\\
&\text{\rm s.t.} 
&\upsilon - \nu - A^\top \pi - \tau\mathbf{e} + \eta + \diag(Z)= 0;\\[2.5pt]
&&\frac{1}{2}(W+W^\top) - \Omega +Z - G\Theta G^\top + \xi I_n = 0;\\[2.5pt]
&&\|W_{i\cdot}\|_2\leq \eta_i\,,~ \forall~ i\in N;\\[2.5pt]
&&\Theta \succ 0,\upsilon \geq 0, \nu \geq 0, \pi\geq 0, \eta \geq 0, Z \succeq 0, \Omega \succeq 0;\\
&&\tau \in \mathbb{R}, \xi \in \mathbb{R},W \in \mathbb{R}^{n\times n}.
\end{array}
\end{equation}
\end{theorem}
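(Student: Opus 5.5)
We derive the dual by forming the Lagrangian of \ref{GNLP-Id-comp}, in the same way as for \ref{eq:dual_op}, with the concave term $\ldet(I_q+G^\top XG)$ in place of $\ldet(I_p-H^\top XH)$. The primal is written with explicit constraints. The multipliers are $\upsilon\ge 0$ for $x\ge \mathbf{0}$, $\nu\ge0$ for $x\le\mathbf{e}$, $\pi\ge0$ for $Ax\le b$, $\tau$ for $\mathbf{e}^\top x=s$, $\xi$ for $\tr(X)=t$, $\Omega\succeq0$ for $X\succeq 0$ and $Z\succeq 0$ for $\Diag(x)-X\succeq0$. The second-order-cone constraints $\|X_{i\cdot}\|_2\le x_i$ get the pairs $(W_{i\cdot},\eta_i)$ in the self-dual SOC, so $\|W_{i\cdot}\|_2\le\eta_i$. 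Their contribution to the Lagrangian is $\sum_i(\eta_i x_i+W_{i\cdot}X_{i\cdot}^\top)=\eta^\top x+W\bullet X$, with sign conventions chosen so that it matches the stated dual. The constant $t\log(\lambda_{\min})$ is dropped during the derivation and re-added at the end; the stated dual objective apparently absorbs it, or it is treated as a constant offset. I will check the displayed objective and note this.

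To make the concave term tractable, introduce an auxiliary variable $Y=I_q+G^\top XG$, with matrix multiplier $\Theta$. Equivalently, use the conjugate representation
\[
\ldet(Y)=\min_{\Theta\succ0}\{\tr(\Theta Y)-\ldet(\Theta)-q\}, \qquad Y\succ 0 .
\]
Substituting gives $\tr(\Theta)+\Theta\bullet(G^\top XG)-\ldet\Theta-q$. This is where the terms $-\ldet(\Theta)+\tr(\Theta)-q$ in the objective come from. The $X$-linear term $(G\Theta G^\top)\bullet X$ enters with a plus sign, because the primal is a maximization.

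Next, maximize the Lagrangian over the free variables $x\in\mathbb{R}^n$ and $X\in\mathbb{S}^n$. The Lagrangian is linear in both, so the supremum is finite only if the coefficients vanish. The $x$-coefficient collects $\upsilon-\nu-A^\top\pi-\tau\mathbf{e}+\eta+\diag(Z)$, where $Z\bullet\Diag(x)=\diag(Z)^\top x$. The $X$-coefficient, after symmetrizing $W\bullet X=\frac12(W+W^\top)\bullet X$, collects $G\Theta G^\top+\Omega-Z-\xi I_n-\frac12(W+W^\top)$. With sign conventions matching the displayed constraint, setting this to zero gives $\frac12(W+W^\top)-\Omega+Z-G\Theta G^\top+\xi I_n=0$. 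The remaining constant terms are $\nu^\top\mathbf{e}+\pi^\top b+\tau s+\xi t$. Together with the $\Theta$ part, they give the stated objective, and minimizing over the multipliers yields \ref{eq:dual_op_comp}.

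The main obstacle is keeping the signs consistent. The only structural difference from Theorem~\ref{thm:dual-GNLP-Id} is the sign in front of $G^\top XG$, which flips $+H\Theta H^\top$ to $-G\Theta G^\top$. I would verify this by running the derivation of Appendix~\ref{app:proof_dual_GNLP_Id} in parallel with $H$ replaced by $G$ and the sign flipped. I would also check that the conjugate step is valid, that is, that the infimum over $\Theta\succ0$ is attained at $\Theta=Y^{-1}$. Strong duality, in the sense that the dual value equals $\zopcomp$, is not part of the claim. If it were needed, a Slater point is available: take $x$ with entries strictly between $0$ and $1$ and $X=(t/s)\Diag(x)$ when $t<s$, assuming the linear constraints allow such a point.
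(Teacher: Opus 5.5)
Your proposal is correct and has the paper's skeleton. The paper proves this statement by rerunning the derivation for Theorem~\ref{thm:dual-GNLP-Id} with $I_p-H^\top XH$ replaced by $I_q+G^\top XG$. Your conjugate identity $\ldet(Y)=\min_{\Theta\succ0}\{\Theta\bullet Y-\ldet(\Theta)-q\}$ is equivalent to its step of introducing $E=I_q+G^\top XG$ and computing $\sup_{E\succ0}\{\ldet(E)-\Theta\bullet E\}=-q-\ldet(\Theta)$. The identity holds on the whole feasible region, since $I_q+G^\top XG\succeq I_q$ whenever $X\succeq0$. Your stationarity condition in $x$ is also identical to the paper's.

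Where you genuinely differ is in dualizing $\|X_{i\cdot}\|_2\le x_i$. The paper attaches a scalar multiplier $\eta_i\ge0$ to the nonsmooth constraint $x_i-\|X_{i\cdot}\|_2\ge0$, so its Lagrangian is not linear in $X$. It therefore needs Lemma~\ref{lem:appendix_sup_Xsym}, a support-function identity plus a compactness and separating-hyperplane argument. That lemma shows $\sup_{X\in\mathbb{S}^n}\{B\bullet X-\sum_i\eta_i\|X_{i\cdot}\|_2\}$ equals $0$ when $B=\frac12(W+W^\top)$ for some $W$ with $\|W_{i\cdot}\|_2\le\eta_i$, and $+\infty$ otherwise; $W$ enters only as a certificate there.

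You instead place each pair $(x_i,X_{i\cdot})$ in the self-dual second-order cone, so $(\eta_i,W_{i\cdot})$ is the multiplier from the outset. The Lagrangian is then linear in $(x,X)$, the supremum over $X$ is trivially $0$ or $+\infty$, and minimizing over $W$ afterwards gives exactly the paper's feasible set. Your route is shorter and avoids the lemma entirely. The paper's route produces the dual in the classical sense, with one scalar multiplier per original inequality, and its lemma is precisely what shows this coincides with the conic dual you wrote down.

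Two small remarks. First, the sign of $W$ is immaterial, since $\|W_{i\cdot}\|_2\le\eta_i$ is invariant under $W\mapsto-W$. Your stated contribution $+W\bullet X$ and your displayed $X$-coefficient containing $-\frac12(W+W^\top)$ differ by exactly this flip. Second, you should not re-add $t\log(\lambda_{\min})$ at the end, because the displayed objective has no such term. As in its appendix derivation for Theorem~\ref{thm:dual-GNLP-Id}, the paper dualizes only the $\ldet$ part, so the stated problem is the dual of the objective with that constant removed.
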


\begin{proof}
    See Appendix \ref{app:proof_dual_GNLP_Id_comp}.
\end{proof}

\begin{theorem} 
 Let 
\begin{itemize}
    \item\!${\rm{LB}}$ be the objective-function value of a feasible solution for {\rm\ref{CGMESP}},
    \item $(\hat\Theta,\hat\upsilon,\hat\nu,\hat\pi,\hat\tau,\hat\xi,\hat Z,\hat \Omega,\hat\eta)$ be a feasible solution for {\rm\ref{eq:dual_op}} or {\rm\ref{eq:dual_op_comp}} with objective-function value $\hat{\zeta}$.
\end{itemize}
Then, for every optimal solution $x^*$ 
for {\rm\ref{CGMESP}}, we have:
\[
\begin{array}{ll}
x_j^*=0, ~ \forall\,j\in N \mbox{ such that } \hat{\zeta}-\mbox{LB} < \hat{\upsilon}_j\thinspace,\\
x_j^*=1, ~ \forall\,j\in N \mbox{ such that } \hat{\zeta}-{\rm LB} < \hat{\nu}_j\thinspace.\\
\end{array}
\]
\end{theorem}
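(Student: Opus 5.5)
We prove this with the standard weak-duality argument. Take the dual-feasible point $(\hat\Theta,\hat\upsilon,\hat\nu,\hat\pi,\hat\tau,\hat\xi,\hat Z,\hat\Omega,\hat\eta)$ together with $\hat W$; the variable $W$ is not listed but is implicitly part of any dual-feasible solution. Consider first \ref{eq:dual_op}. We will show that, for any $(x,X)\in\Pnst$ with $Ax\le b$, the objective of \ref{GNLP-Id} at $(x,X)$ is bounded above by
\[
\hat\zeta-\hat\upsilon^\top x-\hat\nu^\top(\mathbf{e}-x).
\]
Since $\hat\upsilon,\hat\nu\ge 0$ and $x\in[0,1]^n$, this refines plain weak duality.

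To get the inequality, write the Lagrangian of \ref{GNLP-Id} with the multipliers attached as in Appendix \ref{app:proof_dual_GNLP_Id}:
\begin{itemize}
\item $\upsilon$ for $x\ge\mathbf{0}$, $\nu$ for $x\le\mathbf{e}$, $\pi$ for $Ax\le b$, $\tau$ for $\mathbf{e}^\top x=s$, and $\xi$ for $\tr(X)=t$;
\item $\Omega$ for $X\succeq 0$ and $Z$ for $\Diag(x)-X\succeq 0$;
\item the conic pairs $(W_{i\cdot},\eta_i)$ for $\|X_{i\cdot}\|_2\le x_i$.
\end{itemize}
For the concave objective, use the Fenchel-type inequality
\[
\ldet(I_p-H^\top XH)\le -\ldet(\Theta)+\tr\bigl(\Theta(I_p-H^\top XH)\bigr)-p,
\qquad \Theta\succ0,
\]
which follows from $\ldet(Y)\le \tr(\Theta Y)-\ldet(\Theta)-p$.

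Add to this each constraint term multiplied by its nonnegative multiplier. These terms are $\hat\upsilon^\top x\ge0$, $\hat\nu^\top(\mathbf{e}-x)\ge0$, $\hat\pi^\top(b-Ax)\ge0$, $\hat\Omega\bullet X\ge0$, $\hat Z\bullet(\Diag(x)-X)\ge0$, and $\hat\eta_i x_i-\hat W_{i\cdot}X_{i\cdot}^\top\ge0$; the last holds by Cauchy--Schwarz, because $\|\hat W_{i\cdot}\|_2\le\hat\eta_i$ and $\|X_{i\cdot}\|_2\le x_i$. The two equality constraints of the dual force the coefficients of $x$ and of $X$ to cancel exactly. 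What remains is the dual objective $\hat\zeta$, so
\[
\text{obj}(x,X)+\hat\upsilon^\top x+\hat\nu^\top(\mathbf{e}-x)\le\hat\zeta
\]
(dropping the other nonnegative slack terms only weakens the inequality). This bookkeeping, in particular matching the signs of $H\Theta H^\top$ and of $\xi$ with the dual constraint, is the only delicate step. It mirrors the derivation in the appendix, so we would follow that sign convention exactly.

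Now let $x^*$ be optimal for \ref{CGMESP}, and let $X^*$ be the matrix built in Theorem \ref{thm:OCGMESP} and Theorem \ref{thm:QCGMESP}($ii$). Then $(x^*,X^*)$ is feasible for \ref{QCGMESP} and, by Proposition \ref{prop:calP}, for \ref{GNLP-Id}. Since the relaxation is exact, its objective value there equals $\zcgmesp\ge{\rm LB}$. The inequality gives
\[
{\rm LB}+\hat\upsilon^\top x^*+\hat\nu^\top(\mathbf{e}-x^*)\le\hat\zeta .
\]
If $x^*_j=1$, then $\hat\upsilon_j\le\hat\upsilon^\top x^*\le\hat\zeta-{\rm LB}$, which contradicts $\hat\zeta-{\rm LB}<\hat\upsilon_j$; hence $x_j^*=0$. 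Symmetrically, $x_j^*=0$ would give $\hat\nu_j\le\hat\zeta-{\rm LB}$, so $\hat\zeta-{\rm LB}<\hat\nu_j$ forces $x_j^*=1$.

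For \ref{eq:dual_op_comp} the argument is identical. We use $\ldet(I_q+G^\top XG)\le-\ldet(\Theta)+\tr(\Theta(I_q+G^\top XG))-q$, which flips the sign of the $G\Theta G^\top$ term exactly as in the dual constraint. We then invoke exactness of \ref{GNLP-Id-comp} for \ref{QcompCGMESP}, together with Theorem \ref{thm:QbarCGMESP}, to identify the objective value at $(x^*,X^*)$ with $\zcgmesp$.
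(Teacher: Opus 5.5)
Your route is the standard weak-duality fixing argument that the paper defers to; it gives no proof of its own and only cites the methodology. Most of your bookkeeping is correct. The Fenchel inequality holds, every slack term is nonnegative (including $\hat\eta_i x_i-\hat W_{i\cdot}X_{i\cdot}^\top\ge 0$ by Cauchy--Schwarz), and the two equality constraints of \ref{eq:dual_op} cancel the $x$- and $X$-coefficients exactly.

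\textbf{The gap.} The step that fails is where you call the left-hand side ``the objective of \ref{GNLP-Id}''. Your Fenchel inequality bounds only $\ldet(I_p-H^\top XH)$. Moreover, \ref{eq:dual_op} has no $t\log(\lambda_{\max})$ term, because Appendix~\ref{app:proof_dual_GNLP_Id} obtains it by dualizing $\max\ldet(E)$. So what your chain actually gives is
\[
\ldet(I_p-H^\top XH)+\hat\upsilon^\top x+\hat\nu^\top(\mathbf{e}-x)\le\hat\zeta .
\]
Exactness, by contrast, concerns the full objective. At $(x^*,X^*)$ it says $\ldet(I_p-H^\top X^*H)=z^*-t\log(\lambda_{\max})$, where $z^*\ge{\rm LB}$ is the optimal value of \ref{CGMESP}. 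Combining the two correctly yields
\[
\hat\upsilon^\top x^*+\hat\nu^\top(\mathbf{e}-x^*)\le\hat\zeta+t\log(\lambda_{\max})-{\rm LB}.
\]
Your last step therefore works only with the threshold $\hat\zeta+t\log(\lambda_{\max})-{\rm LB}$. The same applies to \ref{eq:dual_op_comp}, with threshold $\hat\zeta+t\log(\lambda_{\min})-{\rm LB}$.

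\textbf{Why it matters.} With the literal $\hat\zeta$, the conclusion is false. Take $C:=cI_n$ with $c>1$, no side constraints, and $p:=n$.
\begin{itemize}
\item Then $H=0$, and $\hat\Theta:=I_p$ with every other multiplier (and $\hat W$) equal to zero is feasible for \ref{eq:dual_op}, with $\hat\zeta=0$.
\item Every feasible $x$ has objective value $t\log(c)$, so ${\rm LB}=t\log(c)>0$.
\item Hence $\hat\zeta-{\rm LB}<0=\hat\upsilon_j=\hat\nu_j$ for every $j$, and the test would fix every variable both to $0$ and to $1$.
\end{itemize}
The paper's dual already contains this omission, so the statement must be read with $\hat\zeta$ increased by $t\log(\lambda_{\max})$ (resp.\ $t\log(\lambda_{\min})$). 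Once you carry that constant through, your proof is complete.
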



\section{Bound comparisons for GMESP}\label{sec:compare-bounds}

In this section, we  present the  existing upper-bounding methods for \ref{CGMESP} from the literature. We then provide a theoretical comparison, in the $\GMESP$ case, between these existing bounds and the new bounds proposed.


\subsection{\textbf{Spectral bound}} \cite{LeeLind2020} presents the
\emph{spectral bound}  for $\GMESP$, given by
\begin{equation*}
    \hypertarget{zspectraltarget}{\zspectralthing}(C,t) := \textstyle\sum_{\ell=1}^t \log(\lambda_\ell(C)).
\end{equation*} 
We note that the spectral bound is oblivious to $s$,
and so it should be particularly bad when $s$ is small. 
More generally,  
\cite{LeeLind2020} defines the \emph{(Lagrangian) spectral bound} for \ref{CGMESP} as 
\begin{equation}\label{eq:specbound_constrained}
    \hypertarget{zspectraltargetL}{\zspectralthingL}(C,s,t,A,b):= \min_{\pi\in \mathbb{R}^m_+} v(\pi) := 
\sum_{\ell=1}^t\log(\lambda_\ell\left( D^\pi C D^\pi \right)) + \pi^\top b ~-\! 
\min_{
\substack{ K\subset N,\\ |K|=s-t}
}~
\sum_{j\in K}\sum_{i=1}^m \pi_i A_{ij}\,,
\end{equation}
where $D^\pi\in \mathbb{S}^n_{++}$ is  the diagonal matrix defined by
\begin{equation}\label{def:Dpi-spec-bound}
D^\pi_{\ell,j}:= 
\left\{
  \begin{array}{ll}
    \exp\{-\frac{1}{2}\sum_{i=1}^m \pi_i A_{ij}\},& \hbox{ for $\ell=j$;} \\
    0,&\hbox{ for $\ell\not=j$.}
  \end{array}
\right.    
\end{equation}

\begin{remark}
We note that: (i) for \ref{CGMESP}, the spectral bound clearly cannot be
improved by scaling, (ii) for the special case of \ref{CMESP},
the spectral bound cannot be
improved by complementation (see [Prop. 3.1.2]\cite{FLbook}), and (iii) for general \ref{CGMESP},
the complementation principle does not apply at all. 
\end{remark}


\subsection{\textbf{Generalized factorization bound}}
Next, we define the ``generalized factorization bound''.
It is based on
 the following fundamental lemma. 

\begin{lemma}[\protect{\cite[Lemma 13]{Nikolov}}]\label{Ni13}
 Let $\omega\in\mathbb{R}_+^k$ satisfy $\omega_1\geq \omega_2\geq \cdots\geq \omega_k$\,, define $\omega_0:=+\infty$, and let $t$ be an integer satisfying
 $0<t\leq k$. Then there exists a unique integer $i$, with $0\leq i< t$, such that
 \begin{equation*} 
 \omega_{i }>\textstyle\frac{1}{t-i }\textstyle\sum_{\ell=i+1}^k \omega_{\ell}\geq \omega_{i+1}\,.
 \end{equation*}
\end{lemma}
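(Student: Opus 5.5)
We prove existence and uniqueness of the index $i$ by studying, for $i=0,1,\ldots,t-1$, the quantity
\[
\textstyle a_i := \frac{1}{t-i}\sum_{\ell=i+1}^k \omega_\ell\,,
\]
which is finite and nonnegative. The statement asks for the unique $i$ with $\omega_i > a_i \geq \omega_{i+1}$. We break this into two predicates: $P(i)$ meaning $a_i \geq \omega_{i+1}$, and $Q(i)$ meaning $\omega_i > a_i$. The plan is to show that $P$ holds at $i=t-1$ and that both $P$ and $Q$ are monotone along $i$, so that the desired index is forced.

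\textbf{Key identity and $P(t-1)$.} Write $a_i$ in terms of $a_{i+1}$:
\[
(t-i)\,a_i = \omega_{i+1} + (t-i-1)\,a_{i+1}, \qquad 0\le i\le t-2 .
\]
Since $a_i$ is a weighted average of $\omega_{i+1}$ and $a_{i+1}$ with positive weights $1$ and $t-i-1$, we get the equivalences
\[
a_i \geq \omega_{i+1} \iff a_{i+1}\geq \omega_{i+1}, \qquad a_i < \omega_{i+1} \iff a_{i+1}<\omega_{i+1}.
\]
Separately, $P(t-1)$ holds because $a_{t-1}=\sum_{\ell=t}^k\omega_\ell\geq \omega_t$ by nonnegativity of the $\omega_\ell$.

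\textbf{$P$ propagates upward.} Suppose $P(i)$ holds and $i+1\le t-1$. The equivalence gives $a_{i+1}\ge \omega_{i+1}$. Since the $\omega$'s are nonincreasing, $\omega_{i+1}\ge\omega_{i+2}$, so $a_{i+1}\geq\omega_{i+2}$, which is $P(i+1)$. Hence the set of $i$ satisfying $P$ is an up-set $\{i_0,\ldots,t-1\}$, nonempty because it contains $t-1$. Take $i^\ast:=i_0$ to be its minimum.

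\textbf{$Q$ holds at $i^\ast$.} If $i^\ast=0$, then $Q(0)$ holds since $\omega_0=+\infty$. If $i^\ast>0$, then $P(i^\ast-1)$ fails, meaning $a_{i^\ast-1}<\omega_{i^\ast}$. By the equivalence this gives $a_{i^\ast}<\omega_{i^\ast}$, which is exactly $Q(i^\ast)$. So $i^\ast$ satisfies both predicates, establishing existence.

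\textbf{Uniqueness.} Suppose $i>i^\ast$. Then $P(i-1)$ holds because $i-1\ge i^\ast$. By the equivalence, $a_i\geq \omega_i$, so $Q(i)$ fails. Any $i<i^\ast$ fails $P$ by the choice of $i^\ast$ as the minimum. Hence $i^\ast$ is the only index satisfying both, and it lies in $\{0,\dots,t-1\}$.

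The only delicate point is the bookkeeping of the averaging identity, specifically applying it with the correct $\omega$ index in each direction, together with the boundary cases $i=0$ (where $\omega_0=+\infty$) and $t=1$. When $t=1$, the only candidate is $i=0$: $P(0)$ holds by nonnegativity and $Q(0)$ holds trivially, so nothing else needs checking.
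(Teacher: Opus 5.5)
Your proof is correct and complete. The identity $(t-i)a_i=\omega_{i+1}+(t-i-1)a_{i+1}$ holds for $i\le t-2$ and gives exactly the sign equivalences you use. Taking $i^\ast$ as the least index with $a_i\ge\omega_{i+1}$ then yields both the strict left inequality (from $\omega_0=+\infty$, or from the failure of that inequality at $i^\ast-1$) and uniqueness.

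The paper does not prove this lemma itself; it imports it from \cite[Lemma 13]{Nikolov}. Your argument is a self-contained proof of that cited result, so there is no in-paper proof to compare it against.
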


Suppose that  $\omega\in\mathbb{R}^k_+$ with  
$\omega_1\geq\omega_2\geq\cdots\geq\omega_k$\,.
Let $\hat\imath$ be the unique integer defined by Lemma \ref{Ni13}. We define
\begin{equation*} 
\phi_t(\omega):=\textstyle\sum_{\ell=1}^{\hat\imath} \log(\omega_\ell) + (t - \hat\imath)\log\left(\frac{1}{t-{\hat\imath}} \sum_{\ell=\hat\imath+1}^{k}
\omega_\ell\right).
\end{equation*}
For $X\in\mathbb{S}_{+}^k$~, we define the \emph{$\Gamma$-function} as
\begin{equation*} 
\Gamma_t(X):= \phi_t(\lambda(X)),
\end{equation*}
For  $C\in\mathbb{S}^n_{+}$\,, we factorize $C=FF^\top$,
with $F\in \mathbb{R}^{n\times k}$, for some $k$ satisfying $r\le k \le n$. 
The \emph{generalized factorization bound} for \ref{CGMESP} from 
\cite{GMESP_Alg}  
is
\begin{align*}\label{ddgfact}\tag{\mbox{DDGFact}}
\hypertarget{zgammatarget}{\zgammathing} (C,s,t,A,b;F)\!:=
&\max \!\left\{ \Gamma_t(F^\top \Diag(x)F) \, : \, \mathbf{e}^\top x=s,\,Ax\leq b,\,
x \in [0,1]^n
\right\}, 
\end{align*}
and ${\zgammathing} (C,s,t,A,b):= {\zgammathing} (C,s,t,A,b; F)$,  where we omit the parameter $F$ in the simplified notation for $\zgammathing(C,s,t,A,b;F)$ because, as has been proven in \cite{GMESP_Alg} (also see \cite{FactPaper}), the factorization bound does not depend on which factorization of $C$ is used.

\begin{remark}\label{rem:gamma_invariant}
We note that: (i) for \ref{CGMESP}, the factorization bound cannot be
improved by scaling (see \cite{GMESP_Alg}), (ii) for the special case of \ref{CMESP},
the factorization bound can be
improved by complementation (see \cite{FactPaper}), 
and (iii) for general \ref{CGMESP},
the complementation principle does not apply at all. 
Expanding on (iii), we do not know 
how to give a factorization-type bound based on the companion formulation  \hbox{{\rm\ref{QcompCGMESP}}}.
\end{remark}


\subsection{\bf{Bound comparisons}}
Theoretical comparisons of upper bounds for experimental-design problems have been the subject of extensive study and are useful for understanding the behavior of different upper-bounding methods  (see, e.g., \cite{FactPaper,PonteFampaLeeMPB,MESP2DOPT,GMESP_Alg,li2025augmented}). Recently, \cite{GMESP_Alg} established a relation between the generalized factorization bound  and the spectral bound for $\GMESP$ which we present in the following theorem. 
\begin{theorem}[\cite{GMESP_Alg}]\label{thm:ddgfact-almost-dominates}
    Let $C\in\mathbb{S}^n_{+}$\,, with $r:=\rank(C)$, $0<t\leq r$, and $t\leq s < n$. Then,
we have
$
 \textstyle
\zgamma(C,s,t,\cdot,\cdot)
- \zspectral(C,t)\leq
t\log({s}/{t}).
$
\end{theorem}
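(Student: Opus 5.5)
Since this is GMESP ($m=0$), I would prove the bound pointwise: for every $x\in[0,1]^n$ with $\mathbf{e}^\top x=s$, show $\Gamma_t(F^\top\Diag(x)F)\le \sum_{\ell=1}^t\log\lambda_\ell(C)+t\log(s/t)$. Then take the maximum over $x$.

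First I would move from $M:=F^\top\Diag(x)F$ to an $n\times n$ matrix. The nonzero eigenvalues of $M$ coincide with those of $\Diag(x)^{1/2}C\Diag(x)^{1/2}$, and hence with those of $C^{1/2}\Diag(x)C^{1/2}$. Padding with zeros leaves $\phi_t$ unchanged, since zeros only add to the tail sum and do not change which index is selected by Lemma~\ref{Ni13}. So let $\omega:=\lambda(C^{1/2}\Diag(x)C^{1/2})$ and let $\hat\imath$ be the index given by Lemma~\ref{Ni13}. Two spectral facts will then be used:
\begin{itemize}
\item Since $0\preceq\Diag(x)\preceq I_n$, we have $C^{1/2}\Diag(x)C^{1/2}\preceq C$. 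By Weyl monotonicity, $\omega_\ell\le\lambda_\ell(C)$ for all $\ell$, which handles the head terms $\ell\le\hat\imath$.
\item For the tail, I would use Ky Fan's minimum principle with the projection onto $\mathrm{span}\{v_{\hat\imath+1},\dots,v_n\}$, where the $v_j$ are orthonormal eigenvectors of $C$. This gives
\[
\textstyle\sum_{\ell>\hat\imath}\omega_\ell\le\sum_{j>\hat\imath}\lambda_j(C)\,a_j,\qquad a_j:=v_j^\top\Diag(x)v_j\in[0,1],\qquad \sum_j a_j=s .
\]
\end{itemize}

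The main obstacle is combining these two facts. The naive tail estimate $\sum_{j>\hat\imath}\lambda_ja_j\le s\lambda_{\hat\imath+1}$ is too weak. Indeed, if $\lambda_{\hat\imath+1}=\dots=\lambda_n$, it yields $(t-\hat\imath)\log\!\big(s\lambda/(t-\hat\imath)\big)$, which exceeds the target $(t-\hat\imath)\log(s\lambda/t)$.

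The missing slack has to come from the head. The defining inequality of Lemma~\ref{Ni13}, $\omega_{\hat\imath}>\frac{1}{t-\hat\imath}\sum_{\ell>\hat\imath}\omega_\ell$, together with the facts that $\sum_{\ell\le\hat\imath}a_\ell\le\hat\imath$ and the total budget is $\sum_j a_j=s$, should allow trading between head and tail. My first attempt would use the known variational description of $\phi_t$: it is the maximum of $\sum_{\ell=1}^t\log\beta_\ell$ over sorted $\beta\in\mathbb{R}^t_+$ that are weakly submajorized by $\omega$ (partial sums of $\beta$ bounded by those of $\omega$, and $\sum_\ell\beta_\ell\le\sum_\ell\omega_\ell$). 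I would then show that any such $\beta$ satisfies
\[
\textstyle\sum_{\ell\le t}\log\beta_\ell\le\sum_{\ell\le t}\log\lambda_\ell(C)+t\log(s/t).
\]
To do this, set $\beta_\ell=\lambda_\ell(C)\,\theta_\ell$. From the partial-sum constraints and the Ky Fan bound, $\sum_{\ell\le j}\beta_\ell\le\sum_{\ell\le j}\lambda_\ell a'_\ell$ for suitably rearranged weights $a'\in[0,1]$ with $\sum a'\le s$. An AM--GM / weighted Abel-summation argument should then bound $\prod_{\ell\le t}\theta_\ell\le(s/t)^t$.

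If this route stalls, the fallback is a Lagrangian-dual argument. Here one would exhibit dual multipliers for the \ref{ddgfact} concave program, namely $\nu=0$ and $\tau$ chosen via $\lambda_t(C)$ scaled by $t/s$, whose dual objective equals $\zspectral(C,t)+t\log(s/t)$.
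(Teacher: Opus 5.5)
Your main route breaks at the variational description of $\phi_t$, where the majorization is reversed. By definition $\phi_t(\omega)=\sum_{\ell\in T}\log\beta^*_\ell$, with $\beta^*:=(\omega_1,\dots,\omega_{\hat\imath},\delta,\dots,\delta)$ and $\delta:=\frac{1}{t-\hat\imath}\sum_{\ell>\hat\imath}\omega_\ell$. Lemma~\ref{Ni13} gives $\delta\geq\omega_{\hat\imath+1}$, so the partial sums of $\beta^*$ are \emph{at least} those of $\omega$, with equal totals; that is, $\omega\prec(\beta^*,\mathbf{0})$. Over your set (partial sums of $\beta$ at most those of $\omega$) the maximum can be strictly smaller than $\phi_t(\omega)$. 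For example, take $\omega=(1,1,1)$ and $t=2$: sortedness and $\beta_1\leq\omega_1$ force $\beta_2\leq\beta_1\leq 1$, which gives $0<2\log(3/2)=\phi_2(\omega)$. So an upper bound over your set does not bound $\phi_t(\omega)$. The next step is also unsupported. Your Ky Fan estimate controls \emph{bottom} sums of $\omega$, and Schur--Horn gives $(\lambda_j(C)a_j)_j\prec\omega$, which is the wrong direction for bounding top partial sums of $\omega$ by $\sum_{\ell\leq j}\lambda_\ell a'_\ell$. The closing AM--GM/Abel step is only asserted, not carried out.

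Your ingredients do suffice once they are oriented correctly. Apply the Ky Fan bound at \emph{every} cut, $\sum_{m\geq j}\omega_m\leq\sum_{m\geq j}\lambda_m(C)a_m$, and use the nondecreasing weights $w_\ell:=1/\lambda_{\min\{\ell,t\}}(C)$, which are finite since $t\leq r$. Abel summation with $w_0:=0$ then gives $\sum_\ell w_\ell\omega_\ell\leq\sum_m w_m\lambda_m(C)a_m\leq\sum_m a_m=s$. Because $\omega\prec(\beta^*,\mathbf{0})$ and $w$ is nondecreasing, it follows that $\sum_{\ell\in T}\beta^*_\ell/\lambda_\ell(C)\leq\sum_\ell w_\ell\omega_\ell\leq s$. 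Finally, apply $\log u\leq u-1$ with $u:=\frac{t}{s}\beta^*_\ell/\lambda_\ell(C)$ and sum over $\ell\in T$; this yields $\phi_t(\omega)\leq\zspectral(C,t)+t\log(s/t)$.

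Your fallback is also viable, but its whole content is the choice of $\Theta$, which you omit; note that $\lambda_t(C)$ enters through $\Theta$, not through $\tau$. Take a spectral decomposition $C=\Phi\Lambda\Phi^\top$, write $\lambda_\ell:=\lambda_\ell(C)$, set $F:=\Phi\Lambda^{1/2}$, and take $\Theta:=\frac{t}{s}\Diag(1/\lambda_1,\dots,1/\lambda_t,1/\lambda_t,\dots,1/\lambda_t)$. Then $F\Theta F^\top\preceq\frac{t}{s}I_n$, so $\tau:=t/s$, $\nu:=\mathbf{0}$, $\upsilon:=\tau\mathbf{e}-\diag(F\Theta F^\top)\geq\mathbf{0}$ is feasible for \ref{DGFact}. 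The $t$ least eigenvalues of $\Theta$ are $t/(s\lambda_\ell)$ for $\ell\in T$, so the dual value is $\zspectral(C,t)+t\log(s/t)+\tau s-t=\zspectral(C,t)+t\log(s/t)$. The paper itself does not prove this statement; it cites \cite{GMESP_Alg}, so either completed route would serve.
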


 In the remainder of this section, we present new results relating upper bounds for $\GMESP$. Next, we summarize the relations that will be established. 

\begin{enumerate}
\item \emph{glinx bound dominance:} For $\gamma := 1/\lambda_{t}(C)^2$ (and hence also for the optimal $\gamma$), we have $\zscaleglinx(C,s,t,\cdot,\cdot)\allowbreak \leq \zspectral(C,t)$\,. 
 \item \emph{Spectral bound dominance:}
        If any of the following successively weaker conditions holds,   then $\zspectral(C,t) \leq  \zgamma(C,s,t,\cdot,\cdot)$\,.
    \begin{enumerate}
       \item
       $\mu_{\max} \geq nt/s$,
        \item
        $\tr(C) \geq t\lambda_{\max}$ and $\sum_{\ell = 1}^t \log\big(\frac{nt}{s}\frac{\lambda_{\ell}(C)}{\tr(C)}\big) \leq 0$,
        \item
        $\sum_{i=1}^t\log(\lambda_i(C)) - \Gamma_t(C) + t \log(n/s)\leq 0$.
    \end{enumerate}
    \item \emph{{\rm GNLP-Id} bound dominance:}
    \begin{enumerate}
        \item $\zop(C,s,t,\cdot,\cdot)-\zspectral(C,t) \leq   \sum_{i=1}^t \log(\lambda_{\max}/\lambda_i(C))$,
        \item if $\mu_{\max} \geq t$\,, then   $\zop(C,s,t,\cdot,\cdot) \leq \zspectral(C,t)$\,.\\[-5pt]
    \end{enumerate}

    \item \emph{{\rm GNLP-Id-comp} bound dominance:} For $C \in \mathbb{S}^n_{++}$\,, we have
    \begin{enumerate}
        \item $\zopcomp(C,s,t,\cdot,\cdot)-\zspectral(C,t) \leq   \sum_{i=t+1}^n \log(\lambda_i(C)/\lambda_{\min})$,
        \item if $\mu_{\min} \geq n-t$\,, then   $\zopcomp(C,s,t,\cdot,\cdot) \leq \zspectral(C,t)$\,.\\[-5pt]
    \end{enumerate}
    \item \emph{Relations among three or more bounds:} For \ref{scaleglinx}\,, consider the scaling parameter $\gamma := 1/\lambda_t(C)^2$, then we have
    \begin{enumerate}
        \item if $\mu_{\max} \geq nt/s$,  
    then $\zscaleglinx(C,s,t,\cdot,\cdot)  \leq \zspectral(C,t) \leq  \zgamma(C,s,t,\cdot,\cdot)$  (and hence also for the optimal $\gamma$), and  $\zop(C,s,t,\cdot,\cdot)  \leq \zspectral(C,t) \leq  \zgamma(C,s,t,\cdot,\cdot)$\,,        
        \item if $\mu_{\max} \geq n-s + t$, 
        then $\zscaleglinx(C,s,t,\cdot,\cdot) = \zop(C,s,t,\cdot,\cdot) = \zspectral(C,t) = \zcgmesp(C,s,t,\cdot,\cdot)$ (and so $\gamma := 1/\lambda_t(C)^2$ is an optimal scaling parameter in this case). 
    \end{enumerate}
\end{enumerate}

\begin{theorem}\label{thm:glinxrelaxed-dominates-spec}
   Let $C\in\mathbb{S}^n_{+}$\,, with $r:=\rank(C)$, $0<t\leq r$,  $t\leq s < n$, and $\gamma := 1/\lambda_t(C)^2$.  Let
\begin{equation}\label{relaxedglinx}  
z(C,s,t) :=\max \left\{ \fscaleglinx(X;\gamma) \,:\,
0\preceq X \preceq I_n,\, \tr(X) = t\right\}.
\end{equation}
   Then, we have $z(C,s,t) \leq \zspectral(C,t)$.
\end{theorem}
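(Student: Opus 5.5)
The plan is to bound the concave objective of \eqref{relaxedglinx} by its first-order (supporting hyperplane) approximation at one well-chosen point, and then to maximize that linear approximation exactly over $\{X : 0\preceq X\preceq I_n,\ \tr(X)=t\}$.

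\textbf{Choice of the point.} Let $C=\Phi\Lambda\Phi^\top$ be a spectral decomposition with $\lambda_1(C)\ge\cdots\ge\lambda_n(C)$. Since $t\le r$, we have $\lambda_t(C)>0$, so $\gamma=1/\lambda_t(C)^2$ is well defined. Take $X_0:=\Phi_{\cdot T}\Phi_{\cdot T}^\top$, the projector onto the top $t$ eigenvectors of $C$; it is feasible for \eqref{relaxedglinx}. Then
\[
M_0:=L(X_0;\gamma)=\gamma CX_0C+I_n-X_0=\Phi\,\Diag(\gamma\lambda_1^2,\ldots,\gamma\lambda_t^2,1,\ldots,1)\,\Phi^\top\succ 0 .
\]
Consequently $\ldet(M_0)=2\sum_{\ell\in T}\log\lambda_\ell(C)+t\log\gamma$, and therefore $\fscaleglinx(X_0;\gamma)=\zspectral(C,t)$. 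It remains to show that $X_0$ is a maximizer.

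\textbf{Concavity bound.} Let $X$ be feasible with $L(X;\gamma)\succ0$; if $L(X;\gamma)$ is singular the objective is $-\infty$ and there is nothing to prove. Since $\ldet$ is concave,
\[
\ldet(L(X;\gamma))\le \ldet(M_0)+M_0^{-1}\bullet\bigl(L(X;\gamma)-M_0\bigr).
\]
Because $L(\cdot;\gamma)$ is affine in $X$, the last term equals $D\bullet(X-X_0)$, where
\[
D:=\gamma CM_0^{-1}C-M_0^{-1}.
\]
In the eigenbasis $\Phi$, the matrix $D$ is diagonal with entries
\[
d_i=1-\lambda_t^2/\lambda_i^2\ \ (i\le t),\qquad d_i=\lambda_i^2/\lambda_t^2-1\ \ (i>t).
\]
This is where the specific choice $\gamma=1/\lambda_t(C)^2$ enters: it gives $d_i\ge0$ for $i\le t$ and $d_i\le0$ for $i>t$.

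\textbf{Linear maximization (the main step).} We need $\max\{D\bullet X: 0\preceq X\preceq I_n,\ \tr(X)=t\}$. By Ky Fan's maximum principle this maximum equals the sum of the $t$ largest eigenvalues of $D$. Alternatively, one can argue directly: the diagonal of $\Phi^\top X\Phi$ lies in $[0,1]^n$ and sums to $t$, so the value is a linear function of that diagonal over this polytope. By the sign pattern above, the $t$ largest eigenvalues of $D$ are $d_1,\ldots,d_t$, and the maximum is attained at $X_0$. Hence $D\bullet(X-X_0)\le0$, so $\ldet(L(X;\gamma))\le\ldet(M_0)$, and
\[
\fscaleglinx(X;\gamma)\le\tfrac12\bigl(2\textstyle\sum_{\ell\in T}\log\lambda_\ell(C)+t\log\gamma-t\log\gamma\bigr)=\zspectral(C,t).
\]

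I expect the main obstacle to be this linear step, in particular correctly identifying the sign pattern of $D$, which is exactly what forces the choice $\gamma=1/\lambda_t(C)^2$. The remaining steps are routine verifications in the eigenbasis of $C$.
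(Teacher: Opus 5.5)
Your proof is correct, and it takes a different route from the paper's. The paper starts from an optimal $X^*$ and writes the objective in the eigenbasis of $C$. It then applies Hadamard's inequality to get $2z(C,s,t)\le\sum_{i\in N}\log\bigl((\gamma\lambda_i(C)^2-1)\,\delta_i+1\bigr)-t\log(\gamma)$, with $\delta:=\diag(\Phi^\top X^*\Phi)$ in the capped simplex $\{\delta\in[0,1]^n:\mathbf{e}^\top\delta=t\}$. Finally it asserts, ``from the optimality conditions,'' that $\delta=\mathbb{I}_t$ maximizes this separable concave function.

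You instead use the tangent-plane inequality for $\ldet$ at the explicit candidate $X_0=\Phi_{\cdot T}\Phi_{\cdot T}^\top$. This linearizes the problem over $\{0\preceq X\preceq I_n,\ \tr(X)=t\}$, and Ky Fan (or the same diagonal reduction) finishes it.

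The two arguments rest on the same core computation. The gradient of the paper's separable function at $\delta=\mathbb{I}_t$ has components $1-\lambda_t(C)^2/\lambda_i(C)^2$ for $i\le t$ and $\lambda_i(C)^2/\lambda_t(C)^2-1$ for $i>t$, which are exactly your $d_i$. So in both proofs, optimality is certified by the sign pattern that the choice $\gamma=1/\lambda_t(C)^2$ forces.

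Your version has three advantages:
\begin{itemize}
\item it carries out explicitly the first-order check that the paper leaves to the reader;
\item it needs no Hadamard/Oppenheim inequality;
\item because $X_0$ is feasible and attains $\zspectral(C,t)$, it shows $z(C,s,t)=\zspectral(C,t)$, not just the inequality.
\end{itemize}
The paper's Hadamard step has its own advantage: it bounds the objective at an arbitrary feasible $X$ by a function of the diagonal alone, so the maximizer does not have to be guessed in advance.
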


\begin{proof}
    Let $({x}^*,{X}^*)$ be an optimal solution to \eqref{relaxedglinx}\,. Let $\Phi \Lambda \Phi^\top$ be any real Schur decomposition of $C$. Then, we have that 
    \begin{align*}
    2\cdot z(C,s,t) &= \ldet(\gamma CX^*C + I_n -X^*) - t\log(\gamma)\\
        &=  \ldet( \gamma\Lambda \Phi^\top X^* \Phi \Lambda  + I_n -\Phi^\top X^* \Phi) - t\log(\gamma)\\
        &\leq \textstyle\sum_{i \in N} \log( (\gamma \lambda_i(C)^2-1) (\Phi^\top X^* \Phi)_{ii} +1) - t\log(\gamma),
    \end{align*}
    where the inequality comes from Hadamard's inequality (see, for example,  \cite[Theorem 7.8.6]{HJBook} for the more general
Oppenheim’s inequality). Note that $0\preceq X^* \preceq I_n \Rightarrow 0 \preceq \Phi^\top X^* \Phi \preceq I_n$ and $\tr(\Phi^\top X^* \Phi) =\tr(X^*\Phi\Phi^\top) =  \tr(X^*) = t$. Then, for $d:=\diag(\Phi^\top X^* \Phi)$, we have that $d \in [0,1]^n$ and $\mathbf{e}^\top d = t$. 

Let $\zeta_i := \gamma \lambda_i(C)^2 - 1$ for $i \in N$, and note that we have $\zeta_1 \geq \zeta_2 \geq \cdots \geq \zeta_n$\,, where $\zeta_i \geq 0$ for $i \in T$, and $\zeta_i \leq 0$ for $i \in N\setminus T$.
      Consider the following convex optimization problem\footnote{This problem is a variation of the ``RM (rate maximization) loading problem'', see \cite{papandreou2007bit}.}
\begin{equation}\label{eq:prob_mask}
        \max \left\{\textstyle\sum_{i\in N}\log(\zeta_i d_i  + 1)     \, : \, \mathbf{e}^\top d=t,~d\in[0,1]^n
\right\}.
\end{equation}
Then, from the optimality conditions\footnote{This same proof idea can be seen in \cite[Theorem 51]{MESP2DOPT}, by replacing $d$ by $x$ and $t$ by $s$}, we can verify that $d_i^* := 1$ for $i \in T$ and $d_i^*:= 0$ for $i\in N\setminus T$ is an optimal solution for \eqref{eq:prob_mask}. Then, we have that $z(C,s,t) \leq \frac{1}{2}\big(\sum_{i=1}^t \log(\gamma \lambda_i(C)^2) - t\log(\gamma)\big) = \sum_{i=1}^t \log(\lambda_i(C)) = \zspectral(C,t)$.
\end{proof}

\begin{corollary}\label{cor:glinx-dominates-spec}
     Let $C\in\mathbb{S}^n_{+}$\,, with $r:=\rank(C)$, $0<t\leq r$, and $t\leq s < n$. Consider the  \ref{scaleglinx} bound with scaling parameter $\gamma := 1/\lambda_t(C)^2$.  
   Then, we have $\zscaleglinx(C,s,t,\cdot,\cdot) \leq \zspectral(C,t)$.
\end{corollary}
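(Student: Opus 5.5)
The plan is to show that the feasible region of \ref{scaleglinx} in the GMESP case ($m=0$) projects, in the $X$ variable, into the feasible region of problem \eqref{relaxedglinx}, and that the objective functions coincide. Theorem \ref{thm:glinxrelaxed-dominates-spec} then gives the result directly. In other words, $\zscaleglinx(C,s,t,\cdot,\cdot)\le z(C,s,t)\le \zspectral(C,t)$.

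Concretely, let $(\hat x,\hat X)$ be any feasible solution of \ref{scaleglinx} with $\gamma:=1/\lambda_t(C)^2$ and no side constraints, so $(\hat x,\hat X)\in\Pnst$. By definition of $\Pnst$, we have $0\preceq \hat X\preceq \Diag(\hat x)$ and $\tr(\hat X)=t$. Since $\hat x\in[0,1]^n$, we get $\Diag(\hat x)\preceq I_n$, hence $0\preceq \hat X\preceq I_n$. Thus $\hat X$ is feasible for \eqref{relaxedglinx}.

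The objective in both problems is the same function $\fscaleglinx(\cdot;\gamma)$ of $X$ alone, with no dependence on $x$. Therefore $\fscaleglinx(\hat X;\gamma)\le z(C,s,t)$. Taking the maximum over all feasible $(\hat x,\hat X)$ gives $\zscaleglinx(C,s,t,\cdot,\cdot)\le z(C,s,t)$, and combining this with Theorem \ref{thm:glinxrelaxed-dominates-spec} yields the claim.

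There is no real obstacle; the only care needed is to confirm two things. First, that $\gamma=1/\lambda_t(C)^2$ is well defined, which holds because $t\le r$ gives $\lambda_t(C)>0$. Second, that the maxima are attained, or interpreted as suprema, so that the inequality between optimal values is legitimate. Since the bound passes through a supremum argument anyway, attainment is not actually needed.
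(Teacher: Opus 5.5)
Your proposal is correct and follows the paper's proof. The paper also deduces the corollary from Theorem~\ref{thm:glinxrelaxed-dominates-spec} via $\zscaleglinx(C,s,t,\cdot,\cdot)\le z(C,s,t)$, and you supply the one-line reason the paper leaves implicit: $0\preceq X\preceq \Diag(x)\preceq I_n$ and $\tr(X)=t$, with the objective depending only on $X$.
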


\begin{proof}
    The result comes directly from Theorem \ref{thm:glinxrelaxed-dominates-spec} and the fact that $\zscaleglinx(C,s,t,\cdot,\cdot)\leq z(C,s,t)$, with $z(C,s,t)$ defined in \eqref{relaxedglinx}.
\end{proof}

\begin{remark}
   In  
   Corollary \ref{cor:glinx-dominates-spec}, we establish that  the \ref{scaleglinx} bound with scaling parameter $\gamma := 1/\lambda_t(C)^2$ dominates the spectral bound for $\GMESP$. This result is new even for $\MESP$.
\end{remark}

Generally, the \ref{GNLP-Id} bound and the  \ref{GNLP-Id-comp} bound do not dominate the spectral bound. But, motivated by Theorem \ref{thm:ddgfact-almost-dominates}, we derive analogous inequalities that relate the \ref{GNLP-Id} bound and the \ref{GNLP-Id-comp} bound to the spectral bound through the eigenvalues of  $C$.
\begin{theorem} 
Let $C\in\mathbb{S}^n_{+}$\,, with $r:=\rank(C)$, $0<t\leq r$, and $t\leq s < n$. Then, we have
\begin{enumerate}
    \item[\rm($i$)] $\zop(C,s,t,\cdot,\cdot)-\zspectral(C,t) \leq   \sum_{i=1}^t \log(\lambda_{\max}/\lambda_i(C)),$\\[-5pt]
    \item[\rm($ii$)] $\zopcomp(C,s,t,\cdot,\cdot)-\zspectral(C,t) \leq   \sum_{i=t+1}^n \log(\lambda_i(C)/\lambda_{\min}).$
\end{enumerate}
\end{theorem}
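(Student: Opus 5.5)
The plan is to discard almost all of the structure of $\Pnst$ and bound the objective functions of \ref{GNLP-Id} and \ref{GNLP-Id-comp} directly, using only $0\preceq X\preceq I_n$, which holds on $\Pnst$ because $X\preceq \Diag(x)\preceq I_n$. First I rewrite the right-hand sides. In ($i$), $\zspectral(C,t)+\sum_{i=1}^t\log(\lambda_{\max}/\lambda_i(C)) = t\log(\lambda_{\max})$, so ($i$) is equivalent to $\zop(C,s,t,\cdot,\cdot)\le t\log(\lambda_{\max})$. In ($ii$), where $C\in\mathbb{S}^n_{++}$ is implicit since the companion bound is only defined then, $\zspectral(C,t)+\sum_{i=t+1}^n\log(\lambda_i(C)/\lambda_{\min}) = \ldet(C)-(n-t)\log(\lambda_{\min})$, so ($ii$) is equivalent to $\zopcomp(C,s,t,\cdot,\cdot)\le \ldet(C)-(n-t)\log(\lambda_{\min})$.

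For ($i$), take any feasible $(x,X)$. Since $X\succeq 0$, we have $H^\top X H\succeq 0$, so every eigenvalue of $I_p-H^\top XH$ is at most $1$. Hence $\ldet(I_p-H^\top XH)\le 0$ wherever it is finite. Adding $t\log(\lambda_{\max})$ and maximizing gives $\zop\le t\log(\lambda_{\max})$, which is exactly the reformulated claim.

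For ($ii$), take any feasible $(x,X)$ and use the form of the objective from the proof of Proposition \ref{prop:GNLP-Id-invariant}($ii$), equivalently Sylvester's determinant identity:
\[
\ldet(I_q+G^\top XG)=\ldet(I_n+X^{1/2}GG^\top X^{1/2})=\ldet(I_n+G^\top X G\,\text{-type term}),
\]
and, more usefully, $\ldet(I_q+G^\top XG)$ is monotone nondecreasing in $X$ in the Loewner order. Since $X\preceq I_n$, this gives $G^\top XG\preceq G^\top G$, and therefore
\[
\ldet(I_q+G^\top XG)\le \ldet(I_q+G^\top G)=\ldet(I_n+GG^\top)=\ldet\big((1/\lambda_{\min})C\big)=\ldet(C)-n\log(\lambda_{\min}).
\]
Adding $t\log(\lambda_{\min})$ yields the required bound $\ldet(C)-(n-t)\log(\lambda_{\min})$.

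There is no substantial obstacle. The only points needing care are two. First, the bookkeeping that turns the stated differences into the clean bounds $t\log(\lambda_{\max})$ and $\ldet(C)-(n-t)\log(\lambda_{\min})$. Second, the justification of Loewner monotonicity of $\ldet(I+\cdot)$ and of the identity $\ldet(I_q+G^\top G)=\ldet(I_n+GG^\top)$, both of which are standard. I would first try a Hadamard-inequality argument in the eigenbasis of $C$, as in Theorem \ref{thm:glinxrelaxed-dominates-spec}, but that route is unnecessary and its KKT verification need not hold here. The direct monotonicity argument avoids it.
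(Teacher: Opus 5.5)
Your proof is correct and is essentially the paper's argument. Both rewrite the right-hand sides so the claims become $\zop(C,s,t,\cdot,\cdot)\le t\log(\lambda_{\max})$ and $\zopcomp(C,s,t,\cdot,\cdot)\le \ldet(C)-(n-t)\log(\lambda_{\min})$, and both derive these from $0\preceq X\preceq I_n$ alone, via $\ldet(I_p-H^\top XH)\le 0$ and $\ldet(I_q+G^\top XG)\le \ldet(I_q+G^\top G)=\ldet((1/\lambda_{\min})C)$.

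The second equality in your first display for ($ii$) (the ``$G^\top XG$-type term'') is garbled and unused, so just delete it.
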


\begin{proof}
    Consider the factorizations $I_n - (1/{\lambda_{\max}})C = HH^\top$ and $(1/{\lambda_{\min}})C -I_n= GG^\top$. As $0 \preceq X \preceq I_n$\,,  we have $0 \preceq I_n - H^\top X H \preceq I_n$ and $0 \preceq G^\top X G \preceq G^\top G$.
    Then,
    \begin{align*}
        \zop(C,s,t,\cdot,\cdot;H) &=  \ldet\left(I_n - H^\top X H\right) + t\log(\lambda_{\max})
        \leq t\log(\lambda_{\max})\,,
    \end{align*}  
    and 
    \begin{align*}
        \zopcomp(C,s,t,\cdot,\cdot;G) &= \ldet\left(I_n + G^\top X G\right) + t\log(\lambda_{\min})\\
        &\leq \ldet\left(I_n + G^\top G\right) + t\log(\lambda_{\min})= \ldet(C) + (t-n)\log(\lambda_{\min})\,.
    \end{align*}
Additionally, we have 
    \[
    \zspectral(C,t) + \textstyle\sum_{i=1}^t \log(\lambda_{\max}/\lambda_i(C)) = t \log(\lambda_{\max})\,,
    \]
and
\[
    \zspectral(C,t) +\textstyle\sum_{i=t+1}^n \log(\lambda_i(C))= \textstyle\sum_{i= 1}^n \log(\lambda_i(C)) = \ldet(C).
    \] 
    The result follows.
\end{proof}

In the following corollary, we point out that the \ref{GNLP-Id} bound and the \ref{GNLP-Id-comp} bound \emph{dominate} the spectral bound under conditions on the eigenvalues of $C$.

\begin{corollary}\label{cor:dom_op_spec}
Let $C\in\mathbb{S}^n_{+}$\,, with $r:=\rank(C)$, $0<t\leq r$, and $t\leq s < n$.  
\begin{enumerate}
    \item[\rm($i$)] If $\mu_{\max} \geq t$, then we have  $\zop(C,s,t,\cdot,\cdot) \leq \zspectral(C,t)$.
    \item[\rm($ii$)] If $\mu_{\min} \geq n-t$, then we have  $\zopcomp(C,s,t,\cdot,\cdot) \leq \zspectral(C,t)$.
\end{enumerate}
\end{corollary}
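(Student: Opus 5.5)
This is a direct consequence of the preceding theorem. In each part the multiplicity hypothesis makes the right-hand side of the corresponding inequality vanish. The plan is to check that it vanishes and substitute.

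For ($i$), recall that $\lambda_1(C)\ge\lambda_2(C)\ge\cdots$ and that $\mu_{\max}$ is the multiplicity of $\lambda_{\max}=\lambda_1(C)$. If $\mu_{\max}\ge t$, then $\lambda_i(C)=\lambda_{\max}$ for every $i\in T$. Hence each term $\log(\lambda_{\max}/\lambda_i(C))$, $i=1,\dots,t$, equals zero, and part ($i$) of the preceding theorem gives $\zop(C,s,t,\cdot,\cdot)-\zspectral(C,t)\le 0$. Since $t\le r$, we have $\lambda_{\max}>0$, so the logarithms and the factorization $I_n-(1/\lambda_{\max})C=HH^\top$ are well defined.

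For ($ii$), the companion bound is defined only for $C\in\mathbb{S}^n_{++}$, so I would read the statement under that standing assumption; this gives $\lambda_{\min}>0$. If $\mu_{\min}\ge n-t$, then the $n-t$ smallest eigenvalues all equal $\lambda_{\min}$. That is, $\lambda_i(C)=\lambda_{\min}$ for $i=t+1,\dots,n$. Every term $\log(\lambda_i(C)/\lambda_{\min})$ in part ($ii$) of the preceding theorem therefore vanishes, and we get $\zopcomp(C,s,t,\cdot,\cdot)\le\zspectral(C,t)$.

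There is no real obstacle here. The only points to state explicitly are the index bookkeeping (the multiplicity of the top eigenvalue covers indices $1,\dots,t$, and that of the bottom eigenvalue covers indices $t+1,\dots,n$) and the positive-definiteness assumption needed for part ($ii$).
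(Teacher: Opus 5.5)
Your proposal is correct and is exactly the argument the paper intends: the corollary is presented as an immediate consequence of the preceding theorem, because $\mu_{\max}\ge t$ forces $\lambda_i(C)=\lambda_{\max}$ for $i\in T$, and $\mu_{\min}\ge n-t$ forces $\lambda_i(C)=\lambda_{\min}$ for $i=t+1,\dots,n$, so both right-hand sides vanish. Your remark that part ($ii$) implicitly requires $C\in\mathbb{S}^n_{++}$, which is needed for the companion bound to be defined, is a sensible clarification.
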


Next, we demonstrate that when $\mu_{\max}$ is greater than a threshold of $n-s+t$, then the \ref{scaleglinx} bound with scaling parameter $\gamma := 1/\lambda_t(C)^2$, the  \ref{GNLP-Id} bound, and the spectral bound equal the optimal objective value of GMESP.

\begin{theorem} 
    Let $C\in\mathbb{S}^n_{+}$\,, with $r:=\rank(C)$, $0<t\leq r$, and $t\leq s < n$. Consider the {\rm\ref{scaleglinx}} bound with scaling parameter $\gamma := 1/\lambda_t(C)^2$.  If $\mu_{\max} \geq n-s + t$, then for any set $S \subset N$ with $|S| = s$, we have 
    \[
    \textstyle\sum_{\ell \in T} \log(\lambda_\ell(C_{S,S})) = t\log(\lambda_{\max}),
    \]
    and therefore
        \[
        \zscaleglinx(C,s,t,\cdot,\cdot)  = \zop(C,s,t,\cdot,\cdot) = \zspectral(C,t) = \zcgmesp(C,s,t,\cdot,\cdot),
        \]
    and so $\gamma := 1/\lambda_t(C)^2$ is an optimal scaling parameter in this case.
\end{theorem}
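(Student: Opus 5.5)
First I establish the eigenvalue claim for an arbitrary $S$ with $|S|=s$. Let $E\subset\mathbb{R}^n$ be the eigenspace of $C$ for $\lambda_{\max}$, of dimension $\mu_{\max}\ge n-s+t$. Let $W_S:=\{y\in\mathbb{R}^n: y_i=0,\ i\notin S\}$, of dimension $s$. A dimension count gives $\dim(E\cap W_S)\ge \mu_{\max}+s-n\ge t$.

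Every nonzero $y\in E\cap W_S$ satisfies $Cy=\lambda_{\max}y$. Restricting to the coordinates in $S$, its subvector $y_S$ satisfies $C_{S,S}y_S=(Cy)_S=\lambda_{\max}y_S$. So $C_{S,S}$ has $\lambda_{\max}$ as an eigenvalue of multiplicity at least $t$. By Cauchy interlacing (or simply $\lambda_1(C_{S,S})\le\lambda_{\max}$, since $C_{S,S}$ is a principal submatrix of $C$), no eigenvalue of $C_{S,S}$ exceeds $\lambda_{\max}$. Hence $\lambda_\ell(C_{S,S})=\lambda_{\max}$ for $\ell\in T$, and $\sum_{\ell\in T}\log(\lambda_\ell(C_{S,S}))=t\log(\lambda_{\max})$. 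Since this holds for every feasible $S$ of GMESP, we get $\zcgmesp(C,s,t,\cdot,\cdot)=t\log(\lambda_{\max})$.

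Next I pin down the bounds. Because $n-s+t\ge t+1>t$, we have $\mu_{\max}\ge t$, so $\lambda_1(C)=\cdots=\lambda_t(C)=\lambda_{\max}$ and $\zspectral(C,t)=t\log(\lambda_{\max})$. The relaxations give upper bounds, which yields the lower inequalities: $\zcgmesp\le\zscaleglinx$ and $\zcgmesp\le\zop$. These follow from the exact-relaxation theorems for \ref{glinx}/\ref{scaleglinx} (noting that $\fscaleglinx$ agrees with the \ref{glinx} objective on $\Xnst$) and for \ref{GNLP-Id}. The upper inequalities come from results already stated: Corollary \ref{cor:glinx-dominates-spec} gives $\zscaleglinx\le\zspectral$ for $\gamma=1/\lambda_t(C)^2$, and Corollary \ref{cor:dom_op_spec}($i$), using $\mu_{\max}\ge t$, gives $\zop\le\zspectral$.

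Chaining these inequalities, every quantity is sandwiched between $\zcgmesp$ and $\zspectral$, and these two are equal. So all four coincide.

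Finally, the optimality of $\gamma$ follows from a simple observation. For any $\gamma>0$, the scaled bound is a valid upper bound, so it is at least $\zcgmesp$. Our $\gamma$ attains exactly $\zcgmesp$, so no other choice can do better.

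The only nonroutine step is the dimension-count argument showing that $\lambda_{\max}$ survives with multiplicity $t$ in every principal submatrix. The rest is assembling earlier results.
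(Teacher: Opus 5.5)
Your proof is correct, and its overall structure matches the paper's. Both arguments first show that every $C_{S,S}$ with $|S|=s$ has its $t$ largest eigenvalues equal to $\lambda_{\max}$. Both then sandwich the glinx$_\gamma$ and GNLP-Id bounds between the optimal value and the spectral bound, using Corollaries~\ref{cor:glinx-dominates-spec} and~\ref{cor:dom_op_spec}($i$).

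The difference is in the eigenvalue step. The paper applies the Poincar\'e separation theorem to the selection matrix $Q$ with $Q^\top C Q = C_{S,S}$. This gives $\lambda_{\max}=\lambda_i(C)\ge\lambda_i(C_{S,S})\ge\lambda_{n-s+i}(C)=\lambda_{\max}$ for $i\in T$, since $n-s+i\le n-s+t\le\mu_{\max}$. You instead intersect the $\lambda_{\max}$-eigenspace of $C$ with the coordinate subspace supported on $S$ and count dimensions.

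The two are essentially the same idea, since interlacing is itself proved by such a Courant--Fischer dimension count. The paper's route is a one-line citation of a standard theorem. Yours is self-contained and gives a little more: the top-$t$ eigenvectors of $C_{S,S}$ can be taken as restrictions of $\lambda_{\max}$-eigenvectors of $C$ supported on $S$.

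You also make explicit two points the paper leaves under ``the result follows.'' First, each relaxation is exact on $\mathcal{X}(n,s,t)$ and hence a valid upper bound, which supplies the lower end of each sandwich. Second, attaining the optimal value certifies that $\gamma=1/\lambda_t(C)^2$ is an optimal scaling, because every $\gamma>0$ yields a valid upper bound.
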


\begin{proof}
Consider any set $S \subset N$, with $|S| = s$. Let $Q \in \mathbb{R}^{n\times s}$ be defined as $Q_{S\cdot} := I_s$ and $Q_{N\setminus S \cdot} := 0$. By the Poincaré separation theorem (see, e.g., \cite[Corollary 4.3.16]{HJBook}), we have that
$
\lambda_i(C) \geq \lambda_i(Q^\top C Q) \geq \lambda_{n-s+i}(C)$ for $i=1,\dots,s$.
In particular, we have for $i\in T$ that $\lambda_i(C) = \lambda_{\max}$ and that 
\[
 \lambda_i( C_{S,S} ) = \lambda_i(Q^\top C Q) \geq  \lambda_{n-s+i}(C) = \lambda_{\max}\,.
\]
Therefore $\sum_{\ell \in T} \log(\lambda_\ell(C_{S,S})) = t\log(\lambda_{\max})$ for any set $S \subset N$ with $|S|=s$. From 
Corollary \ref{cor:glinx-dominates-spec}, we have that $\zscaleglinx(C,s,t,\cdot,\cdot) \leq \zspectral(C,t) = t\log(\lambda_{\max}$).  Moreover, because $\mu_{\max} \geq n-s+t \geq t$, we have from Corollary \ref{cor:dom_op_spec} that $\zop(C,s,t,\cdot,\cdot) \leq \zspectral(C,t) = t\log(\lambda_{\max})$. The result follows.
\end{proof}

\cite{GMESP_Alg} showed numerically that the \ref{ddgfact} bound typically presents larger gaps than the spectral bound when $s-t$ is large. Here, we present a theoretical comparison that establishes sufficient conditions under which the spectral bound  dominates the  \ref{ddgfact} bound.

\begin{theorem}\label{thm:dom-spec-over-ddgfact}
    Let $C\in\mathbb{S}^n_{+}$\,, with $r:=\rank(C)$, $0<t\leq r$, and $t\leq s < n$.  If any of the following successively weaker conditions holds,   then we have $\zspectral(C,t) \leq  \zgamma(C,s,t,\cdot,\cdot)$. 
    \begin{enumerate}
       \item[\rm($i$)] $\mu_{\max} \geq nt/s$,
        \item[\rm($ii$)] $\tr(C) \geq t\lambda_{\max}$ and $\sum_{\ell = 1}^t \log\big(\frac{nt}{s}\frac{\lambda_{\ell}(C)}{\tr(C)}\big) \leq 0$,
        \item[\rm($iii$)]  
        $\sum_{i=1}^t\log(\lambda_i(C)) - \Gamma_t(C) + t \log(n/s)\leq 0$.
    \end{enumerate}
\end{theorem}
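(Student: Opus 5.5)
The plan is to prove the weakest condition, ($iii$), directly by exhibiting a good feasible point for \ref{ddgfact}. Then I show that ($i$) implies ($ii$) and ($ii$) implies ($iii$), which justifies the phrase ``successively weaker''.

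\emph{Step 1 (condition ($iii$) suffices).} We are in the GMESP case, with no side constraints, and $s<n$. So $\hat x := (s/n)\mathbf{e}$ is feasible for \ref{ddgfact}, since $\hat x\in[0,1]^n$ and $\mathbf{e}^\top \hat x = s$. At this point
\[
F^\top \Diag(\hat x) F = (s/n)F^\top F .
\]
The nonzero eigenvalues of $F^\top F$ coincide with those of $FF^\top = C$, and the zero eigenvalues do not affect $\phi_t$, since they enter only through tail sums.

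Next I use that $\phi_t$ is positively log-homogeneous. The defining inequalities of Lemma \ref{Ni13} are invariant under scaling $\omega$ by $\alpha>0$, so the index $\hat\imath$ is unchanged. Hence $\phi_t(\alpha\omega)=t\log\alpha+\phi_t(\omega)$. This gives
\[
\zgamma(C,s,t,\cdot,\cdot)\ \ge\ \Gamma_t\big((s/n)C\big) = \Gamma_t(C) + t\log(s/n).
\]
Therefore, whenever ($iii$) holds,
\[
\zspectral(C,t)=\textstyle\sum_{i=1}^t\log\lambda_i(C) \le \Gamma_t(C) - t\log(n/s) \le \zgamma(C,s,t,\cdot,\cdot).
\]

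\emph{Step 2 (($ii$) implies ($iii$)).} Apply Lemma \ref{Ni13} to $\omega=\lambda(C)$. If $\tr(C)\ge t\lambda_{\max}$, then $i=0$ is the unique index: $\omega_0=+\infty > \frac1t\sum_\ell\omega_\ell \ge \omega_1=\lambda_{\max}$. Hence $\Gamma_t(C)=t\log(\tr(C)/t)$. Substituting this into ($iii$), the left-hand side becomes
\[
\textstyle\sum_{\ell=1}^t \log\big(\frac{nt}{s}\frac{\lambda_\ell(C)}{\tr(C)}\big),
\]
which is $\le 0$ by the second part of ($ii$). The only point needing care is checking the boundary case of the lemma's inequalities, which is covered by the non-strict ``$\ge\omega_{1}$''.

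\emph{Step 3 (($i$) implies ($ii$)).} Suppose $\mu_{\max}\ge nt/s$. Since $n/s>1$, this gives $\mu_{\max}\ge t$, so $\lambda_\ell(C)=\lambda_{\max}$ for all $\ell\in T$. By positive semidefiniteness, $\tr(C)\ge \mu_{\max}\lambda_{\max}\ge (nt/s)\lambda_{\max}\ge t\lambda_{\max}$. The sum in ($ii$) then equals $t\log\big(\frac{nt\lambda_{\max}}{s\,\tr(C)}\big)$, and this is $\le 0$ by the same trace inequality.

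I expect no real obstacle. The main subtle step is Step 1: justifying log-homogeneity of $\Gamma_t$, and the fact that using $F^\top F$ (of order $k$, possibly with zero eigenvalues) instead of $C$ leaves $\Gamma_t$ unchanged.
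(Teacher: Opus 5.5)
Your proof is correct and follows the paper's argument: you evaluate \ref{ddgfact} at $(s/n)\mathbf{e}$ and use $\Gamma_t((s/n)C)=\Gamma_t(C)-t\log(n/s)$ to obtain ($iii$), then use Lemma~\ref{Ni13} to get $\hat\imath=0$ and $\Gamma_t(C)=t\log(\tr(C)/t)$ under the trace condition, and finally use $\tr(C)\ge \mu_{\max}\lambda_{\max}$. The only differences are minor: the paper simply takes $F:=C^{1/2}$, whereas you argue that zero eigenvalues of $F^\top F$ do not affect $\phi_t$, and your Step~3 explicitly checks the second inequality in ($ii$), which the paper leaves implicit.
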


\begin{proof}
    Let $\hat{x}:=(s/n)\mathbf{e}$ and note that $\hat{x}$ is a feasible solution to \ref{ddgfact} when there are no side constraints $Ax\leq b$. Let $F:=C^{1/2}$, then
    \[
    \zgamma(C,s,t,\cdot,\cdot) \geq \Gamma_t(F^\top \Diag(\hat{x}) F) = \Gamma_t((s/n)C) = \Gamma_t(C) - t\log(n/s).
    \]
   Then, we have that
    \[
    \zspectral(C,t) - \zgamma(C,s,t,\cdot,\cdot) \leq \textstyle\sum_{i = 1}^t \log(\lambda_i(C)) - \Gamma_t(C) + t\log(n/s).
    \]
    Therefore, item ($iii$) follows. 

    Now consider that $\tr(C) \geq t\lambda_{\max}$\,. By Lemma \ref{Ni13}, this implies that the unique integer $\hat\imath$ defining $\Gamma_t(C)$ satisfies $\hat\imath = 0$. Therefore 
    \[
    \Gamma_t(C) = t\log((1/t) \tr(C)).
    \]
    Then, item ($ii$) follows from item ($iii$). 
    Finally, assume that $\mu_{\max} \geq nt/s$, then $\tr(C) \geq (nt/s)\lambda_{\max} \geq t\lambda_{\max}$\,. Then, item ($i$) follows from item ($ii$).
\end{proof}

\begin{remark}
    From 
    Corollary \ref{cor:glinx-dominates-spec}, we also conclude that for any condition presented in Theorem \ref{thm:dom-spec-over-ddgfact}, the {\rm\ref{scaleglinx}} bound with scaling parameter $\gamma := 1/\lambda_t(C)^2$ dominates the {\rm\ref{ddgfact}} bound. 
\end{remark}

Combining Corollary \ref{cor:dom_op_spec} (item 1) and 
Theorem \ref{thm:dom-spec-over-ddgfact} (item 1), we immediately get the 
following result.

\begin{corollary}
     Let $C\in\mathbb{S}^n_{+}$\,, with $r:=\rank(C)$, $0<t\leq r$, and $t\leq s < n$. If $\mu_{\max} \geq nt/s$,  
    then we have $\zop(C,s,t,\cdot,\cdot) \leq  \zgamma(C,s,t,\cdot,\cdot)$.
\end{corollary}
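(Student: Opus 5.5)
This follows by chaining two earlier results through the spectral bound $\zspectral(C,t)$. Corollary~\ref{cor:dom_op_spec}(i) bounds $\zop$ above by $\zspectral(C,t)$, and Theorem~\ref{thm:dom-spec-over-ddgfact}(i) bounds $\zspectral(C,t)$ above by $\zgamma$. Both results apply under the standing assumptions $C\in\mathbb{S}^n_{+}$, $0<t\leq r$, $t\leq s<n$, with no side constraints. So the work is checking that the single hypothesis $\mu_{\max}\geq nt/s$ meets the hypothesis of each.

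\textbf{Step 1.} Corollary~\ref{cor:dom_op_spec}(i) requires $\mu_{\max}\geq t$. Since $s<n$, we have $n/s>1$, so $nt/s> t$. Hence $\mu_{\max}\geq nt/s$ implies $\mu_{\max}\geq t$, and we obtain $\zop(C,s,t,\cdot,\cdot)\leq \zspectral(C,t)$.

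\textbf{Step 2.} Theorem~\ref{thm:dom-spec-over-ddgfact}(i) has exactly the hypothesis $\mu_{\max}\geq nt/s$. It gives $\zspectral(C,t)\leq \zgamma(C,s,t,\cdot,\cdot)$.

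\textbf{Step 3.} Concatenating the two inequalities yields $\zop(C,s,t,\cdot,\cdot)\leq \zgamma(C,s,t,\cdot,\cdot)$. No further obstacle is expected.

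The only point needing care is the implication $nt/s\geq t$ in Step 1. This holds because $s\leq n$, which is guaranteed by $s<n$. It is also worth noting that $\mu_{\max}\geq nt/s$ is feasible only when $nt/s\leq n$, i.e.\ $t\leq s$, which is consistent with the standing assumptions.
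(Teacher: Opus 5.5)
Your proposal is correct and matches the paper's approach: the paper obtains this corollary by chaining Corollary~\ref{cor:dom_op_spec}(i) and Theorem~\ref{thm:dom-spec-over-ddgfact}(i) through the spectral bound. You also supply the one check the paper leaves implicit, namely that $\mu_{\max}\geq nt/s>t$ because $s<n$.
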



\section{\texorpdfstring{On the value of the constraints linking $x$ and $X$}{On the value of the constraints linking x and X}}\label{sec:value}

In this section, we examine the
value of the various constraints in $\Pnst$ that link 
$x$ and $X$. These constraints can
be computationally expensive for
solvers to work with, so we are interested
in how important they are for
getting good bounds.


\subsection{\texorpdfstring{The case of the row-norm constraints $\|X_{i\cdot}\|_2 \leq x_i$\,, $i\in N$\,}{The case of the row-norm constraints}}\label{sec:SOCP}~

\medskip

We start with a small example demonstrating that the row-norm (SOCP) constraints $\|X_{i\cdot}\|_2 \leq x_i$ are not redundant in $\Pnst$.

\begin{example}
We demonstrate that even for $n=3$, $s = 2$ and $t=1$, 
the constraints $x \in [0,1]^n$, $\mathbf{e}^\top x = s$, $0 \preceq X\preceq \Diag(x)$,  and $\tr(X) = t$ do not,  in  general,  imply that $\|X_{i\cdot}\|_2 \leq x_i$ for $i \in N$. 

    Let $\hat x := (1/4,~ 1,~ 3/4)^\top$ and $\hat X := 
    \left(\begin{smallmatrix}
        1/5 \;&\; 9/50 \;&\; 0\\
        9/50 \;&\; 3/10 \;&\; 0\\
        0 \;&\; 0 \;&\; 1/2
    \end{smallmatrix}\right). 
    $ Note that $\mathbf{e}^\top \hat{x} = 2$ and $\tr(\hat{X}) = 1$. Then, we have $\lambda_n(\hat{X}) \approx 0.063$ and $\lambda_n(\Diag(\hat{x})-\hat{X})  \approx 0.003$, so $0\preceq \hat{X}\preceq \Diag(\hat{x})$. But
    \[\textstyle
    \|\hat{X}_{1\cdot}\|_2 = \frac{\sqrt{181}}{50} ~ > ~  
    \frac{1}{4} = \hat{x}_1 \,. \eqno \clubsuit
    \]
\end{example}     

We have observed, experimentally, that when
we remove the constraints $\|X_{i\cdot}\|_2 \leq x_i$\,,
upper bounds do not suffer much, 
which is useful to know, because computational times improve significantly. 
Toward mathematically analyzing the effect of
dropping the (non-redundant) constraints 
 $\|X_{i\cdot}\|_2 \leq x_i$\,,
we define$\hypertarget{PwedgenstTarget}$ 
    \begin{align*}
    &\mathcal{P}^\wedge
    (n,s,t) := \left\{ (x,X)\in \mathbb{R}^n\times \mathbb{S}^{n} \!~:~  x \in [0,1]^n,~\mathbf{e}^{\top}x=s,
    ~0\preceq X\preceq \Diag(x)\,,~\tr(X) = t \right\}.
    \end{align*}
Relative to $\Pnst$, $\Pwedgenst$ simply  omits the constraints $\|X_{i\cdot}\|_2 \leq x_i\,,~i \in N$.
 In what follows, $\zscaleglinxfrakpsd$
 denotes what we would get for
 $\zscaleglinx$\,, if we replaced
 $\Pnst$ by $\Pwedgenst$ in \ref{scaleglinx}\,.
 Additionally, $\zglinxfrakpsd$ refers to   $\zscaleglinxfrakpsd$ with $\gamma:=1$.

Next, we upper bound the deterioration in
 \ref{scaleglinx} 
from  removing the $\|X_{i\cdot}\|_2 \leq x_i$
constraints, when $\gamma := 1/\lambda_t(C)^2$. 

\begin{theorem}\label{thm:upper_bound_diff_glinx_SOC}
Let $C\in\mathbb{S}^n_{+}$\,, with $r:=\rank(C)$, $0<t\leq r$, and $t\leq s < n$. Let $\gamma := 1/\lambda_t(C)^2$ and $\kappa_i := \lambda_i(C)/\lambda_t(C)$ for $i \in N$.  Let $g(u):=\sqrt{1 + \tfrac{t}{n}(u^2 - 1)}$, for $u\in\mathbb{R}$.
Then, we have that
    \[
\zscaleglinxfrakpsd(C,s,t,\cdot,\cdot)-\zscaleglinx(C,s,t,\cdot,\cdot) \leq \left({1+{\frac{4(s-t)}{n}}}\right)^{-1}\left( \sum_{i=1}^{t-1}\log\left(\frac{\kappa_{i}}{g(\kappa_i)}\right)+ \sum_{i=t+1}^n \log\left(\frac{1}{g(\kappa_i)}\right)\right).
    \]
\end{theorem}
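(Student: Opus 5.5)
The plan is to sandwich the difference between an upper bound on $\zscaleglinxfrakpsd$ and a lower bound on $\zscaleglinx$ built from an explicit feasible point. Throughout, $\Phi\Lambda\Phi^\top$ is a real Schur decomposition of $C$ and $P:=\Phi_{\cdot T}\Phi_{\cdot T}^\top$.

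\textbf{Upper bound.} Every $(x,X)\in\Pwedgenst$ satisfies $0\preceq X\preceq \Diag(x)\preceq I_n$ and $\tr(X)=t$. Hence $\zscaleglinxfrakpsd(C,s,t,\cdot,\cdot)\le z(C,s,t)$, with $z(C,s,t)$ as in \eqref{relaxedglinx}. By Theorem \ref{thm:glinxrelaxed-dominates-spec}, $z(C,s,t)\le \zspectral(C,t)=\sum_{i=1}^t\log\lambda_i(C)$. This upper bound is attained by the relaxed point $X=P$, since $L(P;\gamma)$ has eigenvalues $\kappa_i^2$ for $i\in T$ and $1$ otherwise.

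\textbf{Lower bound, first point.} Take $x^0:=(s/n)\mathbf{e}$ and $X^0:=(t/n)I_n$. This lies in $\Pnst$: $\tr(X^0)=t$, $X^0\preceq (s/n)I_n$, and $\|X^0_{i\cdot}\|_2=t/n\le s/n$. Here
\[
L(X^0;\gamma)=(t/n)\gamma C^2+(1-t/n)I_n
\]
has eigenvalues $g(\kappa_i)^2$. Therefore
\[
\fscaleglinx(X^0;\gamma)=\textstyle\sum_{i\in N}\log g(\kappa_i)+t\log\lambda_t(C).
\]
Subtracting from the spectral value, and using $\sum_{i\le t}\log\lambda_i-t\log\lambda_t=\sum_{i\le t}\log\kappa_i$ together with $\kappa_t=g(\kappa_t)=1$, gives exactly the bracketed quantity $B$ in the statement. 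So this point alone yields the bound without the factor $(1+4(s-t)/n)^{-1}$.

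\textbf{Lower bound, improved point.} To obtain the factor, I would exploit concavity of $X\mapsto \fscaleglinx(X;\gamma)$ (it is $\ldet$ of an affine map). I would move from $X^0$ toward $P$ along
\[
X^\theta:=(1-\theta)X^0+\theta P,\qquad \theta\in[0,1],
\]
keeping $x$ adapted (e.g.\ $x=(s/n)\mathbf{e}$, or $x$ shifted toward rows with large $P_{ii}$). Concavity gives
\[
\fscaleglinx(X^\theta;\gamma)\ge (1-\theta)\fscaleglinx(X^0;\gamma)+\theta\,\zspectral(C,t),
\]
hence
\[
\zscaleglinxfrakpsd-\zscaleglinx\le (1-\theta)B .
\]
The target is to show that $\theta=\frac{4(s-t)}{n+4(s-t)}$ keeps $(x,X^\theta)\in\Pnst$, since then $1-\theta=(1+4(s-t)/n)^{-1}$.

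\textbf{Main obstacle: feasibility of the mixed point.} The trace condition is automatic. The condition $X^\theta\preceq\Diag(x)$ holds with $x=(s/n)\mathbf{e}$ as long as $(1-\theta)t/n+\theta\le s/n$. The delicate constraint is the row-norm one. Writing $a:=(1-\theta)t/n$ and using $P^2=P$,
\[
\|X^\theta_{i\cdot}\|_2^2=a^2+(2a\theta+\theta^2)P_{ii}\le x_i^2,
\]
and the entries $P_{ii}\in[0,1]$ with $\sum_i P_{ii}=t$ can be unevenly distributed. I would handle this by choosing $x_i$ as an affine function of $P_{ii}$, or of $\sqrt{P_{ii}}$, normalized so that $\mathbf{e}^\top x=s$. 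I would then bound the worst case using $\sum_i\sqrt{P_{ii}}\le\sqrt{nt}$ and $P_{ii}\le 1$, and expect the resulting quadratic inequality in $\theta$ to be satisfied by $\theta=4(s-t)/(n+4(s-t))$ (the constant $4$ presumably coming from an AM–GM or $(a+b)^2\le 2a^2+2b^2$ type step). If that choice fails exactly, I would take the largest feasible $\theta$ and check that it dominates this value.
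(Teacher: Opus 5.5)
\textbf{The gap is the linking constraint $X^\theta\preceq\Diag(x)$, not the row norms.} You correctly note that with $x=(s/n)\mathbf{e}$ this constraint needs $(1-\theta)t/n+\theta\le s/n$, i.e.\ $\theta\le (s-t)/(n-t)$. At $\theta^*:=4(s-t)/(n+4(s-t))$ this holds only when $s\ge 3n/4$.

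Adapting $x$ cannot fix this in general. Suppose $v:=\mathbf{e}/\sqrt{n}$ lies in the range of $P$; for example $C=\mathbf{e}\mathbf{e}^\top+I_n$ with $t=1$, so $P=\mathbf{e}\mathbf{e}^\top/n$. Then for every $x$ with $\mathbf{e}^\top x=s$,
\[
v^\top(\Diag(x)-X^\theta)v=\tfrac{1}{n}\bigl(s-t-\theta(n-t)\bigr).
\]
This is negative at $\theta^*$ whenever $s<3n/4$; e.g.\ $n=8$, $s=2$, $t=1$ gives $\theta^*=1/3>1/7$. The underlying problem is that $P$ is not the $X$-part of any point of $\Pwedgenst$, so it is the wrong mixing partner.

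Your fallback (take the largest feasible $\theta$) gives at best $\theta=(s-t)/(n-t)$. That choice, with $x=(s/n)\mathbf{e}$, is in fact feasible, because $P_{ii}\le 1$ gives $\|X^\theta_{i\cdot}\|_2\le s/n$. So your route does prove the inequality with factor $(n-s)/(n-t)$. For $s>t$, that factor is no worse than $(1+4(s-t)/n)^{-1}$ precisely when $s\ge 3n/4$, and strictly weaker otherwise. It therefore does not yield the statement.

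\textbf{The missing idea.} Mix $\bar X:=(t/n)I_n$ not with $P$ but with an optimal solution $(\tilde x,\tilde X)$ of the relaxation over $\Pwedgenst$.
\begin{itemize}
\item Convexity of $\Pwedgenst$ then preserves every constraint except the row norms.
\item The row norms are violated by at most a constant. Since $0\preceq\tilde X\preceq\Diag(\tilde x)\preceq I_n$, we get $\|\tilde X_{i\cdot}\|_2^2=(\tilde X^2)_{ii}\le\tilde X_{ii}\le\tilde x_i$, hence $\|\tilde X_{i\cdot}\|_2\le\sqrt{\tilde x_i}\le\tilde x_i+\tfrac14$. This is where the constant $4$ comes from.
\item Each row of $\bar X$ has slack $(s-t)/n$ relative to $\bar x=(s/n)\mathbf{e}$. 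Choosing $\theta$ with $\theta/4=(1-\theta)(s-t)/n$ makes $\hat x:=\theta\tilde x+(1-\theta)\bar x$ dominate the row norms of $\hat X:=\theta\tilde X+(1-\theta)\bar X$, by the triangle inequality.
\item Concavity gives $\zscaleglinxfrakpsd-\zscaleglinx\le(1-\theta)\bigl(\zscaleglinxfrakpsd-\fscaleglinx(\bar X;\gamma)\bigr)$.
\item Only at this point replace $\zscaleglinxfrakpsd$ by $\zspectral(C,t)$, via Theorem~\ref{thm:glinxrelaxed-dominates-spec}, and use your evaluation of $\fscaleglinx(\bar X;\gamma)$, which is correct.
\end{itemize}
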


\begin{proof}
    Let $\bar{x} := (s/n)\mathbf{e}$ and $\bar{X} :=(t/n) I_n$\,, and note that $(\bar{x},\bar{X})\in \Pnst$.  
    Consider  $(\tilde{x},\tilde{X})$ to be an optimal solution of \ref{scaleglinx} with $\gamma:=1/\lambda_t(C)^2$, when 
 $\Pnst$ is replaced by $\Pwedgenst$.
    Because $0 \preceq \tilde{X} \preceq \Diag(\tilde{x}) \preceq I_n$\,, we have for each $i\in N$ that
    \[
        \|\tilde{X}_{i\cdot}\|_2^2 = (\tilde{X}^2)_{ii} \leq \tilde{X}_{ii} \leq \tilde{x}_i ~\Rightarrow~ \|\tilde{X}_{i\cdot}\|_2 \leq \sqrt{\tilde{x}_i}\,.
    \]
    Moreover, because $\sqrt{\tilde{x}_i}-\tilde{x}_i = \frac{1}{4} - (\sqrt{\tilde{x}_i} - 1/2)^2$ we obtain $\sqrt{\tilde{x}_i}\leq \tilde{x}_i + \frac{1}{4}$, and so
    \begin{equation}\label{eq:tildeXsoc_ineq}
    \textstyle
          \|\tilde{X}_{i\cdot}\|_2 \leq \tilde{x}_i + \frac{1}{4}.
    \end{equation}
    
    Let $\theta:= \frac{4(s-t)}{n + 4(s-t)}$ and note that $\theta \in [0,1)$ and 
    \begin{equation}\label{eq:theta4}
    \textstyle
        \frac{\theta}{4} = (1-\theta)\frac{s-t}{n}.
    \end{equation} 
    Let
    \[
    \hat{x}:=\theta\tilde{x} + (1-\theta)\bar{x},\qquad \hat{X}:=\theta\tilde{X} + (1-\theta)\bar{X}.
    \]
    Then, for $i \in N$, we have
    \begin{align*}
        \|\hat{X}_{i\cdot}\|_2 &= \|\theta \tilde{X}_{i\cdot} + (1-\theta)\bar{X}_{i\cdot}\|_2\\
         &\leq \theta\| \tilde{X}_{i\cdot}\|_2 + (1-\theta)\|\bar{X}_{i\cdot}\|_2\\
         &\leq \theta(\tilde{x}_i + 1/4) + (1-\theta)t/n\\
         &= \theta\tilde{x}_i +(1-\theta)(({s-t})/{n}) + (1-\theta)(t/n)\\
         &= \theta\tilde{x}_i +(1-\theta)(s
         /{n}) \\
         &= \theta\tilde{x}_i +(1-\theta)\bar{x}_i = \hat{x}_i\,,
    \end{align*}
    where the first inequality follows from the triangle inequality, the second inequality from \eqref{eq:tildeXsoc_ineq}, and the second equation from \eqref{eq:theta4}. Because the remaining constraints defining $\Pwedgenst$ are convex, they are preserved under the convex combination, and we conclude that $(\hat{x},\hat{X}) \in \Pnst$. 

    By concavity of $\fscaleglinxthing$ in $X$, we have  
    \[
     \fscaleglinx(\hat{X}) \geq \theta \fscaleglinx(\tilde{X}) + (1-\theta)\fscaleglinx(\bar{X}) = \theta \zscaleglinxfrakpsd(C,s,t,\cdot,\cdot) + (1-\theta)\fscaleglinx(\bar{X}).
    \]
       Because $(\hat{x},\hat{X}) \in \Pnst$, it follows that $\zscaleglinx(C,s,t,\cdot,\cdot) \geq \fscaleglinx(\hat{X})$.  
    Rearranging, we obtain
    \begin{equation}\label{eq:ord}
    \zscaleglinxfrakpsd(C,s,t,\cdot,\cdot)-\zscaleglinx(C,s,t,\cdot,\cdot) \leq  (1-\theta)(\zscaleglinxfrakpsd(C,s,t,\cdot,\cdot)-\fscaleglinx(\bar{X})).
    \end{equation}
  Note that $\Pwedgenst\subset\{X\in\mathbb{S}^n~:~0\preceq X \preceq I_n,\, \tr(X) = t\}$.
    Therefore, by Theorem \ref{thm:glinxrelaxed-dominates-spec}, it follows that $\zscaleglinxfrakpsd(C,s,t,\cdot,\cdot) \leq \sum_{\ell = 1}^t \log(\lambda_\ell(C))$, which combined with \eqref{eq:ord}, yields  
     \[
    \zscaleglinxfrakpsd(C,s,t,\cdot,\cdot)-\zscaleglinx(C,s,t,\cdot,\cdot) \leq  (1-\theta)(\textstyle\sum_{\ell = 1}^t \log(\lambda_\ell(C))-\fscaleglinx(\bar{X})).
    \]
    Setting $\kappa_\ell := \lambda_\ell(C)/\lambda_t(C)$ for $\ell \in N$, we can verify that
    \[
    \fscaleglinx(\bar{X}) = \textstyle\sum_{\ell\in N} \log\left((1 + (t/n)(\kappa_{\ell}^2 -1))^{\scriptscriptstyle 1/2}\right) + t\log(\lambda_t(C)) = \textstyle\sum_{\ell\in N} \log(g(\kappa_{\ell})) + t\log(\lambda_t(C)).
    \]
    Noting that $1-\theta = n/(n+4(s-t)) = (1 + 4(s-t)/n)^{-1}$, we obtain
    \begin{align*}
        \zscaleglinxfrakpsd(C,s,t,\cdot,\cdot)-\zscaleglinx(C,s,t,\cdot,\cdot) &\leq   (1 + 4(s-t)/n)^{-1}\left(\textstyle\sum_{\ell = 1}^t \log(\kappa_\ell)-\textstyle\sum_{\ell\in N} \log(g(\kappa_{\ell}))\right)\\
        &=  (1 + 4(s-t)/n)^{-1}\left(\textstyle\sum_{\ell = 1}^{t-1} \log(\kappa_\ell/g(\kappa_{\ell}))+\textstyle\sum_{\ell=t+1}^n \log(1/g(\kappa_{\ell}))\right).
    \end{align*}
    Note that in the last equality, we consider that  $\kappa_t = 1$, and hence,  $g(\kappa_t) = 1$. 
\end{proof}

\begin{remark}
To build intuition for the upper bound in Theorem \ref{thm:upper_bound_diff_glinx_SOC}, we refer to Figure \ref{fig:bounds-diff-SOC}. The figure contains two plots illustrating the contribution of a single  eigenvalue $\lambda_i(C)$ to the bound as a function of its magnitude relative to $\lambda_t(C)$, measured by the ratio $\kappa_i:=\lambda_i(C)/\lambda_t(C)$. The left plot corresponds to the case where  $\lambda_i(C)\geq \lambda_t(C)$ (i.e., $\kappa_i \geq 1$), while the right plot corresponds to the case where  $\lambda_i(C)\leq \lambda_t(C)$ (i.e., $\kappa_i\leq 1$). In both plots we see that if $\lambda_i(C)=\lambda_t(C)$ (i.e., $\kappa_i=1$), it does not contribute to the bound. 

As $\kappa_i$ moves away from $1$ in either direction, the contribution of $\lambda_i(C)$ increases rapidly at first and then gradually levels off, approaching a horizontal  dashed line. The dashed lines represent upper bounds on the contribution: $\tfrac{1}{2}\log(n/t)$ asymptotically approached as $\kappa_i \to \infty$ in the left plot, and $\tfrac{1}{2}\log(n/(n-t))$, attained at  $\kappa_i = 0$ in the right plot.  

The five curves in each plot correspond to different values of the ratio $t/n$ and exhibit a complementary pattern across the two plots. When $t/n$ is small, the eigenvalue $\lambda_i$ contributes more significantly if it exceeds $\lambda_t$, while its impact is  negligible if it is less than $\lambda_t$\,. Conversely, when $t/n$ is large, $\lambda_i$ contributes more significantly if it is smaller than $\lambda_t$, and has  negligible impact when it is larger. This asymmetry is counteracted by a balancing effect: the side with larger per-eigenvalue contribution contains fewer eigenvalues. Specifically, when $t/n$ is small,  there is only a small number $t$ of eigenvalues that exceed $\lambda_t$ (with $\kappa_i\geq 1$), whereas when $t/n$ is large, only $n-t$  eigenvalues fall below $\lambda_t$ (with $\kappa_i\leq 1$). Thus, although individual contributions may be large, their aggregate remains controlled.

Finally, the multiplicative factor $(1+4(s-t)/n)^{-1}$ reduces the entire bound and decreases as $s$ increases. This effect remains modest, however, as the factor is bounded below by $1/5$.

    \begin{figure}[!ht]
    \centering
    \includegraphics[width=0.49\textwidth]{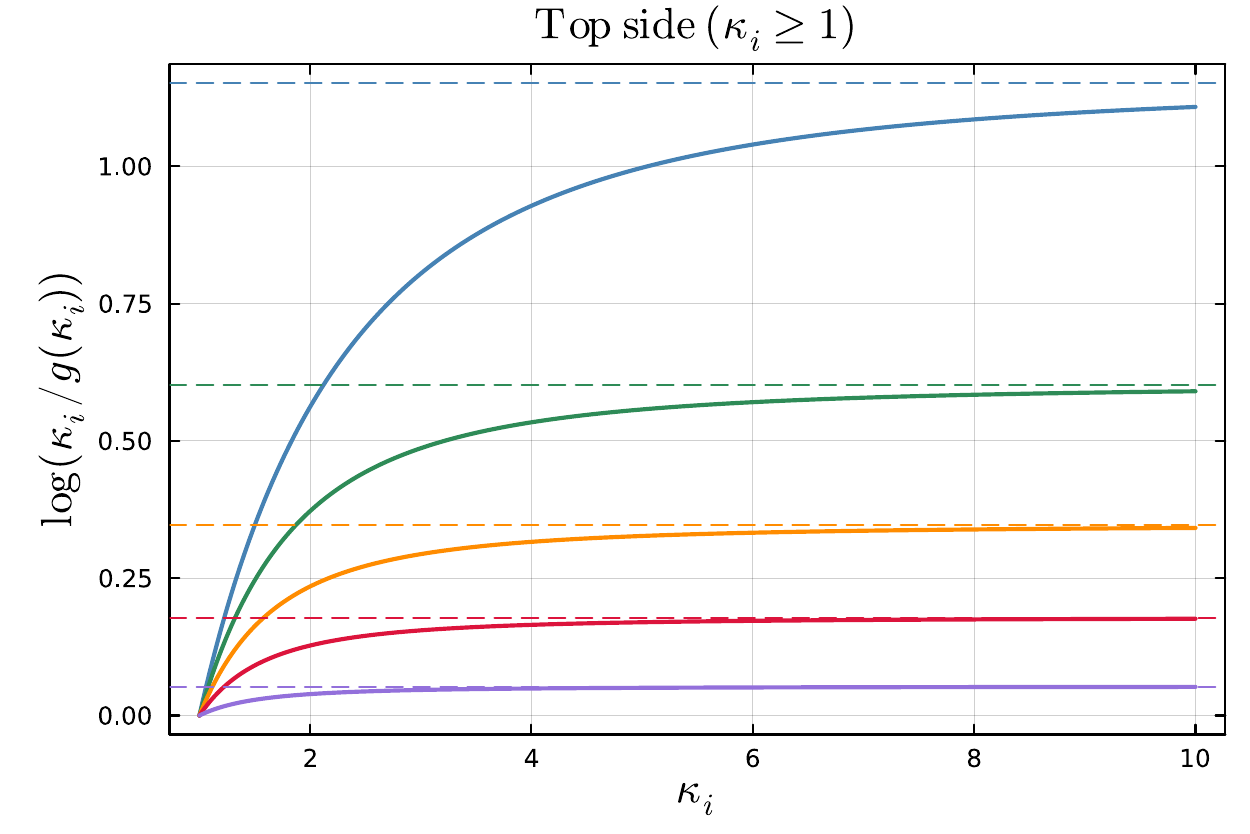}
    \includegraphics[width=0.49\textwidth]{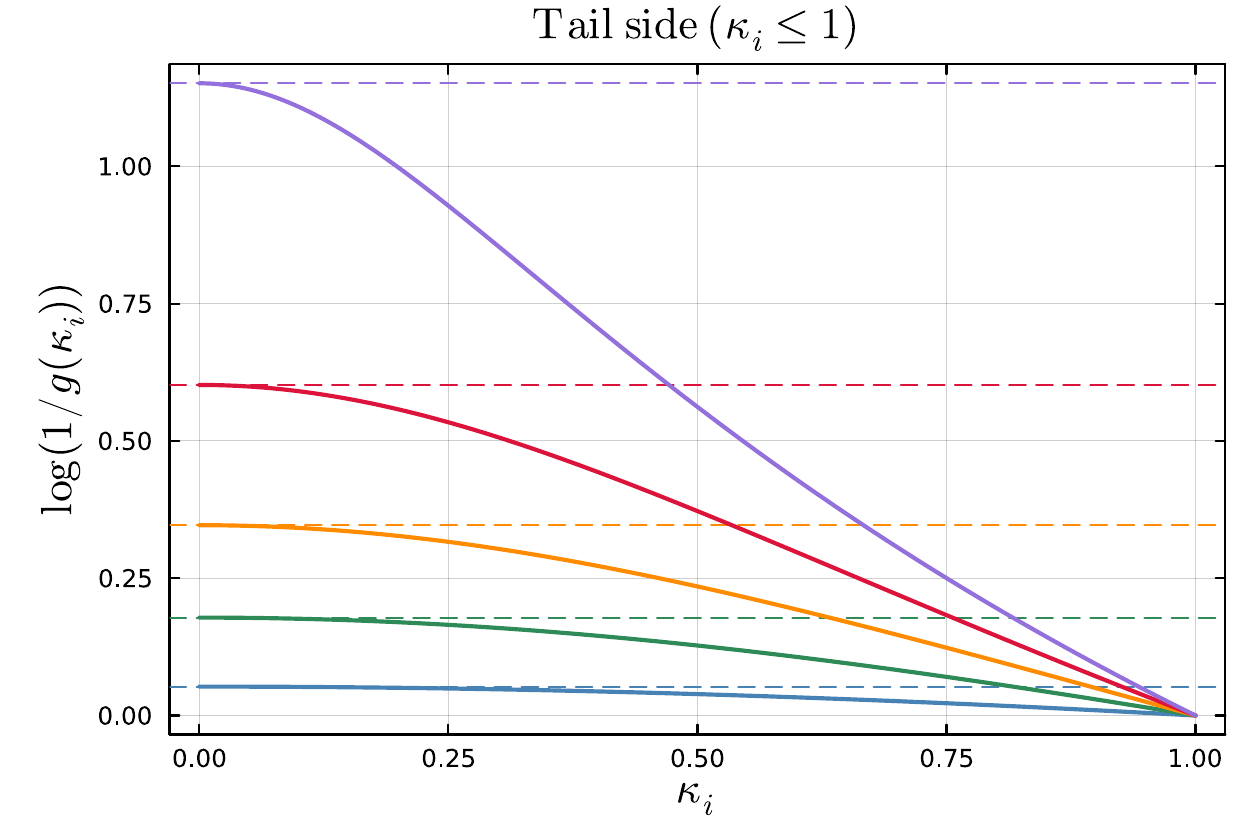}\\
    \includegraphics[width=0.6\textwidth]{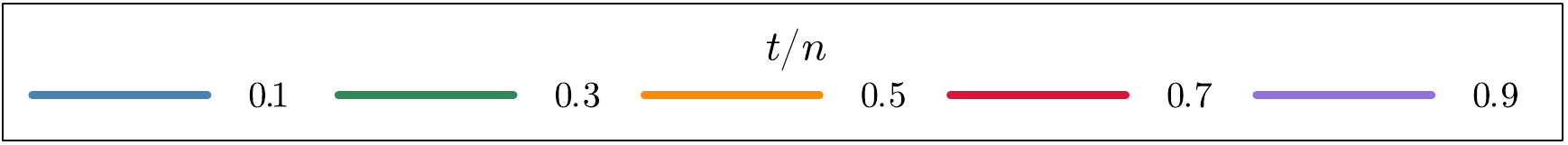}
    \caption{
    Contribution of the eigenvalue $\lambda_i(C)$ to the upper bound in Theorem \ref{thm:upper_bound_diff_glinx_SOC} as a function of $\kappa_i:=\lambda_i(C)/\lambda_t(C)$, for  different ratios $t/n$}
   \label{fig:bounds-diff-SOC}
\end{figure}
\end{remark}

 Although we have derived an upper bound on the deterioration in \ref{scaleglinx} resulting from removing the conic constraints $\|X_{i\cdot}\|_2 \leq x_i$ when $\gamma := 1/\lambda_t(C)^2$, and have observed only minimal deterioration in our numerical experiments, we emphasize that these conic constraints are not redundant. In fact, we next present a family of examples (with $\gamma := 1$)
showing that the loss
in the \ref{glinx} bound
can grow linearly in the size of the problem.
We start with a useful lemma, which lets us
take a constant decrease example, and 
build from it a family of examples with useful properties. The construction is more general than we need (it can handle side constraints), but we present it in this greater generality as
it may be used in future work. 

In the next lemma and its proof, 
for $X \in \mathbb{R}^{m\times n}$ and $Y \in \mathbb{R}^{p\times q}$, we denote their Kronecker product by $X \otimes Y \in \mathbb{R}^{mp\times nq}$.
Further, $\mathbf{1}_k$ denotes the vector of ones having $k$
components. Note that we continue to use $\mathbf{e}$ for
a vector of ones when the number of components is clear from context.

\begin{lemma}\label{lem:familyblocks}
    Let $k \geq 1$ be an integer.
    For any vector $y\in \mathbb{R}^n$, let  $y^{(k)} := \mathbf{1}_k \otimes y$, and for any matrix $M \in \mathbb{R}^{m\times n}$, let $M^{\oplus k} := I_k \otimes M$.
    \begin{enumerate}
        \item[\rm($i$)] Let $(\hat{x},\hat{X})$ be a feasible solution for the {\rm\ref{glinx}} associated with 
        {\rm CGMESP}$( C, s, t, A, b)$, and let its objective value be $\hat\zeta$. Then, $(\hat{x}^{(k)},\hat{X}^{\oplus k})$ is feasible for the {\rm\ref{glinx}} associated with ${\rm CGMESP}({C}^{\oplus k},k{s},k{t},{A}^{\oplus k},\allowbreak b^{(k)})$, and its objective value is $k\hat{\zeta}$.       
     \item[\rm($ii$)]  Let $(\hat\Theta,\hat\upsilon,\hat\nu,\hat\pi,\hat\tau,\hat\xi, \hat Z, \hat\Omega,\hat W,\hat\eta)$ be a feasible solution for the {\rm\ref{eq:dual_glinx}} associated with {\rm CGMESP}$( C, s, t,\allowbreak A, b)$, and let its objective value be $\hat\zeta$. Then, $(\hat\Theta^{\oplus k},\hat\upsilon^{(k)},\hat\nu^{(k)},\hat\pi^{(k)},
 \hat\tau,\hat\xi,\hat Z^{\oplus k},\hat\Omega^{\oplus k},
 \hat W^{\oplus k},\hat\eta^{(k)})$ is a feasible solution for the {\rm\ref{eq:dual_glinx}} associated with {\rm CGMESP}$({C}^{\oplus k},k{s},k{t},{A}^{\oplus k},b^{(k)})$, and its objective value is $k\hat\zeta$.
    \end{enumerate}
\end{lemma}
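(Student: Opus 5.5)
The plan is a direct block-by-block verification, using the Kronecker identities $(I_k\otimes M)(I_k\otimes M')=I_k\otimes (MM')$, $(I_k\otimes M)^\top = I_k\otimes M^\top$ and $(I_k\otimes M)(\mathbf{1}_k\otimes y)=\mathbf{1}_k\otimes (My)$. We also use that block-diagonal matrices have spectrum equal to the union of the block spectra. The latter gives $\ldet(M^{\oplus k})=k\ldet(M)$ and $\tr(M^{\oplus k})=k\tr(M)$. Moreover, $M^{\oplus k}\succeq 0$ (resp. $\succ 0$) if and only if $M\succeq 0$ (resp. $\succ 0$).

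For ($i$), I check each constraint of $\mathcal{P}(kn,ks,kt)$ and the side constraints in turn.
\begin{itemize}
\item Bounds and cardinality: $\hat x^{(k)}\in[0,1]^{kn}$ and $\mathbf{e}^\top \hat x^{(k)}=ks$.
\item Semidefinite constraints: $\Diag(\hat x^{(k)})=\Diag(\hat x)^{\oplus k}$, so $0\preceq \hat X^{\oplus k}\preceq \Diag(\hat x^{(k)})$ follows blockwise.
\item Trace: $\tr(\hat X^{\oplus k})=kt$.
\item Row norms: each row of $\hat X^{\oplus k}$ is a row of $\hat X$ padded with zeros, so $\|(\hat X^{\oplus k})_{(j-1)n+i,\cdot}\|_2=\|\hat X_{i\cdot}\|_2\le \hat x_i=\hat x^{(k)}_{(j-1)n+i}$.
\item Side constraints: $A^{\oplus k}\hat x^{(k)}=\mathbf{1}_k\otimes (A\hat x)\le \mathbf{1}_k\otimes b=b^{(k)}$.
\end{itemize}
For the objective, $C^{\oplus k}\hat X^{\oplus k}C^{\oplus k}+I_{kn}-\hat X^{\oplus k}=(C\hat XC+I_n-\hat X)^{\oplus k}$. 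Hence $\frac12\ldet$ of it equals $k\hat\zeta$.

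For ($ii$), I substitute into each constraint of \eqref{eq:dual_glinx}.
\begin{itemize}
\item Vector equation: $\diag(\hat Z^{\oplus k})=\hat Z$'s diagonal repeated, i.e. $\diag(\hat Z)^{(k)}$, and $(A^{\oplus k})^\top \hat\pi^{(k)}=(A^\top\hat\pi)^{(k)}$. Since $\hat\tau$ is a scalar, $\hat\tau\mathbf{e}_{kn}=(\hat\tau\mathbf{e}_n)^{(k)}$. So the vector equation is the original one repeated $k$ times.
\item Matrix equation: every term is block diagonal with identical blocks. This includes $\hat\xi I_{kn}=(\hat\xi I_n)^{\oplus k}$ and $C^{\oplus k}\hat\Theta^{\oplus k}C^{\oplus k}=(C\hat\Theta C)^{\oplus k}$, so it reduces blockwise to the original.
\item Norm constraints: each row of $\hat W^{\oplus k}$ is a zero-padded row of $\hat W$, so $\|\hat W^{\oplus k}_{\cdot}\|_2\le \hat\eta^{(k)}$ componentwise.
\item Sign and definiteness: these are inherited blockwise.
\end{itemize}

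For the dual objective, write
\[-\tfrac12\ldet(2\hat\Theta^{\oplus k})+\tr(\hat\Theta^{\oplus k})+(\hat\nu^{(k)})^\top\mathbf{e}+(\hat\pi^{(k)})^\top b^{(k)}+\hat\tau ks+\hat\xi kt-\tfrac{kn}{2}.\]
Each of $-\tfrac12\ldet(2\hat\Theta^{\oplus k})$, $\tr(\hat\Theta^{\oplus k})$, $(\hat\nu^{(k)})^\top\mathbf{e}$ and $(\hat\pi^{(k)})^\top b^{(k)}$ is $k$ times its counterpart in the original problem. In the remaining terms the scalars $\hat\tau,\hat\xi$ are deliberately not replicated, and they multiply $ks$ and $kt$, which scales those terms by $k$. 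The constant is $kn/2=k\cdot(n/2)$. Summing gives $k\hat\zeta$.

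The only step needing care is the bookkeeping in the dual: the single scalars $\hat\tau,\hat\xi$ must remain consistent with the replicated linear equations. This works because $\mathbf{e}_{kn}$ and $I_{kn}$ themselves decompose as $\mathbf{1}_k\otimes\mathbf{e}_n$ and $I_k\otimes I_n$. Everything else is routine.
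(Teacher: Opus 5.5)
Your proposal is correct and follows the paper's proof essentially step for step. Both arguments check each primal constraint of $\mathcal{P}(kn,ks,kt)$ and each dual constraint of the dual problem blockwise. Both use the Kronecker identities and $\ldet(I_k\otimes M)=k\ldet(M)$ to get the $k\hat\zeta$ objective values, with the scalars $\hat\tau,\hat\xi$ left unreplicated exactly as you describe.
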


\begin{proof}
    We repeatedly use the standard Kronecker identities $
(A\otimes B)(C\otimes D)=(AC)\otimes(BD)$ (see, e.g., \cite[Lemma 4.2.10]{HJBook2}), 
$\Diag(y^{(k)}) = I_k\otimes \Diag(y)$,
together with the facts that whenever $M\succeq 0$, we have
$I_k\otimes M \succeq 0$ (see, e.g., \cite[Eq. 4.2.8]{HJBook2}), and $\ldet(I_k\otimes M)=k\ldet(M)$ (see, e.g., \cite[Page 249, Prob.~1]{HJBook2}).

For part ($i$), because $(\hat{x},\hat{X})$ is feasible for \ref{glinx}, we have:
\begin{itemize}
    \item $\mathbf{e}^\top \hat{x}^{(k)} = k \mathbf{e}^\top \hat{x} = ks$; 
    \item $(I_k\otimes A)\hat x^{(k)} = \mathbf{1}_k  \otimes (A\hat{x}) \leq \mathbf{1}_k \otimes b = b^{(k)}$;  
    \item $\tr(\hat{X}^{\oplus k}) = k \tr(\hat{X}) = kt$;
    \item $\hat{X}^{\oplus k} = I_k \otimes \hat{X} \succeq 0$;  
    \item $\Diag(\hat{x}^{(k)})-I_k\otimes \hat{X} =  I_k\otimes (\Diag(\hat{x}) - \hat{X}) \succeq 0$;
    \item  The row-norm constraints are preserved blockwise, because each row of
$\hat{X}^{\oplus k}$ has exactly one block equal to a row of $\hat{X}$ and zeros elsewhere.
\end{itemize}
We conclude that $(\hat x^{(k)},\hat{X}^{\oplus k})$ is feasible for \ref{glinx}, and with objective value
\begin{align*}
        \ldet({C}^{\oplus k}\hat{X}^{\oplus k}{C}^{\oplus k} + I_{nk} - \hat{X}^{\oplus k}) &= \ldet((I_k \otimes C)(I_k\otimes \hat{X})(I_k \otimes C) + I_{nk} - (I_k \otimes \hat{X}))\\
        &= \ldet(I_k \otimes (C\hat{X}C) + I_k \otimes I_{n} - I_k \otimes \hat{X})\\ 
        &= \ldet(I_k \otimes (C\hat{X}C +  I_{n} - \hat{X}))\\ 
        &= k\ldet(C\hat{X}C +  I_{n} - \hat{X}) = k \hat{\zeta}.
    \end{align*}

    For part ($ii$), to lighten the notation, we define  
    \[
    (\tilde\Theta,\tilde\upsilon,\tilde\nu,\tilde\pi,\tilde\tau,\tilde\xi, \tilde Z, \tilde\Omega,\tilde W,\tilde\eta) := 
    (\hat\Theta^{\oplus k},\hat\upsilon^{(k)},\hat\nu^{(k)},\hat\pi^{(k)},
 \hat\tau,\hat\xi,\hat Z^{\oplus k},\hat\Omega^{\oplus k},
 \hat W^{\oplus k},\hat\eta^{(k)}).
\]
Because $(\hat\Theta,\hat\upsilon,\hat\nu,\hat\pi,\hat\tau,\hat\xi, \hat Z, \hat\Omega,\hat W,\hat\eta)$ is feasible for \ref{eq:dual_glinx}, we have:
    \begin{itemize}
        \item $ \tilde\upsilon - \tilde\nu - (I_k\otimes A)^\top \tilde\pi - \tilde\tau\mathbf{e} + \tilde\eta + \diag(\tilde Z) = \mathbf{1}_k\otimes (\hat\upsilon - \hat\nu - A^\top \hat\pi  - \hat\tau\mathbf{e} + \hat\eta+  \diag(\hat Z)) = 0$;
        \item $\frac{1}{2}(\tilde{W}+\tilde{W}^\top)-\tilde\Omega+\tilde{Z}
-C^{\oplus k}\tilde\Theta C^{\oplus k}+\tilde\Theta+\tilde\xi I_{kn} = I_k\otimes(\frac{1}{2}(\hat{W}+\hat{W}^\top)-\hat\Omega+\hat{Z}
-C\hat\Theta C+\hat\Theta+\hat\xi I_{n}) = 0$;
        \item the row-norm constraints again hold blockwise, and all other cone constraints are preserved by Kronecker product with $I_k$\,.
    \end{itemize}
    For the objective value, we have
\begin{align*}
   &-\textstyle\frac{1}{2}\ldet(2\tilde\Theta) + \tr(\tilde\Theta) + \tilde\nu^\top \mathbf{e} + \tilde\pi^\top b^{(k)} + \tilde\tau (ks) + \tilde\xi (kt) - \textstyle\frac{nk}{2}\\
   &=k\left(-\textstyle\frac{1}{2}\ldet(2\hat\Theta) + \tr(\hat\Theta) + {\hat\nu}^\top \mathbf{e} + {\hat\pi}^\top b + \hat\tau s + \hat\xi t - \textstyle\frac{n}{2}\right) = k\hat\zeta.
\end{align*}
The result follows.
\end{proof}

\begin{theorem}\label{thm:family_glinx}
    There exists an infinite set of positive integers $\mathcal{I}$ and a family of positive-semidefinite matrices $\{C_{\tilde n}\}_{\tilde n \in \mathcal{I}}$\,, 
    where each 
    $C_{\tilde n}$ is of order ${\tilde n}$, 
    such that 
    \[
        \zglinxfrakpsd(C_{\tilde n}\,,s_{\tilde n}\,,t_{\tilde n}\,,\cdot,\cdot) - \zglinx(C_{\tilde n}\,,s_{\tilde n}\,,t_{\tilde n}\,,\cdot,\cdot) \geq \delta {\tilde n}
    \]
    for some positive scalar $\delta \geq 5\cdot 10^{-6}$.
\end{theorem}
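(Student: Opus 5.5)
Take a single small base instance with a strictly positive gap and replicate it block-diagonally using Lemma \ref{lem:familyblocks}. Concretely, fix a base triple $(C,s,t)$ of order $n$ (for instance $n=3$, $s=2$, $t=1$, or slightly larger if needed), with no side constraints, such that
\[
\zglinxfrakpsd(C,s,t,\cdot,\cdot)-\zglinx(C,s,t,\cdot,\cdot)\geq \delta_0>0 .
\]
Then set $\mathcal{I}:=\{kn: k\geq 1\}$, and for $\tilde n=kn$ take $C_{\tilde n}:=C^{\oplus k}$, $s_{\tilde n}:=ks$, $t_{\tilde n}:=kt$. We need $t_{\tilde n}\le \rank(C_{\tilde n})=k\,\rank(C)$ and $t_{\tilde n}\le s_{\tilde n}<\tilde n$, and these scale correctly.

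The lower bound on the relaxed value comes from primal replication. Take an optimal (or just good) $(\tilde x,\tilde X)\in\mathcal{P}^\wedge(n,s,t)$ for the base instance with value $\zeta^\wedge$. The argument of Lemma \ref{lem:familyblocks}($i$), omitting the row-norm bullet, shows that $(\tilde x^{(k)},\tilde X^{\oplus k})\in\mathcal{P}^\wedge(kn,ks,kt)$ with value $k\zeta^\wedge$. Hence $\zglinxfrakpsd(C_{\tilde n},\dots)\ge k\zeta^\wedge$.

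The upper bound on $\zglinx$ comes from dual replication. Take a feasible solution of \ref{eq:dual_glinx} for the base instance with objective $\hat\zeta$; this works with $m=0$, simply dropping $\pi$. By weak duality, $\zglinx(C,s,t)\le\hat\zeta$. By Lemma \ref{lem:familyblocks}($ii$), the replicated dual is feasible for the big instance with value $k\hat\zeta$, so $\zglinx(C_{\tilde n},\dots)\le k\hat\zeta$. Combining the two bounds gives a gap of at least $k(\zeta^\wedge-\hat\zeta)=(\tilde n/n)(\zeta^\wedge-\hat\zeta)$. We then take $\delta:=(\zeta^\wedge-\hat\zeta)/n$ and need this to be at least $5\cdot10^{-6}$.

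The main obstacle is producing the explicit base certificate. This requires a concrete $C$, an explicit primal point $(\tilde x,\tilde X)\in\mathcal{P}^\wedge$ violating some row-norm constraint with a verifiably large value, and an explicit dual-feasible point $(\Theta,\upsilon,\nu,\tau,\xi,Z,\Omega,W,\eta)$ for \ref{eq:dual_glinx} whose value is certifiably smaller. I would first solve both relaxations numerically on small instances, for example starting from the style of the $n=3$ example above and searching over $C$, to find one with a clear gap. I would then round the primal and dual solutions to rationals. Primal feasibility of $\tilde X\succeq0$ and $\Diag(\tilde x)-\tilde X\succeq0$, and dual feasibility of $Z,\Omega\succeq0$, $\Theta\succ0$, and the cone constraints $\|W_{i\cdot}\|_2\le\eta_i$, would be checked exactly via minors or slack. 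The dual equality constraints are restored exactly by solving for $\upsilon$ and $\Omega$, or $Z$, as slacks. The small constant $5\cdot10^{-6}$ suggests the achievable gap is tiny, so the rounding must be done carefully so that the interval between the rational primal value and the rational dual value stays positive after dividing by $n$.
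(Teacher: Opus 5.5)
Your plan matches the paper's proof: certify a positive gap on one base instance with an explicit rational point in $\mathcal{P}^\wedge(n,s,t)\setminus\mathcal{P}(n,s,t)$ and an explicit rational feasible solution of \ref{eq:dual_glinx} (via weak duality), then replicate both block-diagonally with Lemma \ref{lem:familyblocks}, using part ($i$) without the row-norm bullet, as you note. What your proposal still lacks is the certificate itself, and the paper supplies it for $n=6$, $s=4$, $t=3$. There the primal value is $\approx 11.804396$ and the dual value is $\approx 11.804352$, so the base gap is at least $3\cdot10^{-5}$, which gives exactly $\delta=3\cdot10^{-5}/6=5\cdot10^{-6}$.
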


\begin{proof}(sketch)
See Appendix \ref{app:sols_family_glinx} for details.
First, we construct a particular small GMESP instance
with $n=6$ and associated rational matrix $C$, $s=4$, and $t=3$. 
The data and key quantities are chosen to be rational so that relevant calculations can be carried out exactly, thereby enabling a fully rigorous proof.
We give a rational point $(\hat{x},\hat{X})\in   \Pwedgenst \setminus \Pnst$ with  objective value $\frac{1}{2}\ldet(C\hat{X}C + I - \hat{X}) \approx 11.804396$. So we have 
$\zglinxfrakpsd(C_n,s_n,t_n,\cdot,\cdot) \geq 11.80439$.
We construct a rational solution $(\hat\Theta,\hat\upsilon,\hat\nu,\hat\pi,\hat\tau,\hat\xi,\hat Z,\hat \Omega,\hat\eta)$
that is feasible for \ref{eq:dual_glinx} with objective value $-\frac{1}{2}\ldet(2\hat\Theta) + \tr(\hat\Theta) + {\hat\nu}^\top \mathbf{e} + \hat\pi^\top b + \hat\tau s + \hat\xi t - \frac{n}{2} \approx  11.804352$, and therefore $\zglinx(C,s,t,\cdot,\cdot) \leq 11.80436$. Then, for this particular instance, we have
\[
\zglinxfrakpsd(C,s,t,\cdot,\cdot) - \zglinx(C,s,t,\cdot,\cdot) \geq  3\cdot 10^{-5}.
\]
Now, considering Lemma \ref{lem:familyblocks},
with $k\geq 1$,
we have 
\[
\zglinxfrakpsd({C}^{\oplus k},k{s},k{t},\cdot, \cdot) - \zglinx({C}^{\oplus k},k{s},k{t},\cdot, \cdot) \geq  3\cdot 10^{-5}k.
\]
For $\tilde n \in \mathcal{I}:=\{6,12,18,\ldots\}$,
letting $C_{\tilde n}:={C}^{\oplus k}$,
$s_{\tilde n}:=ks$, and $t_{\tilde n}:=kt$,
we have $3\cdot 10^{-5}k=
5\cdot 10^{-6}{\tilde n}$, and the result follows.
\end{proof}

\begin{remark}
We note that with respect to
the construction in the
proof of Theorem \ref{thm:family_glinx},
we have $\zcgmespthing(C,s,\allowbreak t,\cdot,\cdot)$ $\approx$ $11.67922$, and so 
$\zcgmespthing(C_{\tilde n}\,,s_{\tilde n}\,,t_{\tilde n}\,,\cdot,\cdot) \approx 11.67922{\tilde n}/6$.
Considering the linear growth of 
$\zglinx(C_{\tilde n}\,,\allowbreak s_{\tilde n}\,,\allowbreak t_{\tilde n}\,,\cdot,\cdot)$, we can
conclude that 
$\zcgmespthing(C_{\tilde n}\,,s_{\tilde n}\,,t_{\tilde n}\,,\cdot,\cdot)$ grows linearly in $\tilde n$, which implies, by  Theorem
\ref{thm:family_glinx}, that the row-norm constraints 
can yield a reduction in the optimality gap that is not asymptotically vanishing.
See \cite[Sections 2 and 4]{ChenFampaLeeLinxGaps} for similar results
regarding ``masking'' the linx relaxation. 
\end{remark}

We have one final point to make concerning the 
$\|X_{i\cdot}\|_2 \leq x_i$ constraints with regard to convex relaxations of the set $\Xnst$.
It is natural to wonder whether 
the constraints 
$\|X_{i\cdot}\|_p \leq x_i$ are valid,
 for some $p \in [1,2)$, 
 which could help with relaxations.
 But we will next see that, in general, for all $p \in [1,2)$, such constraints are not valid.

\begin{example}
    Choose any $p \in [1,2)$, and let $k$ be any integer satisfying $k > (2^p-1)^{2/(2-p)} \geq 1$. Let $q$ be a positive integer, let $n := k+1+q$, and let $\eta:=\mathbf{0} \in \mathbb{R}^q$. Let $\hat{u} \in \mathbb{R}^{k+1}$ be the unit vector defined by
    $\hat{u} := \big(2^{-1/2}, (2k)^{-1/2}, (2k)^{-1/2}, \ldots, (2k)^{-1/2}\big)^\top$, and let
    $\hat{X} := \left(\begin{smallmatrix} \hat{u} \\ \eta \end{smallmatrix}\right)\left(\begin{smallmatrix} \hat{u} \\ \eta \end{smallmatrix}\right)^\top$.
    Note that $\hat{X} \succeq 0$, $\hat{X}^2 = \hat{X}$, $\tr(\hat{X}) = \hat{u}^\top \hat{u} = 1$,  $\|\hat{X}_{i\cdot}\|_2 = |\hat{u}_i| \leq 1/\sqrt{2}$  for $i = 1, \ldots, k+1$, and $\|\hat{X}_{i\cdot}\|_2 = 0$ for $i = k+2, \ldots, n$. 
    Then, $\hat{X}_{1\cdot} = (2^{-1}, (2\sqrt{k})^{-1}, \ldots, (2\sqrt{k})^{-1}, 0, \ldots, 0)$ and therefore
    \[
    \|\hat{X}_{1\cdot}\|_p = 2^{-1}(1 + k^{(2-p)/2})^{1/p} ~>~ 2^{-1}(1 + 2^p - 1)^{1/p} = 1.
    \]
    Now, let $t := 1$, choose integer $s$ such that $k+1 \leq s < n\, (=k+1+q)$, and define $\hat{x} \in \{0,1\}^n$ by $\hat{x}_i := 1$ for $i = 1, \ldots, s$ and $\hat{x}_i := 0$ for $i = s+1, \ldots, n$. We employ no side constraints $Ax \leq b$. Then, the solution $(\hat{x}, \hat{X})$ is feasible for {\rm \ref{QCGMESP}} (i.e., $(\hat{x}, \hat{X}) \in \Xnst$) but does not satisfy  $\|X_{1\cdot}\|_p \leq x_1$\,. \hfill $\clubsuit$
\end{example}

    
\subsection{\texorpdfstring{The case of $X\preceq \Diag(x)$\,}{The case of the semidefiniteness constraint}}
\medskip

We have observed, experimentally, the importance of the $X\preceq \Diag(x)$ constraint.  
Toward mathematically analyzing the effect of
dropping this constraint,
we define$\hypertarget{PsubsetnstTarget}$
    \begin{align*}
    &\mathcal{P}^{\scriptscriptstyle\subset}
    (n,s,t) := \left\{ (x,X)\in \mathbb{R}^n\times \mathbb{S}^{n} \!~:~  x \in [0,1]^n,~\mathbf{e}^{\top}x=s,
    ~0\preceq X\preceq I_n\,,~\tr(X) = t\,,~
~\|X_{i\cdot}\|_2 \leq x_i\,,~i \in N \right\}
    \end{align*}
Relative to $\Pnst$, $\mathcal{P}^
{\scriptscriptstyle\subset}
(n,s,t)$ replaces 
$X \preceq \Diag(x)$ with the weaker inequality $X \preceq I_n$\,.
In what follows, for each relaxation  $R\in \{$\ref{scaleglinx}\,,~\ref{GNLP-Id},~\ref{GNLP-Id-comp}$\}$,
    $z_R^{\scriptscriptstyle\subset}(C,s,t,A,b)$ 
     denotes what we would get for
 $z_R(C,s,t,A,b)$\,, if we replaced
 $\Pnst$ by $\Psubsetnst$ in the relaxation $R$.

We will demonstrate that under some technical conditions, the spectral bound
$\zspectral(C,t)$, which is known to be quite poor
compared to other bounds for MESP,
 dominates 
$\zscaleglinxfraksoc(C,s,t,\cdot,\cdot)$ 
for all $\gamma>0$, as well as
$\zgnlpidfrakSOC(C,s,t,\cdot,\cdot)$ and $\zgnlpidcompfrakSOC(C,s,t,\cdot,\cdot)$\,.

   \begin{theorem}\label{thm:spec-dom-21norm}
    Let $C\in\mathbb{S}^n_{+}$\,, with $r:=\rank(C)$, $0<t\leq r$, and $t\leq s < n$. Let $\Phi \Lambda \Phi^\top$ be a real Schur decomposition of $C$.
    If there exists an orthonormal $\Phi$ such that $s \geq \|\Phi_{\cdot T}\|_{2,1}$\,, then 
    \begin{itemize}
        \item[\rm($i$)]   $\zspectral(C,t) \leq \zscaleglinxfraksoc(C,s,t,\cdot,\cdot)$
        for all scaling parameters $\gamma>0$; moreover,
         $\gamma := 1/\lambda_t(C)^2$ 
         minimizes $\zscaleglinxfraksoc(C,s,t,\cdot,\cdot)$
         and yields  $\zspectral(C,t) = \zscalehatglinxfraksoc(C,s,t,\cdot,\cdot)$\,.
        \item[\rm($ii$)] $\zspectral(C,t) \leq \zgnlpidfrakSOC(C,s,t,\cdot,\cdot)$ and  $\zspectral(C,t) \leq \zgnlpidcompfrakSOC(C,s,t,\cdot,\cdot)$\,.
    \end{itemize}
\end{theorem}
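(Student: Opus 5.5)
The plan is to exhibit one explicit feasible point of $\Psubsetnst$ whose objective value in each relaxation equals $\zspectral(C,t)$. The natural candidate is $\hat X := \Phi_{\cdot T}\Phi_{\cdot T}^\top$, the orthogonal projector onto the span of $t$ leading eigenvectors of $C$. This $\hat X$ satisfies $0\preceq \hat X\preceq I_n$ and $\tr(\hat X)=t$. Since $\hat X$ is a projector, $\|\hat X_{i\cdot}\|_2^2=\hat X_{ii}=\|\Phi_{iT}\|_2^2$, so $\|\hat X_{i\cdot}\|_2=\|\Phi_{iT}\|_2$. Hence $\sum_i \|\hat X_{i\cdot}\|_2=\|\Phi_{\cdot T}\|_{2,1}\le s$.

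Next I construct $\hat x$ with $\hat x_i\ge \|\Phi_{iT}\|_2$, $\hat x\in[0,1]^n$ and $\mathbf{e}^\top\hat x=s$. Set $\hat x_i:=\|\Phi_{iT}\|_2$, which lies in $[0,1]$ because $\Phi$ is orthonormal. Then raise components toward $1$ until the sum reaches $s$. This is possible because $s<n$ and the starting sum is at most $s$. Thus $(\hat x,\hat X)\in\Psubsetnst$, and every lower bound below follows by evaluating objectives at this point.

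For ($i$), compute $\fscaleglinx(\hat X;\gamma)$. In the eigenbasis, $\Phi^\top\hat X\Phi=\Diag(\mathbf{e}_T)$, where $\mathbf{e}_T$ is the $0/1$ indicator vector of $T$. So $\gamma C\hat XC+I_n-\hat X$ is similar to a diagonal matrix with entries $\gamma\lambda_i^2$ for $i\in T$ and $1$ otherwise. Its $\ldet$ is therefore $\sum_{i\in T}\log(\gamma\lambda_i^2)$, and $\fscaleglinx(\hat X;\gamma)=\tfrac12\big(\sum_{i\in T}\log\lambda_i^2+t\log\gamma-t\log\gamma\big)=\zspectral(C,t)$ for every $\gamma>0$. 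This gives $\zspectral\le\zscaleglinxfraksoc$ for all $\gamma$. For the reverse inequality at $\gamma=1/\lambda_t(C)^2$, observe that $\Psubsetnst$ projects into $\{X: 0\preceq X\preceq I_n,\ \tr(X)=t\}$. Theorem \ref{thm:glinxrelaxed-dominates-spec} then gives $\zscaleglinxfraksoc\le\zspectral$ at that $\gamma$. Hence equality holds there, and since $\zspectral$ is a lower bound for every $\gamma$, this $\gamma$ is a minimizer.

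For ($ii$), I use the $H$-free expressions from the proof of Proposition \ref{prop:GNLP-Id-invariant}, with $\hat X^{1/2}=\hat X$. For \ref{GNLP-Id}, the matrix $I_n+\hat X((1/\lambda_{\max})C-I_n)\hat X$ is similar, in the $\Phi$ basis, to a diagonal matrix with entries $\lambda_i/\lambda_{\max}$ on $T$ and $1$ elsewhere. Adding $t\log\lambda_{\max}$ gives exactly $\zspectral(C,t)$. For \ref{GNLP-Id-comp}, $I_n-\hat X(I_n-(1/\lambda_{\min})C)\hat X$ has entries $\lambda_i/\lambda_{\min}$ on $T$, and adding $t\log\lambda_{\min}$ again yields $\zspectral(C,t)$. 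Neither objective depends on $\hat x$, so both lower bounds follow.

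The only delicate step is the feasibility of $\hat x$, namely meeting the row-norm constraints while keeping the sum equal to $s$. This is exactly where the hypothesis $s\ge\|\Phi_{\cdot T}\|_{2,1}$ enters. Everything else is direct computation in the eigenbasis of $C$.
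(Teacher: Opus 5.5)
Your proof is correct and follows the paper's argument. You use the same projector $\hat X=\Phi_{\cdot T}\Phi_{\cdot T}^\top$ and an $\hat x$ lifted from the row norms $\|\Phi_{iT}\|_2$ up to total $s$; the paper takes the explicit choice $\hat x_i=\|\hat X_{i\cdot}\|_2+(1-\|\hat X_{i\cdot}\|_2)\theta$ instead of your greedy raising. You also evaluate all three objectives directly in the eigenbasis of $C$ and invoke Theorem~\ref{thm:glinxrelaxed-dominates-spec} for the reverse inequality in ($i$), and your justification there (the $X$-part of $\Psubsetnst$ lies in the feasible set of \eqref{relaxedglinx}) is the correct one, whereas the paper's text asserts $\Psubsetnst\subset\Pnst$, a containment that actually goes the other way.
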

 
\begin{proof}
We will construct a feasible solution $(\hat{x},\hat{X}) \in 
\Psubsetnst$
and demonstrate that the objective functions of \ref{scaleglinx}\,, \ref{GNLP-Id} and \ref{GNLP-Id-comp} evaluated at $(\hat{x},\hat{X})$ all equal $\zspectral(C,t)$.

Let $\Phi \Lambda \Phi^\top$ be a real Schur decomposition of $C$ satisfying $s \geq \|\Phi_{\cdot T}\|_{2,1}$\,, and define  $\hat{X} := \Phi_{\cdot T} \Phi_{\cdot T}^\top$\,. Because the columns of    $\Phi_{\cdot T}$ are orthonormal, we have $\Phi_{\cdot T}^\top \Phi_{\cdot T} = I_t$\,, which gives $\tr(\hat{X}) = t$ and $\hat{X}^2 = \hat{X}$, and therefore $0\preceq \hat{X} \preceq I_n$\,. Moreover, for each $i \in N$, we have 
\[
\|\hat{X}_{i\cdot}\|_2^2 = \|\Phi_{i,T}\Phi_{\cdot T}^\top\|_2^2 = \Phi_{i,T}\Phi_{\cdot T}^\top\Phi_{\cdot T}\Phi_{i,T}^\top = \|\Phi_{i,T}\|_2^2\,.
\]
Therefore $\|\hat{X}_{i\cdot}\|_2 =  \|\Phi_{i,T}\|_2$\,, and in particular $0 \leq \|\hat{X}_{i\cdot}\|_2 \leq 1$.
Let
\begin{equation}\label{eq:thm_dom_spec_op_theta}
    \theta := (s-\|\hat{X}\|_{2,1})/(n- \|\hat{X}\|_{2,1}).
\end{equation}
Because $\|\hat{X}\|_{2,1} = \|\Phi_{\cdot T}\|_{2,1} \leq s \leq n$, it follows that $\theta \in [0,1]$. Now let  
\[
\hat{x}_i := \|\hat{X}_{i\cdot}\|_2 + (1-\|\hat{X}_{i\cdot}\|_2)\theta \,,\qquad i \in N.
\]
By construction, $\hat{x} \in [0,1]^n$. Moreover, because $\theta \geq 0$, we have
$\hat{x}_i \geq \|\hat{X}_{i\cdot}\|_2$ for $i \in N$. Finally, 
\[
\mathbf{e}^\top \hat{x} = \|\hat{X}\|_{2,1} + (n-\|\hat{X}\|_{2,1})\theta = s,
\]
where the last equation follows from \eqref{eq:thm_dom_spec_op_theta}. Then, we conclude that $(\hat{x},\hat{X}) \in \mathcal{P}^{\scriptscriptstyle \subset}(n,s,t)$. 

We now evaluate the objective function
of each relaxation at $(\hat{x},\hat{X})$.

\medskip

\noindent ($i$):~  Fix any $\gamma > 0$ and define $L(X;\gamma) := \gamma C{X}C + I - {X}$. Recall that $\hat{X}=\Phi\Diag(\mathbb{I}_t)\Phi^\top$, where $\mathbb{I}_t\in\mathbb{R}^n$ has its first $t$ entries equal to one and its remaining entries equal to zero.  The objective function of \ref{scaleglinx} is $\fscaleglinx(X;\gamma):= \textstyle\frac{1}{2}(\ldet(L(X;\gamma)) - t\log(\gamma))$. Then, 
    \[
      L(\hat{X};\gamma)=  \Phi \Diag((\gamma\lambda^2_1(C),\dots,\gamma\lambda_t(C)^2, 1,\dots,1))  \Phi^\top,
    \]
    and therefore 
    \[
    \fscaleglinx(\hat{X};\gamma) = \textstyle\frac{1}{2}(\sum_{\ell=1}^t \log(\gamma \lambda_i(C)^2) - t\log(\gamma)) = \zspectral(C,s).
    \]
    We note that the value of $\fscaleglinx(\hat X;\gamma)$ is independent of $\gamma$, and hence  $\zscaleglinxfraksoc \geq \zspectral(C,t)$ for all $\gamma > 0$. For the reverse inequality, note that  $\Psubsetnst \subset \mathcal{P}(n,s,t)$. Therefore, by  Theorem \ref{thm:glinxrelaxed-dominates-spec}, we obtain $\zscalehatglinxfraksoc \leq \zspectral(C,t)$  for the specific choice of $\gamma := 1/\lambda_t(C)^2$.

\medskip

\noindent ($ii$):~  Let  $H_{n\times n}:=\Phi(I_n-\frac{1}{\lambda_{\max}}\Lambda)^{\scriptscriptstyle 1/2}$ and $G_{n \times n}:= \Phi(\frac{1}{\lambda_{\min}}\Lambda - I_n)^{\scriptscriptstyle 1/2}$. Then, both $H^\top \hat{X} H$ and $G^\top \hat{X} G$ are zero outside their principal $t \times t$ submatrices indexed by $T$. Moreover, we have
\[
(H^\top \hat{X} H)_{T,T} = I_t-\textstyle\frac{1}{\lambda_{\max}}\Lambda_{T,T}\,,\qquad  (G^\top \hat{X} G)_{T,T} = \textstyle\frac{1}{\lambda_{\min}}\Lambda_{T,T} - I_t\,.
\]
 Then, 
\[
    \textstyle\ldet(I_n - H^\top \hat{X} H) + t\log(\lambda_{\max})= \ldet(\frac{1}{\lambda_{\max}}\Lambda_{T,T})+ t\log(\lambda_{\max}) = \textstyle\sum_{\ell=1}^t \log(\lambda_\ell(C)), 
\]
and similarly,
\[
    \textstyle\ldet(I_n + G^\top \hat{X} G) + t\log(\lambda_{\min})= \ldet(\frac{1}{\lambda_{\min}}\Lambda_{T,T})+ t\log(\lambda_{\min}) = \textstyle\sum_{\ell=1}^t \log(\lambda_\ell(C)). 
\]
The result follows.
\end{proof}

The following corollary provides a simpler and easily-verifiable stronger condition for the same conclusion as Theorem \ref{thm:spec-dom-21norm}. 

\begin{corollary} 
    Let $C\in\mathbb{S}^n_{+}$\,, with $r:=\rank(C)$, $0<t\leq r$, and $t\leq s < n$.
    If $s \geq \sqrt{nt}$, then 
    \begin{itemize}
        \item[\rm($i$)]   $\zspectral(C,t) \leq \zscaleglinxfraksoc(C,s,t,\cdot,\cdot)$
        for all scaling parameters $\gamma>0$; moreover,
         $\gamma := 1/\lambda_t(C)^2$  
         minimizes $\zscaleglinxfraksoc(C,s,t,\cdot,\cdot)$
         and yields  $\zspectral(C,t) = \zscalehatglinxfraksoc(C,s,t,\cdot,\cdot)$\,.
        \item[\rm($ii$)] $\zspectral(C,t) \leq \zgnlpidfrakSOC(C,s,t,\cdot,\cdot)$ and  $\zspectral(C,t) \leq \zgnlpidcompfrakSOC(C,s,t,\cdot,\cdot)$\,.
    \end{itemize}
\end{corollary}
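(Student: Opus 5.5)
The plan is to reduce the corollary to Theorem \ref{thm:spec-dom-21norm}: it suffices to show that $s \geq \sqrt{nt}$ implies $s \geq \|\Phi_{\cdot T}\|_{2,1}$ for every real Schur decomposition $\Phi\Lambda\Phi^\top$ of $C$. Here $\|\cdot\|_{2,1}$ denotes the sum over rows of the Euclidean row norms. Once this holds, both items follow verbatim from Theorem \ref{thm:spec-dom-21norm}, since the orthonormal $\Phi$ needed there then exists trivially; any Schur decomposition will do.

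The key step is a Cauchy--Schwarz estimate on the row norms of $\Phi_{\cdot T}$. Because the columns of $\Phi_{\cdot T}$ are orthonormal, $\sum_{i\in N}\|\Phi_{i,T}\|_2^2 = \tr(\Phi_{\cdot T}^\top\Phi_{\cdot T}) = t$. Applying Cauchy--Schwarz to the vectors $(\|\Phi_{i,T}\|_2)_{i\in N}$ and $\mathbf{e}\in\mathbb{R}^n$ gives
\[
\|\Phi_{\cdot T}\|_{2,1}=\textstyle\sum_{i\in N}\|\Phi_{i,T}\|_2 \leq \sqrt{n}\,\big(\sum_{i\in N}\|\Phi_{i,T}\|_2^2\big)^{1/2} = \sqrt{nt}\leq s.
\]
This is exactly the hypothesis of Theorem \ref{thm:spec-dom-21norm}.

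There is essentially no obstacle. The only care needed is to confirm that the notation $\|\cdot\|_{2,1}$ in Theorem \ref{thm:spec-dom-21norm} means the sum of row $2$-norms, which matches its use there: $\mathbf{e}^\top\hat x=\|\hat X\|_{2,1}+\cdots$ with $\hat x_i\ge\|\hat X_{i\cdot}\|_2$. One should also note that the bound holds for every orthonormal $\Phi$, so the corollary's condition is indeed stronger than, and independent of, the choice of eigenbasis.
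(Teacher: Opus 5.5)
Your proposal is correct and matches the paper's proof. Both arguments use the identity $\sum_{i\in N}\|\Phi_{i,T}\|_2^2=\tr(\Phi_{\cdot T}^\top\Phi_{\cdot T})=t$ and apply Cauchy--Schwarz to the row-norm vector against $\mathbf{e}$, which gives $\|\Phi_{\cdot T}\|_{2,1}\le\sqrt{nt}\le s$ for every orthonormal $\Phi$; Theorem \ref{thm:spec-dom-21norm} then yields both items.
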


\begin{proof}
    Let $\omega := (\|\Phi_{1 T}\|_2\,,\|\Phi_{2 T}\|_2\,,\dots\,,\|\Phi_{n T}\|_2)$ for any orthonormal matrix $\Phi \in \mathbb{R}^{n \times n}$.  Note that 
\[
\|\Phi_{\cdot T}\|_{2,1}^2 = (\mathbf{e}^\top \omega)^2 \leq \|\mathbf{e}\|_2^2 \|\omega\|_2^2 = n \|\Phi_{\cdot T}\|_F^2 = n \tr(\Phi_{\cdot T}\Phi_{\cdot T}^\top) =  n \tr(\Phi_{\cdot T}^\top\Phi_{\cdot T}) = n \tr(I_t) = nt.
\]
 Then, $\|\Phi_{\cdot T}\|_{2,1} \leq \sqrt{nt}$  and the result follows from Theorem \ref{thm:spec-dom-21norm}.
\end{proof}


\section{Generalized scaling for CGMESP}\label{sec:gscale}

\cite{gscale} introduced the notion of ``generalized scaling'' for \ref{CMESP}. Their key idea was to extend the classical ordinary scaling parameter $\gamma \in \mathbb{R}_{++}$\,, introduced in \cite{AFLW_Using}, with a vector parameter, possibly resulting in tighter bounds. 
When the vector parameter takes the form $\gamma \mathbf{e}$, their approach reduces to ordinary scaling. 
In their work, the authors introduced generalized-scaling schemes for the linx,  DDFact and  BQP bounds. They showed that, for linx and BQP, the resulting objective functions are convex in the component-wise logarithm of the vector scaling parameter, a property that was leveraged to design BFGS-based algorithms for optimizing the parameter. Although an analogous convexity result was not established for DDFact in \cite{gscale}, the authors applied a BFGS-based method to locally optimize its vector scaling parameter, which proved effective in improving the unscaled 
bound. Very recently \cite[Prop. 10]{Fatma2026} demonstrated that, similarly to linx and BQP, the g-scaled factorization bound proposed in \cite{gscale} is also convex in the component-wise logarithm of the vector scaling parameter.   

Motivated by these results, we propose generalized scaling procedures for both \ref{glinx} and \ref{ddgfact}. It is straightforward to extend the convexity result for the g-scaled factorization bound for \ref{CMESP} from \cite{Fatma2026} to \ref{ddgfact}. 
In contrast, 
the g-scaled \ref{glinx} bound that we introduce here is not convex in the component-wise logarithm of the vector scaling parameter. Nevertheless, our numerical experiments indicate that applying a BFGS-based algorithm to locally optimize this parameter still yields significant improvements in the resulting bounds. 

In the interest of clarity and following \cite{gscale}, we write \emph{g-scaling} for 
generalized scaling and \emph{o-scaling} for ordinary scaling.
Throughout, we let  $\Upsilon:=\left(\gamma_1,\gamma_2,\ldots,\gamma_n\right)^\top \in \mathbb{R}_{++}^n$ denote the ``scaling vector''. 

Next, we present the g-scaled upper bounds for \ref{CGMESP}.


\subsection{g-scaling for glinx}
 We now apply g-scaling to the generalized linx bound. For $\Upsilon \in \mathbb{R}^n_{++}$\,, we define
\[
\hypertarget{fgscaleglinxtarget}{\fgscaleglinxthing}
(X;\Upsilon):=\textstyle\frac{1}{2}\ldet(\Diag(\Upsilon) C \Diag(\Upsilon)X\Diag(\Upsilon) C \Diag(\Upsilon) + I_n-X )- 2\textstyle\sum_{i \in N} \log(\gamma_i) X_{ii}\,,
\]
and the associated \emph{g-scaled generalized linx bound}
\begin{equation}\label{gscaleglinx}\tag{glinx$_\Upsilon$}  
\begin{array}{ll}
\hypertarget{zgscaleglinxtarget}{\zgscaleglinxthing}(C,s,t,A,b) :=\max \left\{\fgscaleglinx(X;\Upsilon)  \,:\, Ax\leq b,~ (x,X) \in \Pnst\right\}.
\end{array}
\end{equation}

Note that we can interpret \ref{gscaleglinx} as applying the unscaled \ref{glinx} bound to the symmetrically-scaled matrix $\Diag(\Upsilon)C\Diag(\Upsilon)$, and then correcting by $-2\sum_{i\in N}\log(\gamma_i)X_{ii}$. It is easy to check that for $\Upsilon:=\sqrt[\leftroot{-1}\uproot{2}4]{\gamma}\,\mathbf{e}$, we have 
$\fgscaleglinxthing
(X;\Upsilon)=
\fscaleglinxthing
(X;\gamma)
$ when $\tr(X)=t$, 
and therefore \ref{gscaleglinx} truly generalizes \ref{scaleglinx}.

The following theorem establishes that the \ref{gscaleglinx} bound is a valid upper bound for \ref{CGMESP}, enabling us to incorporate g-scaling into \ref{glinx}.

\begin{theorem}
    For any fixed $\Upsilon \in \mathbb{R}^n_{++}$\,, the 
    \ref{gscaleglinx} bound is an upper bound for \ref{CGMESP}. 
\end{theorem}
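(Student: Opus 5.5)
The plan is to show that every feasible $\hat x$ of \ref{CGMESP} extends to a point $(\hat x,\hat X)\in\Xnst\subset\Pnst$ (Proposition \ref{prop:calP}) with $\fgscaleglinx(\hat X;\Upsilon)\ge\sum_{\ell\in T}\log(\lambda_\ell(C_{S(\hat x),S(\hat x)}))$. Maximizing over $\hat x$ then gives the bound. Unlike the o-scaled case, I do not expect the relaxation to be exact under g-scaling. So the point $\hat X$ will be chosen to depend on $\Upsilon$, and we only aim for an inequality.

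Let $S:=S(\hat x)$ and $D:=\Diag(\Upsilon)$. By Theorem \ref{thm:OCGMESP}, take $\hat U$ with orthonormal columns spanning the dominant $t$-dimensional eigenspace of $C_{S,S}$ (rows outside $S$ zero), so that $\ldet(\hat U^\top C\hat U)$ equals the \ref{CGMESP} value of $\hat x$. Write the thin QR factorization $D^{-1}\hat U=U'R$, with $U'^\top U'=I_t$ and $R$ nonsingular. Because $D$ is diagonal, $U'$ is still zero outside $S$. Set $\hat X:=U'U'^\top$. By Theorem \ref{thm:QCGMESP}($ii$), $(\hat x,\hat X)\in\Xnst$, and so it is feasible for \ref{gscaleglinx}.

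Next I evaluate the objective. The exactness argument for \ref{glinx} (the $MM^\top$ block computation) used only that $C$ is positive semidefinite. Applied to $DCD$, it gives
\[
\tfrac12\ldet(DCD\hat XDCD+I_n-\hat X)=\ldet(U'^\top DCD\,U').
\]
Put $V:=DU'=\hat UR^{-1}$. Then $V^\top CV=R^{-\top}(\hat U^\top C\hat U)R^{-1}$ and $V^\top V=R^{-\top}R^{-1}$. Hence
\[
\ldet(U'^\top DCDU')=\ldet(\hat U^\top C\hat U)+\ldet(U'^\top D^2U').
\]
Therefore
\[
\fgscaleglinx(\hat X;\Upsilon)=\ldet(\hat U^\top C\hat U)+\ldet(U'^\top D^2U')-\textstyle\sum_{i\in N}\log(\gamma_i^2)\hat X_{ii},
\]
and it remains to show that the last two terms have a nonnegative sum.

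That remaining inequality is the key step:
\[
\ldet(U'^\top\Diag(m)U')\ge\textstyle\sum_i (U'U'^\top)_{ii}\log m_i,\qquad m\in\mathbb{R}^n_{++},\ U'^\top U'=I_t,
\]
here with $m_i=\gamma_i^2$. I would prove it via the operator Jensen inequality for the operator-concave function $\log$ under the isometry $U'$, namely $\log(U'^\top MU')\succeq U'^\top\log(M)U'$, and then take traces. The right-hand trace is $\tr(\log(M)\,U'U'^\top)=\sum_i\hat X_{ii}\log m_i$. As a fallback, a direct argument is available: diagonalize $U'^\top MU'=W\Delta W^\top$, apply concavity of $\log$ to each diagonal entry $\Delta_{jj}=\sum_i m_i(U'W)_{ij}^2$, whose weights $(U'W)_{ij}^2$ sum to $1$ over $i$, and then sum over $j$. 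I expect this step to be the main obstacle, mostly in getting the direction and the normalization right. Once it is settled, the objective is at least $\ldet(\hat U^\top C\hat U)$, which is the \ref{CGMESP} value of $\hat x$, and the theorem follows.
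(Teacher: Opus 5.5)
Your proposal is correct and follows essentially the same route as the paper: your QR-based $\hat X = U'U'^\top$ is exactly the paper's $VV^\top$ with $V = E(E^\top E)^{-1/2}$ and $E=\Diag(\Upsilon)^{-1}\hat U$, since both are the orthogonal projector onto the range of $\Diag(\Upsilon)^{-1}\hat U$; the paper then uses the same $MM^\top$ block identity and the same splitting into $\ldet(\hat U^\top C\hat U)+\ldet(V^\top\Diag(\Upsilon)^2V)$. For the final inequality the paper uses precisely your fallback (diagonalize, then apply weighted AM--GM column by column), and your operator-Jensen argument for the operator-concave $\log$ under the isometry $U'$ is an equally valid, slightly more compact justification of the same step.
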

\begin{proof}
    Let $(\hat{x},\hat{U})$ be an optimal solution for \ref{OCGMESP}, and let $S \subset N$ with $|S|=s$ denote the support of $\hat{x}$. We will consider that \ref{OCGMESP} is a valid extended-variable  formulation of \ref{CGMESP} (see Remark \ref{rem:ocgmesp}).
    
    Define $E:= \Diag(\Upsilon)^{\scriptscriptstyle -1} \hat{U}$ and $V := E(E^\top E)^{-1/2}$. Let $\hat{X}:=V V^\top$.
    
   As $(\hat{x},\hat{U}) \in \Unst$, we have $\mathbf{e}^{\top} \hat{x}=s$, $\hat{x} \in \{0,1\}^n$, $\hat{U}^\top \hat{U}=I_t$, and $\|\hat{U}_{i\cdot}\|_2 \leq \hat{x}_i$ for $i\in N$. Note that $V  \in \mathbb{R}^{n\times t}$ satisfies $V^\top V  = I_t$\,. Moreover, for $i\in N\setminus S$, $E_{i\cdot} = (1/\gamma_i) \hat{U}_{i\cdot} = \mathbf{0}^\top$, so $V_{i\cdot} =  E_{i\cdot}(E^\top E)^{\scriptscriptstyle -1/2} = \mathbf{0}^\top$. 

   Then, we have
\begin{itemize}
    \item $\hat{X}\succeq 0$, since $\hat{X}=V V^\top$.
    \item $\hat{X}^2 = V (V^\top V )V^\top =V I_tV^\top = \hat{X}$.
    \item $\tr(\hat{X}) = \tr(V V^\top) = \tr(V^\top V ) = \tr(I_t) = t$.
    \item For $i \in N$, 
    $
    \|\hat{X}_{i\cdot}\|_2^2 = (\hat{X}\hat{X}^\top)_{ii} = (\hat{X}^2)_{ii} = \hat{X}_{ii}=(V V^\top)_{ii}=\|V_{i\cdot}\|_2^2\,,
    $
    so $\|\hat{X}_{i\cdot}\|_2 = \|V_{i\cdot}\|_2$\,. If $i \in N\setminus S$, then $V_{i\cdot}=\mathbf{0}^\top$ gives $\|\hat{X}_{i\cdot}\|_2 = 0 = \hat{x}_i$\,. If $i \in S$, then because $V^\top V =I_t$\,, we have $V V^\top \preceq I_n$\,, which yields $\|V_{i\cdot}\|_2^2 = (V V^\top)_{ii} \leq 1 = \hat{x}_i$\,. In either case, $\|\hat{X}_{i\cdot}\|_2 \leq \hat{x}_i$\,.
\end{itemize}
Therefore, we have that $(\hat{x},\hat{X}) \in \Xnst$, and by Proposition \ref{prop:calP}, we conclude that $(\hat{x},\hat{X}) \in \Pnst$.
    
     Choose  $ Q\in\mathbb{R}^{n\times (n-t)}$ such that   $R :=\begin{pmatrix}
        V  \!\!&\!\!  Q
    \end{pmatrix}$ 
    satisfies $R^\top R=I_n$\,.
    Note that $R R^\top=I_n$ and, therefore,  $ Q Q^\top=I_n-\hat{X}$. 
   Define     
    \[M:=
    \begin{pmatrix}
            V^\top  \Diag(\Upsilon)C \Diag(\Upsilon) V  & 0\\
             Q^\top  \Diag(\Upsilon)C \Diag(\Upsilon) V  &  I_{n-t}
        \end{pmatrix}.
    \]
    We can verify that 
    \begin{align*}
        &\ldet( \Diag(\Upsilon)C \Diag(\Upsilon)\hat{X} \Diag(\Upsilon)C \Diag(\Upsilon) + I_n -\hat{X})\\
        &\quad = \ldet(R^\top( \Diag(\Upsilon)C \Diag(\Upsilon)\hat{X} \Diag(\Upsilon)C \Diag(\Upsilon) + I_n -\hat{X})R)\\
        &\quad =\ldet(R^\top  \Diag(\Upsilon)C \Diag(\Upsilon)V V^\top  \Diag(\Upsilon)C \Diag(\Upsilon) R + R^\top  Q Q^\top R)\\
        &\quad = \ldet(MM^\top) =2\ldet(V^\top  \Diag(\Upsilon)C \Diag(\Upsilon) V).
    \end{align*}
Then, we have 
    
\begin{equation}\label{eq:gscaleglinx_upper_bound_eqa}
        \tfrac{1}{2}\ldet(\Diag(\Upsilon)C\Diag(\Upsilon) \hat{X} \Diag(\Upsilon)C\Diag(\Upsilon) + I_n - \hat{X}) =  \ldet(V^\top \Diag(\Upsilon)C\Diag(\Upsilon)V ).
    \end{equation}
    Observe that $\Diag(\Upsilon)V  = \Diag(\Upsilon)E(E^\top E)^{-1/2} = \hat{U}(E^\top E)^{-1/2}$, which yields
    \[
    \ldet(V^\top\Diag(\Upsilon)C\Diag(\Upsilon)V ) = \ldet(\hat{U}^\top C \hat{U}) + \ldet((E^\top E)^{\scriptscriptstyle -1}) = \ldet(\hat{U}^\top C \hat{U}) + \ldet(V^\top \Diag(\Upsilon)^2 V ),
    \]
    where the final equality follows from  $\hat{U}^\top\hat{U} = I_t$.
    
    We note that $\zcgmesp(C,s,t,A,b)=\ldet(\hat{U}^\top C \hat{U})$ (see Remark \ref{rem:ocgmesp}). Combining this with \eqref{eq:gscaleglinx_upper_bound_eqa} gives
    \begin{equation}\label{eq:gscaleglinx_equality_reformulation}
    \zcgmesp(C,s,t,A,b) =  \tfrac{1}{2}\ldet(\Diag(\Upsilon)C\Diag(\Upsilon) \hat{X} \Diag(\Upsilon)C\Diag(\Upsilon) + I_n - \hat{X}) - \ldet(V^\top \Diag(\Upsilon)^2 V ).
    \end{equation}

    Let $\Phi\Diag(\sigma)\Phi^\top$ be a real Schur decomposition of $V^\top \Diag(\Upsilon)^2 V $, where $\Phi \in \mathbb{R}^{t \times t}$ satisfies $\Phi\Phi^\top = \Phi^\top \Phi = I_t$ and $\sigma \in \mathbb{R}^t_{++}$\,. 
    Define $W := V \Phi$, and observe that $W^\top W = I_t$, $WW^\top = V V^\top = \hat{X}$, and
 \[
    W^\top \Diag(\Upsilon)^2 W = \Phi^\top V^\top\Diag(\Upsilon)^2V \Phi=
    \Phi^\top \Phi \Diag(\sigma) \Phi^\top \Phi
    = \Diag(\sigma).
    \]
    Hence, for all $j \in T$, we have $
    \sigma_j = W_{\cdot j}^\top \Diag(\Upsilon)^2 W_{\cdot j} = \sum_{i \in N} \gamma_i^2 W_{ij}^2$\,. Note that 
     the identity $W^\top W = I_t$ gives $\sum_{i \in N}W_{ij}^2 = 1$, for all $j\in T$. Applying the weighted AM-GM inequality (see, e.g., \cite[Theorem 9]{hardy1952inequalities}), we obtain for each $j \in T$
     \[
     \sigma_j = \textstyle\sum_{i \in N} \gamma_i^2 W_{ij}^2 \geq \textstyle\prod_{i \in N} (\gamma_i^2)^{W_{ij}^2} ~\Rightarrow~ \log(\sigma_j)  \geq \textstyle\sum_{i\in N} W_{ij}^2 \log(\gamma_i^2).
     \]
     Therefore, summing over $j\in T$ and using the fact that $WW^\top = \hat{X}$, we have
     \[
     \textstyle\sum_{j\in T}\log(\sigma_j)  \geq  \textstyle\sum_{i\in N} \log(\gamma_i^2)\sum_{j\in T}W_{ij}^2 = 2 \textstyle\sum_{i\in N}\log(\gamma_i)\hat{X}_{ii}\,.
     \]
     Recalling that $\sigma$
     denotes the eigenvalues of $V^\top\Diag(\Upsilon)^2 V$, we conclude from \eqref{eq:gscaleglinx_equality_reformulation} that
     \[
     \zcgmesp(C,s,t,A,b) \leq \tfrac{1}{2}\ldet(\Diag(\Upsilon)C\Diag(\Upsilon) \hat{X} \Diag(\Upsilon)C\Diag(\Upsilon) + I_n - \hat{X})  -  2 \textstyle\sum_{i\in N}\log(\gamma_i)\hat{X}_{ii}\,.
     \]
    Because $(\hat{x},\hat{X}) \in \Pnst$ and $A\hat{x}\leq b$, the pair $(\hat{x},\hat{X})$ is feasible for \ref{gscaleglinx}\,. As the right-hand side above equals $\fgscaleglinx(\hat{X};\Upsilon)$, 
    the result follows.
\end{proof}

With  
Corollary \ref{cor:glinx-dominates-spec}, we have that the \ref{scaleglinx} bound  with o-scaling parameter $\gamma := 1/\lambda_t(C)^2$
dominates the spectral bound for $\GMESP$. The following theorem strengthens this:  the \ref{gscaleglinx} bound with a given  g-scaling parameter 
$\Upsilon$  dominates the Lagrangian spectral bound for \ref{CGMESP}.

\begin{theorem} 
   Let $C\in\mathbb{S}^n_{+}$\,, with $r:=\rank(C)$, $0<t\leq r$, and $t\leq s < n$.  Given the Lagrangian spectral bound for {\rm \ref{CGMESP}} defined in \eqref{eq:specbound_constrained} with a diagonal matrix $D^{\pi} \in \mathbb{S}^n_{++}$ defined in \eqref{def:Dpi-spec-bound},  consider the  {\rm\ref{gscaleglinx}} bound with g-scaling parameter $\Upsilon := \lambda_t(D^{\pi}CD^{\pi})^{\scriptscriptstyle -1/2} \diag(D^{\pi})$.  
   Then, we have
    \[
    \zgscaleglinx(C,s,t,A,b) \leq \zspectralL(C,s,t,A,b).
    \]
\end{theorem}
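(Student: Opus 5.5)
We fix $\pi\in\mathbb{R}^m_+$ and write $D:=D^\pi$, $d:=\diag(D)$, $\hat C:=DCD$, $c:=\lambda_t(\hat C)^{-1/2}$, so that $\Upsilon=c\,d$. We show that $\fgscaleglinx(X;\Upsilon)\le v(\pi)$ for every feasible $(x,X)$ of \ref{gscaleglinx}. The plan is to split the g-scaled objective into an o-scaled glinx objective for the matrix $\hat C$, which Theorem \ref{thm:glinxrelaxed-dominates-spec} controls, plus a term that is linear in $\diag(X)$, which we bound by a Lagrangian/LP argument.

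\emph{Step 1 (rewriting the objective).} Since $\Diag(\Upsilon)C\Diag(\Upsilon)=c^2\hat C$, the log-determinant term is $\frac12\ldet(\gamma\hat CX\hat C+I_n-X)$ with $\gamma:=c^4=1/\lambda_t(\hat C)^2$. The correction term splits as
\[
-2\textstyle\sum_i\log(\gamma_i)X_{ii}=-2t\log c-2\sum_i\log(d_i)X_{ii},
\]
using $\tr(X)=t$. Here $2t\log c=\frac12 t\log\gamma$. Also, by \eqref{def:Dpi-spec-bound}, $-2\log d_i=\sum_k\pi_kA_{ki}=:a_i$, that is, $a=A^\top\pi$. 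Hence
\[
\fgscaleglinx(X;\Upsilon)=\textstyle\frac12\big(\ldet(\gamma\hat CX\hat C+I_n-X)-t\log\gamma\big)+\sum_i a_iX_{ii}.
\]

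\emph{Step 2 (the spectral part).} Any $(x,X)\in\Pnst$ satisfies $0\preceq X\preceq\Diag(x)\preceq I_n$ and $\tr(X)=t$. We apply Theorem \ref{thm:glinxrelaxed-dominates-spec} to $\hat C$; its hypotheses hold because $\rank(\hat C)=r$, as $D$ is positive definite. This bounds the first part by $\sum_{\ell=1}^t\log\lambda_\ell(D^\pi CD^\pi)$.

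\emph{Step 3 (the linear part, the only genuinely new step).} We write
\[
\textstyle\sum_i a_iX_{ii}=a^\top x-a^\top y,\qquad y:=x-\diag(X).
\]
For the first term, $a^\top x=\pi^\top Ax\le\pi^\top b$, since $\pi\ge0$ and $Ax\le b$. For the second term, $X\preceq\Diag(x)$ and $X\succeq0$ give $0\le X_{ii}\le x_i\le1$, so $y\in[0,1]^n$ and $\mathbf{e}^\top y=s-t$. The set $\{y\in[0,1]^n:\mathbf{e}^\top y=s-t\}$ is an integral polytope whose vertices are the indicator vectors of sets of size $s-t$. Therefore
\[
a^\top y\ge\textstyle\min_{|K|=s-t}\sum_{j\in K}a_j=\min_{|K|=s-t}\sum_{j\in K}\sum_i\pi_iA_{ij},
\]
and the sign of the $a_i$ does not matter here.

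\emph{Step 4 (conclusion).} Adding the bounds from Steps 2 and 3 gives $\fgscaleglinx(X;\Upsilon)\le v(\pi)$ for every feasible $(x,X)$. Maximizing over $(x,X)$ then yields $\zgscaleglinx\le v(\pi)$. Taking $\pi$ to be the minimizer in \eqref{eq:specbound_constrained}, which is the $\pi$ that defines $\Upsilon$, gives $\zgscaleglinx(C,s,t,A,b)\le\zspectralL(C,s,t,A,b)$.

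The main care points are the bookkeeping in Step 1, namely that $c^4=\gamma$ and that the correction term uses $\tr(X)=t$, and noticing in Step 3 that $X\preceq\Diag(x)$ is exactly what makes $y$ feasible for the LP.
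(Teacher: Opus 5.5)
Your proof is correct and follows essentially the same route as the paper: the same splitting of the correction term $-2\sum_i\log(\gamma_i)X_{ii}$ into a $t\log\lambda_t(D^\pi CD^\pi)$ piece plus a term linear in $\diag(X)$, the same substitution $x-\diag(X)$ (the paper's $w$) to get $\pi^\top b$ minus the sum over the $s-t$ cheapest indices, and the same bound on the log-determinant part. The differences are cosmetic. You invoke Theorem~\ref{thm:glinxrelaxed-dominates-spec} directly for $D^\pi CD^\pi$, whereas the paper repeats its Hadamard-inequality argument inline. You also make explicit that $\pi$ must be the minimizer in \eqref{eq:specbound_constrained}, which the paper's final equality uses implicitly.
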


\begin{proof}
    Let $({x}^*,{X}^*)$ be an optimal solution to \ref{gscaleglinx} with g-scaling parameter $\Upsilon:=\lambda_t(D^{\pi}CD^{\pi})^{\scriptscriptstyle -1/2} \diag(D^{\pi})$. 
    Let $E := D^{\pi} C D^{\pi}$ and let $\beta_j:= \sum_{i=1}^m \pi_i A_{ij}$ for $j \in N$, so that $\gamma_j = \lambda_t(E)^{\scriptscriptstyle -1/2}\exp(-\frac{1}{2}\beta_j)$.  Expanding the logarithm gives
    \begin{align*}
        -2\textstyle\sum_{j \in N} \log(\gamma_j)X^*_{jj} =  \textstyle\sum_{j\in  N} \log(\lambda_t(E))X^*_{jj} + \textstyle\sum_{j\in  N}  \beta_j X^*_{jj} = t \log(\lambda_t(E)) + \textstyle\sum_{j\in  N}  \beta_j X^*_{jj}\,.
    \end{align*}
    Now let $w := x^* -\diag(X^*)$. Because $0 \preceq X^* \preceq \Diag(x^*)$ and $x^*\in [0,1]^n$, we have $w \in [0,1]^n$, and because  $\mathbf{e}^\top x^* = s$ and $\tr(X^*) = t$, we get $\mathbf{e}^\top w = s-t$. Substituting $X^*_{jj} = x^*_j - w_j$\,,
    \[
    \textstyle\sum_{j\in  N}  \beta_j X^*_{jj} = \textstyle\sum_{j\in  N}  \beta_j x^*_j - \textstyle\sum_{j\in  N}  \beta_j w_j\,.
    \]
    For the first sum in the right-hand side of the equation above, $Ax^* \leq b$ and $\pi \geq 0$ give $\textstyle\sum_{j\in  N}  \beta_j x^*_j = \pi^\top A x^* \leq \pi^\top b$. For the second, because $w \in [0,1]^n$ with $\mathbf{e}^\top w = s-t$, the sum $\textstyle\sum_{j\in  N}  \beta_j w_j$ is minimized by setting $w_j :=1$ on the $s-t$ indices with smallest $\beta_j$\,, so 
    \[
    \sum_{j\in  N}  \beta_j w_j \geq \min_{\stackrel{K\subset N,}{|K|=s-t}}~
\sum_{j\in K}\beta_j = \min_{\stackrel{K\subset N,}{|K|=s-t}}~\sum_{j\in K}\sum_{i=1}^m \pi_i A_{ij}\,.
    \]  
    Putting these together, we conclude that
    \begin{align*}
        -2\textstyle\sum_{j \in N} \log(\gamma_j)X^*_{jj} &= t \log(\lambda_t(E)) + \textstyle\sum_{j\in  N}  \beta_j X^*_{jj} \\
        &\leq t \log(\lambda_t(E)) + \pi^\top b - \min_{\stackrel{K\subset N,}{|K|=s-t}}~\sum_{j\in K}\sum_{i=1}^m \pi_i A_{ij}\,.
    \end{align*}

    It remains to bound the log-determinant term. Let $\Phi \Lambda \Phi^\top$ be any real Schur decomposition of $E$ and let $h(X;\Upsilon) := \frac{1}{2}\ldet(\Diag(\Upsilon) C \Diag(\Upsilon)X\Diag(\Upsilon) C \Diag(\Upsilon) + I_n-X)$. Then, 
    \begin{align*}
   h(X^*;\Upsilon)
    &= {\textstyle\frac{1}{2}}\ldet(\lambda_t(E)^{-2}EX^*E + I_n-X^*)\\
    &=  {\textstyle\frac{1}{2}}\ldet( \lambda_t(E)^{-2}\Lambda \Phi^\top X^* \Phi \Lambda  + I_n -\Phi^\top X^* \Phi) \\
    &\leq {\textstyle\frac{1}{2}}\textstyle\sum_{\ell \in N} \log( (\lambda_t(E)^{-2} \lambda_{\ell}(E)^2-1) (\Phi^\top X^* \Phi)_{\ell \ell} +1),\\
    &\leq -t\log( \lambda_t(E)) +  \textstyle\sum_{\ell=1}^t \log(\lambda_{\ell}(E)),
    \end{align*}
    where the first inequality comes from Hadamard's inequality (see, for example,  \cite[Theorem 7.8.6]{HJBook} for the more general
Oppenheim’s inequality) and the second inequality follows from the same reasoning as in Theorem \ref{thm:glinxrelaxed-dominates-spec}.

Adding the two bounds together and  noting that the $\pm t\log(\lambda_t(E))$ terms cancel, we obtain
\begin{align*}
    \zgscaleglinx(C,s,t,A,b) &= h(X^*;{\Upsilon}) -2\textstyle\sum_{j \in N} \log(\gamma_j)X^*_{jj}\\
    &\leq \sum_{\ell =1}^t \log(\lambda_\ell(E))  + \pi^\top b - \min_{\stackrel{K\subset N,}{|K|=s-t}}~\sum_{j\in K}\sum_{i=1}^m \pi_i A_{ij}\\
    &= \zspectralL(C,s,t,A,b).
\end{align*}
\end{proof}


\subsection{g-scaling for DDGFact}

Extending g-scaling to the generalized factorization bound is more subtle than for \ref{glinx}, as the \ref{ddgfact} formulation involves only the variable $x$ and lacks an $\ldet$
 term that could absorb the generalized scaling directly. To address this, we establish the following lemma, which provides the eigenvalue inequality needed to incorporate the scaling vector $\Upsilon$ into the \ref{ddgfact} bound.

\begin{lemma}\label{lem:Lee-scale}
 Let $C\in\mathbb{S}^n_{+}$\,, with $r:=\rank(C)$, $0<t\leq r$. Let $\Upsilon \in \mathbb{R}^n_{++}$\,. For every set $S \subset N$ with $|S|=s$ and $0 < t \leq s < n$ and $\rank(C_{S,S})\geq t$, we have 
\[
 \textstyle\sum_{\ell =1}^t \log( \lambda_\ell(C_{S,S})) \leq \textstyle\sum_{\ell=1}^t \log(\lambda_\ell((\Diag(\Upsilon)^{\scriptscriptstyle 1/2}C \Diag(\Upsilon)^{\scriptscriptstyle 1/2})_{S,S}))
   - \textstyle\sum_{\ell=s-t+1}^s \log(\lambda_\ell(\Diag(\Upsilon)_{S,S})).
 \]
\end{lemma}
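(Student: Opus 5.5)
Write $D:=\Diag(\Upsilon)_{S,S}$, so $(\Diag(\Upsilon)^{1/2}C\Diag(\Upsilon)^{1/2})_{S,S}=D^{1/2}C_{S,S}D^{1/2}$. The inequality to prove is
\[
\textstyle\sum_{\ell=1}^t\log\lambda_\ell(C_{S,S})+\sum_{\ell=s-t+1}^s\log\lambda_\ell(D)\le \sum_{\ell=1}^t\log\lambda_\ell(D^{1/2}C_{S,S}D^{1/2}),
\]
that is, $\prod_{\ell\le t}\lambda_\ell(C_{S,S})\cdot\prod_{\ell\le t}\lambda_{s-\ell+1}(D)\le\prod_{\ell\le t}\lambda_\ell(D^{1/2}C_{S,S}D^{1/2})$. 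This is an Ostrowski/multiplicative-Weyl type statement for products of the top $t$ eigenvalues under a congruence. I plan to prove it by a variational characterization of the product of the top $t$ eigenvalues, following the idea behind Theorem~\ref{thm:OCGMESP}.

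The key step is the identity, valid for $M\in\mathbb{S}^s_+$,
\[
\textstyle\prod_{\ell=1}^t\lambda_\ell(M)=\max\{\det(U^\top MU): U\in\mathbb{R}^{s\times t},\,U^\top U=I_t\}.
\]
The inequality ``$\ge$'' is the Poincaré separation step already used in the proof of Theorem~\ref{thm:OCGMESP}, and equality is attained by the top eigenvectors. Let $U$ be the matrix of top-$t$ eigenvectors of $C_{S,S}$, so $\det(U^\top C_{S,S}U)=\prod_{\ell\le t}\lambda_\ell(C_{S,S})>0$ by the rank assumption. Set $E:=D^{-1/2}U$ and $V:=E(E^\top E)^{-1/2}$, mirroring the construction in the proof of the validity of \ref{gscaleglinx}. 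Then $V^\top V=I_t$ and $D^{1/2}V=U(E^\top E)^{-1/2}$. Therefore
\[
\det(V^\top D^{1/2}C_{S,S}D^{1/2}V)=\det(U^\top C_{S,S}U)\,\det(E^\top E)^{-1}=\det(U^\top C_{S,S}U)\,\det(U^\top D^{-1}U)^{-1}.
\]
The left side is at most $\prod_{\ell\le t}\lambda_\ell(D^{1/2}C_{S,S}D^{1/2})$ by the variational identity.

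It remains to show $\det(U^\top D^{-1}U)\le\prod_{\ell\le t}\lambda_\ell(D^{-1})=\prod_{\ell\le t}1/\lambda_{s-\ell+1}(D)$. This again follows from the variational identity (Poincaré separation) applied to $D^{-1}$ with orthonormal $U$. Taking logarithms and combining the pieces gives
\[
\textstyle\sum_{\ell\le t}\log\lambda_\ell(C_{S,S})+\sum_{\ell\le t}\log\lambda_{s-\ell+1}(D)\le\sum_{\ell\le t}\log\lambda_\ell(D^{1/2}C_{S,S}D^{1/2}),
\]
which is the claim once the index $\ell\mapsto s-\ell+1$ is reindexed to $\ell=s-t+1,\dots,s$.

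The main obstacle is bookkeeping rather than substance. We need the rank condition so that $\det(U^\top C_{S,S}U)>0$ and the logarithms are finite; positivity of $\Upsilon$ makes $D^{\pm1/2}$ and $E^\top E$ invertible. We also need to check that the smallest eigenvalues of $D$ correspond exactly to the largest of $D^{-1}$. Since $D$ is diagonal, its eigenvalues are just the sorted $\gamma_i$, $i\in S$.
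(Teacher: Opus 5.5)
Your proof is correct, but it takes a different route from the paper's.

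The paper writes $C_{S,S}=D^{-1/2}(D^{1/2}C_{S,S}D^{1/2})D^{-1/2}$. It then applies Horn's multiplicative inequality for singular values of products, \cite[Theorem 3.3.4]{HJBook2}, namely $\prod_{\ell\le t}\sigma_\ell(ABC)\le\prod_{\ell\le t}\sigma_\ell(A)\sigma_\ell(B)\sigma_\ell(C)$. The two factors $\lambda_\ell(D^{-1/2})$ combine into $\lambda_\ell(D^{-1})=1/\lambda_{s-\ell+1}(D)$, which gives the bound in one line.

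You avoid Horn's theorem entirely. You take $U$ to be the top-$t$ eigenvectors of $C_{S,S}$ and normalize $E=D^{-1/2}U$ to $V=E(E^\top E)^{-1/2}$. This gives the exact factorization $\det(U^\top C_{S,S}U)=\det(V^\top D^{1/2}C_{S,S}D^{1/2}V)\,\det(U^\top D^{-1}U)$. You then bound each factor by Poincar\'e separation; since both matrices involved are positive semidefinite, the termwise eigenvalue bounds pass to products.

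What your route buys:
\begin{itemize}
\item It is self-contained. It uses only tools already in the paper: the Poincar\'e step from Theorem~\ref{thm:OCGMESP} and the $E(E^\top E)^{-1/2}$ normalization from the proof that \ref{gscaleglinx} is a valid upper bound.
\item It shows that this lemma and that validity proof are essentially the same argument. In both, the objective factors exactly as a determinant on the normalized $V$ times a correction $\det(E^\top E)^{-1}$, and that correction is bounded below. Here the lower bound is the product of the $t$ smallest eigenvalues of $D$, via Poincar\'e separation on $D^{-1}$. There it is $\exp\bigl(2\sum_i \log(\gamma_i)\hat X_{ii}\bigr)$, via weighted AM--GM, which is why \ref{gscaleglinx} needs no auxiliary variable $y$.
\end{itemize}
The paper's route is shorter, given the citation.
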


\begin{proof}
 From \cite[Proposition 2.1]{LeeLind2020}, we have that for any $\Upsilon \in \mathbb{R}^n_{++}$\,,
\begin{align*}
\textstyle\sum_{\ell=1}^t \log(\lambda_\ell(C_{S,S})) 
&= \textstyle\sum_{\ell=1}^t \log\bigl(\lambda_\ell\bigl(
   \Diag(\Upsilon)^{\scriptscriptstyle -1/2}_{S,S}
   \Diag(\Upsilon)^{\scriptscriptstyle 1/2}_{S,S} C_{S,S} \Diag(\Upsilon)^{\scriptscriptstyle 1/2}_{S,S}
   \Diag(\Upsilon)^{\scriptscriptstyle -1/2}_{S,S}\bigr)\bigr) \\
& \leq\textstyle\sum_{\ell=1}^t \log\bigl(
   \lambda_\ell\bigl(\Diag(\Upsilon)^{\scriptscriptstyle -1/2}_{S,S}\bigr)
   \lambda_\ell\bigl(\Diag(\Upsilon)^{\scriptscriptstyle 1/2}_{S,S} C_{S,S} \Diag(\Upsilon)^{\scriptscriptstyle 1/2}_{S,S}\bigr)
   \lambda_\ell\bigl(\Diag(\Upsilon)^{\scriptscriptstyle -1/2}_{S,S}\bigr)\bigr) \\
&= \textstyle\sum_{\ell=1}^t \log(\lambda_\ell(\Diag(\Upsilon)^{\scriptscriptstyle 1/2}_{S,S} C_{S,S} \Diag(\Upsilon)^{\scriptscriptstyle 1/2}_{S,S}))
   + \textstyle\sum_{\ell=1}^t \log(\lambda_\ell(\Diag(\Upsilon)^{\scriptscriptstyle -1}_{S,S})) \\
&= \textstyle\sum_{\ell=1}^t \log(\lambda_\ell(\Diag(\Upsilon)^{\scriptscriptstyle 1/2}_{S,S} C_{S,S} \Diag(\Upsilon)^{\scriptscriptstyle 1/2}_{S,S}))
   - \textstyle\sum_{\ell=s-t+1}^s \log(\lambda_\ell(\Diag(\Upsilon)_{S,S})),
\end{align*}
where the inequality comes from \cite[Theorem 3.3.4]{HJBook2}. Because $\Diag(\Upsilon)^{\scriptscriptstyle 1/2}$ is a diagonal matrix with positive diagonal entries, we have $\Diag(\Upsilon)^{\scriptscriptstyle 1/2}_{S,S} C_{S,S} \Diag(\Upsilon)^{\scriptscriptstyle 1/2}_{S,S} = (\Diag(\Upsilon)^{\scriptscriptstyle 1/2} C \Diag(\Upsilon)^{\scriptscriptstyle 1/2})_{S,S}$. The result follows.
\end{proof}

For  $C\in\mathbb{S}^n_{+}$\,, let $C=FF^\top$,
where $F\in \mathbb{R}^{n\times k}$ for some $k$ satisfying $r\le k \le n$. Building on the previous lemma, for $\Upsilon \in \mathbb{R}^n_{++}$ we define
\[
\hypertarget{fgscalegfacttarget}{\fgscalegfactthing}
(x,y;\Upsilon)
:=\Gamma_t(F^\top \Diag(\Upsilon)^{\scriptscriptstyle 1/2}\Diag(x)\Diag(\Upsilon)^{\scriptscriptstyle 1/2}F )- \textstyle\sum_{i \in N} \log(\gamma_i) y_{i}\,,
\]
and the associated \emph{g-scaled generalized factorization bound}
\begin{align*}\label{ddgfactscale}\tag{\mbox{DDGFact$_{\Upsilon}$}}
\hypertarget{zscalegammatarget}{\zgscalegammathing} (C,s,t,A,b;F)\!:=
&\max \!\left\{\fgscalegfact(x,y;\Upsilon)  \, : \, \mathbf{e}^\top x=s,\, \mathbf{e}^\top y=t,\, \mathbf{0} \leq y\leq x, \,Ax\leq b,\,
x\!\in\![0,1]^n
\right\}.
\end{align*}
Note that, in contrast to \ref{ddgfact}, the formulation \ref{ddgfactscale} requires an auxiliary variable $y$, which arises from the second term in Lemma \ref{lem:Lee-scale}.
We can interpret \ref{ddgfactscale} as applying the unscaled \ref{ddgfact} bound to the symmetrically-scaled matrix $\Diag(\Upsilon)^{\scriptscriptstyle 1/2}C \Diag(\Upsilon)^{\scriptscriptstyle 1/2}$, and then correcting by $- \textstyle\sum_{i \in N} \log(\gamma_i) y_{i}$.
The following theorem establishes that \ref{ddgfactscale} is a valid upper bound for \ref{CGMESP}, enabling us to incorporate g-scaling into \ref{ddgfact}.

\begin{theorem}\label{thm:Gamma-upper-bound}
For fixed $\Upsilon \in \mathbb{R}^n_{++}$\,, the \ref{ddgfactscale} bound is an upper bound for \ref{CGMESP}.
\end{theorem}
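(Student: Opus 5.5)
Take an optimal solution $x^*$ of \ref{CGMESP} with support $S$, $|S|=s$, and exhibit a pair $(x^*,\hat y)$ feasible for \ref{ddgfactscale} whose objective value is at least $\zcgmesp(C,s,t,A,b)$. Feasibility of $x^*$ itself is automatic: $\mathbf{e}^\top x^*=s$, $Ax^*\leq b$, and $x^*\in[0,1]^n$. We may assume $\rank(C_{S,S})\geq t$, since otherwise the objective value of $x^*$ is $-\infty$ and there is nothing to prove. This puts us in the setting of Lemma \ref{lem:Lee-scale}.

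For $\hat y$, sort the indices of $S$ by $\gamma_i$ and let $\hat y$ be the indicator vector of the $t$ indices in $S$ with the smallest $\gamma_i$. Then $\mathbf{e}^\top\hat y=t$ and $\mathbf{0}\leq\hat y\leq x^*$, and
\[
\textstyle\sum_{i\in N}\log(\gamma_i)\hat y_i=\sum_{\ell=s-t+1}^s\log(\lambda_\ell(\Diag(\Upsilon)_{S,S})),
\]
because the eigenvalues of the diagonal matrix $\Diag(\Upsilon)_{S,S}$ are exactly $\{\gamma_i\}_{i\in S}$, and the $t$ smallest of them are indexed by $\ell=s-t+1,\dots,s$. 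So the correction term $-\sum_i\log(\gamma_i)\hat y_i$ in $\fgscalegfact$ matches the last term of Lemma \ref{lem:Lee-scale}.

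For the $\Gamma$-term, let $\tilde C:=\Diag(\Upsilon)^{1/2}C\Diag(\Upsilon)^{1/2}=\tilde F\tilde F^\top$ with $\tilde F:=\Diag(\Upsilon)^{1/2}F$. Since $x^*$ is binary,
\[
F^\top\Diag(\Upsilon)^{1/2}\Diag(x^*)\Diag(\Upsilon)^{1/2}F=\tilde F_{S\cdot}^\top\tilde F_{S\cdot}.
\]
This matrix has the same nonzero eigenvalues as $\tilde F_{S\cdot}\tilde F_{S\cdot}^\top=\tilde C_{S,S}$, and $\Gamma_t$ depends only on the eigenvalue multiset, with zero eigenvalues affecting only the tail sum, where they contribute nothing. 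Hence $\Gamma_t(\tilde F_{S\cdot}^\top\tilde F_{S\cdot})=\Gamma_t(\tilde C_{S,S})$.

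The key remaining inequality is $\Gamma_t(Y)\geq\sum_{\ell=1}^t\log(\lambda_\ell(Y))$ for $Y\succeq0$. This is the standard fact that makes \ref{ddgfact} an upper bound, and it can be cited from \cite{GMESP_Alg}. Directly: with $\hat\imath$ from Lemma \ref{Ni13}, the first $\hat\imath$ terms agree. For the rest, AM--GM gives
\[
\textstyle\sum_{\ell=\hat\imath+1}^t\log\lambda_\ell\leq(t-\hat\imath)\log\big(\frac{1}{t-\hat\imath}\sum_{\ell=\hat\imath+1}^t\lambda_\ell\big),
\]
and the right-hand side is at most the $\phi_t$ term, since adding the nonnegative tail eigenvalues only increases the average.

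Chaining these steps gives
\[
\fgscalegfact(x^*,\hat y;\Upsilon)\geq\textstyle\sum_{\ell=1}^t\log\lambda_\ell(\tilde C_{S,S})-\sum_{\ell=s-t+1}^s\log\lambda_\ell(\Diag(\Upsilon)_{S,S})\geq\sum_{\ell=1}^t\log\lambda_\ell(C_{S,S}),
\]
where the last inequality is Lemma \ref{lem:Lee-scale}. The right-hand side equals $\zcgmesp(C,s,t,A,b)$.

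The main point needing care is the correspondence between the $y$-term and the smallest $t$ eigenvalues of $\Diag(\Upsilon)_{S,S}$. The constraint $y\leq x$ is what allows choosing $\hat y$ supported inside $S$ on the smallest $\gamma_i$, and the sign of the correction is consistent because we are maximizing over $y$. The rest is routine bookkeeping.
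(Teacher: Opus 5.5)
Your proof is correct and follows essentially the same route as the paper. Both combine Lemma~\ref{lem:Lee-scale} with the upper-bounding property of the factorization bound applied to the scaled factor $\Diag(\Upsilon)^{1/2}F$. Both take $\hat y$ as the indicator of the $t$ indices of $S$ with smallest $\gamma_i$, which is exactly an optimal solution of the paper's auxiliary linear program. You additionally spell out $\Gamma_t(Y)\geq\sum_{\ell=1}^t\log(\lambda_\ell(Y))$ via AM--GM and explicitly handle the case $\rank(C_{S,S})<t$, both of which the paper leaves implicit.
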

\begin{proof}
Let $\hat x \in \{0,1\}^n$ be any feasible solution to \ref{CGMESP}, with support $S \subset N$, $|S|=s$. By Lemma \ref{lem:Lee-scale},
\[
\textstyle\sum_{\ell=1}^t \log(\lambda_\ell(C_{S,S})) \leq \textstyle\sum_{\ell=1}^t \log(\lambda_\ell((\Diag(\Upsilon)^{\scriptscriptstyle 1/2} C \Diag(\Upsilon)^{\scriptscriptstyle 1/2})_{S,S})) - \textstyle\sum_{\ell=s-t+1}^s \log(\lambda_\ell(\Diag(\Upsilon)_{S,S})).
\]
Because $\Diag(\Upsilon)^{\scriptscriptstyle 1/2} C \Diag(\Upsilon)^{\scriptscriptstyle 1/2} = (\Diag(\Upsilon)^{\scriptscriptstyle 1/2} F)(\Diag(\Upsilon)^{\scriptscriptstyle 1/2} F)^\top$, evaluating the objective function of \ref{ddgfact} 
at $\hat x$ yields
\[
\textstyle\sum_{\ell=1}^t \log(\lambda_\ell((\Diag(\Upsilon)^{\scriptscriptstyle 1/2} C \Diag(\Upsilon)^{\scriptscriptstyle 1/2})_{S,S})) \leq \Gamma_t(F^\top \Diag(\Upsilon)^{\scriptscriptstyle 1/2} \Diag(\hat x) \Diag(\Upsilon)^{\scriptscriptstyle 1/2} F).
\]
Combining the two inequalities above, we have
\begin{equation}\label{eq:combined-bound}
\textstyle\sum_{\ell=1}^t \log(\lambda_\ell(C_{S,S})) \leq \Gamma_t(F^\top \Diag(\Upsilon)^{\scriptscriptstyle 1/2} \Diag(\hat x) \Diag(\Upsilon)^{\scriptscriptstyle 1/2} F) - \textstyle\sum_{\ell=s-t+1}^s \log(\lambda_\ell(\Diag(\Upsilon)_{S,S})).
\end{equation}
As $\Diag(\Upsilon)$ is a diagonal matrix with positive entries, $\sum_{\ell=s-t+1}^s \log(\lambda_\ell(\Diag(\Upsilon)_{S,S}))$ is equal to the sum of the $t$ least elements in $\{\log(\gamma_i): i \in S\}$. This quantity coincides with the optimal value of the linear program
\begin{equation}\label{eq:subprob_gamma_gscale}
\min\!\left\{\textstyle\sum_{i \in N} \log(\gamma_i)\, y_i \,:\, 0 \leq y \leq \hat x,\ \mathbf{e}^\top y = t\right\}.
\end{equation}
Let $\hat y$ be an optimal solution of \eqref{eq:subprob_gamma_gscale}. Substituting into \eqref{eq:combined-bound} yields
\[
\textstyle\sum_{\ell=1}^t \log(\lambda_\ell(C_{S,S})) \leq \Gamma_t(F^\top \Diag(\Upsilon)^{\scriptscriptstyle 1/2} \Diag(\hat x) \Diag(\Upsilon)^{\scriptscriptstyle 1/2} F) - \textstyle\sum_{i \in N} \log(\gamma_i)\, \hat y_i = \fgscalegfact(\hat x, \hat y; \Upsilon).
\]
As $(\hat x, \hat y)$ is feasible for \ref{ddgfactscale}\,, the result follows.
\end{proof}

\begin{remark}
    A g-scaled \ref{glinx} bound could also be derived by applying the argument of Theorem \ref{thm:Gamma-upper-bound} together with Lemma \ref{lem:Lee-scale}. However, this approach would yield a weaker bound, as it would require introducing an auxiliary variable $y$ into the optimization problem; from the definition of $\Pnst$, the choice $y = \diag(X)$ would be feasible, so the resulting bound would be dominated by \ref{gscaleglinx}\,.
\end{remark}

\cite{gscale} demonstrated that the g-scaling for DDFact admits a stationary point at $\Upsilon = \mathbf{e}$ in the absence of side constraints $Ax\leq b$. More recently, by establishing that the g-scaled factorization bound is convex in $\log(\Upsilon)$, \cite{Fatma2026} established that $\Upsilon = \mathbf{e}$ is in fact  optimal  in this setting.  Adapting the approach used for \ref{CMESP} in \cite[Theorem 6, part (iv)]{gscale}, we demonstrate,  by leveraging Lemma \ref{lem:optimal_xhat_diagf} below, that the analogous result for \ref{ddgfactscale} follows from a considerably simpler argument. In particular,  Theorem \ref{thm:UpsOptOnes} establishes that $\Upsilon=\mathbf{e}$ is optimal for \ref{ddgfactscale} when no side constraints $Ax\leq b$ are present. 
We begin by deriving the gradient of the g-scaled objective function, with the proof proceeding similarly to that of \cite[Theorem 6, part (iv)]{gscale}.

\begin{lemma}\label{lem:gradUps_gscaleddgfact}
    Let $(\hat{x},\hat{y})$ be a feasible solution to {\rm\ref{ddgfactscale}} for a fixed g-scaling parameter $
    \hat\Upsilon \in \mathbb{R}^n_{++}$\,. Define
     \begin{equation*}
         M_{\Upsilon}(\Upsilon;{x}) := F^\top \Diag(\Upsilon)^{\scriptscriptstyle 1/2}\Diag( x)\Diag(\Upsilon)^{\scriptscriptstyle 1/2} F. 
     \end{equation*}
     Let $\hat{r}:=\rank(M(\hat\Upsilon;\hat{x}))$ and let $\textstyle\sum_{\ell=1}^{k} \hat\lambda_{\ell} \hat u_{\ell} \hat u_{\ell}^{\top}$ be a spectral decomposition of $M(\hat\Upsilon;\hat{x})$ with $k \geq \hat{r}$. Let
       $\hat{\Theta}:=\sum_{\ell=1}^{k} {\hat \beta}_\ell \hat{u}_\ell\hat{u}_\ell^\top$\,, where
\begin{equation}\label{defbetaa}
\hat{\beta}_\ell:=\left\{
\begin{array}{ll}
        \textstyle 1/\hat{\lambda}_\ell\,,
       &~1\leq \ell\leq \hat{\iota};\\
     1/\hat{\delta},&~\hat{\iota}<\ell\leq \hat{r};\\
     (1+\epsilon)/\hat{\delta},&~\hat{r}<\ell\leq k,
\end{array}\right.
\end{equation}
 for any $\epsilon>0$, where $\hat{\iota}$ is the unique integer defined  in Lemma \ref{Ni13} for $\lambda_\ell=\hat{\lambda}_\ell$\,, and
$
\hat \delta:=\frac{1}{t-\hat \iota}\sum_{\ell=\hat \iota+1}^{k}\hat \lambda_\ell
$\,. Then, the function $f_{\mbox{\normalfont\protect\tiny $\Gamma$}}(\hat{x},\hat{y};\Upsilon)$ is differentiable in $\Upsilon$ with gradient
    \[
    g_{\mbox{\normalfont\protect\tiny $\Upsilon$}}(\hat{x},\hat{y};\hat\Upsilon) := \hat{x}\circ \diag(F\hat\Theta F^\top) - \Diag(\hat\Upsilon)^{-1}\hat{y}.
    \]    
\end{lemma}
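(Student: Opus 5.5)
The objective splits into two terms, and only the first depends on $\Upsilon$ in a nontrivial way. The second term, $-\sum_{i\in N}\log(\gamma_i)\hat y_i$, is linear in $\log\gamma_i$, so its partial derivative in $\gamma_i$ is $-\hat y_i/\hat\gamma_i$. This gives the term $-\Diag(\hat\Upsilon)^{-1}\hat y$ directly.

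For the first term, I would write $\Gamma_t(M(\Upsilon;\hat x))$ with $M(\Upsilon;\hat x)=F^\top \Diag(\Upsilon\circ\hat x)F$, since diagonal matrices commute and so $\Diag(\Upsilon)^{1/2}\Diag(\hat x)\Diag(\Upsilon)^{1/2}=\Diag(\Upsilon\circ \hat x)$. The map $\Upsilon\mapsto M$ is linear, with
\[
\partial M/\partial\gamma_i=\hat x_i F_{i\cdot}^\top F_{i\cdot}.
\]
The chain rule then reduces the claim to showing that $\Gamma_t$ is differentiable at $\hat M:=M(\hat\Upsilon;\hat x)$ with gradient $\hat\Theta$ restricted to the range of $\hat M$. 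Given that, we obtain
\[
\partial\Gamma_t/\partial\gamma_i=\hat x_i\,F_{i\cdot}\hat\Theta F_{i\cdot}^\top=\hat x_i\,(F\hat\Theta F^\top)_{ii},
\]
which is the first term of $g_\Upsilon$.

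The main step is the differentiability of $\Gamma_t$ and the identification of its gradient. I would follow the argument used for DDFact and DDGFact in the cited works (\cite{Nikolov}, \cite{GMESP_Alg}, \cite[Theorem 6]{gscale}). $\Gamma_t(X)=\phi_t(\lambda(X))$ is a spectral function, and $\phi_t$ is symmetric and differentiable on the relevant region. Its partial derivatives are $1/\omega_\ell$ for $\ell\le\hat\iota$ and $1/\hat\delta$ for $\ell>\hat\iota$, because the strict inequality $\omega_{\hat\iota}>\hat\delta$ in Lemma \ref{Ni13} keeps $\hat\iota$ locally constant. Lewis's theorem on spectral functions then gives $\nabla\Gamma_t(\hat M)=\sum_\ell \partial_\ell\phi_t(\hat\lambda)\,\hat u_\ell\hat u_\ell^\top$, which coincides with $\hat\Theta$ on eigenvectors with $\ell\le\hat r$.

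The subtle point is the null-space directions $\ell>\hat r$, where $\hat\Theta$ carries the factor $(1+\epsilon)/\hat\delta$ rather than $1/\hat\delta$. I would handle this by restricting to perturbations of $\Upsilon$ with $\hat x$ fixed. If $\hat x_i=0$, the derivative is zero and the formula agrees, since $\hat x_i$ multiplies the term. If $\hat x_i>0$, the row $F_{i\cdot}$ scaled by $\sqrt{\hat x_i}$ is a row of the factor $\Diag(\hat x)^{1/2}F$ of $\hat M$, so $F_{i\cdot}^\top$ lies in the range of $\hat M$. In that case $F_{i\cdot}\hat u_\ell=0$ for $\ell>\hat r$, and the $\epsilon$-dependent part of $\hat\Theta$ contributes nothing.

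The remaining issue is that $\hat M$ may be singular, where $\phi_t$ could fail to be smooth. Because $\Upsilon$ perturbations keep $M$ inside the fixed range, I would restrict $\Gamma_t$ to that subspace. Multiplicities among the eigenvalues equal to $\hat\delta$ cause no trouble, since $\phi_t$ is locally $\sum_{\ell\le\hat\iota}\log\omega_\ell$ plus $(t-\hat\iota)\log$ of a linear function of the trace. So differentiability follows, and the gradient formula follows.
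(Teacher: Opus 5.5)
Most of your argument is sound, and it follows the route the paper intends; the paper gives no proof and only says it proceeds as in \cite[Theorem 6, part (iv)]{gscale}. The following steps are all correct:
\begin{itemize}
\item the log term gives $-\Diag(\hat\Upsilon)^{-1}\hat y$;
\item the map $\Upsilon\mapsto M(\Upsilon;\hat x)=\sum_i\gamma_i\hat x_iF_{i\cdot}^\top F_{i\cdot}$ is linear;
\item restricting to the range of $\hat M$, which is the same for every $\Upsilon\in\mathbb{R}^n_{++}$, is a sound way to handle singular $\hat M$;
\item the factor $\hat x_i$, together with $F_{i\cdot}^\top$ lying in that range when $\hat x_i>0$, correctly removes the $\epsilon$-dependent part of $\hat\Theta$.
\end{itemize}
One small slip: the relevant factor of $\hat M$ is $\Diag(\hat\Upsilon\circ\hat x)^{1/2}F$, not $\Diag(\hat x)^{1/2}F$. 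Since $\hat\gamma_i>0$, nothing changes.

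The gap is in the main step, the differentiability of $\phi_t$ at $\hat\lambda$. You claim the strict inequality in Lemma~\ref{Ni13} keeps $\hat\iota$ locally constant, and later that $\phi_t$ is locally $\sum_{\ell\le\hat\iota}\log\omega_\ell$ plus $(t-\hat\iota)\log$ of a linear function even with multiplicities at $\hat\delta$. Only the left inequality $\hat\lambda_{\hat\iota}>\hat\delta$ is strict. When $\hat\delta=\hat\lambda_{\hat\iota+1}$, arbitrarily small perturbations push the selected index upward. For example, take $t=\hat r=2$ and $\hat\lambda=(1,1)$. Then $\hat\iota=0$ and $\hat\delta=1$, but at $(1+\eta,1-\eta)$ the index is $1$ and $\phi_2=\log(1-\eta^2)$, whereas your frozen formula gives $2\log 1=0$.

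In this tie case, $\phi_t$ near $\hat\lambda$ is only a selection among the smooth functions
$\phi^{(j)}(\omega):=\sum_{\ell\le j}\log\omega_\ell+(t-j)\log\bigl(\frac{1}{t-j}\sum_{\ell>j}\omega_\ell\bigr)$.
Here $j\ge\hat\iota$ ranges over the block of indices with $\hat\lambda_{\hat\iota+1}=\dots=\hat\lambda_j=\hat\delta$ and $j<t$. No other index can be selected near $\hat\lambda$, because for every other index one of the two inequalities of Lemma~\ref{Ni13} fails strictly at $\hat\lambda$.

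The missing observation is that all these $\phi^{(j)}$ agree with $\phi^{(\hat\iota)}$ in value and gradient at $\hat\lambda$. Along the tie block, $\frac{1}{t-j}\sum_{\ell>j}\hat\lambda_\ell=\hat\delta$, so each $\phi^{(j)}$ has partials $1/\hat\lambda_\ell$ for $\ell\le\hat\iota$ and $1/\hat\delta$ for $\ell>\hat\iota$. A function that near a point always equals one of finitely many functions, all differentiable there with common value and gradient, is differentiable there with that gradient. This common gradient is constant on tied coordinates, so the symmetric extension is differentiable at $\hat\lambda$ and Lewis's theorem applies.

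With this added, your proof is complete. As written, it does not cover the case $\hat\delta=\hat\lambda_{\hat\iota+1}$.
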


\begin{lemma}[\protect{\cite[Lemma 20]{GMESP_Alg}}]\label{lem:optimal_xhat_diagf}
   Let $\hat{x}$ be an optimal  solution of {\rm \ref{ddgfact}}, 
   where there are no side constraints $Ax\leq b$. Let  
     $F(\hat x)= F^\top \Diag(\hat x) F =: \sum_{\ell=1}^{k} \hat\lambda_{\ell} \hat u_{\ell} \hat u_{\ell}^{\top}$ be a spectral decomposition of $F(\hat x)$. Let
       $\hat{\Theta}:=\sum_{\ell=1}^{k} {\hat \beta}_\ell \hat{u}_\ell \hat{u}_\ell^\top$\,, where $\hat\beta$ is defined in \eqref{defbetaa}. 
Then, for every $i,j\in N$, 
we have
\begin{enumerate}
        \item[\rm($i$)] 
        $\diag(F\hat\Theta F^\top)_{i} \geq \diag(F\hat\Theta F^\top)_{j}\,$, if $\hat{x}_i > \hat{x}_j$\,,
        \item[\rm($ii$)] 
        $\diag(F\hat\Theta F^\top)_{i} = \diag(F\hat\Theta F^\top)_{j}\,$, if $\hat{x}_i\,, \hat{x}_j\in(0,1)$.
    \end{enumerate}
\end{lemma}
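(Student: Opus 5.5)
The plan is a first-order optimality argument for the concave problem \ref{ddgfact} over the simplex-type region $\{x\in[0,1]^n:\mathbf{e}^\top x=s\}$, using pairwise exchange directions $d:=\mathbf{e}_j-\mathbf{e}_i$. Write $f(x):=\Gamma_t(F^\top\Diag(x)F)$, $g:=\diag(F\hat\Theta F^\top)$, so that $g_i=F_{i\cdot}\hat\Theta F_{i\cdot}^\top$. Recall that $\phi_t$ is a concave symmetric function of the eigenvalues. Its partial derivatives are $1/\lambda_\ell$ for $\ell\le\hat\iota$ and $1/\hat\delta$ for $\hat\iota<\ell$. By the definition of $\hat\iota$ in Lemma \ref{Ni13}, these formulas agree across the boundary $\ell=\hat\iota$, so $\phi_t$ is continuously differentiable on the positive orthant. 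The spectral-function calculus then gives the gradient of $\Gamma_t$ at a positive definite argument as $\sum_\ell\hat\beta_\ell\hat u_\ell\hat u_\ell^\top$. When $M:=F^\top\Diag(\hat x)F$ is rank deficient ($\hat r<k$), I would restrict attention to the range $\mathcal{R}$ of $M$. Perturbations $F^\top\Diag(d)F$ whose row contributions lie in $\mathcal{R}$ keep the zero eigenvalues at zero. On such directions $f$ is differentiable with derivative $\hat\Theta\bullet F^\top\Diag(d)F=d^\top g$, and the $(1+\epsilon)/\hat\delta$ block does not enter.

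First I would record that if $\hat x_i>0$ then $F_{i\cdot}^\top\in\mathcal{R}$. This holds because $M\succeq\hat x_iF_{i\cdot}^\top F_{i\cdot}$, hence $\ker M\perp F_{i\cdot}^\top$. Part ($ii$) then follows directly. If $\hat x_i,\hat x_j\in(0,1)$, both $\pm(\mathbf{e}_i-\mathbf{e}_j)$ are feasible directions at $\hat x$ and both rows lie in $\mathcal{R}$. The two-sided derivative therefore exists and equals $g_i-g_j$, and optimality forces $g_i-g_j\le0$ and $g_j-g_i\le0$.

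For part ($i$), take $\hat x_i>\hat x_j$. Then $\hat x_i>0$ and $\hat x_j<1$, so the direction $\mathbf{e}_j-\mathbf{e}_i$ is feasible and optimality gives $f'(\hat x;\mathbf{e}_j-\mathbf{e}_i)\le0$. If $F_{j\cdot}^\top\in\mathcal{R}$ (in particular whenever $\hat x_j>0$), the directional derivative equals $g_j-g_i$ and we are done. The remaining case is $\hat x_j=0$ with $F_{j\cdot}$ having a nonzero component $w$ in $\ker M$. Here I would compute the one-sided derivative directly. Adding $\tau F_{j\cdot}^\top F_{j\cdot}$ creates a new eigenvalue of order $\tau\|w\|^2$ in the null space, and this eigenvalue enters the averaged tail $\hat\delta$. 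The contribution of that new eigenvalue to the derivative is then $\|w\|^2/\hat\delta$ rather than $(1+\epsilon)\|w\|^2/\hat\delta$. The case also uses that the tail index $\hat\iota$ is unchanged for small $\tau$, which holds because the inequality defining $\hat\iota$ is strict on the left.

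This last case is the step I expect to be the main obstacle. Optimality only gives $g_i\ge g_j-\epsilon\|w\|^2/\hat\delta$, so the coefficient $\epsilon$ threatens the inequality. I would first try to show that this case cannot occur at an optimum, or that it forces strict slack. If $F_{j\cdot}$ has a null-space component, moving mass from $i$ to $j$ increases $\hat\delta$ at first order by $\|w\|^2/(t-\hat\iota)$ while the loss is governed by $g_i$. I would compare these using $g_i\le\sum_\ell\hat\beta_\ell(\hat u_\ell^\top F_{i\cdot}^\top)^2\le\|F_{i\cdot}\|^2/\hat\lambda_{\min}$. If that comparison fails, the fallback is to read the lemma as holding for all sufficiently small $\epsilon>0$ and take $\epsilon\to0$, which gives the weak inequality in ($i$). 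I would check against \cite[Lemma 20]{GMESP_Alg} which of these two readings is intended.
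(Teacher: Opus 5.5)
Your exchange argument is sound as far as it goes. At $\hat x$, the one-sided derivative of $\Gamma_t(F^\top\Diag(x)F)$ along a feasible direction $\mathbf{e}_j-\mathbf{e}_i$ equals $\diag(F\hat\Theta_0F^\top)_j-\diag(F\hat\Theta_0F^\top)_i$, where $\hat\Theta_0$ denotes $\hat\Theta$ with $\epsilon=0$. Your range argument shows that $\diag(F\hat\Theta F^\top)$ and $\diag(F\hat\Theta_0F^\top)$ agree at every index with $\hat x_i>0$. This gives ($ii$), and it gives ($i$) whenever $\hat x_j>0$.

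The case you flag as the main obstacle is not a gap that more work can close, because the statement is false there. Take $n=3$, $C=F=I_3$, $s=2$, $t=1$. Since $t=1$ forces $\hat\iota=0$, we have $\Gamma_1(X)=\log\tr(X)$. So the objective of \ref{ddgfact} equals $\log 2$ on the whole feasible set, and $\hat x=(1,1,0)^\top$ is optimal. Then $\hat\delta=2$, $\hat r=2$, and $\hat\Theta=\Diag(1/2,1/2,(1+\epsilon)/2)=F\hat\Theta F^\top$. Hence $\diag(F\hat\Theta F^\top)_1=1/2<(1+\epsilon)/2=\diag(F\hat\Theta F^\top)_3$, although $\hat x_1>\hat x_3$, and this holds for every $\epsilon>0$.

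In this example the first-order slack $\diag(F\hat\Theta_0F^\top)_1-\diag(F\hat\Theta_0F^\top)_3$ is exactly $0$, so both of your proposed repairs fail. The null-space case does occur at an optimum without strict slack, so no comparison of $\|w\|^2/(t-\hat\iota)$ with $g_i$ can exclude it. No ``sufficiently small $\epsilon$'' reading rescues ($i$) either. Letting $\epsilon\to0$ only yields the inequality for $\hat\Theta_0$, which is not the matrix in the statement.

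What your argument does prove in full is a corrected lemma, in either of two forms:
\begin{enumerate}
\item[(a)] ($i$) holds for all $i,j$ with $\hat x_i>\hat x_j$ when $\hat\Theta$ is replaced by $\hat\Theta_0$.
\item[(b)] ($i$) holds for $\hat\Theta$ itself under the extra hypothesis $\hat x_j>0$, or more generally $F_{j\cdot}^\top\in\mathcal{R}$, because then neither row sees the $(1+\epsilon)/\hat\delta$ block.
\end{enumerate}
The restricted form (b) is exactly what the paper uses later, in the proof of Theorem~\ref{thm:UpsOptOnes}. There ($i$) is applied only to $i\in\mathcal{I}_f$ and $\hat\jmath\in\mathcal{I}_1$, both with positive entries of $\hat x$. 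So rather than trying to rule out the null-space case, record the counterexample and state the lemma with the added hypothesis (or with $\epsilon=0$). Then finish with the argument you already have.
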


\begin{theorem}\label{thm:UpsOptOnes}
    Let $\hat{\Upsilon}:=\mathbf{e}$, and consider {\rm\ref{ddgfactscale}} and {\rm\ref{ddgfact}} with no side constraints $Ax\leq b$. Let $\hat{x}$ be an optimal solution of {\rm\ref{ddgfact}}. Let  
     $F(\hat x)= F^\top \Diag(\hat x) F =: \sum_{\ell=1}^{k} \hat\lambda_{\ell} \hat u_{\ell} \hat u_{\ell}^{\top}$ be a spectral decomposition of $F(\hat x)$ and define $\hat{\Theta}:=\sum_{\ell=1}^{k} {\hat \beta}_\ell \hat{u}_\ell \hat{u}_\ell^\top$\,, where $\hat\beta$ is defined in \eqref{defbetaa}.  Let $\hat{y}:=\hat{x}\circ \diag(F\hat{\Theta}F^\top)$. Then,  $(\hat{x},\hat{y})$ is an optimal solution to {\rm\ref{ddgfactscale}} for $\hat\Upsilon=\mathbf{e}$ and
    \[
    g_{\Upsilon}(\hat{x},\hat{y};\hat{\Upsilon}) = \mathbf{0}.
    \]
\end{theorem}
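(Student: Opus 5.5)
The plan has three parts: check that $(\hat x,\hat y)$ is feasible for \ref{ddgfactscale} with $\hat\Upsilon=\mathbf{e}$, show that it attains the \ref{ddgfact} optimal value, and then read off the gradient.

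\textbf{Objective at $\Upsilon=\mathbf{e}$.} With $\hat\Upsilon=\mathbf{e}$ we have $\log(\gamma_i)=0$ for every $i$. Hence
\[
f^{\mbox{\normalfont\protect\tiny{gsc}}}_{\mbox{\normalfont\protect\tiny $\Gamma$}}(x,y;\mathbf{e})=\Gamma_t(F^\top\Diag(x)F),
\]
which does not depend on $y$. Consequently the optimal value of \ref{ddgfactscale} is at most $\zgamma(C,s,t,\cdot,\cdot)$: dropping $y$ and its constraints only enlarges the feasible set. If $(\hat x,\hat y)$ is feasible, its objective value is $\Gamma_t(F^\top\Diag(\hat x)F)=\zgamma$, so it is optimal.

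\textbf{Feasibility.} We need $\mathbf{0}\le\hat y\le\hat x$ and $\mathbf{e}^\top\hat y=t$.

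First, $\hat\Theta\succ0$, because every $\hat\beta_\ell>0$. So $d:=\diag(F\hat\Theta F^\top)\ge\mathbf{0}$, which gives $\hat y\ge\mathbf{0}$.

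Second, for the sum,
\[
\mathbf{e}^\top\hat y=\sum_i\hat x_i F_{i\cdot}\hat\Theta F_{i\cdot}^\top=\hat\Theta\bullet F^\top\Diag(\hat x)F=\sum_\ell\hat\beta_\ell\hat\lambda_\ell.
\]
The terms with $\ell\le\hat\iota$ contribute $1$ each. The terms with $\hat\iota<\ell\le\hat r$ contribute $\hat\lambda_\ell/\hat\delta$, and these sum to $t-\hat\iota$ by the definition of $\hat\delta$. The terms with $\ell>\hat r$ vanish because $\hat\lambda_\ell=0$. The total is $t$.

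Third, $\hat y\le\hat x$ is equivalent to $d_i\le1$ whenever $\hat x_i>0$. This is the main obstacle, and I would get it from the KKT/optimality conditions of \ref{ddgfact} with no side constraints, using Lemma \ref{lem:optimal_xhat_diagf}.

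\textbf{The step $d_i\le1$.} Lemma \ref{lem:optimal_xhat_diagf} shows that $d$ is monotone in $\hat x$ and constant on the fractional coordinates. So there is a threshold $\nu$ with $d_i=\nu$ for $\hat x_i\in(0,1)$, $d_i\ge\nu$ for $\hat x_i=1$, and $d_i\le\nu$ for $\hat x_i=0$.

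I would first try to show $d_i\le1$ for $\hat x_i>0$ by a dual/supergradient argument. The quantity $d$ is a supergradient of $x\mapsto\Gamma_t(F^\top\Diag(x)F)$ at $\hat x$. Optimality over $\{\mathbf{e}^\top x=s,\ x\in[0,1]^n\}$ gives multipliers $\tau,\nu_i\ge0$ with $d_i=\tau+\nu_i-\upsilon_i$.

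If that stalls, I would compare the total $\sum_i\hat x_i d_i=t$ against the individual values instead. I would use $F_{i\cdot}\hat\Theta F_{i\cdot}^\top\le\hat x_i^{-1}$ for $\hat x_i>0$, which should follow from $\hat x_iF_{i\cdot}^\top F_{i\cdot}\preceq F^\top\Diag(\hat x)F$. Combined with the choice of $\hat\beta_\ell$ (at most $1/\hat\lambda_\ell$ for $\ell\le\hat r$), this gives
\[
\hat x_i d_i\le\hat x_i F_{i\cdot}\Bigl(\textstyle\sum_{\ell\le\hat r}\hat\lambda_\ell^{-1}\hat u_\ell\hat u_\ell^\top\Bigr)F_{i\cdot}^\top\le1.
\]
That is exactly $\hat y_i\le\hat x_i$. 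I expect this second route to be the one that works, though it must be checked that $\hat\beta_\ell\le1/\hat\lambda_\ell$ when $\ell\le\hat r$. This holds because $\hat\lambda_\ell\le\hat\delta$ for $\ell>\hat\iota$ by Lemma \ref{Ni13}. It also requires that $F_{i\cdot}$ lie in the range of $F^\top\Diag(\hat x)F$, so that the $\epsilon$ terms do not contribute.

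\textbf{Gradient.} Once $(\hat x,\hat y)$ is known to be optimal, Lemma \ref{lem:gradUps_gscaleddgfact} at $\hat\Upsilon=\mathbf{e}$ gives
\[
g_\Upsilon=\hat x\circ d-\hat y=\mathbf{0},
\]
directly from the definition of $\hat y$.
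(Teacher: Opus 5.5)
There is a genuine gap in the step $\hat y\le\hat x$, which is the only nontrivial part of the theorem. Your per-coordinate estimate is valid. Since $F_{i\cdot}^\top$ lies in the range of $F^\top\Diag(\hat x)F$ when $\hat x_i>0$, and $\hat\beta_\ell\le 1/\hat\lambda_\ell$ for $\ell\le\hat r$, you do get $F_{i\cdot}\hat\Theta F_{i\cdot}^\top\le 1/\hat x_i$. But that is $\hat x_i d_i\le 1$, i.e.\ $\hat y_i\le 1$, not $\hat y_i\le\hat x_i$. Your sentence ``That is exactly $\hat y_i\le\hat x_i$'' is false whenever $\hat x_i\in(0,1)$: there you need $d_i\le 1$ and you only have $d_i\le 1/\hat x_i$.

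This route cannot work on its own, because it never uses optimality of $\hat x$, and $\hat y\le\hat x$ fails at non-optimal feasible points. Take $n=2$, $s=t=1$, $F=(2,1)^\top$ and $\hat x=(1/2,1/2)$. Then $\hat\Theta=2/5$ and $d=(8/5,2/5)$, so $\hat y_1=4/5>1/2=\hat x_1$, while $\hat y_1\le 1$ holds as your bound predicts. Your first route (KKT multipliers) only reproduces the threshold structure of Lemma \ref{lem:optimal_xhat_diagf}. It gives no bound on the threshold value itself.

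The missing step is to combine ingredients you already wrote down. Your bound yields $\hat y\le\mathbf{e}$; the paper gets this differently, via a compact SVD $U\Sigma V^\top$ of $\Diag(\hat x)^{1/2}F$, writing $\hat y$ as the diagonal of $UMU^\top$ with $0\preceq M\preceq I$, but your pseudo-inverse argument is an equally good way to get it. This settles coordinates with $\hat x_i=1$, and $\hat x_i=0$ is trivial.

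For fractional coordinates, $d_i$ equals a common value $\nu$ by Lemma \ref{lem:optimal_xhat_diagf}(ii). There are two cases:
\begin{itemize}
\item If some $\hat x_j=1$, then part (i) gives $\nu\le d_j=\hat x_j d_j=\hat y_j\le 1$.
\item If no coordinate equals $1$, then $\hat y=\nu\hat x$, so $t=\mathbf{e}^\top\hat y=\nu s$ forces $\nu=t/s\le 1$.
\end{itemize}
With this case analysis in place, the rest of your proposal matches the paper and is fine. That includes the self-contained computation $\mathbf{e}^\top\hat y=\sum_\ell\hat\beta_\ell\hat\lambda_\ell=t$ (the paper simply cites it), the $y$-independence of the objective at $\Upsilon=\mathbf{e}$, and the gradient identity.
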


\begin{proof}
    We first verify that, given $\hat{x}$, the constructed $\hat{y}$ ensures that ($\hat{x},\hat{y}$) is feasible for $\ref{ddgfactscale}$\,. As $\hat x$ is optimal for \ref{ddgfact}, we have $\mathbf{e}^\top \hat{x} = s$ and $\hat{x} \in [0,1]^n$. Furthermore,  $\hat y$ satisfies
    \begin{itemize}
        \item $\mathbf{e}^\top \hat{y} = t$:  from \cite[Eq. 16]{GMESP_Alg}, we have that $\hat{x}^\top  \diag(F\hat{\Theta}F^\top) = t$ and therefore it follows that
        \[
        \mathbf{e}^\top \hat{y}=\textstyle\sum_{i\in N}\hat{x}_i \diag(F\hat{\Theta}F^\top)_i = \hat{x}^\top \diag(F\hat{\Theta}F^\top) = t.
        \]
        
        \item $\hat{y} \geq \mathbf{0}$: for $i \in N$, we have $\hat{y}_i = \hat{x}_i (F\hat\Theta F^\top)_{ii} = \hat{x}_i (F_{i\cdot} \hat\Theta F_{i\cdot}^\top) \geq 0$
         because $\hat\Theta \succeq 0$ and $\hat{x}_i \geq 0$.
         
        \item $\hat{y} \leq \hat{x}$: first, we will prove that $\hat{y} \leq \mathbf{e}$. Let $E:= \Diag(\hat{x})^{\scriptscriptstyle 1/2}F$, and consider the (compact) singular value decomposition of $E=: U\Sigma V^\top$ with $U^\top U = V^\top V = I_{\hat{r}}$\,, where   $\hat{r}:= \rank(E)$. 
            Because $E^\top E = F^\top \Diag(\hat x)F = V\Sigma^2V^\top$, then from the construction of $\hat\Theta$ we have $V^\top \hat\Theta V = \Diag(\hat\beta_1,\dots,\hat\beta_{\hat r})$, and therefore
\[
M:=\Sigma V^\top \hat\Theta V\Sigma
  =\Diag(\hat\beta_1\hat\lambda_1,\dots,\hat\beta_{\hat r}\hat\lambda_{\hat r}).
\]
From Lemma \ref{Ni13} and the definition of $\hat\beta$, it follows that $0\leq \hat\beta_{i}\hat\lambda_i\leq 1$ for $i=1,\dots,{\hat r}$, and hence $0\preceq M \preceq I_{\hat{r}}$\,. Then, we have that
            \[
            UMU^\top = U\Sigma V^\top \hat\Theta V\Sigma U^\top =E\hat\Theta E^\top= \Diag(\hat{x})^{\scriptscriptstyle 1/2}F\hat\Theta F^\top \Diag(\hat{x})^{\scriptscriptstyle 1/2},
            \]
           and therefore $\diag(UMU^\top) = \hat{y}$.
        Because $0\preceq M\preceq I_{\hat{r}}$\,, from \cite[Obs. 7.7.2]{HJBook} we have that  $UMU^\top \preceq UU^\top \preceq I_n$\,. This implies that $\hat{y} \leq \mathbf{e}$.
                
        Next, we consider the partition of $N$ into three subsets:   $\mathcal{I}_0:=\{i\in N\,:\,\hat{x}_i = 0\}$,  $\mathcal{I}_1:=\{i\in N\,:\,\hat{x}_i = 1\}$ and   $\mathcal{I}_f:=\{i\in N\,:\,\hat{x}_i \in (0,1)\}$.
        \begin{enumerate}
            \item If $i \in \mathcal{I}_0$,  $\hat{y}_i \leq \hat{x}_i$ holds by construction, because  $\hat{y}_i = 0 = \hat{x}_i$\,. 
            \item   If $i \in \mathcal{I}_1$,  $\hat{y}_i \leq \hat{x}_i$ follows because $\hat{y}_i \leq 1 = \hat{x}_i$\,, as we proved above. 
        
           \item If $i \in \mathcal{I}_f$,
            let $\hat{d}:= \diag(F\hat\Theta F^\top)_j$\,, for every $j \in \mathcal{I}_f$ (this is well defined, due to Lemma \ref{lem:optimal_xhat_diagf}, part~(ii)). 
           We consider two cases.
           \begin{enumerate}
           \item Suppose that $|\mathcal{I}_1|=0$.  Then $\hat{y} = \hat{d} \hat{x}$ and so, as we already proved that $\mathbf{e}^\top \hat{y}=t$, we have that  $t = \mathbf{e}^\top \hat{y} = \hat{d} (\mathbf{e}^\top \hat{x}) = \hat{d}s$. Therefore, $\hat{d}=t/s\leq 1$, and so $\hat{y}_i =  \hat{d} \hat{x}_i \leq \hat{x}_i$\,.
            
            \item Suppose that $|\mathcal I_1| > 0$. Let $\hat\jmath\in \mathcal{I}_1$. Because $\hat{x}_{\hat\jmath} = 1$, we have $\diag(F\hat\Theta F^\top)_{\hat\jmath} =\diag(F\hat\Theta F^\top)_{\hat\jmath} \hat{x}_{\hat\jmath} =  \hat{y}_{\hat\jmath} \leq 1$, as we already proved that $\hat{y} \leq \mathbf{e}$. Moreover, $\hat{x}_{\hat\jmath} > \hat{x}_i$\,, so by Lemma \ref{lem:optimal_xhat_diagf}, part (a), we have $\diag(F\hat\Theta F^\top)_i \leq \diag(F\hat\Theta F^\top)_{\hat\jmath} \leq 1$. Then, 
            $
            \hat{y}_i =\hat{x}_i\diag(F\hat\Theta F^\top)_i \leq \hat{x}_i$\,.
           \end{enumerate}
        \end{enumerate}
    \end{itemize}
    For $\hat\Upsilon=\mathbf{e}$, we have $
f_{\mbox{\normalfont\protect\tiny $\Gamma$}}(\mathbf e;x,y)
=
\Gamma_t(F^\top \Diag(x)F),$
so the objective-function value is independent of $y$. Hence, because $\hat x$ is optimal
for \ref{ddgfact} and $\hat y=\hat{x}\circ \diag(F\hat\Theta F^\top) $ is feasible for
\ref{ddgfactscale}, the pair $(\hat x,\hat y)$ is optimal for
\ref{ddgfactscale} at $\hat\Upsilon=\mathbf{e}$. Finally, from Lemma \ref{lem:gradUps_gscaleddgfact}, we have  
$
    g_{\mbox{\normalfont\protect\tiny $\Upsilon$}}(\hat{x},\hat{y};\mathbf{e}) = \hat{x}\circ \diag(F\hat\Theta F^\top) - \hat{y} = \mathbf{0}.
$
Using arguments analogous to those  in \cite[Proposition 10]{Fatma2026}, it is straightforward to show that $\fgscalegfactthing(\Upsilon;\hat{x},\hat{y})$ is convex in $\log(\Upsilon)$. Moreover, as the gradient with respect to $\Upsilon$ vanishes at $\hat\Upsilon=\mathbf{e}$, which implies that the gradient with respect to $\log(\Upsilon)$ also vanishes at this point, we conclude that $\hat\Upsilon=\mathbf{e}$ is optimal. 
\end{proof}


\subsection{Optimizing generalized scaling}

Adapting the BFGS methodology from
\cite[Sec. 5]{gscale} for the generalized scaling for \ref{CMESP},  we describe an optimization procedure for computing a g-scaling
parameter $\hat\Upsilon$ that  minimizes the 
\ref{ddgfactscale} bound and  locally minimizes the \ref{gscaleglinx} bound. We present
the procedure for \ref{gscaleglinx}\,; the procedure for \ref{ddgfactscale}
is analogous, with $(\hat{x},\hat{y})$ replacing $\hat{X}$ and
$f_{\mbox{\normalfont\protect\tiny $\Gamma$}}(\hat{x},\hat{y};\hat\Upsilon)$ replacing
$\fgscaleglinx(\hat{X};\hat\Upsilon)$.

The optimization procedure proceeds iteratively. At each iteration, for
the current value of $\hat\Upsilon$, the matrix
$\hat{X}:=\hat{X}(\hat\Upsilon)$ is first obtained by solving
\ref{gscaleglinx} for the given $\hat\Upsilon$. The function
$\fgscaleglinx(\hat{X};\hat\Upsilon)$ is then locally minimized
with respect to $\log(\hat\Upsilon)$, treating $\hat{X}$ as fixed when
computing a gradient and performing the BFGS update. Although $\hat{X}$
is recomputed at every iteration as a function of $\hat\Upsilon$, it is
treated as constant during the update of $\hat\Upsilon$. This separation
is essential to ensure that, at every iteration, the
objective value of $\fgscaleglinx(\hat{X};\hat\Upsilon)$ remains
an upper bound for \ref{CGMESP}, and it follows the methodology of
\cite[Sec. 5]{gscale}.

We have the following remark:
We could also investigate a g-scaling analogue of the \ref{GNLP-Id} and \ref{GNLP-Id-comp} bounds. This approach was not considered in \cite{gscale}. For the NLP bound, the o-scaling parameter plays a more delicate role, because it influences the convexity of the relaxation. As a result, extending g-scaling to this setting is not straightforward. However, for a related problem, \cite{AugNLP_MERSP} proposed a diagonal-scaling approach for the NLP-Id bound, which suggests a possible direction for future work.

Finally, we present a simple example demonstrating that, in general, the \ref{gscaleglinx} bound is not convex in $\log(\Upsilon)$.

\begin{example}
    Let $n:=3$, $s:=2$, $t:=1$. Let $C:=vv^\top$ with $v:=(1,-1,1)^\top$, and $\hat{x}:=(1,1,0)^\top$, $\hat{X}:=ww^\top$ with $w:=\tfrac{1}{\sqrt 2}(1,1,0)^\top$. Note that $(\hat{x},\hat{X}) \in \Pnst$. Let $\Psi:=\log(\Upsilon)$ and $h(\Psi):=\fgscaleglinx(\exp(\Psi);\hat{X})$.  Take $\Psi^1:=(1,0,0)^\top$, $\Psi^2:=(3,0,0)^\top$, and $\theta:=1/2$, so that $\theta\Psi^1+(1-\theta)\Psi^2=(2,0,0)^\top$. Then,
\begin{itemize}
    \item $h\left(\theta\Psi^1+(1-\theta)\Psi^2\right) =  2\log(\exp(2)-1)-2-\log(2)\approx 1.0160$;
    \item $\theta h(\Psi^1)+(1-\theta)h(\Psi^2)=\log(\exp(1)-1)+\log(\exp(3)-1)-2-\log(2)\approx 0.7971$.
\end{itemize}
As $h\left(\theta\Psi^1+(1-\theta)\Psi^2\right)>\theta h(\Psi^1)+(1-\theta)h(\Psi^2)$, we conclude that $h$ is not convex in $\Psi$; equivalently, $\fgscaleglinx$ is not convex in $\log(\Upsilon)$.\hfill $\clubsuit$
\end{example}


\section{Down branching}\label{sec:BB}

In this section, we consider \ref{CGMESP} B\&B subproblems, where we have fixed variables $x_i = 0$ for $i \in F_0\subset N$, with $F_0\neq \emptyset$, i.e., ``down branching'' (see \cite[Section 3]{Johnson}). There are two equivalent ways of doing this. We can simply eliminate each such variable $x_i$\,, the associated rows and columns of $X$, the associated columns of
$A$, and the associated rows and columns of $C$. Alternatively, we can extend the $Ax\leq b$ constraints with $f:=|F_0|$ further constraints $x_i\leq 0$, for $i\in F_0$\,. While equivalent for \ref{CGMESP}, we will see that these two methods do not always produce equal bounds for a given convex-relaxation based bounding method.
We will consider applying the \ref{glinx} bound, the \ref{GNLP-Id} bound, and the \ref{GNLP-Id-comp} bound to such B\&B subproblems.

Formally, we consider the two possibilities:
\begin{itemize}
    \item[\rm($i$)] the matrix $C$ is replaced by its principal submatrix $C_{N\setminus F_0,N\setminus F_0}$\,, arriving at
    \begin{equation}\label{C-F0-out}\tag{$\mathcal{R}$}
       \mbox{CGMESP}(C_{N\setminus F_0,N\setminus F_0}\,,s,t,A_{\cdot\, N\setminus F_0}\,,b)  
    \end{equation}

    \item[\rm($ii$)] we employ additional side constraints $\mathcal{A}x\leq \mathbf{0}$ with $\mathcal{A} \in \mathbb{R}^{f \times n}$ such that  $\mathcal{A}_{\cdot \,F_0} := I_f$ and $\mathcal{A}_{\cdot \,N\setminus F_0} := 0$, arriving at 
    \begin{equation}\label{A-F0-out}\tag{$\mathcal{C}$}
        \mbox{CGMESP}(C,s,t,\left(\begin{smallmatrix}
            A\\
            \mathcal{A}
        \end{smallmatrix}\right),\left(\begin{smallmatrix}
            b\\
            \mathbf{0}
        \end{smallmatrix}\right)).  
    \end{equation}
\end{itemize}

\ref{C-F0-out} indicates that the subproblem is ``reduced''
and \ref{A-F0-out} indicates that the subproblem is 
``constrained''. 
With the following example, we demonstrate that, 
the bounds for  \ref{C-F0-out} can be different from the bounds for \ref{A-F0-out}.

\begin{example}\label{ex:RvsC}
    Consider the GMESP instance with $C := \left(\begin{smallmatrix}
        4 & 2 & 1 & 1\\
        2 & 2 & 1 & 0\\
        1 & 1 & 1 & 0\\
        1 & 0 & 0 & 2
    \end{smallmatrix}\right)$, $s := 2$ and $t := 1$. Let  $F_0 := \{1\}$. Then, we have 

\begin{center}
\begin{tabular}{c|ccc}
             procedure   & $\zglinx$   & $\zop$  &   $\zopcomp$  \\[2pt] \hline
\ref{C-F0-out}             & 1.148            & 0.962  & 1.545            \\ \hline
\ref{A-F0-out}          & 1.322            & 1.058  & 1.636              
\end{tabular}
\end{center}
 \hfill $\clubsuit$
\end{example}

With the following theorem, we establish that the glinx bound for \ref{C-F0-out} always dominates the glinx bound for \ref{A-F0-out}\,, as was illustrated (strictly) in Example \ref{ex:RvsC}. 

\begin{theorem}\label{thm:Rbetter1}
    $
        \zglinxthing(C_{N\setminus F_0,N\setminus F_0},s,t,A_{\cdot \, N\setminus F_0},b) \leq  \zglinxthing(C,s,t,\left(\begin{smallmatrix}
            A\\
            \mathcal{A}
        \end{smallmatrix}\right),\left(\begin{smallmatrix}
            b\\
            \mathbf{0}
        \end{smallmatrix}\right)).
    $
\end{theorem}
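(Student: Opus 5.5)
The plan is to take an optimal solution of the reduced glinx problem and embed it into the constrained glinx problem by padding with zeros. Then I will show that the constrained objective at the padded point is at least the reduced optimal value. Write $R:=N\setminus F_0$ and $\bar n:=n-f$. Let $(\tilde x,\tilde X)$ be optimal for \ref{glinx} applied to $\mbox{CGMESP}(C_{R,R},s,t,A_{\cdot R},b)$; if the reduced problem is infeasible the inequality is trivial. Define $\hat x\in\mathbb{R}^n$ by $\hat x_R:=\tilde x$ and $\hat x_{F_0}:=\mathbf{0}$. Define $\hat X\in\mathbb{S}^n$ by $\hat X_{R,R}:=\tilde X$, with all other entries zero.

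The first step is feasibility, which is routine. We have $\hat x\in[0,1]^n$ and $\mathbf{e}^\top\hat x=s$. Also $A\hat x=A_{\cdot R}\tilde x\le b$ and $\mathcal{A}\hat x=\hat x_{F_0}=\mathbf{0}$. Next, $\tr(\hat X)=t$ and $\hat X\succeq 0$. The matrix $\Diag(\hat x)-\hat X$ is block diagonal with blocks $\Diag(\tilde x)-\tilde X\succeq0$ and $0$, so it is positive semidefinite. Finally, each row norm of $\hat X$ either equals the corresponding row norm of $\tilde X$ (for $i\in R$) or is zero (for $i\in F_0$), so the row-norm constraints hold. Hence $(\hat x,\hat X)\in\Pnst$ satisfies the constrained side constraints.

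The main step is comparing objectives. After a permutation putting $R$ first, $\hat X=\left(\begin{smallmatrix}\tilde X&0\\0&0\end{smallmatrix}\right)$. Therefore
\[
C\hat XC+I_n-\hat X=\begin{pmatrix} C_{R,R}\tilde X C_{R,R}+I_{\bar n}-\tilde X & C_{R,R}\tilde X C_{R,F_0}\\ C_{F_0,R}\tilde X C_{R,R} & I_f + C_{F_0,R}\tilde X C_{R,F_0}\end{pmatrix}.
\]
This is $P+Q$ with $P:=\Diag(C_{R,R}\tilde XC_{R,R}+I_{\bar n}-\tilde X,\,I_f)$ and $Q:=B\tilde XB^\top-\Diag(C_{R,R}\tilde XC_{R,R},0)$, where $B:=C_{\cdot R}$ is the $n\times\bar n$ column block. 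The cleanest route is to write $Q=\left(\begin{smallmatrix}0\\ C_{F_0,R}\end{smallmatrix}\right)\tilde X(\cdots)^\top$ plus cross terms. Rather than fight the cross terms, I will use the Schur complement. The lower-right block $D:=I_f+C_{F_0,R}\tilde XC_{R,F_0}$ is positive definite with $\ldet(D)\ge0$. The Schur complement of $D$ is
\[
K:=C_{R,R}\tilde XC_{R,R}+I_{\bar n}-\tilde X - C_{R,R}\tilde X^{1/2}\bigl(\tilde X^{1/2}C_{R,F_0}D^{-1}C_{F_0,R}\tilde X^{1/2}\bigr)\tilde X^{1/2}C_{R,R}.
\]
Set $Y:=\tilde X^{1/2}C_{R,F_0}$. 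Then $Y D^{-1}Y^\top=Y(I+Y^\top Y)^{-1}Y^\top=I-(I+YY^\top)^{-1}$. This gives
\[
K=C_{R,R}\tilde X^{1/2}(I+YY^\top)^{-1}\tilde X^{1/2}C_{R,R}+I_{\bar n}-\tilde X.
\]
The determinant factors as $\ldet(C\hat XC+I_n-\hat X)=\ldet(D)+\ldet(K)$, with $\ldet(D)=\ldet(I+YY^\top)$. So the goal reduces to showing
\[
\ldet(I+YY^\top)+\ldet(K)\;\ge\;\ldet\bigl(C_{R,R}\tilde XC_{R,R}+I_{\bar n}-\tilde X\bigr).
\]

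This inequality is the expected obstacle. My first attempt will use $M:=C_{R,R}\tilde X^{1/2}$, $J:=I_{\bar n}-\tilde X\succeq0$, and $W:=I+YY^\top\succeq I$. The claim then reads
\[
\ldet W+\ldet(MW^{-1}M^\top+J)\ge\ldet(MM^\top+J).
\]
If $J\succ0$, I can factor out $J$: with $N:=J^{-1/2}M$, the claim becomes $\ldet W+\ldet(I+NW^{-1}N^\top)\ge\ldet(I+NN^\top)$. By Sylvester's identity, $\ldet(I+NW^{-1}N^\top)=\ldet(W+N^\top N)-\ldet W$. So the left side equals $\ldet(W+N^\top N)$, which is at least $\ldet(I+N^\top N)=\ldet(I+NN^\top)$ because $W\succeq I$. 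The singular case $J\not\succ0$ follows by continuity: perturb $J$ to $J+\epsilon I$ and let $\epsilon\to0$. This proves the claim. Halving gives the constrained glinx objective at $(\hat x,\hat X)$ at least the reduced optimum, and the theorem follows.
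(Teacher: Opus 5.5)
Your proof is correct. It differs from the paper's in which diagonal block of $L_{\mathcal{C}}=C\hat XC+I_n-\hat X$ you eliminate.

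The paper takes the Schur complement of the top-left block $L_{\mathcal{R}}$, the reduced glinx matrix. This gives $\ldet(L_{\mathcal{C}})=\ldet(L_{\mathcal{R}})+\ldet(I_f+C_{F_0,R}\tilde X^{1/2}\Psi\tilde X^{1/2}C_{R,F_0})$, where $\Psi=I-\tilde X^{1/2}C_{R,R}L_{\mathcal{R}}^{-1}C_{R,R}\tilde X^{1/2}$ and $R=N\setminus F_0$ as in your notation. The paper then observes that $\Psi$ and $I-\tilde X$ are the two Schur complements of the single matrix $\left(\begin{smallmatrix} L_{\mathcal{R}} & C_{R,R}\tilde X^{1/2}\\ \tilde X^{1/2}C_{R,R} & I\end{smallmatrix}\right)$. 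So $\Psi\succeq 0$ follows at once from $\tilde X\preceq I$, and the extra term is immediately nonnegative.

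You instead eliminate the $F_0$ block $D=I_f+Y^\top Y\succeq I$, which is always invertible. This moves the work into the auxiliary inequality $\ldet W+\ldet(MW^{-1}M^\top+J)\ge\ldet(MM^\top+J)$. Your proof of it is valid: normalize by $J^{1/2}$, use Sylvester's identity to collapse the left side to $\ldet(W+N^\top N)$, apply Loewner monotonicity with $W\succeq I$, and handle singular $J$ via $J+\epsilon I$ and continuity of $\det$.

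The trade-off is as follows. The paper's route is shorter and needs no perturbation, but it inverts $L_{\mathcal{R}}$, so it works under the (harmless) assumption $L_{\mathcal{R}}\succ 0$. Your route needs no such assumption and isolates a clean, self-contained determinant inequality. Both arguments use the constraint $X\preceq \Diag(x)$ in the same way, only through $I-\tilde X\succeq 0$. You should say this explicitly when you introduce $J$, since your factorization and the $J+\epsilon I$ perturbation both depend on it.

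Two small cleanups. The abandoned $P+Q$ paragraph can be deleted. You also do not need an optimal $(\tilde x,\tilde X)$, because the argument works pointwise for every feasible point of the reduced relaxation.
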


\begin{proof}
Assume without loss of generality that
$C = \left(\begin{smallmatrix}
   C_{N\setminus F_0,N\setminus F_0} & C_{N\setminus F_0,F_0}\\
    C_{F_0,N\setminus F_0}& C_{F_0,F_0}
\end{smallmatrix}\right)$. Let 
    \[
    \hat{x} = \left(\tilde{x}:=\hat{x}_{N\setminus F_0}, \hat{x}_{F_0}:=0\right), \quad \hat{X} = \begin{pmatrix}
        \tilde{X}:= \hat{X}_{N\setminus F_0,N\setminus F_0} & \hat{X}_{N\setminus F_0, F_0}:= 0\\
        \hat{X}_{F_0,N\setminus F_0}:= 0 & \hat{X}_{F_0,F_0}:= 0
    \end{pmatrix}
    \]   
   describe a bijection between the set of feasible solutions $(\hat{x},\hat{X})$ of $\mathcal{C}$ and the set of feasible solutions $(\tilde{x},\tilde{X})$ of $\mathcal{R}$.
Let $L_{\mathcal{R}}:= C_{N\setminus F_0,N\setminus F_0}\tilde{X}C_{N\setminus F_0,N\setminus F_0} + I_{n-f} - \tilde{X}$, and let $L_{\mathcal{C}} := C\hat{X}C + I_n -\hat{X}$.  Assume that $L_{\mathcal{R}}\succ 0$ and $L_{\mathcal{C}}\succ 0$. Then, we have that
\begin{itemize}
    \item the objective value of \ref{glinx} for \ref{C-F0-out} at $(\tilde{x},\tilde{X})$ is equal to $\textstyle\frac{1}{2}\ldet(L_{\mathcal{R}})$.
    \item the objective value of \ref{glinx} for \ref{A-F0-out} at $(\hat{x},\hat{X})$ is equal to $\textstyle\frac{1}{2}\ldet(L_{\mathcal{C}})$.
\end{itemize}
Note that 
\[
L_{\mathcal{C}} = \begin{pmatrix}
    L_{\mathcal{R}} & C_{N\setminus F_0,N\setminus F_0} \tilde{X}C_{N\setminus F_0,F_0}\\[5pt]
    C_{F_0,N\setminus F_0} \tilde{X} C_{N\setminus F_0,N\setminus F_0} & I_{f} + C_{F_0,N\setminus F_0}\tilde{X}C_{N\setminus F_0, F_0}
\end{pmatrix},
\]
so from the Schur-complement determinant formula, we have
\begin{align*}
   \ldet(L_{\mathcal{C}}) &=  \ldet(L_{\mathcal{R}}) + \ldet(I_{f} + C_{F_0,N\setminus F_0}\tilde{X}C_{N\setminus F_0, F_0} - C_{F_0,N\setminus F_0} \tilde{X} C_{N\setminus F_0,N\setminus F_0}L_{\mathcal{R}}^{-1} C_{N\setminus F_0,N\setminus F_0} \tilde{X}C_{N\setminus F_0,F_0})\\
    &= \ldet(L_{\mathcal{R}}) +\ldet(I_{f} + C_{F_0,N\setminus F_0}\tilde{X}^{\scriptscriptstyle 1/2}(I_{n-f} - \tilde{X}^{\scriptscriptstyle 1/2}C_{N\setminus F_0,N\setminus F_0} L_{\mathcal{R}}^{-1}C_{N\setminus F_0,N\setminus F_0} \tilde{X}^{\scriptscriptstyle 1/2})\tilde{X}^{\scriptscriptstyle 1/2}C_{N\setminus F_0,F_0})\\
    &= \ldet(L_{\mathcal{R}}) +\ldet(I_{f} + C_{F_0,N\setminus F_0}\tilde{X}^{\scriptscriptstyle 1/2}\Psi\tilde{X}^{\scriptscriptstyle 1/2}C_{N\setminus F_0,F_0}),
\end{align*}
where $\Psi := I_{n-f} - \tilde{X}^{\scriptscriptstyle 1/2}C_{N\setminus F_0,N\setminus F_0} L_{\mathcal{R}}^{-1}C_{N\setminus F_0,N\setminus F_0} \tilde{X}^{\scriptscriptstyle 1/2}$.

Then, in order to conclude that $\ldet(L_{\mathcal{C}}) \geq \ldet(L_{\mathcal{R}})$, it suffices to demonstrate that $\Psi\succeq0$. Consider a matrix 
\[
\Upsilon := \begin{pmatrix}
    L_\mathcal{R} & C_{N\setminus F_0,N\setminus F_0}\tilde{X}^{\scriptscriptstyle 1/2} \\
   \tilde{X}^{\scriptscriptstyle 1/2}C_{N\setminus F_0,N\setminus F_0}   & I_{n-f}
\end{pmatrix}.
\]
Note that $\Psi$ is the Schur complement of $L_{\mathcal{R}}$ in $\Upsilon$  and  $L_{\mathcal{R}} - C_{N\setminus F_0,N\setminus F_0}\tilde{X}C_{N\setminus F_0,N\setminus F_0}$ is the Schur complement of $I_{n-f}$ in $\Upsilon$.
Then, from \cite[Theorem 1.12]{SchurBook}, we have that 
\[
\Psi \succeq 0 \Leftrightarrow \Upsilon \succeq 0 \Leftrightarrow L_{\mathcal{R}} - C_{N\setminus F_0,N\setminus F_0}\tilde{X}C_{N\setminus F_0,N\setminus F_0} \succeq 0.
\]
Because $L_{\mathcal{R}} - C_{N\setminus F_0, N\setminus F_0}\tilde{X}C_{N\setminus F_0,N\setminus F_0} = I_{n-f} - \tilde{X}$ and $\tilde{X} \preceq \Diag(\tilde{x}) \preceq I_{ n-f}$\,, the result follows. 
\end{proof}

Next, we demonstrate that the GNLP-Id (resp., GNLP-Id$_{\rm{c}}$) bound for \ref{C-F0-out} always dominates the  GNLP-Id (resp., GNLP-Id$_{\rm{c}}$) for \ref{A-F0-out}\,, as was illustrated (strictly) in the example. 
\begin{lemma}\label{lem:natural_to_nlp_modified}
Let $C\in\mathbb{S}^n_{+}$\,, $0 < t < n$, $0 \preceq \hat{X} \preceq I_n$\,, with $\tr(\hat{X})=t$, and  $\gamma>0$\,. Define $\Theta(\gamma):= I_n + \hat{X}^{\scriptscriptstyle 1/2}(\gamma C-I_n)\hat{X}^{\scriptscriptstyle 1/2}$ and assume that $\Theta(\gamma) \succ 0$. Let 
     \begin{equation*} 
         h(\gamma) := \ldet( \Theta(\gamma)) - t\log(\gamma).
     \end{equation*}
Then $h(\gamma)$ is {nonincreasing} in $\gamma$ on $\left(0,1/\lambda_{\max}\right]$, and $h(\gamma)$ is {nondecreasing} in $\gamma$ on $\left[1/\lambda_{\min},\infty\right)$.
\end{lemma}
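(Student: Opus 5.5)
The plan is a direct derivative computation, with the sign of $h'$ settled by a Loewner-order comparison on each of the two intervals. First I rewrite $\Theta(\gamma)$ in a more convenient form. Since $\hat X^{1/2}\hat X^{1/2}=\hat X$,
\[
\Theta(\gamma)= I_n-\hat X+\gamma K,\qquad K:=\hat X^{1/2}C\hat X^{1/2}\succeq 0 .
\]
Because $0\preceq \hat X\preceq I_n$, we have $I_n-\hat X\succeq 0$, so $\Theta(\gamma)\succeq 0$ for every $\gamma>0$. Moreover, $\Theta(\gamma)$ is Loewner-nondecreasing in $\gamma$. Hence $h$ is smooth on any interval where $\Theta\succ 0$; where $\Theta$ is singular we read $h=-\infty$, and the monotonicity claims are interpreted on the subinterval where $\Theta\succ0$. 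That subinterval is an up-set in $\gamma$, by the monotonicity of $\Theta$.

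On that set I differentiate:
\[
h'(\gamma)=\tr(\Theta(\gamma)^{-1}K)-\tfrac{t}{\gamma}.
\]
Using $\gamma K=\Theta(\gamma)-(I_n-\hat X)$, $\tr(\Theta^{-1}\Theta)=n$ and $t=\tr(\hat X)$, this becomes
\[
\gamma h'(\gamma)= n-t-\tr\big(\Theta(\gamma)^{-1}(I_n-\hat X)\big)=\tr\big((I_n-\Theta(\gamma)^{-1})(I_n-\hat X)\big).
\]
So the sign of $h'$ is governed by comparing $\Theta(\gamma)$ with $I_n$, since $\tr(AB)\ge 0$ whenever $A,B\succeq 0$ and $I_n-\hat X\succeq0$.

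For $\gamma\in(0,1/\lambda_{\max}]$, we have $\gamma C\preceq I_n$. Conjugating by $\hat X^{1/2}$ gives $\gamma K\preceq \hat X$, hence $\Theta(\gamma)\preceq I_n$. Where $\Theta(\gamma)\succ0$, this gives $\Theta(\gamma)^{-1}\succeq I_n$. Then $I_n-\Theta^{-1}\preceq 0$, so the trace is $\le 0$ and $h'\le 0$: $h$ is nonincreasing.

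For $\gamma\ge 1/\lambda_{\min}$, we have $\gamma C\succeq I_n$. Then $\gamma K\succeq \hat X$, so $\Theta(\gamma)\succeq I_n\succ0$ automatically. Hence $\Theta^{-1}\preceq I_n$, the trace is $\ge0$, and $h$ is nondecreasing. (If $\lambda_{\min}=0$ this interval is empty and there is nothing to prove.)

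The main point needing care is the first interval, where $\Theta$ may be singular for small $\gamma$. I handle it with the monotonicity of $\Theta$ noted above: the set where $\Theta\succ0$ is an interval $[\gamma_0,\infty)$ or $(\gamma_0,\infty)$. On it $h$ is differentiable with $h'\le0$ up to $1/\lambda_{\max}$. Below $\gamma_0$, $h=-\infty$, which does not break nonincreasing-ness in the extended sense consistent with the lemma's standing assumption $\Theta(\gamma)\succ 0$.
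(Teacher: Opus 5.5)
Your proof is the paper's proof. You derive the same identity $\gamma h'(\gamma)=\tr\big((I_n-\Theta(\gamma)^{-1})(I_n-\hat X)\big)$ and then use the same Loewner comparisons: $\Theta(\gamma)\preceq I_n$ on $(0,1/\lambda_{\max}]$ and $\Theta(\gamma)\succeq I_n$ on $[1/\lambda_{\min},\infty)$.

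One correction to your closing paragraph. Values $h=-\infty$ just below some $\gamma_0\in(0,1/\lambda_{\max})$, with finite values above it, would violate nonincreasing-ness, not be consistent with it. The case is harmless only because it never occurs. Both summands of $\Theta(\gamma)=I_n-\hat X+\gamma\hat X^{1/2}C\hat X^{1/2}$ are PSD, so $\ker\Theta(\gamma)=\ker(I_n-\hat X)\cap\ker C$ for every $\gamma>0$. Hence $\Theta(\gamma)\succ 0$ at one $\gamma>0$ forces it at all $\gamma>0$, and $h$ is finite and smooth on all of $(0,\infty)$.
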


\begin{proof}
We have  
\begin{equation*} 
\begin{array}{ll}
    h'(\gamma) &= \gamma^{-1}   \tr(\Theta(\gamma)^{-1}\gamma \hat{X}^{\scriptscriptstyle 1/2} C\hat{X}^{\scriptscriptstyle 1/2} ) - \gamma^{-1}t\\
    &=\gamma^{-1}\tr(\Theta(\gamma)^{-1}(\Theta(\gamma) - (I_n-\hat{X}))) -  \gamma^{-1}\tr(\hat{X}) \\
     &=\gamma^{-1}\tr(I_n - \Theta(\gamma)^{-1} (I_n-\hat{X})) -  \gamma^{-1}\tr(\hat{X}) \\
     &=\gamma^{-1}\tr((I_n - \hat{X}) - \Theta(\gamma)^{-1} (I_n-\hat{X})) \\
    &= \gamma^{-1}\tr((I_n - \Theta(\gamma)^{-1})(I_n - \hat{X}))\\
     &= \gamma^{-1}\tr((I_n - \hat{X})^{\scriptscriptstyle 1/2}(I_n-\Theta(\gamma)^{-1})(I_n - \hat{X})^{\scriptscriptstyle 1/2}).
\end{array}
\end{equation*}
For $\gamma \in (0, 1/\lambda_{\max}]$\,, we have $\gamma C \preceq I_n \Rightarrow \Theta(\gamma) \preceq I_n \Rightarrow \Theta(\gamma)^{-1} \succeq I_n\,,$ so we conclude that $h$ is nonincreasing on the interval $(0,1/\lambda_{\max}]$.  For $\gamma \in [1/\lambda_{\min},\infty)$, we have $\gamma C \succeq I_n \Rightarrow \Theta(\gamma) \succeq I_n \Rightarrow \Theta(\gamma)^{-1} \preceq I_n\,,$ so we conclude that $h$ is nondecreasing on the interval $[1/\lambda_{\min},\infty)$. 
\end{proof}

\begin{theorem}\label{thm:Rbetter2}
     \phantom{.}
     \begin{enumerate}
         \item[\rm($i$)] $\zopthing(C_{N\setminus F_0,N\setminus F_0}\,,s,t,A_{\cdot \,N\setminus F_0}\,,b) \leq  \zopthing(C,s,t,\left(\begin{smallmatrix}
            A\\
            \mathcal{A}
        \end{smallmatrix}\right),\left(\begin{smallmatrix}
            b\\
            \mathbf{0}
        \end{smallmatrix}\right))$.
         \item[\rm($ii$)] $\zopcompthing(C_{N\setminus F_0,N\setminus F_0}\,,s,t,A_{\cdot \,N\setminus F_0}\,,b) \leq  \zopcompthing(C,s,t,\left(\begin{smallmatrix}
            A\\
            \mathcal{A}
        \end{smallmatrix}\right),\left(\begin{smallmatrix}
            b\\
            \mathbf{0}
        \end{smallmatrix}\right))$.
     \end{enumerate}
\end{theorem}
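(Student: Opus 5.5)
The plan is to reuse the bijection between feasible sets from the proof of Theorem \ref{thm:Rbetter1}, and to reduce the comparison to the monotonicity statement of Lemma \ref{lem:natural_to_nlp_modified}, applied to the reduced matrix $C_{\mathcal R}:=C_{N\setminus F_0,N\setminus F_0}$. Throughout I will use the $H$- and $G$-free forms of the objectives from the proof of Proposition \ref{prop:GNLP-Id-invariant}. For \ref{GNLP-Id} the objective at $X$ is $\ldet(I+X^{1/2}(\gamma C-I)X^{1/2})-t\log\gamma$ with $\gamma=1/\lambda_{\max}$. For \ref{GNLP-Id-comp} it is the same expression with $\gamma=1/\lambda_{\min}$. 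Here $\lambda_{\max}$ and $\lambda_{\min}$ are taken for whichever matrix defines the relaxation.

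\textbf{Step 1 (evaluating \ref{A-F0-out} at an embedded point).} As before, feasible $(\hat x,\hat X)$ of \ref{A-F0-out} correspond bijectively to feasible $(\tilde x,\tilde X)$ of \ref{C-F0-out}, where $\hat X$ is $\tilde X$ padded with zeros. Since $\hat X\succeq0$ is block diagonal with a zero block on $F_0$, the matrix $\hat X^{1/2}$ is $\tilde X^{1/2}$ padded with zeros. Hence
\[
I_n+\hat X^{1/2}(\gamma C-I_n)\hat X^{1/2}=\begin{pmatrix} I_{n-f}+\tilde X^{1/2}(\gamma C_{\mathcal R}-I_{n-f})\tilde X^{1/2} & 0\\ 0 & I_f\end{pmatrix}.
\]
Its $\ldet$ therefore equals $\ldet(\Theta_{\mathcal R}(\gamma))$, where $\Theta_{\mathcal R}$ is the matrix of Lemma \ref{lem:natural_to_nlp_modified} built from $C_{\mathcal R}$ and $\tilde X$. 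So, with $h$ as in that lemma (for $C_{\mathcal R}$ and $\tilde X$), the \ref{A-F0-out} objective at $(\hat x,\hat X)$ is $h(1/\lambda_{\max}(C))$ for \ref{GNLP-Id} and $h(1/\lambda_{\min}(C))$ for \ref{GNLP-Id-comp}. Likewise, the \ref{C-F0-out} objective at $(\tilde x,\tilde X)$ is $h(1/\lambda_{\max}(C_{\mathcal R}))$, respectively $h(1/\lambda_{\min}(C_{\mathcal R}))$.

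\textbf{Step 2 (interlacing and monotonicity).} By Cauchy interlacing for principal submatrices, $\lambda_{\max}(C_{\mathcal R})\le\lambda_{\max}(C)$ and $\lambda_{\min}(C_{\mathcal R})\ge\lambda_{\min}(C)$. For ($i$) this gives $1/\lambda_{\max}(C)\le 1/\lambda_{\max}(C_{\mathcal R})$. Both points lie in $(0,1/\lambda_{\max}(C_{\mathcal R})]$, where $h$ is nonincreasing, so $h(1/\lambda_{\max}(C))\ge h(1/\lambda_{\max}(C_{\mathcal R}))$. For ($ii$), $C\succ0$ makes $C_{\mathcal R}\succ0$, and $1/\lambda_{\min}(C)\ge 1/\lambda_{\min}(C_{\mathcal R})$. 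Both points lie in $[1/\lambda_{\min}(C_{\mathcal R}),\infty)$, where $h$ is nondecreasing, giving the same inequality. The hypotheses of the lemma hold because $0\preceq\tilde X\preceq\Diag(\tilde x)\preceq I$ and $\tr(\tilde X)=t$. Taking the maximum over the bijectively matched feasible sets then yields both inequalities.

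\textbf{Main obstacle.} The delicate point is the lemma's assumption $\Theta(\gamma)\succ0$ along the whole interval between the two values of $\gamma$, and the possibility that the objective equals $-\infty$. In ($ii$) this is automatic, since $\Theta_{\mathcal R}(\gamma)\succeq I$ there. In ($i$), for $\gamma\le 1/\lambda_{\max}(C_{\mathcal R})$ the matrix $\gamma C_{\mathcal R}\tilde X$ governs definiteness, and I expect that $\Theta_{\mathcal R}(\gamma)$ is singular for one $\gamma$ in this range iff it is singular for all. If so, either both objectives are $-\infty$, which is trivial, or positive definiteness holds throughout and the lemma applies on the interval. The other possible obstacle is the edge case $t=n-f$ excluded by the lemma's hypothesis $t<n$. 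In that case $\tilde X=I$ is forced, and I would handle it by direct computation.
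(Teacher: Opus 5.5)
Your proposal is correct and follows essentially the paper's proof. The paper uses the same zero-padding bijection, the same interlacing facts $\lambda_{\max}(C_{\mathcal R})\le\lambda_{\max}(C)$ and $\lambda_{\min}(C_{\mathcal R})\ge\lambda_{\min}(C)$, and Lemma~\ref{lem:natural_to_nlp_modified} applied to the reduced data; it writes the constrained objective through the rows $H_{(N\setminus F_0)\cdot}$ rather than padding $\hat X^{1/2}$, which is equivalent. Your positive-definiteness concern, which the paper sidesteps by simply assuming $L_{\mathcal R},L_{\mathcal C}\succ0$, resolves as you expected: $\Theta_{\mathcal R}(\gamma)=(I-\tilde X)+\gamma\,\tilde X^{1/2}C_{\mathcal R}\tilde X^{1/2}$ is a sum of two positive-semidefinite matrices whose common kernel does not depend on $\gamma>0$.
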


\begin{proof}
        Assume without loss of generality that
$C = \left(\begin{smallmatrix}
   C_{N\setminus F_0,N\setminus F_0} & C_{N\setminus F_0,F_0}\\
    C_{F_0,N\setminus F_0}& C_{F_0,F_0}
\end{smallmatrix}\right)$. Let 
    \[
    \hat{x} = \left(\tilde{x}:=\hat{x}_{N\setminus F_0}, \hat{x}_{F_0}:=0\right), \quad \hat{X} = \begin{pmatrix}
        \tilde{X}:= \hat{X}_{N\setminus F_0,N\setminus F_0} & \hat{X}_{N\setminus F_0, F_0}:= 0\\
        \hat{X}_{F_0,N\setminus F_0}:= 0 & \hat{X}_{F_0,F_0}:= 0
    \end{pmatrix}
    \]   
   describe a bijection between the set of feasible solutions 
      $(\tilde{x},\tilde{X})$ of  \ref{C-F0-out}
   and the set of feasible solutions 
   $(\hat{x},\hat{X})$ of \ref{A-F0-out}.

\medskip

\noindent ($i$):~ Let $\tilde{\lambda}_{\max} := \lambda_{1}(C_{N\setminus  F_0 ,N\setminus  F_0 })$.  Consider any factorizations 
$HH^\top  = I_{n} - (1/{\lambda_{\max}})C$ and 
$H_{\scriptscriptstyle\mathcal{R}}H_{\scriptscriptstyle\mathcal{R}}^\top  = I_{n-f} - (1/{\tilde\lambda_{\max}})C_{N\setminus F_0, N\setminus F_0 }$ where $H \in \mathbb{R}^{n\times n}$ and $H_{\scriptscriptstyle\mathcal{R}} \in \mathbb{R}^{(n-f)\times (n-f)}$. Let 
$L_{\mathcal{C}} := I_n - H^\top \hat{X} H$, and let $L_{\mathcal{R}} := I_{n-f} - H_{\scriptscriptstyle\mathcal{R}}^\top \tilde{X} H_{\scriptscriptstyle\mathcal{R}}$.  Assume that $L_{\mathcal{R}}\succ 0$ and $L_{\mathcal{C}}\succ 0$. 
Then, we have that
\begin{itemize}
    \item the objective value of \ref{GNLP-Id} for \ref{C-F0-out} at $(\tilde{x},\tilde{X})$ is equal to $\ldet(L_{\mathcal{R}}) + t\log(\tilde\lambda_{\max})$.
    \item the objective value of \ref{GNLP-Id} for \ref{A-F0-out} at $(\hat{x},\hat{X})$ is equal to $\ldet(L_{\mathcal{C}})+ t\log(\lambda_{\max})$\,.
\end{itemize}
We have
        \begin{align*}
        \ldet(L_{\mathcal{C}}) + t\log(\lambda_{\max})
        &=
        \ldet(I_n - H^\top \hat{X} H) + t\log(\lambda_{\max})\\
        &=  \ldet(I_n - H_{(N\setminus F_0)\, \cdot}^\top \tilde{X} H_{(N\setminus F_0)\, \cdot}) + t\log(\lambda_{\max})\\
        &=  \ldet(I_{n-f} -\tilde{X}^{\scriptscriptstyle 1/2}  H_{(N\setminus F_0)\, \cdot} H_{(N\setminus F_0)\, \cdot}^\top \tilde{X}^{\scriptscriptstyle 1/2} ) + t\log(\lambda_{\max})\\
        &=  \ldet(I_{n-f} -\tilde{X}^{\scriptscriptstyle 1/2}  (HH^\top)_{N\setminus F_0, N\setminus F_0 }\tilde{X}^{\scriptscriptstyle 1/2} ) + t\log(\lambda_{\max})\\
        &=  \ldet(I_{n-f} -\tilde{X}^{\scriptscriptstyle 1/2}  (I_{n-f} - (1/{\lambda_{\max}})C_{N\setminus F_0, N\setminus F_0 })\tilde{X}^{\scriptscriptstyle 1/2} ) + t\log(\lambda_{\max})\\
        &\geq  \ldet(I_{n-f} -\tilde{X}^{\scriptscriptstyle 1/2}  (I_{n-f} - (1/{\tilde\lambda_{\max}})C_{N\setminus F_0, N\setminus F_0 })\tilde{X}^{\scriptscriptstyle 1/2} ) + t\log(\tilde\lambda_{\max})\\
        &=  \ldet(I_{n-f} -\tilde{X}^{\scriptscriptstyle 1/2} H_{\scriptscriptstyle\mathcal{R}}H_{\scriptscriptstyle\mathcal{R}}^\top\tilde{X}^{\scriptscriptstyle 1/2} ) + t\log(\tilde\lambda_{\max})\\
        &=  \ldet(I_{n-f} - H_{\scriptscriptstyle\mathcal{R}}^\top \tilde{X} H_{\scriptscriptstyle\mathcal{R}} ) + t\log(\tilde\lambda_{\max})=  \ldet(L_{\mathcal{R}} ) + t\log(\tilde\lambda_{\max}),
    \end{align*}
    where the inequality comes from Lemma \ref{lem:natural_to_nlp_modified} as  $\lambda_{\max} \geq \tilde\lambda_{\max}$\, (see, e.g.,  \cite[Theorem 4.3.8]{HJBook}).    
    
\medskip

\noindent ($ii$):~  Let $\tilde{\lambda}_{\min} := \lambda_{n-f}(C_{N\setminus  F_0 ,N\setminus  F_0 })$.  
        Consider any factorizations 
$GG^\top  = (1/{\lambda_{\min}})C - I_n$ and 
$G_{\scriptscriptstyle\mathcal{R}}G_{\scriptscriptstyle\mathcal{R}}^\top  = (1/{\tilde\lambda_{\min}})C_{N\setminus F_0, N\setminus F_0 } - I_{n-f}$ where $G \in \mathbb{R}^{n\times n}$ and $G_{\scriptscriptstyle\mathcal{R}} \in \mathbb{R}^{(n-f)\times (n-f)}$.  Let 
$L_{\mathcal{C}} := I_n + G^\top \hat{X} G$, and let $L_{\mathcal{R}} := I_{n-f} + G_{\scriptscriptstyle\mathcal{R}}^\top \tilde{X} G_{\scriptscriptstyle\mathcal{R}}$. 
Assume that $L_{\mathcal{R}}\succ 0$ and $L_{\mathcal{C}}\succ 0$. 
Then, we have that
        \begin{itemize}
            \item the objective value of \ref{GNLP-Id-comp} for \ref{C-F0-out} at $(\tilde{x},\tilde{X})$ is equal to $\ldet(L_{\mathcal{R}}) + t\log(\tilde\lambda_{\min})$.
         \item the objective value of \ref{GNLP-Id-comp} for \ref{A-F0-out} at $(\hat{x},\hat{X})$ is equal to $\ldet(L_{\mathcal{C}}) + t\log(\lambda_{\min})$.
\end{itemize}
We have
   \begin{align*}
        \ldet(L_{\mathcal{C}}) + t\log(\lambda_{\min})&=\ldet(I_n + G^\top \hat{X} G) + t\log(\lambda_{\min})\\
        &=  \ldet(I_{n} + G_{(N\setminus F_0)\, \cdot}^\top \tilde{X} G_{(N\setminus F_0)\, \cdot}) + t\log(\lambda_{\min})\\
        &=  \ldet(I_{n-f} +\tilde{X}^{\scriptscriptstyle 1/2}  G_{(N\setminus F_0)\, \cdot} G_{(N\setminus F_0)\, \cdot}^\top \tilde{X}^{\scriptscriptstyle 1/2} ) + t\log(\lambda_{\min})\\
        &=  \ldet(I_{n-f} +\tilde{X}^{\scriptscriptstyle 1/2}  (GG^\top)_{N\setminus F_0, N\setminus F_0 }\tilde{X}^{\scriptscriptstyle 1/2} ) + t\log(\lambda_{\min})\\
        &=  \ldet(I_{n-f} -\tilde{X}^{\scriptscriptstyle 1/2}  (I_{n-f} - (1/{\lambda_{\min}})C_{N\setminus F_0, N\setminus F_0 })\tilde{X}^{\scriptscriptstyle 1/2} ) + t\log(\lambda_{\min})\\
        &\geq \ldet(I_{n-f} -\tilde{X}^{\scriptscriptstyle 1/2}  (I_{n-f} - (1/{\tilde\lambda_{\min}})C_{N\setminus F_0, N\setminus F_0 })\tilde{X}^{\scriptscriptstyle 1/2} ) + t\log(\tilde\lambda_{\min})\\
        &= \ldet(I_{n-f} +\tilde{X}^{\scriptscriptstyle 1/2}  G_{\scriptscriptstyle\mathcal{R}} G_{\scriptscriptstyle\mathcal{R}}^\top\tilde{X}^{\scriptscriptstyle 1/2} ) + t\log(\tilde\lambda_{\min})\\
        &= \ldet(I_{n-f} +G_{\scriptscriptstyle\mathcal{R}}^\top\tilde{X} G_{\scriptscriptstyle\mathcal{R}}) + t\log(\tilde\lambda_{\min}) = \ldet(L_{\mathcal{R}}) + t\log(\tilde\lambda_{\min}),
    \end{align*}
     where the inequality comes from Lemma \ref{lem:natural_to_nlp_modified} as   $\lambda_{\min} \leq \tilde\lambda_{\min}$\, (see, e.g.,  \cite[Theorem 4.3.8]{HJBook}).    
\smallskip

    The result follows.
    \end{proof}

\begin{remark}
 With Example \ref{ex:RvsC} and 
 Theorems \ref{thm:Rbetter1} and \ref{thm:Rbetter2}, we saw the 
 superiority of the reduced formulation \ref{C-F0-out}
versus the constrained formulation \ref{A-F0-out}. In 
addition, the size of \ref{C-F0-out} is smaller than that of 
\ref{A-F0-out}, so typically we can expect that 
working with \ref{C-F0-out} versus
\ref{A-F0-out} should be better from the 
perspective of computational effort to solve relaxations. 
\end{remark}

\begin{remark}\label{rem:fixat1}
If we wish to fix variables $x_i = 1$ for $i \in F_1\subset N$, with $F_1 \neq \emptyset$, i.e., ``up branching'' (see \cite[Section 3]{Johnson}), we only know one way to do this, which is
analogous to \ref{A-F0-out}; we simply 
extend the $Ax\leq b$ constraints with $|F_1|$ further constraints $-x_i\leq -1$, for $i\in F_1$\,.
It would be of great interest to find a better method for fixing at one, just as we have established that \ref{C-F0-out}
is better than \ref{A-F0-out} for fixing at zero.
We do know how to do this for CMESP (see \cite[Chapter 2]{FLbook}, for example; also see \cite[Section 4.2]{MESP2DOPT}), using the Schur-complement determinant formula.
\end{remark}


\section{Numerical experiments}\label{sec:numexp}

We used the general-purpose solver KNITRO
14.0.0 (see \cite{KNITRO}), via the Julia wrapper KNITRO.jl v0.14.8,
and the conic solver Hypatia 11.0.1 (see \cite{coey2022solving}) to solve the semidefinite problems. 
We used the parameter settings for the solvers aiming at their best performance. For KNITRO, we used the following settings:
\texttt{CONVEX = true}, \texttt{FEASTOL} = $10^{-6}$ (feasibility tolerance), \texttt{OPTTOL} = $10^{-6}$ (absolute optimality tolerance),
\texttt{ALGORITHM} = 1 (Interior/Direct algorithm), 
\texttt{HESSOPT} = 2 (KNITRO computes a quasi-Newton BFGS Hessian). 

We ran our experiments on ``zebratoo'' (running Windows Server 2022 Standard):
two Intel Xeon Gold 6444Y 3.60GHz processors, with 16 cores each, and 128 GB of memory.

When comparing different upper bounds for our test instances, we present the gaps defined by the difference between the upper bounds for \ref{CGMESP}, and the lower bounds computed by a local-search heuristic.

We observed in our experiments that the SOC constraints are both weak and computationally expensive to incorporate into the interior-point solvers. From a theoretical standpoint, these constraints also appear to be ineffective; see Section \ref{sec:SOCP}.  Therefore, in this section, we present results for the  \ref{scaleglinx} bound, the \ref{gscaleglinx} bound, the  \ref{GNLP-Id} bound and the \ref{GNLP-Id-comp} bound obtained without the SOC constraints. 


\subsection{Constructing dual-feasible solutions for GMESP} 
To certify a valid upper bound for  GMESP and avoid drawing incorrect conclusions from \emph{near}-optimal solutions, we rigorously construct dual  solutions for the relaxations of GMESP. For the \ref{ddgfact} bound, \cite{GMESP_Alg} proposed  a closed-form construction of dual solutions yielding a small duality gap by adapting the approach in \cite{Weijun}. For the extended-variable relaxations proposed in Section \ref{sec:contrel}, we follow a similar procedure for all formulations. We explain the procedure for \ref{glinx}; the constructions for \ref{GNLP-Id} and \ref{GNLP-Id-comp} are analogous.

First, we extract the dual variable $\hat Z$ associated with the
constraint $\Diag(x) - X \succeq 0$ from the interior-point solver.
Because the solver may return a $\hat{Z} \not\succeq 0$, we project
$\hat{Z}$ onto the PSD cone by computing its eigendecomposition and
setting all negative eigenvalues to zero.  Next, note that when there
are no constraints $Ax\leq b$, we may set $\hat\pi:=\mathbf{0}$.  We
also set $\hat{W} := 0$, $\hat\eta:=\mathbf{0}$, as the SOC constraints
are not included.  Then, we define $\hat\tau,\hat\nu,\hat\upsilon$
following the construction given in
Appendix \ref{app:construct_dgfact}, after replacing
$F\hat\Theta F^\top$ with $\hat Z$.  This implies that
\[
\hat\upsilon - \hat\nu - A^\top \hat\pi - \hat\tau\mathbf{e}
       + \hat\eta + \diag(\hat Z) = 0.
\]
We then construct a feasible dual solution for the second constraint,
\[
\frac{1}{2}(W+W^\top) - \Omega + Z - C\Theta C + \Theta + \xi I_n = 0.
\]
Following a similar approach to that in \cite[Sec. 3.3.4]{FLbook}, we
define $\hat\Theta := \frac{1}{2}(C\hat{X}C + I_n - \hat{X})^{-1}$,
where $(\hat x,\hat X)$ is the primal solution returned by the
interior-point solver.  Set
$\hat M := \hat Z - C\hat\Theta C + \hat\Theta$ and define
\[
\hat\xi := -\lambda_n(\hat M), \qquad
\hat\Omega := \hat M + \hat\xi I_n
            = \hat Z - C\hat\Theta C + \hat\Theta + \hat\xi I_n\,.
\]
By construction, $\hat\Omega$ satisfies the second constraint with
equality.  Moreover, the choice
$\hat\xi := -\lambda_n(\hat M)$ ensures that $\hat\Omega \succeq 0$. Therefore, we conclude that $(\hat\Theta,\hat\upsilon,\hat\nu,\hat\pi,
\hat\tau,\hat\xi,\hat Z,\hat\Omega,\hat\eta)$ is a feasible solution
for \ref{eq:dual_glinx}.


\subsection{Bound comparison}

In our first experiment, we  use a benchmark covariance matrix of dimension $n=63$,  originally  
obtained from J. Zidek (University of British Columbia), coming from an application for re-designing an environmental monitoring network;
see \cite{Guttorp-Le-Sampson-Zidek1993} and \cite{HLW}. This  matrix has been used extensively in testing and developing algorithms for $\MESP$; see \cite{KLQ,LeeConstrained,AFLW_Using,LeeWilliamsILP,HLW,AnstreicherLee_Masked,BurerLee,Anstreicher_BQP_entropy,Kurt_linx,Mixing,FactPaper,PFLXadmm}.  
We note that \cite{GMESP_Alg} reported good results when $\kappa := s-t$ was small; the largest value considered was $\kappa = 3$. Their approach performs well when $\GMESP$ is similar to $\MESP$. In what follows, we will see that our approach for \ref{scaleglinx} (our results for \ref{scaleglinx} are computed with the optimal o-scaling parameter $\gamma$; see Subsection \ref{sec:linx}) is much more robust in terms of $\kappa$, showing that  \ref{scaleglinx} remains a \emph{very strong} relaxation even when $\kappa$ is very large, such as $\kappa = 24$. 

In Figure \ref{fig:varying_t_for_different_kappa63}, we present results analogous to those shown in \cite[Figure 1]{GMESP_Alg}, while also consider larger values of $\kappa$. For $\kappa = 0$, the experiments reduce to $\MESP$, as the spectral,  \ref{ddgfact} and  \ref{scaleglinx} bounds coincide with their $\MESP$ bounds when $s = t$. In this setting,  \ref{ddgfact} and \ref{scaleglinx} are competitive, with no clear dominance; moreover, the complementary factorization bound for $\MESP$ performs poorly
 on these instances (see \cite{FactPaper}). 
 This behavior changes in the $\GMESP$ setting. We observe that  \ref{scaleglinx} consistently outperforms \ref{ddgfact}, even for $\kappa = 1$. Furthermore,  \ref{scaleglinx} remains stable as $\kappa$ increases,
in sharp contrast with the deterioration observed for \ref{ddgfact}. 
 Even for a very large value of $\kappa$, all gaps associated with  \ref{scaleglinx} remain below one, highlighting the effectiveness of this new proposed bound. We do not show  results for \ref{GNLP-Id} and \ref{GNLP-Id-comp} in Figure \ref{fig:varying_t_for_different_kappa63} because they were not competitive for the instances considered there. Finally,  as noted in \cite[Figure 1]{GMESP_Alg},  the spectral bound becomes more competitive with the \ref{ddgfact} bound as $\kappa$ increases. Now, our results show that, for larger  values of $\kappa$, the spectral bound can significantly outperforms \ref{ddgfact} bound. Nevertheless, it remains consistently weaker than the \ref{scaleglinx} bound.

\begin{figure}[!ht] 
    \centering
    \subfloat[$\kappa = 0$]{{\includegraphics[width=0.4\textwidth]{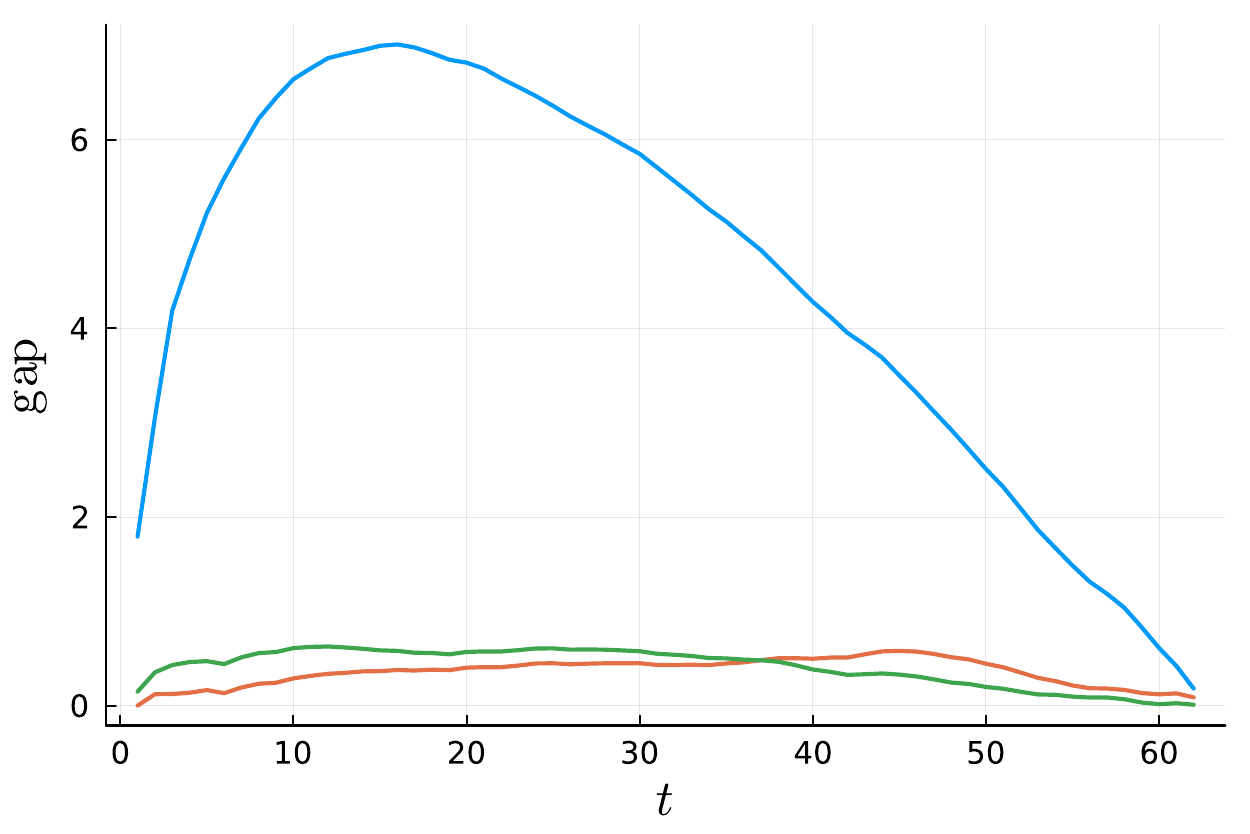} }}
    \subfloat[$\kappa = 1$]{{\includegraphics[width=0.4\textwidth]{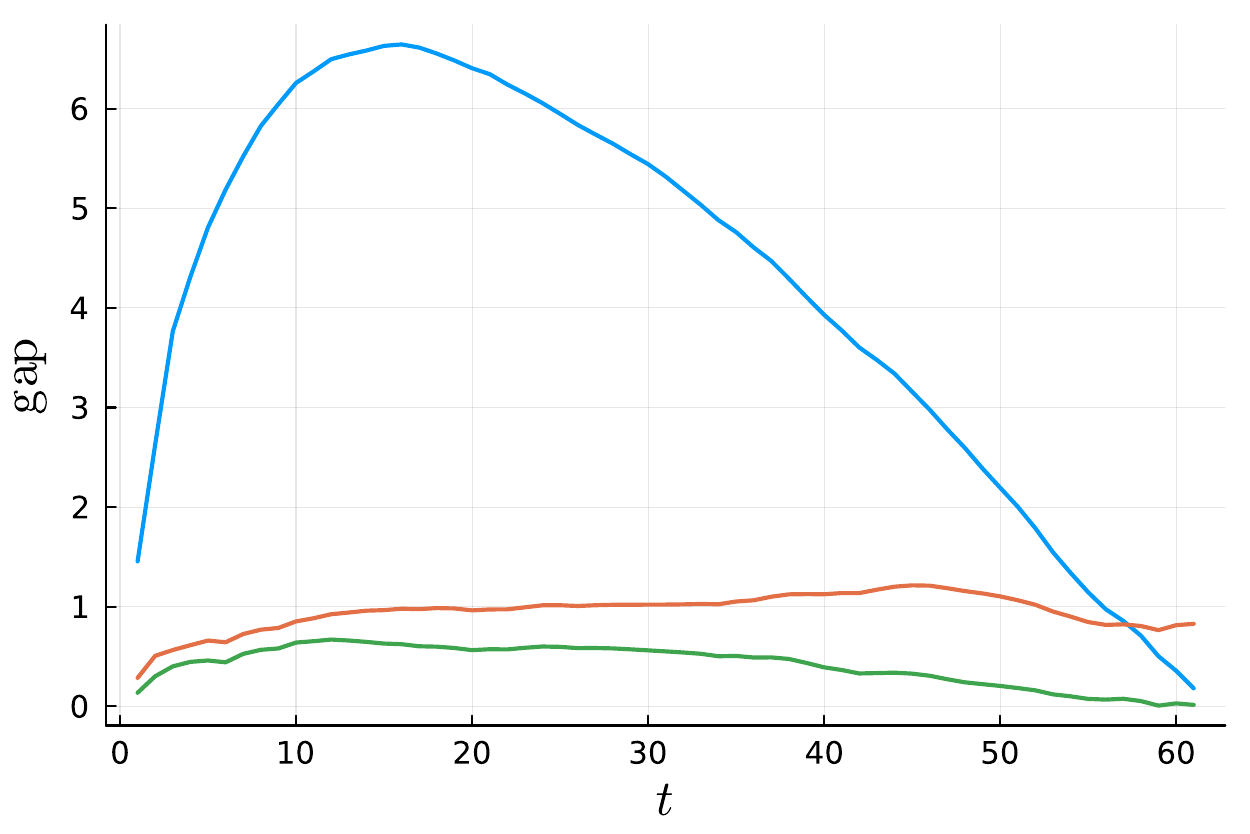} }}
    
    \subfloat[$\kappa = 12$]{{\includegraphics[width=0.4\textwidth]{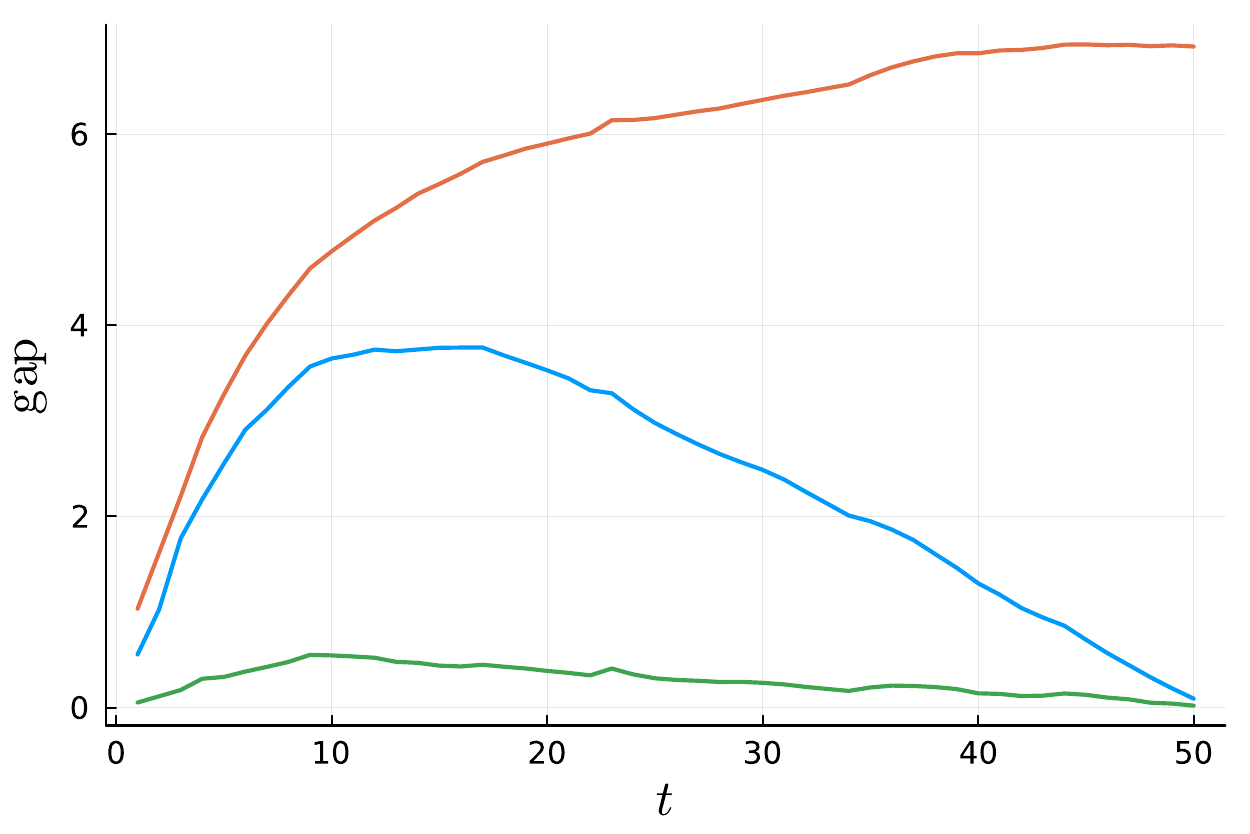} }}%
    ~
    \subfloat[$\kappa = 24$]{{\includegraphics[width=0.4\textwidth]{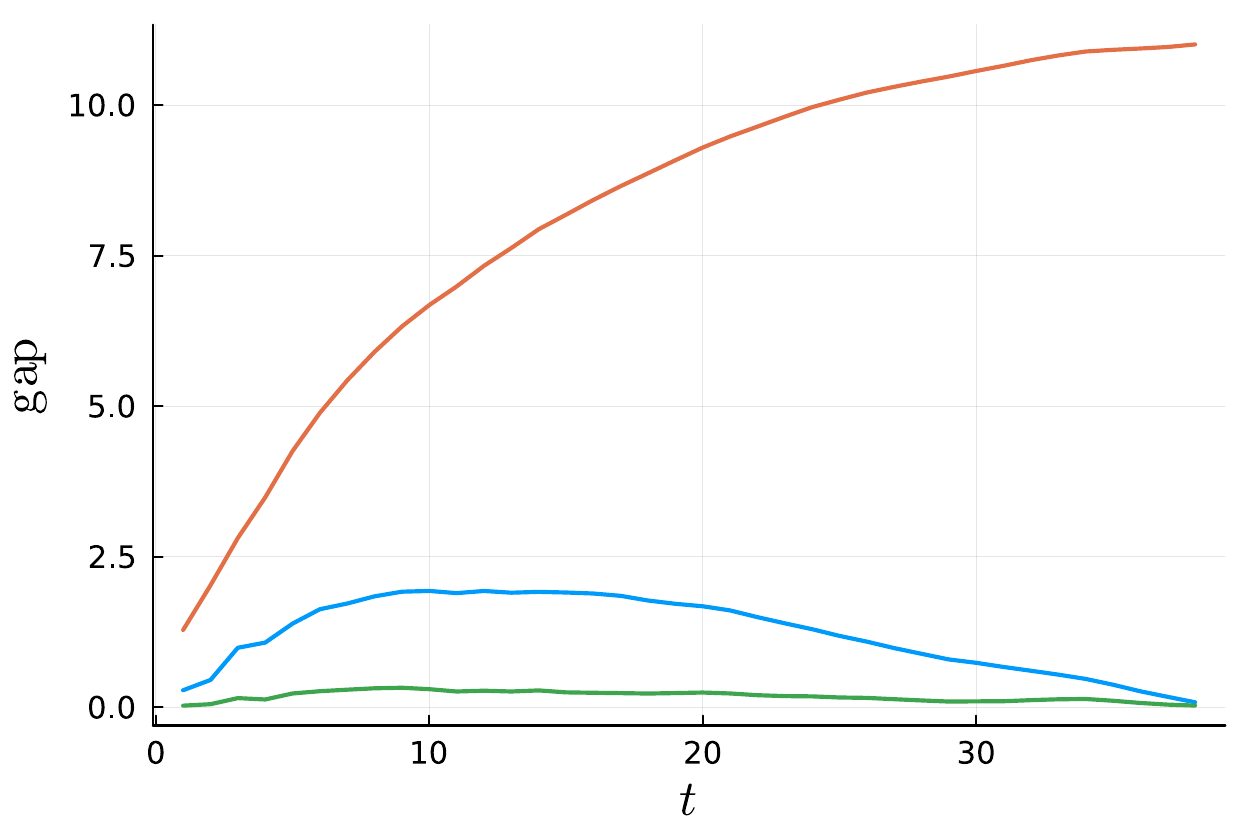} }}\\
     \subfloat{{\includegraphics[scale=0.65]{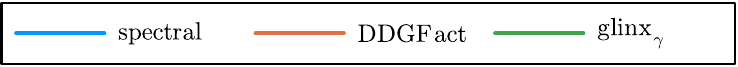} }}%
     \caption{Gaps for GMESP varying $t = s - \kappa$ ($n=63$) }%
\label{fig:varying_t_for_different_kappa63}%
\end{figure}

Next, we consider larger instance sizes that were not tested in \cite{GMESP_Alg}. In particular, we consider two additional  
benchmark instances with  $n=90$ and $n=124$, based on nonsingular covariance matrices $C$ obtained from J. Zidek (University of British Columbia). The former is based on temperature data from monitoring stations in the Pacific Northwest of the
United States, while the latter is derived from an application to the redesign of an environmental monitoring network. These matrices have been widely used in testing and developing algorithms for $\MESP$; see, for example, \cite{Kurt_linx,gscale,li2025augmented,MESP2DOPT}. 
Figures
\ref{fig:varying_t_for_different_kappa90} and \ref{fig:varying_t_for_different_kappa124} show that \ref{scaleglinx} consistently outperforms the other bounds in most cases, with the advantage being particularly pronounced as $\kappa$ increases. 

\begin{figure}[!ht]%
    \centering
    \subfloat[$\kappa = 0$]{{\includegraphics[width=0.4\textwidth]{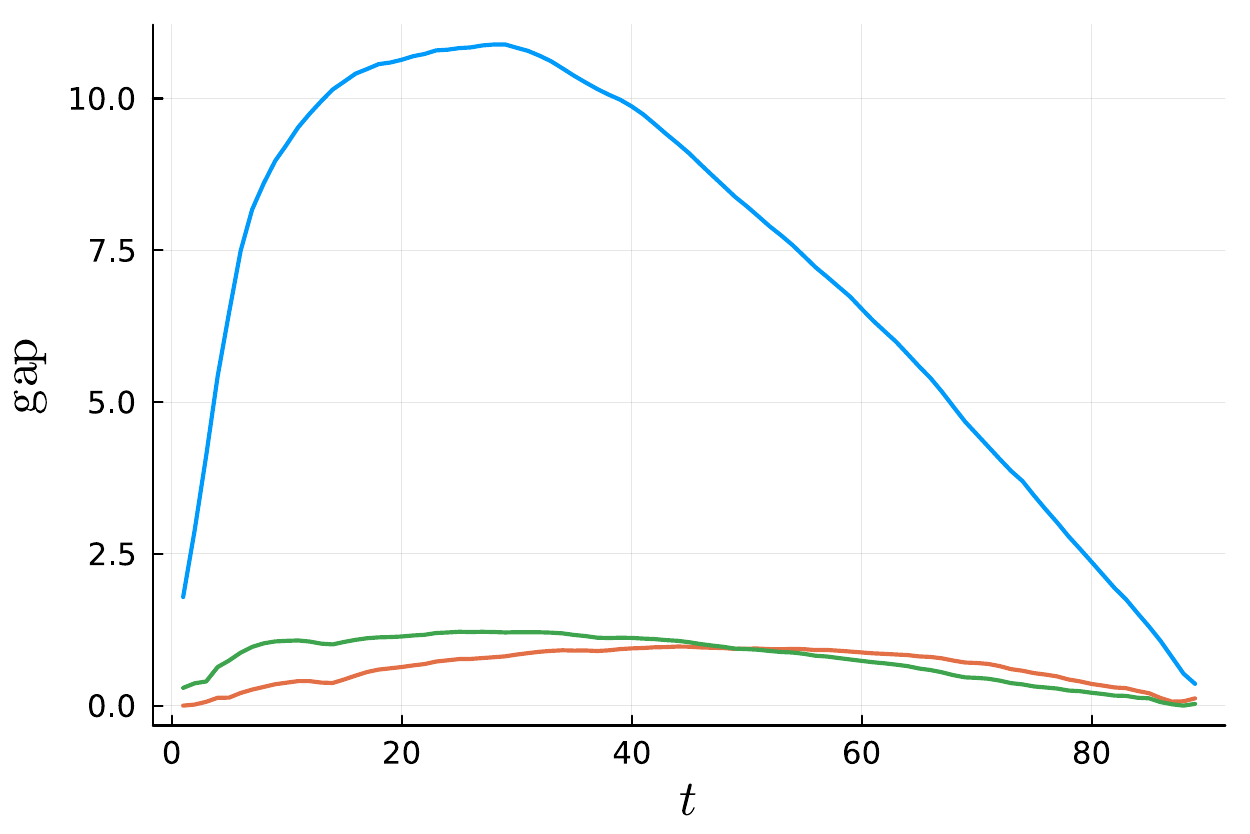} }}
    \subfloat[$\kappa = 1$]{{\includegraphics[width=0.4\textwidth]{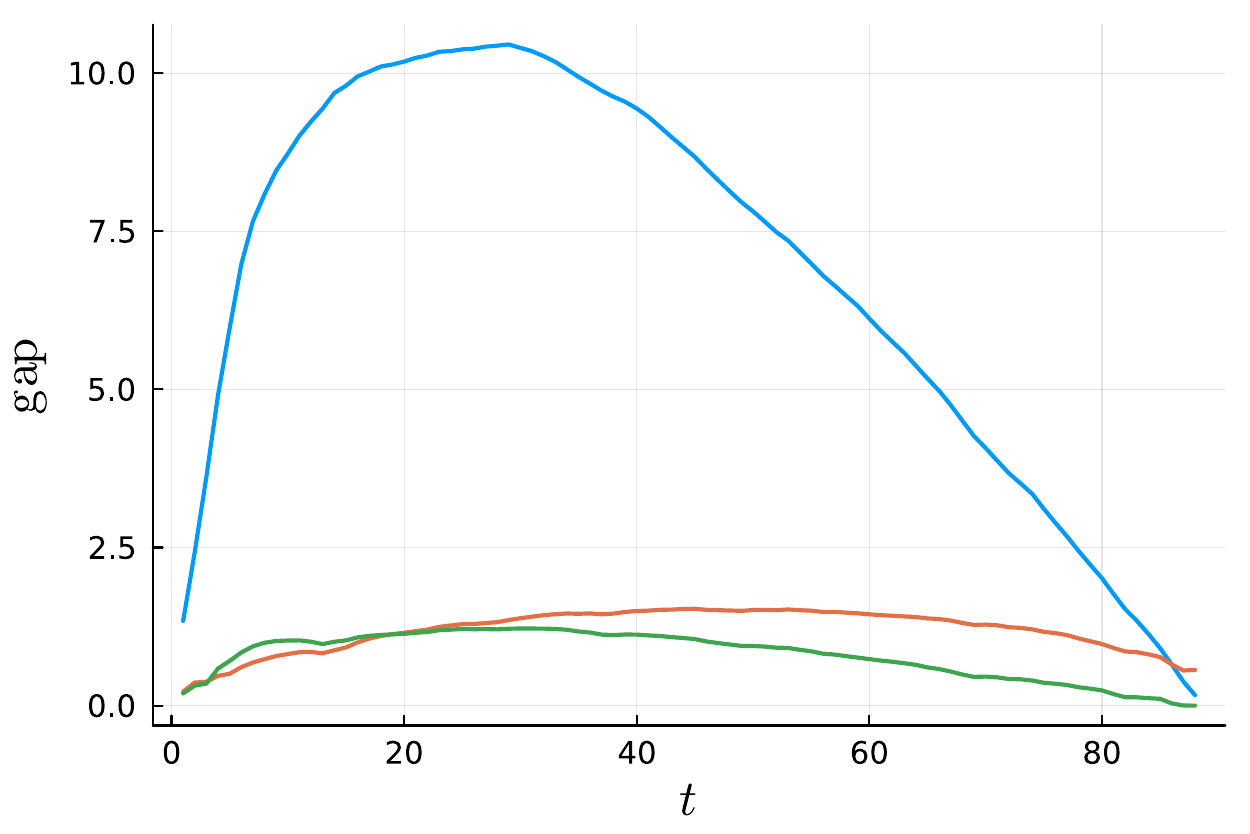} }}
    
    \subfloat[$\kappa = 12$]{{\includegraphics[width=0.4\textwidth]{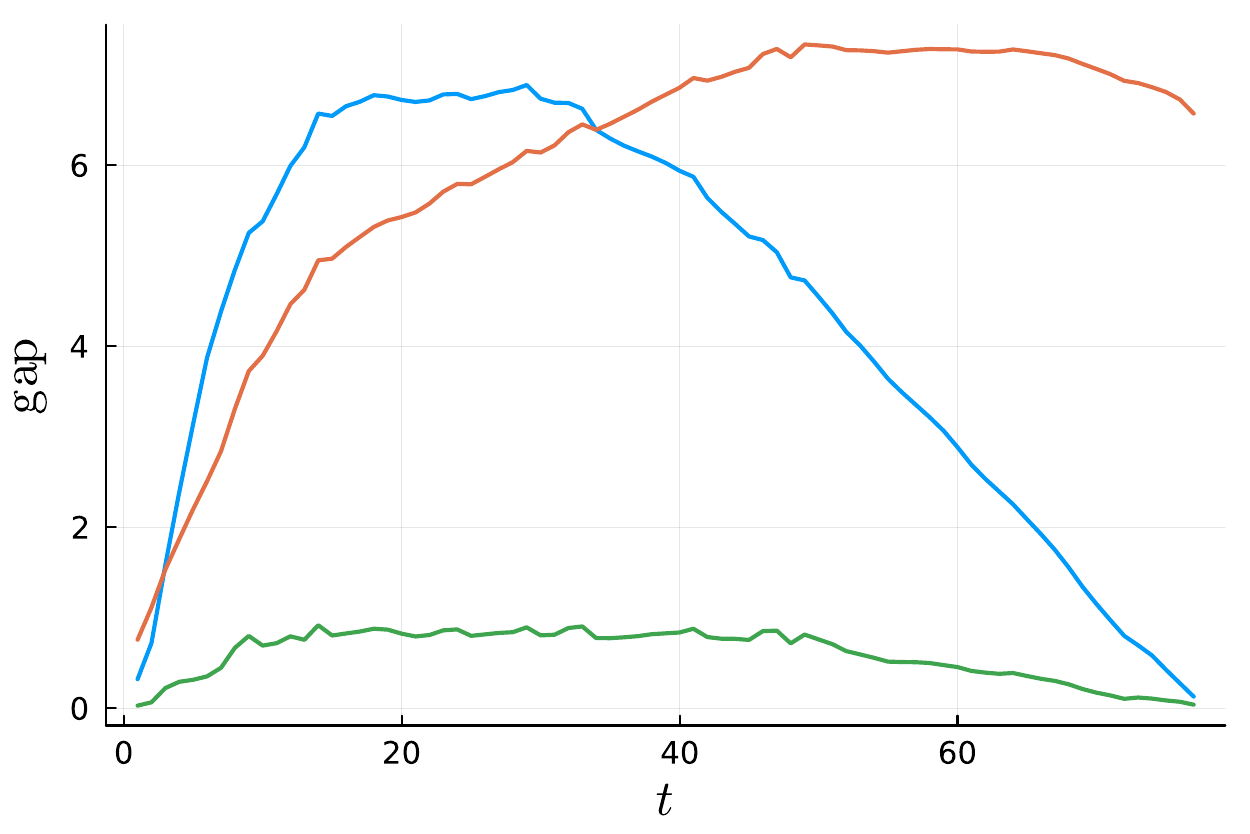} }}%
    ~
    \subfloat[$\kappa = 24$]{{\includegraphics[width=0.4\textwidth]{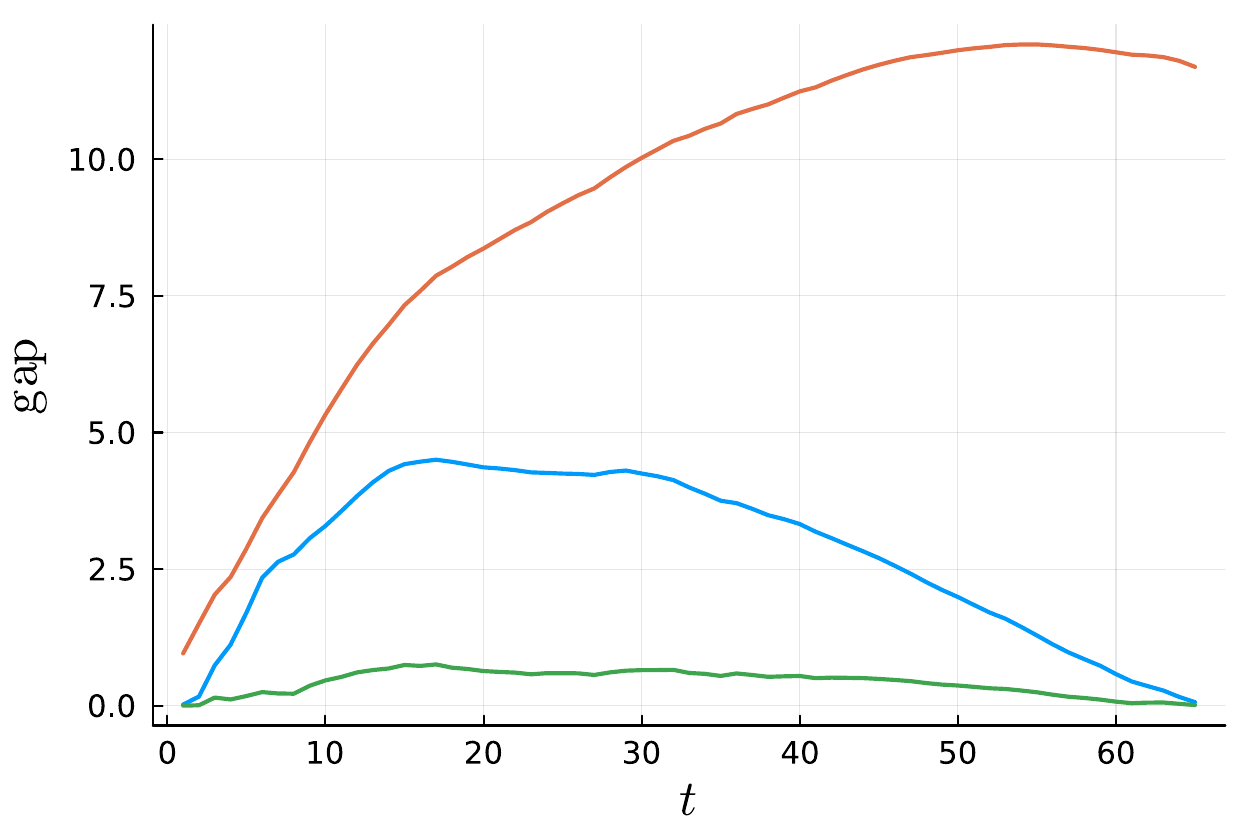} }}\\
    
     \subfloat{{\includegraphics[scale=0.65]{label63.pdf} }}%
     \caption{Gaps for GMESP varying $t = s - \kappa$ ($n=90$) }%
\label{fig:varying_t_for_different_kappa90}%
\end{figure}

\begin{figure}[!ht] 
    \centering
    \subfloat[$\kappa = 0$]{{\includegraphics[width=0.4\textwidth]{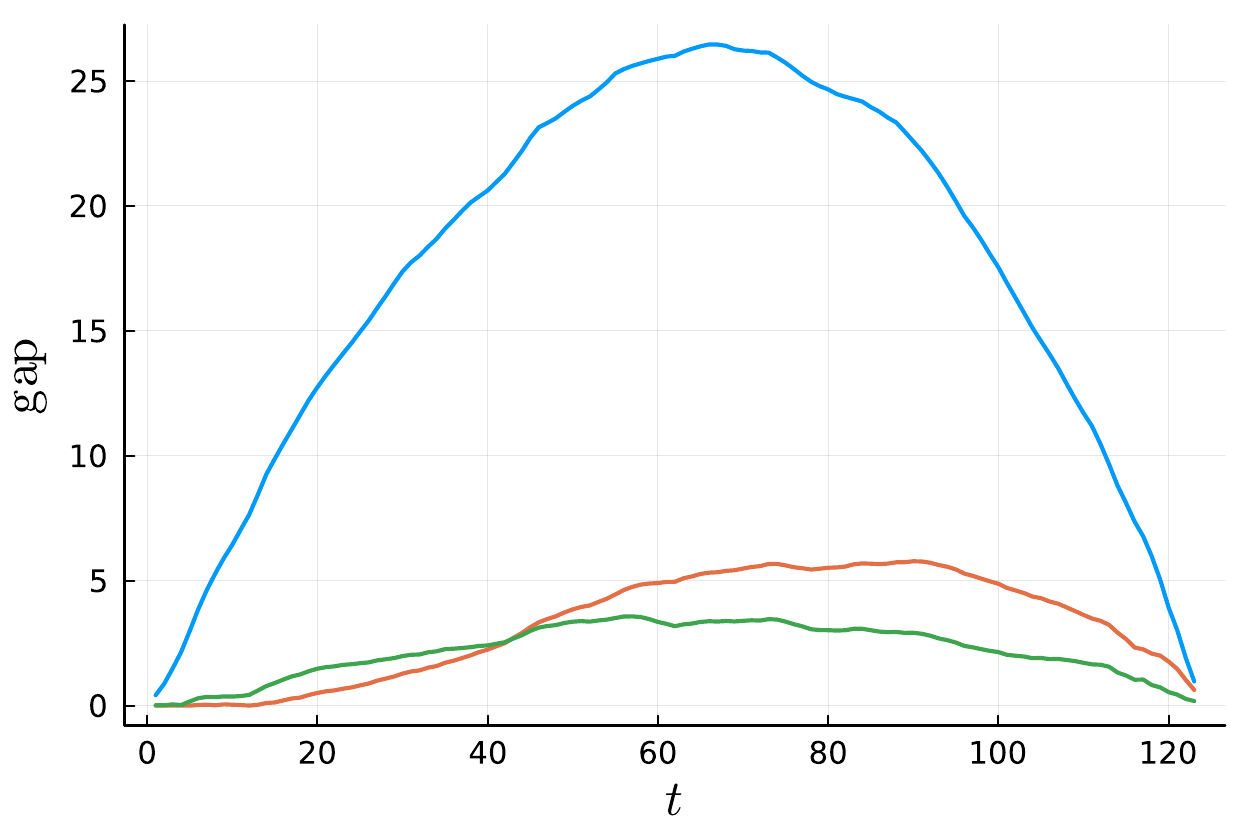} }}
    \subfloat[$\kappa = 1$]{{\includegraphics[width=0.4\textwidth]{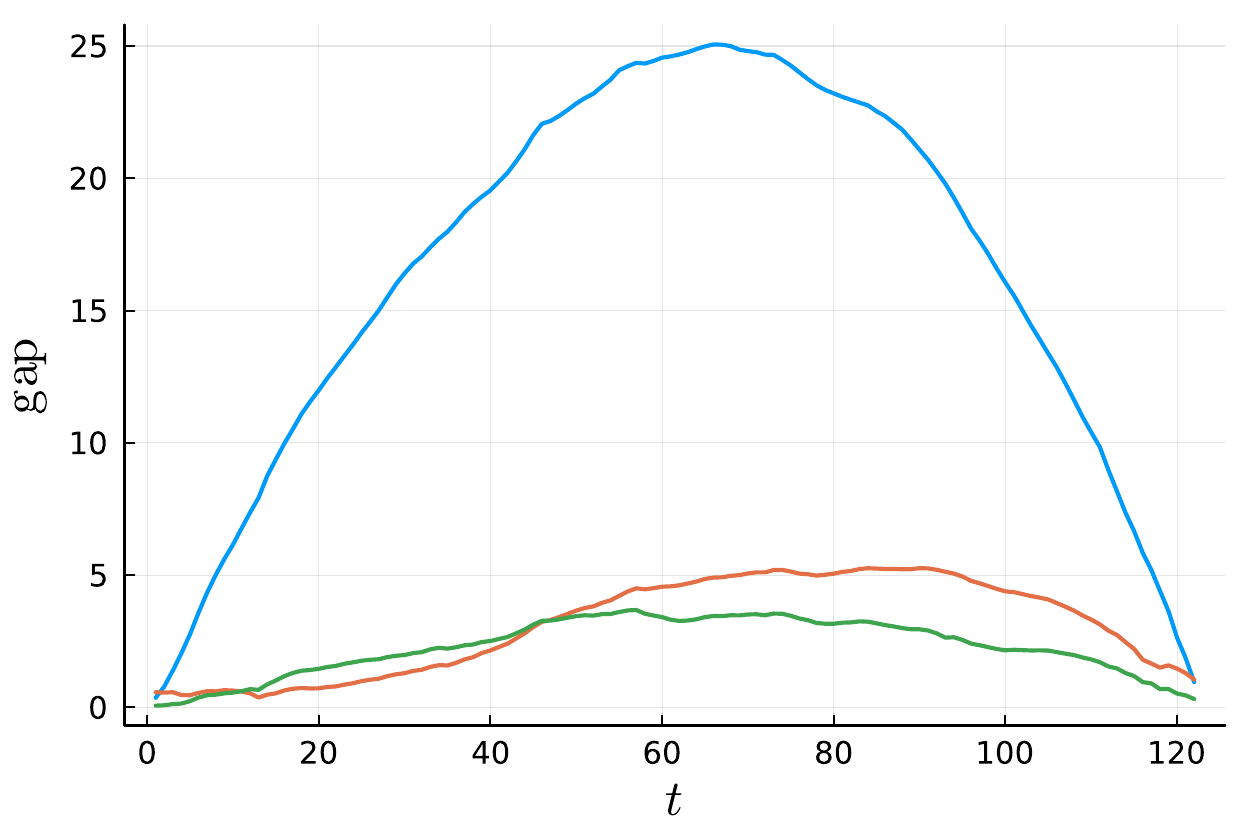} }}
    
    \subfloat[$\kappa = 12$]{{\includegraphics[width=0.4\textwidth]{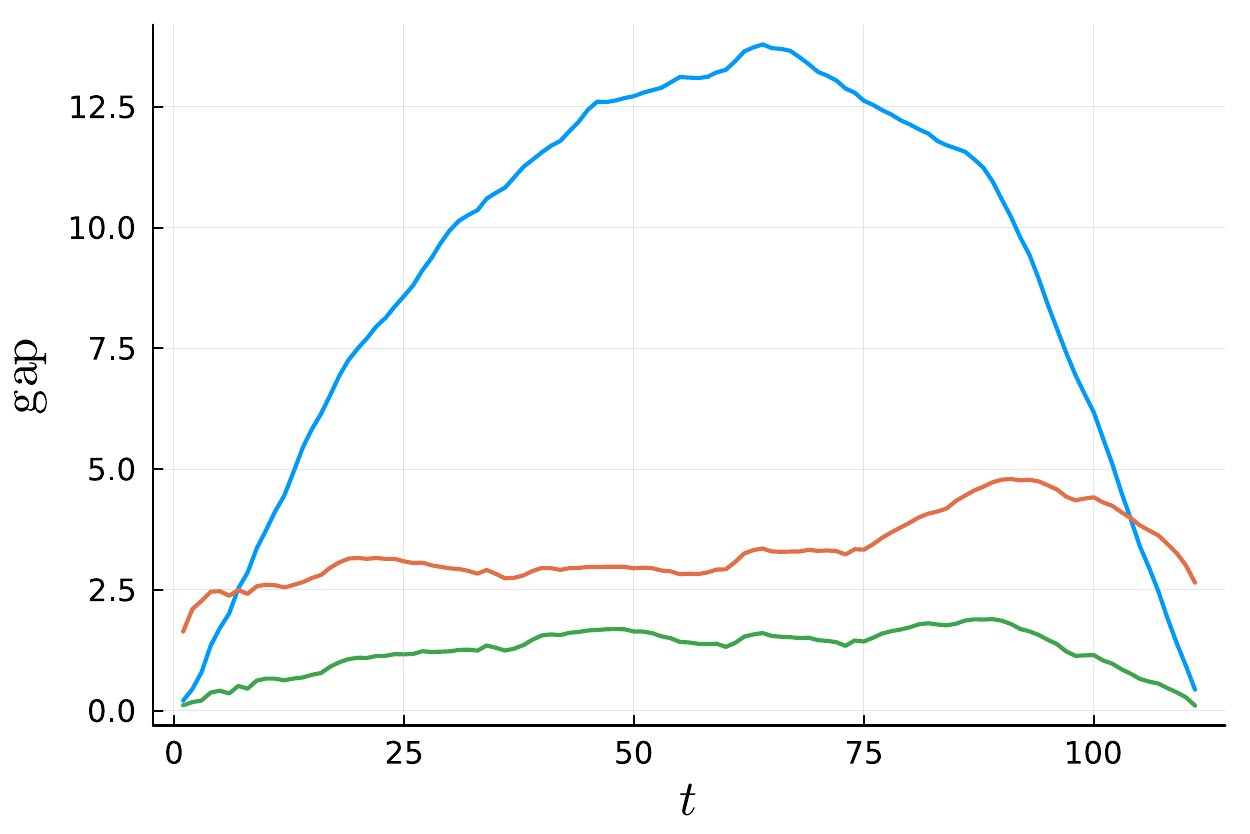} }}%
    ~
    \subfloat[$\kappa = 24$]{{\includegraphics[width=0.4\textwidth]{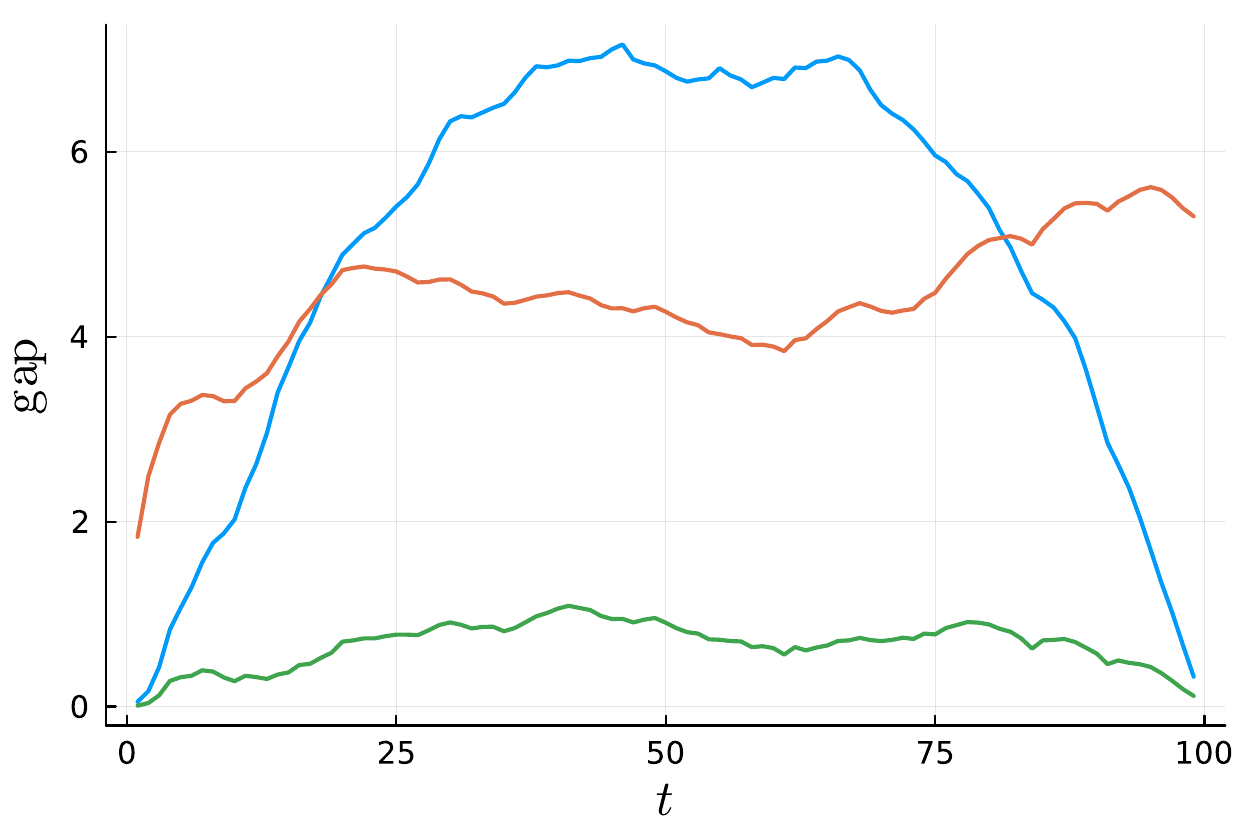} }}
    
     \subfloat{{\includegraphics[scale=0.65]{label63.pdf} }}%
     \caption{Gaps for GMESP varying $t = s - \kappa$ ($n=124$) }%
\label{fig:varying_t_for_different_kappa124}%
\end{figure}

Finally, we consider instances for which the \ref{GNLP-Id} and \ref{GNLP-Id-comp} bounds perform well. To this end, we extract the leading principal submatrix of order $50$ from our $124$-dimensional benchmark instance and investigate the impact of having similar eigenvalues.
Following \cite{MESP2DOPT}, we construct covariance matrices as described next. To create instances with  similar greatest eigenvalues, we keep the eigenvectors of $C$ fixed and  modify its greatest $19$ eigenvalues
so that they are similar the  20th. Specifically, for $i = 1, \dots, 19$, we set 
$\lambda_i(C) := 1.001^{20-i}\cdot \lambda_{20}(C)$. An analogous procedure is applied to generate covariance matrices with similar least eigenvalues.    
This adjustment slightly separates the top or bottom eigenvalues while keeping them similar. 
In Figure \ref{fig:similar-eigvals},  we observe that 
\ref{GNLP-Id} (resp., \ref{GNLP-Id-comp}) performs well when the greatest (resp., least) eigenvalues of $C$ are similar, $\kappa$ is small, and $t$ is small (resp., large). We omit  the spectral bound, as it is not competitive in this setting. 
\begin{figure}[!ht]
    \centering
    \includegraphics[width=0.4\textwidth]{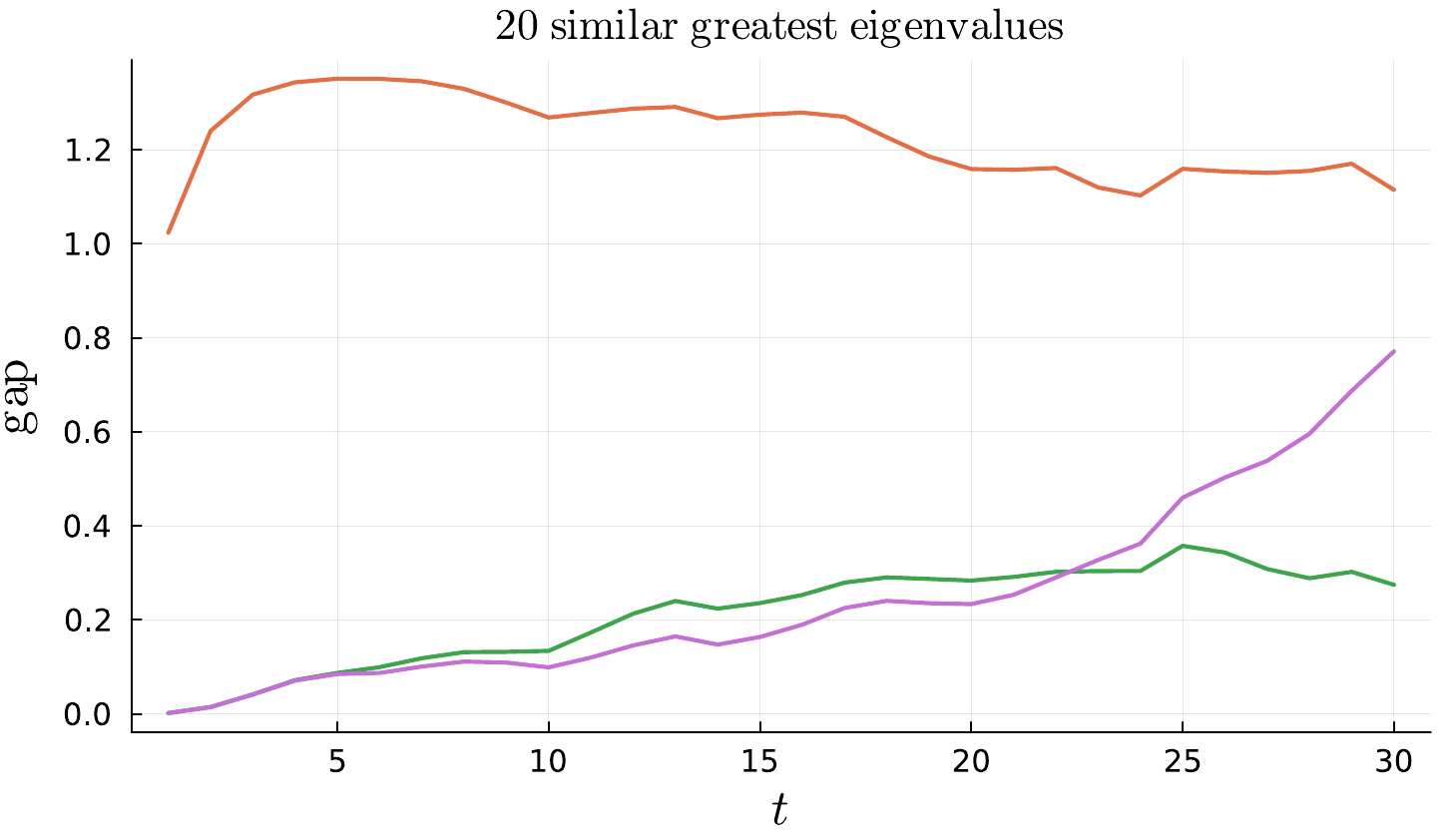}
    \includegraphics[width=0.4\textwidth]{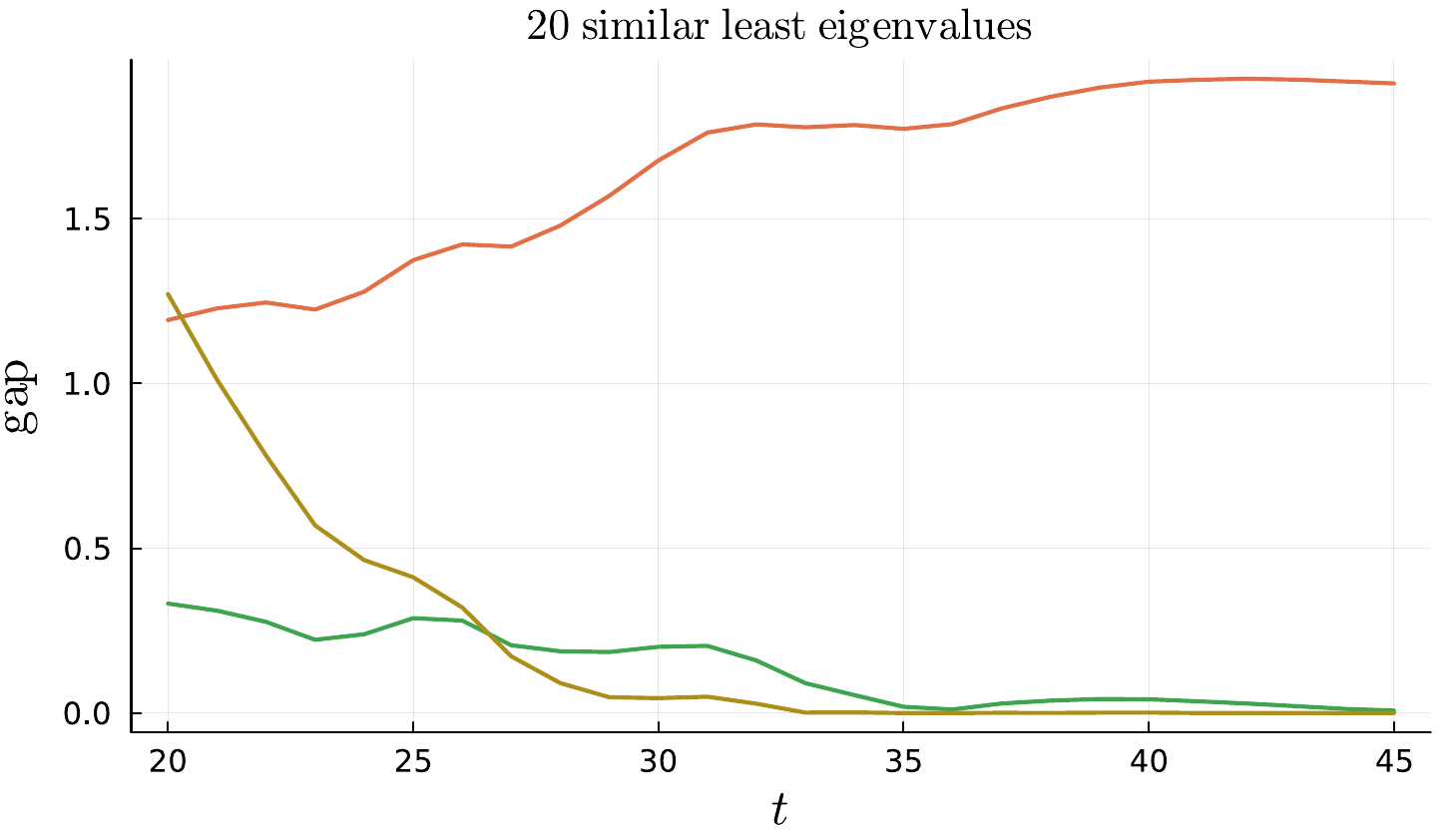}\\
    \includegraphics[width=0.5\textwidth]{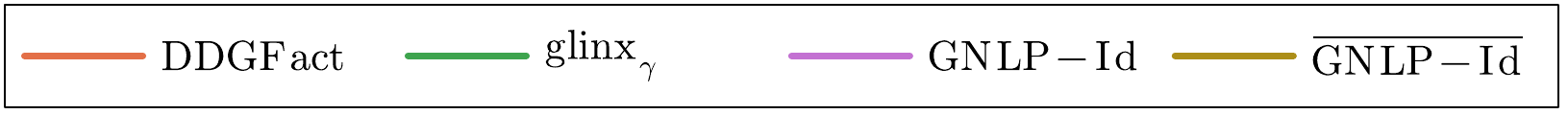}
    \caption{GMESP bounds for modified covariance matrices ($\kappa =2$, $n=50$)
    }
   \label{fig:similar-eigvals}
\end{figure}


\subsection{Effect of generalized scaling}

In the next experiments, we analyze the numerical performance of the g-scaling procedures proposed in Section \ref{sec:gscale}.

In the first experiment, we consider the same instances used in Figure \ref{fig:varying_t_for_different_kappa63}  and compare the performance of \ref{scaleglinx} and \ref{gscaleglinx}\,. We begin by  computing the optimal o-scaling parameter $\hat\gamma$ for \ref{scaleglinx}\,. Then, we initialize the BFGS algorithm used to locally optimize $\Upsilon$ at $\hat{\gamma}^{\scriptscriptstyle 1/4}\mathbf{e}$. We note that when $\gamma:=\hat\gamma$, and $\Upsilon=\hat{\gamma}^{\scriptscriptstyle 1/4}\mathbf{e}$, the objective values of \ref{scaleglinx} and \ref{gscaleglinx} coincide. Consequently, after locally optimizing $\Upsilon$, the bound produced by \ref{gscaleglinx}  is guaranteed to be at least as strong as the bound produced by \ref{scaleglinx}\,. Overall, as $\kappa$ increases, the advantage of the \ref{gscaleglinx} bound over the \ref{scaleglinx} bound becomes increasingly pronounced, even though the gaps associated with \ref{scaleglinx} are already very small for the instances considered.

\begin{figure}[!ht] 
    \centering
    \subfloat[$\kappa = 0$]{{\includegraphics[width=0.4\textwidth]{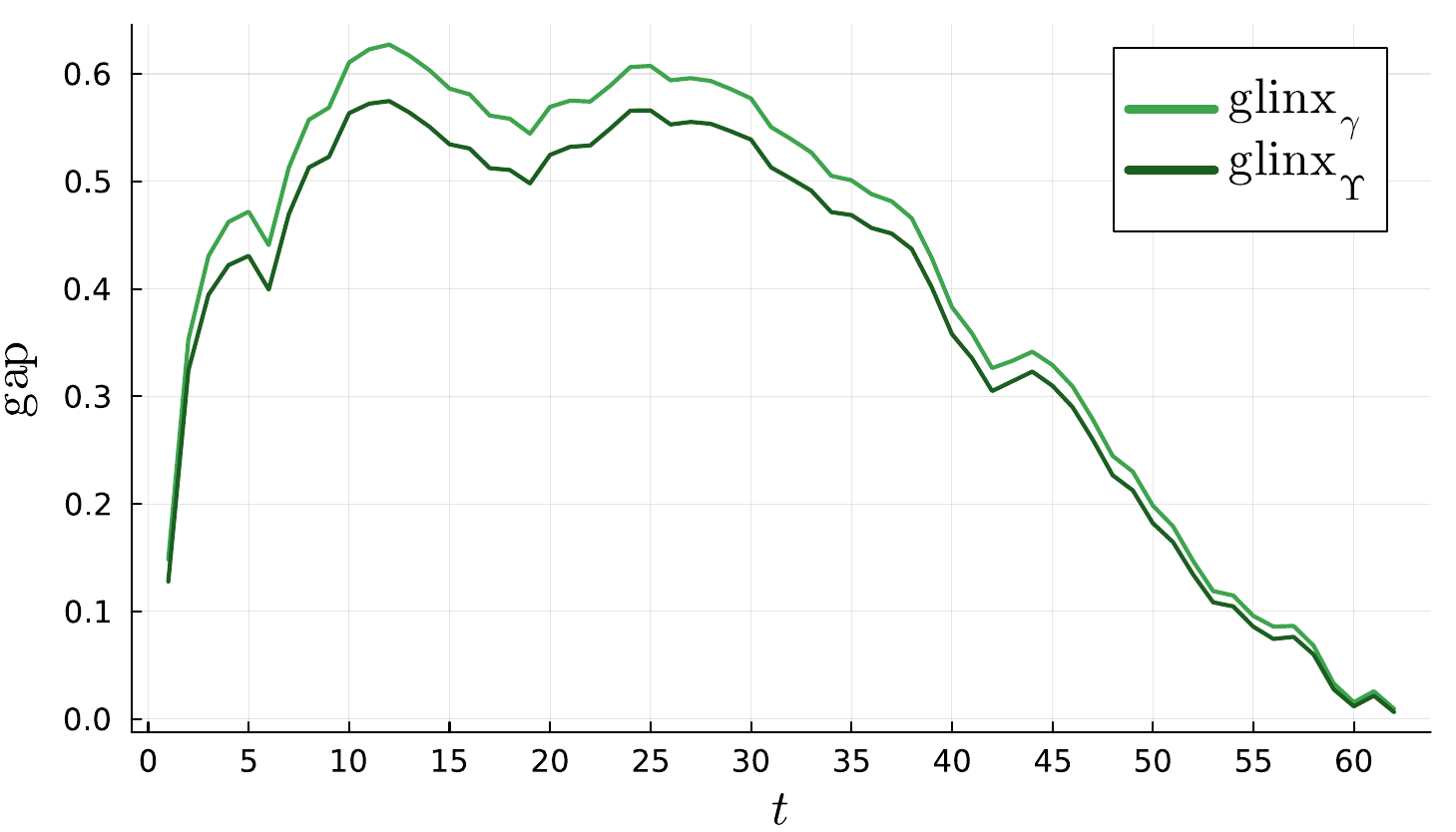} }}
    \subfloat[$\kappa = 1$]{{\includegraphics[width=0.4\textwidth]{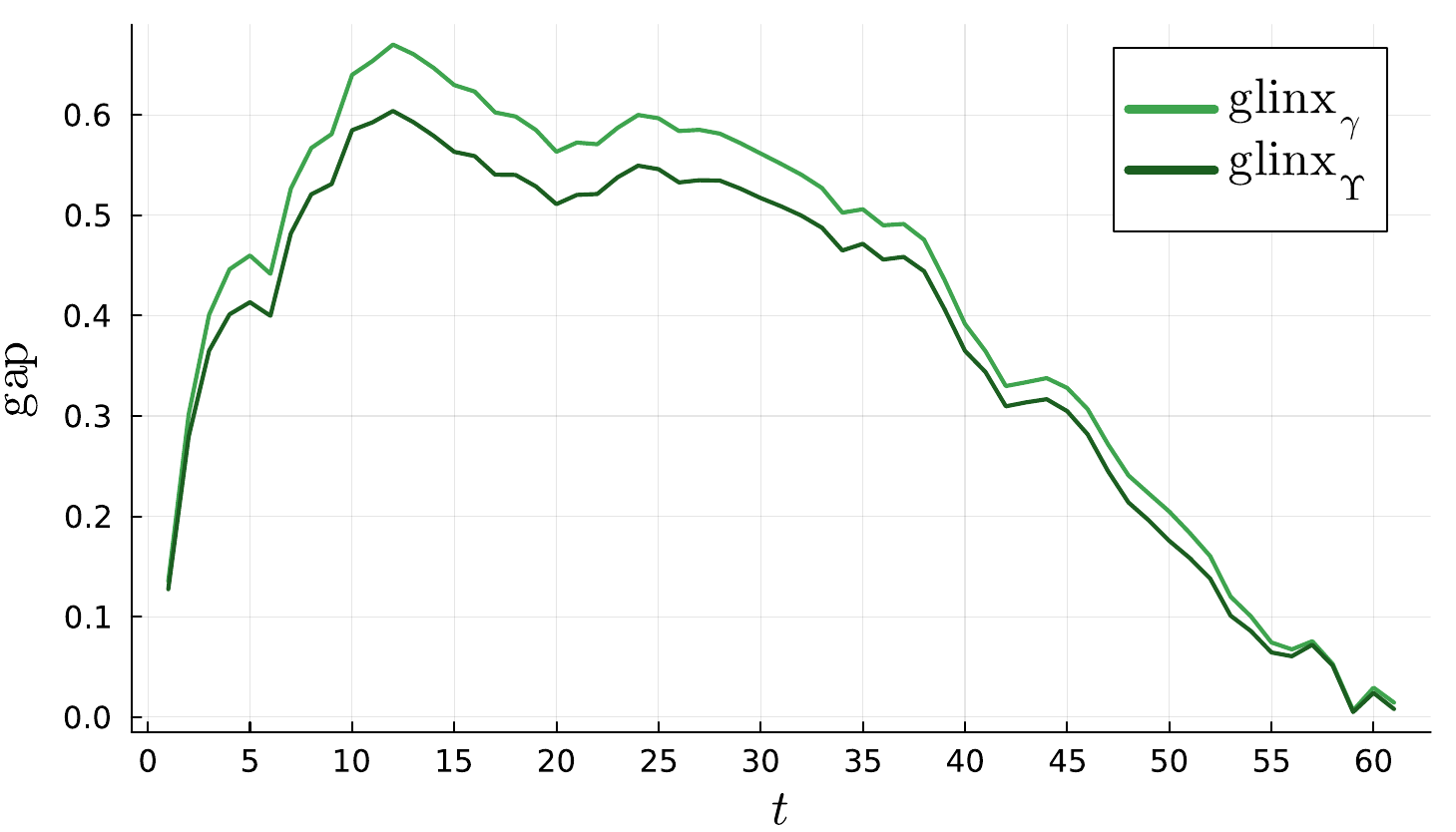} }}
    
    \subfloat[$\kappa = 12$]{{\includegraphics[width=0.4\textwidth]{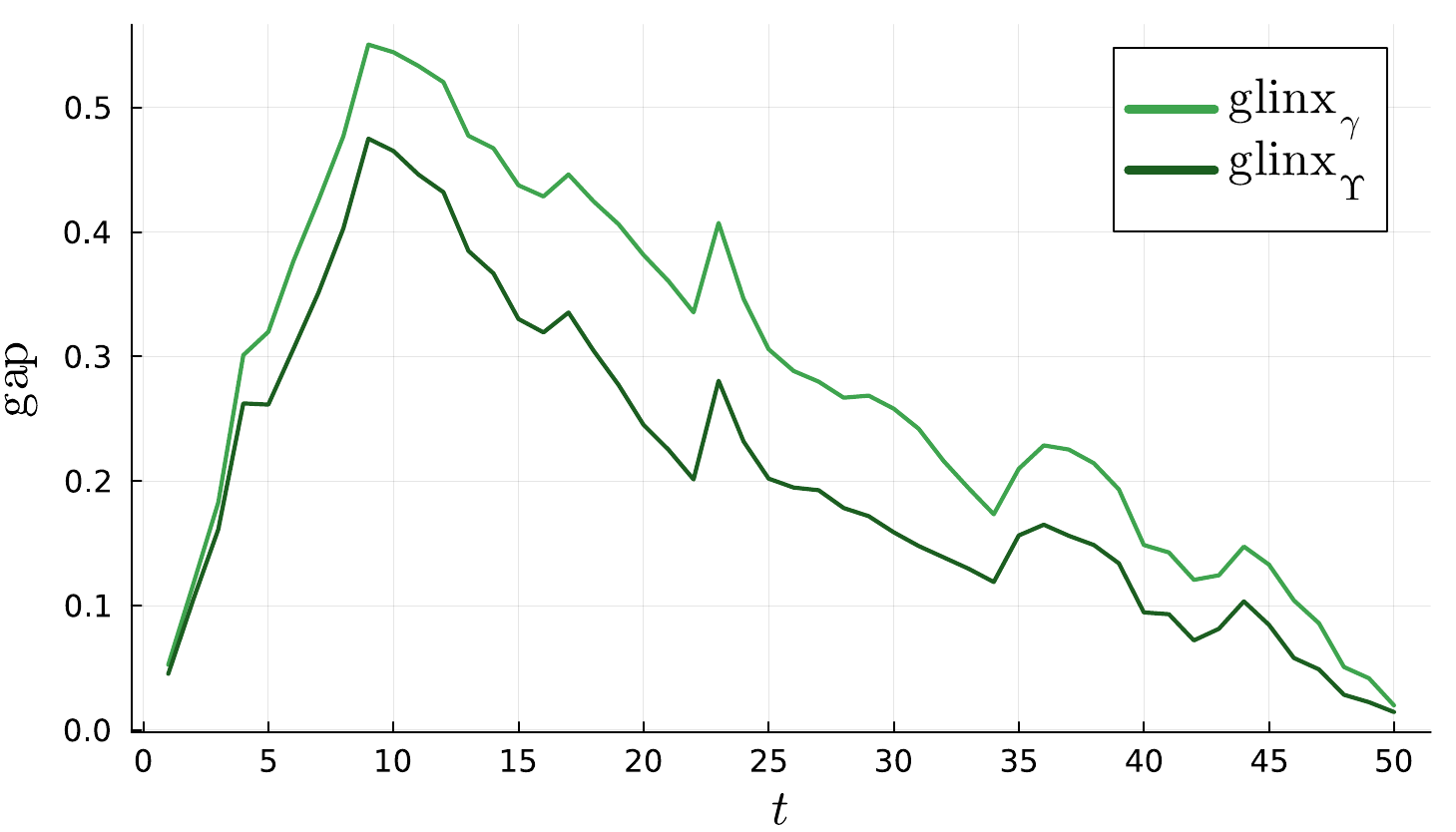} }} 
    ~
    \subfloat[$\kappa = 24$]{{\includegraphics[width=0.4\textwidth]{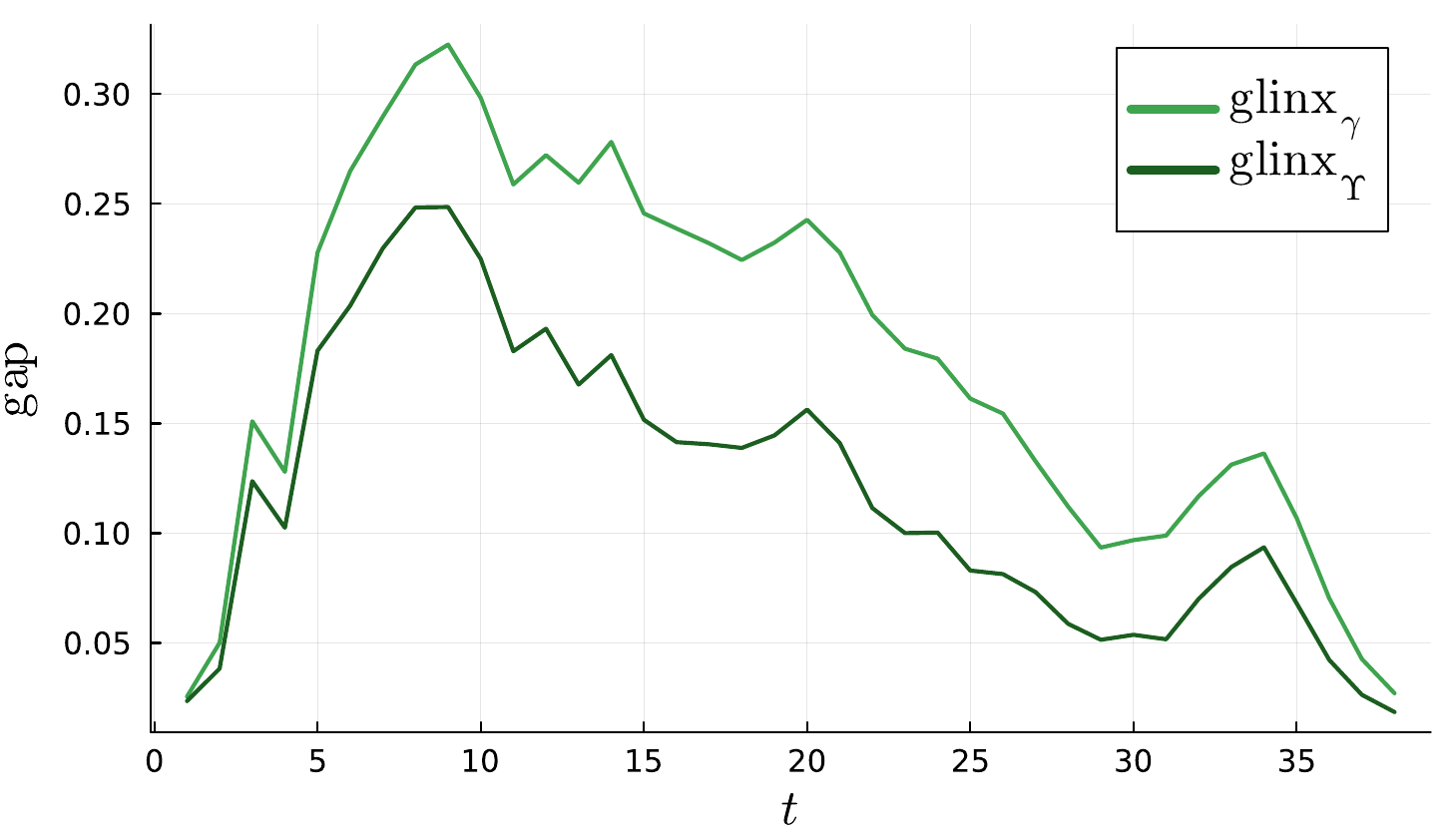} }}\\
     \caption{Gaps for \ref{glinx} for generalized scaling varying $t = s - \kappa$ ($n=63$)
     } 
\label{fig:varying_t_for_different_kappa_gscale_63} 
\end{figure}

For our second experiment, we use the benchmark covariance matrix with $n=124$ mentioned above to analyze scaling for \ref{ddgfact}. We set $s=62$ and vary $\kappa$ between $1$ and $3$. 
We begin with the following observations. By Remark \ref{rem:gamma_invariant},  the  \ref{ddgfact} bound is invariant under o-scaling, so we compare it directly with the \ref{ddgfactscale} bound. 
Moreover, in light of Theorem \ref{thm:UpsOptOnes},  $\Upsilon = \mathbf{e}$ is optimal in the absence of side constraints. 
 Nevertheless,  once variables are fixed to one (by adding constraints of the form $x_i=1$), we observe consistent improvements of \ref{ddgfactscale} over \ref{ddgfact}. This effect is particularly relevant in a B\&B framework for $\GMESP$, where such variable fixings naturally arise.
We initialize the experiment by computing a solution $\hat x$ to \ref{CGMESP} using a local-search heuristic, and let $S$ denote its support. We then perform a sequence of $s-1$ diving steps, fixing one free variable $x_i$ to $1$ in \ref{ddgfactscale} at each step. The selected index $i \in S$ corresponds to the variable whose value in the current relaxation is farthest from one. As fixings are restricted to indices in $S$, the resulting gaps (i.e., the difference between the optimal values of \ref{ddgfactscale} and the lower bound provided by the local-search heuristic) remain nonnegative.
At the root node, we set $\Upsilon = \mathbf{e}$, as this choice is optimal by Theorem \ref{thm:UpsOptOnes}. At each child node, we locally optimize the
 scaling vector using a BFGS algorithm, warm-starting $\Upsilon$ from the parent's scaling vector. 
The results are reported in Figure \ref{fig:diving_ddgfactgscale}. 
We observe a clear pattern: as the number of fixed variables increases (up to approximately 45), the improvement of \ref{ddgfactscale} over \ref{ddgfact} becomes more pronounced. 
However, as $\kappa$ increases,  the optimization of the scaling factor becomes less effective, and the observed gains  diminishes. 

\begin{figure}[!ht]
    \centering
    \subfloat[$\kappa = 1$]{{\includegraphics[width=0.33\textwidth]{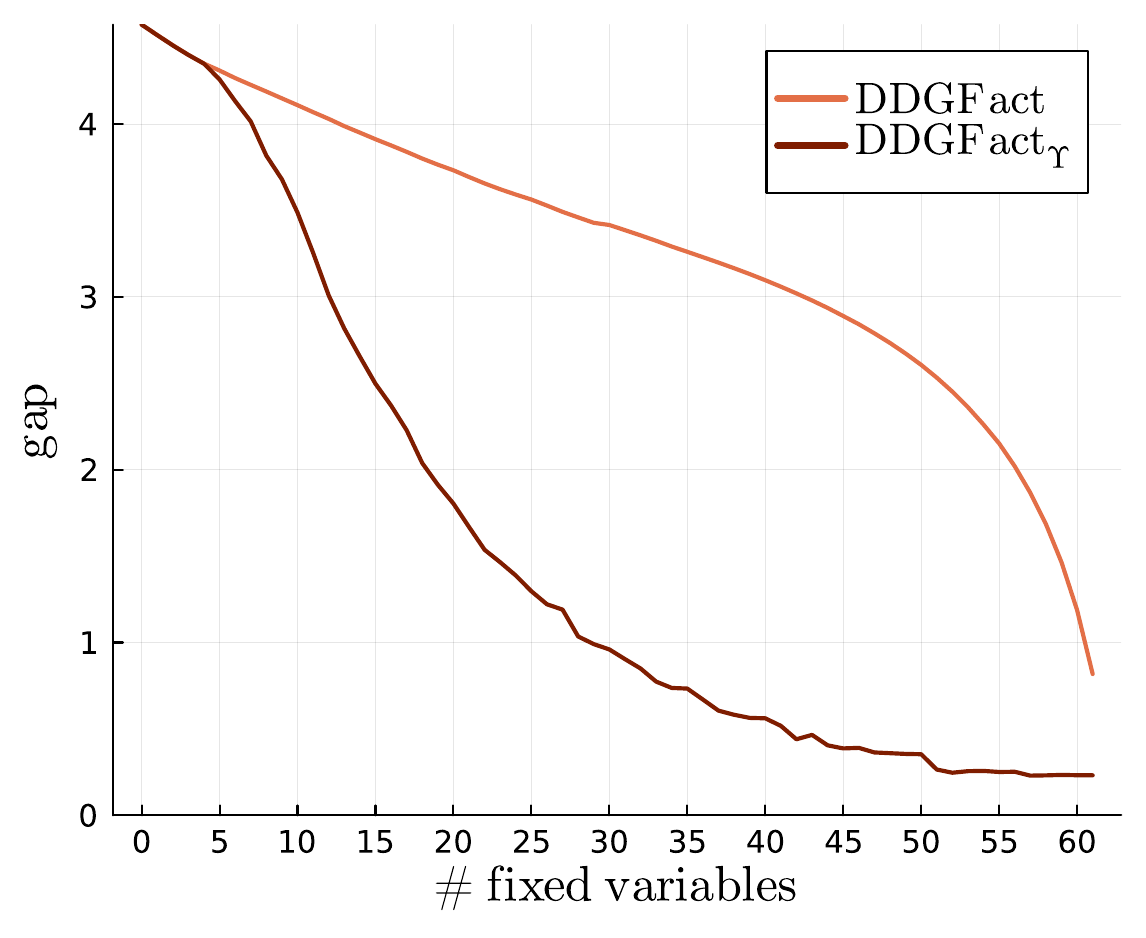} }}
    \subfloat[$\kappa = 2$]{{\includegraphics[width=0.33\textwidth]{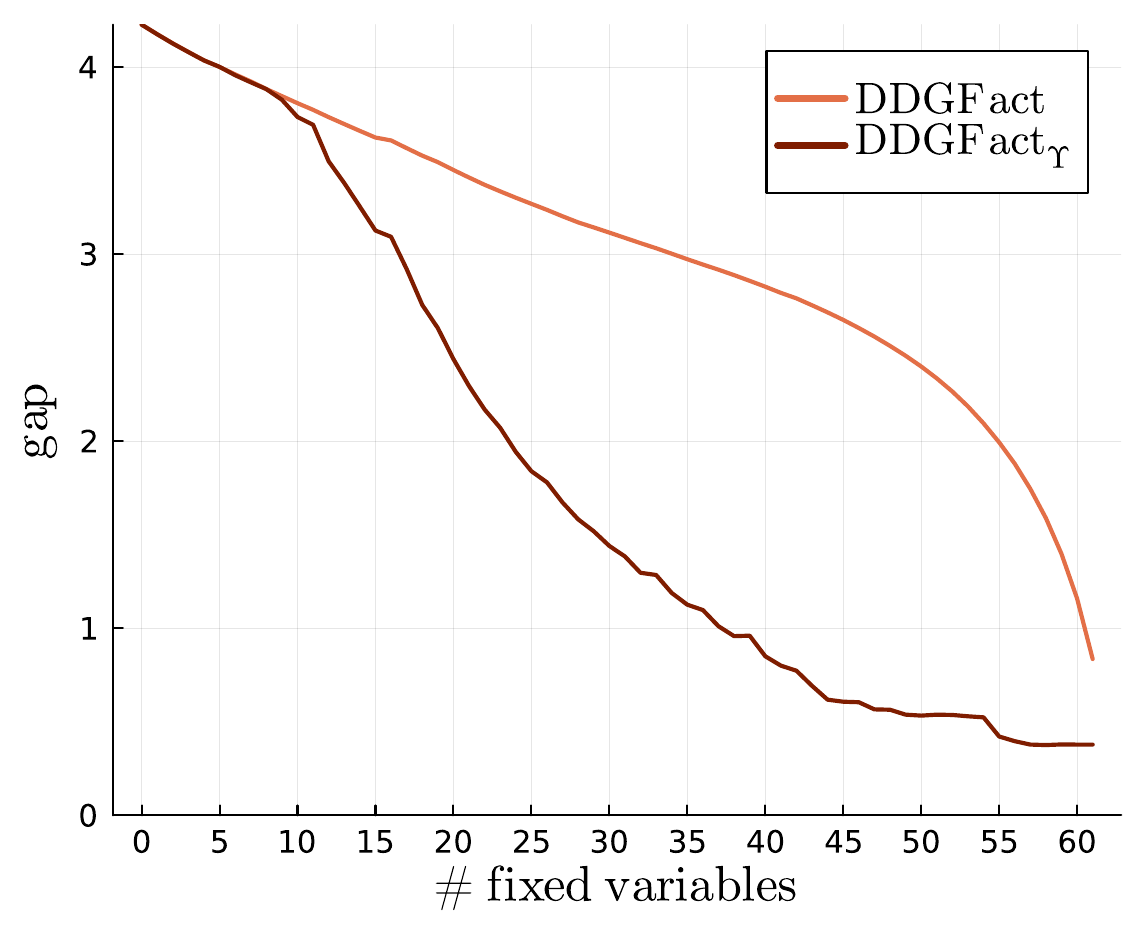} }}
    \subfloat[$\kappa = 3$]{{\includegraphics[width=0.33\textwidth]{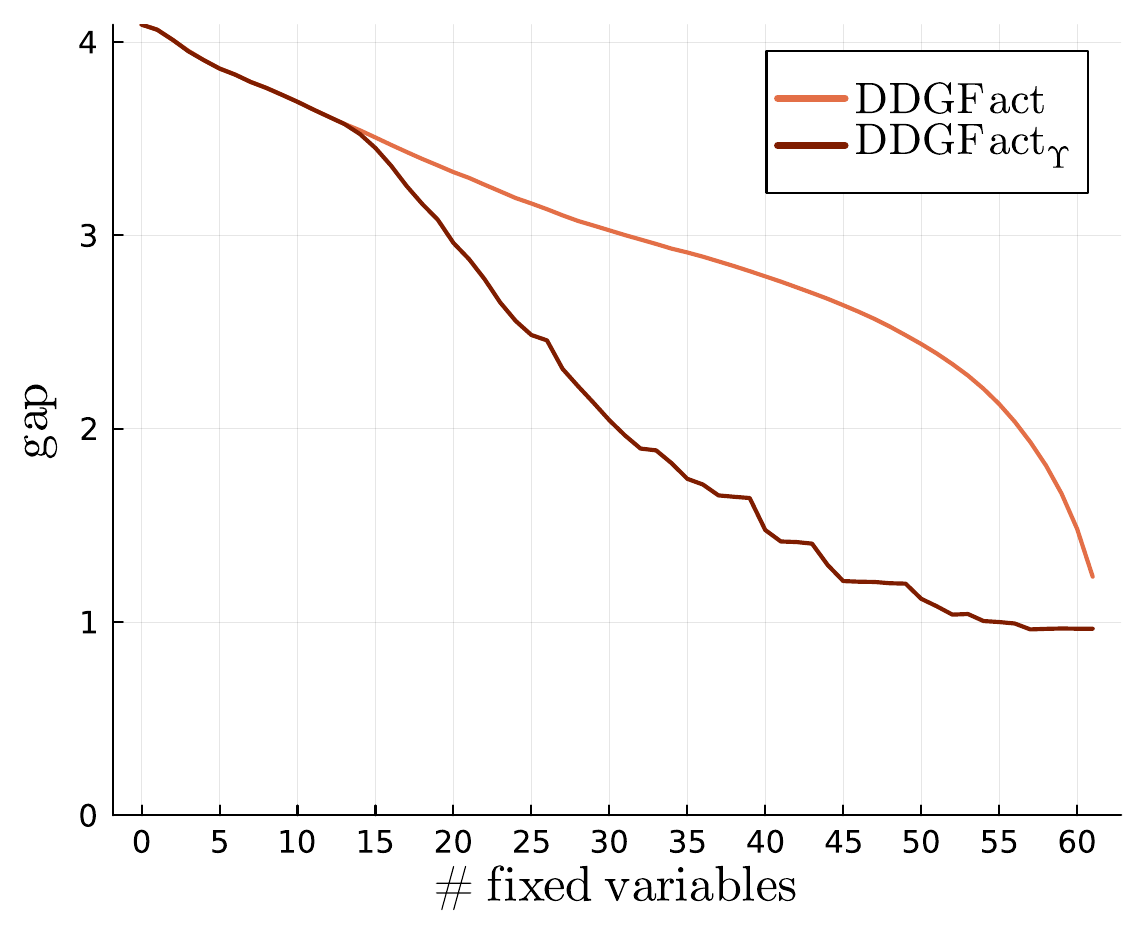} }}
    \caption{Effect of generalized scaling for \ref{ddgfactscale} in variable fixing ($n=124,s=62$)
    }
    \label{fig:diving_ddgfactgscale}
\end{figure}


\FloatBarrier


\section{Outlook}\label{sec:outlook}

First, we would like to highlight the issue described in Remark \ref{rem:fixat1}, namely,  the challenge of  finding a better way
to fix variables at 1, in the context of B\&B for \ref{CGMESP},
as an important topic for exploration. 

The so called ``BQP relaxation'' (developed by \cite{Anstreicher_BQP_entropy}; also see \cite[Section 3.6]{FLbook})
provides, in a more natural way, the first extended-variable convex relaxation for \ref{CMESP}.
So that would seem like another possible starting point for
an extended-variable based convex relaxation for \ref{CGMESP}
that can be explored; but this does not seem to be straightforward.
The relationship between the BQP bound and the ``Boolean quadric polytope'' (see \cite{padberg}) which lent its name, immediately
allows combinatorial cutting planes for that polytope to be
applied to improve the bound provided by the BQP relaxation (see \cite{AnstreicherLee_Masked}
and \cite{Mixing}). We do not know a similar scheme
for getting useful cutting planes for our 
convex relaxations of \ref{CGMESP} (without adding even more variables to the extended formulation).

Regarding the \ref{GNLP-Id} and \ref{GNLP-Id-comp}
relaxations, the ``NLP bound'' (and its complement)
for \ref{CMESP} has other useful parameter choices
(see \cite{AFLW_Using}; also see \cite[Section 3.5]{FLbook}),
so it would be interesting to see those extended to
\ref{CGMESP}.

\cite{Mixing} introduced the idea of ``mixing'' two or more convex-relaxation based bounds, to achieve improvements over the individual bounds. They did this in the context of \ref{CMESP}, but it is
a rather general concept that could be developed 
with any of the convex-optimization bounds for \ref{CGMESP}.


\section*{Acknowldegments} M. Fampa was supported in part by CNPq grant 307167/2022-4.
J. Lee was supported in part by AFOSR grant FA9550-22-1-0172.

This work was initiated at the Reunion Workshop (August 26--30, 2024) of the Institute for Computational and Experimental Research in Mathematics (ICERM) semester program Discrete Optimization: Mathematics, Algorithms, and Computation, supported by the 
National Science Foundation under Grant No. DMS-1929284. 

\bibliographystyle{alpha}
\bibliography{Bib}

@misc{Fatma2026,
      title={From Majorization to Scaling: Advancing Convex Relaxations of Maximum Entropy Sampling Problem}, 
      author={Lingqing Shen and Fatma Kılınç-Karzan},
      year={2026},
      note={\url{https://arxiv.org/abs/2604.10363}}, 
}

@misc{yamagishi2026dualpathfixingstrategyapplication,
      title={The dual-path fixing strategy and its application to the set-covering problem}, 
      author={Paulo Michel F. Yamagishi and Marcia Fampa and Jon Lee},
      year={2026},
      note={To appear in: SEA 2026 (24th Symposium on Experimental Algorithms, Copenhagen). \url{https://arxiv.org/abs/2601.20977}}, 
}

@incollection{Guttorp-Le-Sampson-Zidek1993,
    AUTHOR = {Guttorp, Peter and Le, Nhu D. and Sampson, Paul D. and Zidek,
              James V.},
     TITLE = {Using entropy in the redesign of an environmental monitoring
              network},
 BOOKTITLE = {Multivariate Environmental Statistics},
 publisher = {North-Holland},
    year = {1993},
    VOLUME = {6},
    editor = {Patil, G.P. and Rao, C.R. and Ross, N.P.},
     PAGES = {175--202},
note={\url{https://marciafampa.com/pdf/GLSZ.pdf}},
}

@book {HJBook,
    AUTHOR = {Horn, Roger A. and Johnson, Charles R.},
     TITLE = {Matrix Analysis},
   EDITION = {{F}irst},
 PUBLISHER = {Cambridge University Press, Cambridge},
      YEAR = {1985},
note={\url{https://doi.org/10.1017/CBO9780511810817}},
}

@incollection{SchurBook,
BOOKTITLE = {The {S}chur {C}omplement and its {A}pplications},
title={Basic properties of the {S}chur complement},
    SERIES = {Numerical Methods and Algorithms},
    VOLUME = {4},
author={Roger A. Horn and Fuzhen Zhang},
    EDITOR = {F. Zhang},
 PUBLISHER = {Springer-Verlag, New York},
      YEAR = {2005},
     PAGES = {17--46},
note={\url{https://doi.org/10.1007/b105056}},
}

@article{Kurt_linx,
  author =       {Kurt M. Anstreicher},
  title =        {Efficient Solution of Maximum-Entropy Sampling Problems},
  journal =  {Operations Research},
  year =         {2020},
  volume = {68},
pages = {1826--1835},
note={\url{https://doi.org/10.1287/opre.2019.1962}},
}

@Article{Anstreicher_BQP_entropy,
  author={Kurt M. Anstreicher},
  title={{Maximum-entropy sampling and the Boolean quadric polytope}},
  journal={Journal of Global Optimization},
  year=2018,
  volume={72},
  pages={603--618},
note={\url{https://doi.org/10.1007/s10898-018-0662-x}},
}

@Book{FLbook,
author="Fampa, Marcia
and Lee, Jon",
Title="Maximum-Entropy Sampling: Algorithms and Application",
year="2022",
publisher="Springer",
note={\url{https://doi.org/10.1007/978-3-031-13078-6}},
}

@article{LeeLind2020,
author = {Lee, Jon and Lind, Joy},
title = {Generalized maximum-entropy sampling},
journal = {INFOR: Information Systems and Operations Research},
volume = {58},
number = {2},
pages = {168--181},
year  = {2020},
note={\url{https://doi.org/10.1080/03155986.2018.1533774}},
}

@PHDTHESIS{WilliamsPhD,
  author =       {Joy Denise Williams},
  title =        {Spectral Bounds for Entropy Models},
  school =       {University of Kentucky},
  year =         {1998},
  type =         {{Ph.D.}},
   month =        {April},
note={\url{https://saalck-uky.primo.exlibrisgroup.com/permalink/01SAA_UKY/15remem/alma9914832986802636}}
}

@article{GMESP_Alg,
      title={Convex relaxation for the generalized maximum-entropy sampling problem}, 
      author={Gabriel Ponte and Marcia Fampa and Jon Lee},
      journal={Algorithmica},
      volume={88, Article 22}, 
      year={2026},
      note={\url{https://doi.org/10.1007/s00453-026-01373-9}}, 
}

@InProceedings{SEA_proceedings,
  author =	{Ponte, Gabriel and Fampa, Marcia and Lee, Jon},
  title =	{Convex Relaxation for the Generalized Maximum-Entropy Sampling Problem},
  booktitle =	{Proceedings of SEA 2024 (22nd International Symposium on Experimental Algorithms)},
  pages =	{25:1--25:14},
  series =	{Leibniz International Proceedings in Informatics (LIPIcs)},
  year =	{2024},
  volume =	{301},
  editor =	{Liberti, L.},
  note={\url{https://doi.org/10.4230/LIPIcs.SEA.2024.25}},
}

@article{gscale,
author = {Chen, Zhongzhu and Fampa, Marcia and Lee, Jon},
year = {2024},
title = {Generalized scaling for the constrained maximum-entropy sampling problem},
journal = {Mathematical Programming},
volume={212},
pages={177--216},
note={\url{https://doi.org/10.1007/s10107-024-02101-3}},
}

@article {BurerLee,
    AUTHOR = {Burer, Samuel and Lee, Jon},
     TITLE = {Solving maximum-entropy sampling problems using factored masks},
  JOURNAL = {Mathematical Programming},
    VOLUME = {109},
      YEAR = {2007},
     PAGES = {263--281},
note={\url{https://doi.org/10.1007/s10107-006-0024-1}},
}

@inproceedings{AnstreicherLee_Masked,
    AUTHOR = {Anstreicher, Kurt M. and Lee, Jon},
     TITLE = {A masked spectral bound for maximum-entropy sampling},
 BOOKTITLE = {Proceedings of: m{OD}a 7},
    SERIES = {Contributions to Statistics},
    editor={A. Di Bucchianico and H. Läuter and H.P. Wynn},
     PAGES = {1--12},
 PUBLISHER = {Physica, Heidelberg},
      YEAR = {2004},
note={\url{https://doi.org/10.1007/978-3-7908-2693-7_1}},
}

@ARTICLE{LeeWilliamsILP,
  author =       {Lee, Jon and Williams, Joy},
  title =        {A linear integer programming bound for maximum-entropy sampling},
  journal =      {Mathematical Programming},
  year =         {2003},
  volume =       {94},
  pages =        {247--256},
note={\url{https://doi.org/10.1007/s10107-002-0318-x}},
}

@inproceedings {HLW,
    AUTHOR = {Hoffman, Alan and Lee, Jon and Williams, Joy},
     TITLE = {New upper bounds for maximum-entropy sampling},
 BOOKTITLE = {Proceedings of m{OD}a 6},
     PAGES = {143--153},
 PUBLISHER = {Physica, Heidelberg},
      YEAR = {2001},
      editor={A.C. Atkinson and P. Hackl and W.G. Müller},
note={\url{https://doi.org/10.1007/978-3-642-57576-1_16}},
}

@article {AFLW_Using,
    AUTHOR = {Anstreicher, Kurt M. and Fampa, Marcia and Lee, Jon and
              Williams, Joy},
     TITLE = {Using continuous nonlinear relaxations to solve constrained
              maximum-entropy sampling problems},
JOURNAL = {Mathematical Programming},
    VOLUME = {85},
      YEAR = {1999},
     PAGES = {221--240},
note={\url{https://doi.org/10.1007/s101070050055}},
}

@article{ChenFampaLeeLinxGaps,
  author =       {Zhongzhu Chen  and Marcia Fampa and  Jon Lee},
  title =        {Masking {A}nstreicher's linx bound for improved entropy bounds},
  year =         {2022},
  journal =      {Operations Research},
  note= {\url{https://doi.org/10.1287/opre.2022.2324}},
  volume={72},
  number={2},
  pages={591--603},
}

@inproceedings{Nikolov,
author = {Nikolov, Aleksandar},
title = {Randomized Rounding for the Largest Simplex Problem},
year = {2015},
editor={R. Rubinfeld},
publisher = {Association for Computing Machinery},
booktitle = {Proceedings of STOC 2015 (Forty-Seventh Annual ACM Symposium on Theory of Computing)},
pages = {861--870},
note={\url{https://doi.org/10.1145/2746539.2746628}},
}

@article{Weijun,
author =       {Yongchun Li and Weijun Xie},
title =        {Best principal submatrix selection for the maximum entropy sampling problem: scalable algorithms and performance guarantees},
journal={Operations Research},
year =         {2023},
volume={72},
number={2},
pages={493--513},
note   =    {\url{https://doi.org/10.1287/opre.2023.2488}},
}

@article{LeeConstrained,
 author = {Lee, Jon},
 title = {Constrained Maximum-Entropy Sampling},
 journal = {Operations Research},
 volume = {46},
 year = {1998},
 pages = {655--664},
note={\url{https://doi.org/10.1287/opre.46.5.655}},
}

@article{Mixing,
  author =       {Zhongzhu Chen and Marcia Fampa and Am\'elie Lambert and Jon Lee},
  title =        {Mixing convex-optimization bounds for maximum-entropy sampling},
  year =         {2021},
  volume= {188},
  pages = {539--568},
  journal = {Mathematical Programming},
note={\url{https://doi.org//10.1007/s10107-020-01588-w}},
}

@article{FactPaper,
  author =       {Zhongzhu Chen and Marcia Fampa and Jon Lee},
  title =        {On computing with some convex relaxations for the maximum-entropy sampling problem},
  year =         {2023},
volume ={35},
number ={2},
pages={368--385},
  journal =        {INFORMS Journal on Computing},
note={\url{https://doi.org/10.1287/ijoc.2022.1264}},
}

@article{FL_update,
author =       {Marcia Fampa and Jon Lee},
title = {Recent advances in maximum-entropy sampling},
journal = {Kuwait Journal of Science},
volume = {53},
number = {1},
pages = {100527},
year = {2026},
note={\url{https://doi.org/10.1016/j.kjs.2025.100527}},
}

@article {KLQ,
    AUTHOR = {Ko, Chun-Wa and Lee, Jon and Queyranne, Maurice},
     TITLE = {An exact algorithm for maximum entropy sampling},
JOURNAL = {Operations Research},
    VOLUME = {43},
      YEAR = {1995},
     PAGES = {684--691},
note={\url{https://doi.org/10.1287/opre.43.4.684}},
}

@article{PonteFampaLeeMPB,
  title={Branch-and-bound for integer {D}-optimality with fast local search and variable-bound tightening},
  author={Ponte, Gabriel and Fampa, Marcia and Lee, Jon},
  journal={Mathematical Programming},
  year={2025},
  publisher={Springer},
note = {\url{https://doi.org/10.1007/s10107-025-02196-2}}
}

@article{padberg,
author = {Padberg, Manfred},
title = {The {B}oolean quadric polytope: Some characteristics, facets and relatives},
year = {1989},
volume = {45},
number = {1--3},
journal = {Mathematical Programming},
pages = {139--172},
note={\url{https://doi.org/10.1007/BF01589101}},
}

@inproceedings{li2025augmented,
  title={The augmented factorization bound for maximum-entropy sampling},
  author={Li, Yongchun},
  booktitle={International Conference on Integer Programming and Combinatorial Optimization},
  pages={412--426},
  year={2025},
  organization={Springer},
  editor={N. Megow and A.  Basu},
note={\url{https://doi.org/10.1007/978-3-031-93112-3_30}},
}

@misc{PFLXadmm,
  title={{ADMM} for 0/1 {D-Opt} and {MESP} relaxations},
  author={Ponte, Gabriel and Fampa, Marcia and Lee, Jon and Xu, Luze},
  year={2024},
note={\url{https://arxiv.org/abs/2411.03461}}
}

@inproceedings{hugedoptmohit,
year={2025},
author = {Aditya Pillai and Gabriel Ponte and Marcia Fampa and Jon Lee and Mohit Singh and Weijun Xie},
title = {Computing Experiment-Constrained {D}-Optimal Designs},
booktitle={Proceedings of the 2025 Conference on Applied and Computational Discrete Algorithms (ACDA)},
pages = {182--195},
publisher={SIAM},
editor={A. Conway and A. Pothen and M. Farach-Colton and B. Ucar},
note={\url{https://doi.org/10.1137/1.9781611979084.14}},
}

@article{papandreou2007bit,
  title={Bit and power allocation in constrained multicarrier systems: The single-user case},
  author={Papandreou, Nikolaos and Antonakopoulos, Theodore},
  journal={EURASIP Journal on Advances in Signal Processing},
  volume={2008},
  pages={643081},
  year={2007},
  publisher={Springer},
note={\url{https://doi.org/10.1155/2008/643081}},
}

@Incollection{KNITRO,
author="Byrd, Richard H.
and Nocedal, Jorge
and Waltz, Richard A.",
editor="Di Pillo, G.
and Roma, M.",
title="{KNITRO}: An Integrated Package for Nonlinear Optimization",
booktitle="Large-Scale Nonlinear Optimization",
year="2006",
publisher="Springer",
pages="35--59",
note={\url{https://doi.org/10.1007/0-387-30065-1_4}},
}

@article{PCA,
author = {Jolliffe, Ian and Cadima, Jorge},
year = {2016},
month = {04},
pages = {20150202},
title = {Principal component analysis: A review and recent developments},
volume = {374},
journal = {Philosophical Transactions of the Royal Society A: Mathematical, Physical and Engineering Sciences},
note={\url{https://doi.org/10.1098/rsta.2015.0202}},
}

@article{Johnson,
title = {Progress in Linear Programming-Based Algorithms for Integer Programming: An Exposition},
author={Ellis L. Johnson and George L. Nemhauser and Martin W.P. Savelsbergh},
year = {2000},
volume = {12},
number = {1},
journal = {INFORMS Journal on Computing},
pages = {2--23},
note={\url{https://doi.org/10.1287/ijoc.12.1.2.11900}},
}

@misc{MESP2DOPT,
      title={On the relationship between {MESP} and 0/1 {D-Opt} and their upper bounds}, 
      author={Gabriel Ponte and Marcia Fampa and Jon Lee},
      year={2025},
note={\url{https://arxiv.org/abs/2511.04350}}
}

@book{boyd2004convex,
  title={Convex Optimization},
  author={Boyd, Stephen and Vandenberghe, Lieven},
  year={2004},
  publisher={Cambridge University Press},
note={\url{https://doi.org/10.1017/CBO9780511804441}},
}

@book{bertsekas2009convex,
  title={Convex Optimization Theory},
  author={Bertsekas, Dimitri},
  volume={1},
  year={2009},
  publisher={Athena Scientific},
  note={\url{https://web.mit.edu/dimitrib/www/Convex_Theory_Entire_Book.pdf}},
}

@misc{AugNLP_MERSP,
      title={The augmented {NLP} bound for maximum-entropy remote sampling}, 
      author={Gabriel Ponte and Marcia Fampa and Jon Lee},
      year={2026},
      note={\url{https://arxiv.org/abs/2601.20970}}, 
}

@book {HJBook2,
    AUTHOR = {Horn, Roger A. and Johnson, Charles R.},
     TITLE = {Topics in Matrix Analysis},
      NOTE = {\url{https://doi.org/10.1017/CBO9780511840371}},
 PUBLISHER = {Cambridge University Press, Cambridge},
      YEAR = {1994},
     PAGES = {viii+607},
      ISBN = {0-521-46713-6},
}

@article{coey2022solving,
    title={Solving natural conic formulations with {H}ypatia.jl},
    author={Chris Coey and Lea Kapelevich and Juan Pablo Vielma},
    year={2022},
    journal={INFORMS Journal on Computing},
    publisher={INFORMS},
    volume={34},
    number={5},
    pages={2686--2699},
    note={\url{https://doi.org/10.1287/ijoc.2022.1202}},
}

@book{hardy1952inequalities,
  title={Inequalities},
  author={Hardy, Godfrey Harold and Littlewood, John Edensor and P{\'o}lya, George},
  year={1952},
  publisher={Cambridge University Press},
  note={\url{https://mathematicalolympiads.wordpress.com/wp-content/uploads/2012/08/inequalities-hardy-littlewood-polya.pdf}},
}


\appendix\normalsize

\section{Lagrangian duals}\label{app:a}

\subsection{Proof of Theorem \ref{thm:dual-glinx}}\label{app:proof_dual_glinx}
Before proving Theorem \ref{thm:dual-glinx}, we have the following lemma.

\begin{lemma}\label{lem:appendix_sup_Xsym}
    For a symmetric matrix $B \in \mathbb{S}^n$ and a vector $\eta\in \mathbb{R}^n_{+}$\,,  we have
     \begin{align*}
        \sup_{X\in\mathbb{S}^n}(B\bullet X - \textstyle\sum_{i\in N} \eta_i \|X_{i\cdot}\|_2)\qquad\qquad\qquad\qquad\qquad\qquad\qquad\qquad\qquad\qquad& \\
        = \begin{cases}
        0,\quad &\exists W\in \mathbb{R}^{n\times n}\text{ such that } \frac{1}{2}(W+W^\top)=B,~\|W_{i\cdot}\|_2 \leq \eta_i\,,~i \in N\,,\\
        +\infty,&\text{otherwise.}
    \end{cases}
    \end{align*}
\end{lemma}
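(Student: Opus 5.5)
The plan is to treat the supremum as the conjugate of a positively homogeneous convex function and identify its effective domain by duality. Let $g(X) := \sum_{i\in N}\eta_i\|X_{i\cdot}\|_2$ on $\mathbb{S}^n$. Since $X\mapsto B\bullet X - g(X)$ is positively homogeneous of degree one and vanishes at $X=0$, the supremum is either $0$ or $+\infty$. It equals $0$ exactly when $B\bullet X\le g(X)$ for all $X\in\mathbb{S}^n$, that is, exactly when $B$ lies in the subdifferential $\partial g(0)$ taken within $\mathbb{S}^n$. So the whole task reduces to characterizing this set.

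For the ``if'' direction, suppose $W$ satisfies $\frac12(W+W^\top)=B$ and $\|W_{i\cdot}\|_2\le\eta_i$. For symmetric $X$ we have $B\bullet X = W\bullet X$, and by Cauchy--Schwarz applied row by row,
\[
W\bullet X=\sum_i W_{i\cdot}X_{i\cdot}^\top\le\sum_i\|W_{i\cdot}\|_2\|X_{i\cdot}\|_2\le g(X).
\]
Hence the supremum is $0$, attained at $X=0$.

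For the ``only if'' direction, extend $g$ to all of $\mathbb{R}^{n\times n}$ by the same formula. On that larger space, $g$ is the support function of the convex compact set $\mathcal{K}:=\{W\in\mathbb{R}^{n\times n}:\|W_{i\cdot}\|_2\le\eta_i,\ i\in N\}$, because the dual of the mixed $(2,1)$-type norm is row-wise Euclidean. Restricting $g$ to the subspace $\mathbb{S}^n$ amounts to composing with the inclusion map $J:\mathbb{S}^n\to\mathbb{R}^{n\times n}$. The standard chain rule for support functions then gives that $g\circ J$ is the support function of $J^*(\mathcal{K})$, where the adjoint is $J^*(W)=\frac12(W+W^\top)$. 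Since $\mathcal{K}$ is compact, its image $J^*(\mathcal{K})$ is compact, hence closed, and no closure operation is needed. Consequently $\partial(g\circ J)(0)=J^*(\mathcal{K})$. So if $B\bullet X\le g(X)$ for all symmetric $X$, then $B=\frac12(W+W^\top)$ for some $W\in\mathcal{K}$. Otherwise some $X$ has $B\bullet X>g(X)$, and scaling $X$ drives the objective to $+\infty$.

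The main obstacle is justifying the identity between $g\circ J$ and the support function of $J^*(\mathcal{K})$ rigorously. I would argue this by separation. Suppose $B\notin J^*(\mathcal{K})$. This set is compact and convex, so there is $X\in\mathbb{S}^n$ with
\[
B\bullet X>\max_{W\in\mathcal{K}}\tfrac12(W+W^\top)\bullet X=\max_{W\in\mathcal{K}}W\bullet X=g(X),
\]
where the middle equality uses that $X$ is symmetric. The final equality, $\max_{W\in\mathcal{K}}W\bullet X=g(X)$, is shown directly by choosing $W_{i\cdot}=\eta_iX_{i\cdot}/\|X_{i\cdot}\|_2$ whenever $X_{i\cdot}\ne 0$. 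This $X$ witnesses $B\bullet X>g(X)$ and completes the proof.
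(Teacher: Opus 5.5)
Your proposal is correct and follows essentially the same route as the paper. In both, the case $B=\frac{1}{2}(W+W^\top)$ with $\|W_{i\cdot}\|_2\le\eta_i$ is handled by row-wise Cauchy--Schwarz; otherwise $B$ is strictly separated by a symmetric $X$ from the compact convex set $\{\frac{1}{2}(W+W^\top):\|W_{i\cdot}\|_2\le\eta_i,\ i\in N\}$, using $\frac{1}{2}(W+W^\top)\bullet X=W\bullet X$ and the row-wise dual-norm identity, and then $X$ is scaled up (your support-function/adjoint framing is just a repackaging of this separation argument).
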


\begin{proof}
    We would like to solve
    \begin{equation}\label{eq:appendix_supXsym_1}
        \sup_{X\in\mathbb{S}^n}(B\bullet X - \textstyle\sum_{i\in N} \eta_i \|X_{i\cdot}\|_2).
    \end{equation}

    Let $W \in \mathbb{R}^{n\times n}$. From  \cite[Example 3.26]{boyd2004convex}, for each $i \in N$, 
    \begin{equation}\label{eq:appendix_eta_i_row_normX_sup}
        \eta_i\|X_{i\cdot}\|_2 = \sup_{\|W_{i\cdot}\|_2 \leq \eta_i} W_{i\cdot}X_{i\cdot}^\top \,.
    \end{equation}
    Define $\mathcal{W}_\eta:= \{W\in \mathbb{R}^{n\times n}\,:\,\|W_{i\cdot}\|_2 \leq \eta_i\,,~i \in N\}$. Because the constraints on $W$ are row-wise, the supremum separates by rows, then
    \[
    \sum_{i \in N}\eta_i\|X_{i\cdot}\|_2 = \sup_{W \in \mathcal{W}_{\eta}} \sum_{i \in N}W_{i\cdot}X_{i\cdot}^\top =\sup_{W \in \mathcal{W}_\eta}W\bullet X.
    \]
    Therefore, for each fixed $X$, we have
    \[
    B\bullet X - \sup_{W \in \mathcal{W}_\eta}W\bullet X = \inf_{W \in \mathcal{W}_\eta} (B-W)\bullet X,
    \]
    and \eqref{eq:appendix_supXsym_1} can be rewritten as 
    \[
    \sup_{X \in \mathbb{S}^n}\inf_{W \in \mathcal{W}_\eta} (B-W)\bullet X.
    \]
    
    Note that for any $X \in \mathbb{S}^n$, we have
    \[
     \textstyle W\bullet X = \tr(W^\top X) = \frac{1}{2}(\tr(W^\top X)+ \tr(WX)) = \frac{1}{2}\tr((W+W^\top)X),
    \]
    so
    \[
   \textstyle (B-W)\bullet X = \left( B - \frac{W+W^\top}{2}\right)\bullet X ,\qquad \forall X \in \mathbb{S}^n.
    \]
    
    \begin{itemize}
        \item If $W \in \mathcal{W}_\eta$ such that $B = \frac{1}{2}(W+W^\top)$, then \eqref{eq:appendix_supXsym_1} is equal to zero.\\ 
        
    Take any $X \in \mathbb{S}^n$, then
    \[
    B\bullet X = \frac{1}{2}(W+W^\top)\bullet X = W \bullet X. 
    \]
    Then for $i \in N$, we have 
    \[
    W_{i\cdot}X_{i\cdot}^\top \leq \|W_{i\cdot}\|_2\|X_{i\cdot}\|_2 \leq \eta_i\|X_{i\cdot}\|_2\,,
    \]
    where the first inequality comes from Cauchy-Schwarz and the last inequality comes from the fact that $W \in \mathcal{W}_\eta$\,.
    then
    \[
    W\bullet X \leq \sum_{i\in N} \eta_i\|X_{i\cdot}\|_2 
    \]
    so we conclude that 
    \[
    B\bullet X - \sum_{i\in N} \eta_i\|X_{i\cdot}\|_2 \leq 0,
    \]
    where equality holds at $X = 0$, so the supremum equals zero.\\
    
    \item If $B \neq \frac{1}{2}(W+W^\top)$, then \eqref{eq:appendix_supXsym_1} is $+\infty$.

    We note that for $i \in N$, we have that $\|W_{i\cdot}\|_2 \leq \eta_i$ is a closed ball and the intersection of closed sets is closed. Also, $\|W\|_F^2 = \sum_{i\in N} \|W_{i\cdot}\|_2^2 \leq \sum_{i \in N}\eta_i^2$\,. Then, we have that $\mathcal{W}_\eta$ is closed and bounded and therefore $\mathcal{W}_\eta$ is a compact set.
    
    Let $\mathcal{S}_\eta := \{\frac{1}{2}(W+W^\top)\,:\, W\in \mathcal{W}_\eta\}$.
    From \cite[Proposition A.2.6(d)]{bertsekas2009convex}, we have that $\mathcal{S}_\eta$ is compact. Also, note that $\mathcal{S}_\eta$ is a convex set as well.

    We have $B \not\in \mathcal{S}_\eta$, then, from \cite[Example 2.20]{boyd2004convex}, we have that there exists a non-zero matrix $\hat{X} \in \mathbb{S}^n$ where 
    \[
    M\bullet \hat{X}\leq \beta,~\forall M \in \mathcal{S}_\eta\,,\quad B\bullet \hat{X} > \beta,
    \]
    for some $\beta \in \mathbb{R}$. Then, we have
    \begin{align*}
        B \bullet \hat{X} > \sup_{ M \in \mathcal{S}_\eta} M\bullet \hat{X} = \sup_{ W \in \mathcal{W}_\eta} \frac{1}{2}(W+W^\top)\bullet \hat{X} = \sup_{ W \in \mathcal{W}_\eta} W\bullet \hat{X} = \sum_{i\in N}\eta_i\|\hat{X}_{i\cdot}\|_2\,,
    \end{align*}
    where the last equation comes from \eqref{eq:appendix_eta_i_row_normX_sup}. Then
    for $X = t\hat{X}$, we have that for $t \to +\infty$, \eqref{eq:appendix_supXsym_1} goes to $+\infty$ as well.
 \qed
\end{itemize}
\renewcommand{\qedsymbol}{}
\end{proof}

\begin{proof}[Proof of Theorem \ref{thm:dual-glinx}]
    Introducing a variable $E \in \mathbb{S}^n_{++}$\, we consider
\begin{equation*}
\begin{array}{rrll}
&\max&\frac{1}{2}\ldet(E)\\
&\text{\rm s.t.} &CXC + I - X - E = 0;\\
&&\mathbf{e}^\top x = s;\\
&&Ax \leq b;\\
&&\tr(X) = t;\\
&&x_i - \|X_{i\cdot}\|_2 \geq 0,~ \forall~ i \in N;\\
&&x\geq 0,~\mathbf{e} - x\geq 0;\\
&&X\succeq 0,~\Diag(x) - X\succeq 0,
\end{array}
\end{equation*}
and consider the Lagrangian function
\begin{align*}
    \!\!\!\!\!\!\!\!\mathcal{L}(E,X,x,\Theta,\upsilon,\nu,\pi,\tau,\xi,Z,\Omega,\eta)\!=\! 
    &\textstyle\frac{1}{2}\ldet(E) +  
    \tr(\Theta) + (C\Theta C)\bullet X 
    - \Theta \bullet (E+X) +  \nu^\top(\mathbf{e}-x) ~+ \\
    &~~ \upsilon^\top x +\pi^\top (b-Ax) + \tau(s-\mathbf{e}^\top x)  + \xi(t-\tr(X)) ~+\\ &~~ \Omega\bullet X + Z\bullet (\Diag(x)-X) +\textstyle\sum_{i = 1}^n\eta_i(x_i - \|X_{i\cdot}\|_2)
\end{align*}
with $\mathrm{dom}\,\mathcal{L} = \mathbb{S}^n_{++}\times \mathbb{S}^n\times \mathbb{R}^n \times \mathbb{S}^n \times \mathbb{R}^n \times \mathbb{R}^n \times \mathbb{R}^m\times \mathbb{R} \times \mathbb{R}\times \mathbb{S}^{n}_{+}\times \mathbb{S}^{n}_{+}\times \mathbb{R}^n$. The corresponding dual function is 
\begin{equation}\label{eq:dual_func_glinx_sup}
    \mathcal{L}^*(\Theta,\upsilon,\nu,\pi,\tau,\xi,Z,\Omega,\eta) := \sup_{E\succ 0,\, x,\, X}\!\! \mathcal{L}(E,X,x,\Theta,\upsilon,\nu,\pi,\tau,\xi,Z,\Omega,\eta).
\end{equation}
Assuming that $\zglinx > -\infty$, we are justified to use minimum in the formulation of the Lagrangian dual problem, rather than infimum, because, in this case, Slater's condition holds for the primal (e.g., for $\hat{X}:= (t/n)I$ and $\hat{x}:=(s/n)\mathbf{e}$), and so the optimal value of the Lagrangian is attained.

We note that Lagrangian function is concave in $x$, $X$ and $E \succ 0$. Then the supremum in \eqref{eq:dual_func_glinx_sup} for $E$ and $x$ follows from  first-order optimality conditions when the supremum is finite, while the supremum over $X$ is given by using Lemma \ref{lem:appendix_sup_Xsym}.

\begin{itemize}
    \item Supremum over $E$. 
    \begin{align*}
        \sup_{E \succ 0}(\textstyle\frac{1}{2}\ldet(E) - \Theta \bullet E) = \begin{cases}
        -\frac{n}{2} - \frac{1}{2}\ldet(2\Theta),\quad &\Theta \succ 0,\\
        +\infty,&\text{otherwise.}
    \end{cases}
    \end{align*}
    \item Supremum over $x \in \mathbb{R}^n$.
    \begin{align*}
        \sup_{x\in\mathbb{R}^n}((\upsilon - \nu - A^\top \pi - \tau\mathbf{e} + \eta + \diag(Z))^\top x) = \begin{cases}
        0,\quad &\upsilon - \nu - A^\top \pi - \tau\mathbf{e} + \eta + \diag(Z) = 0,\\
        +\infty,&\text{otherwise.}
    \end{cases}
    \end{align*}
    \item Supremum over $X \in \mathbb{S}^n$. We apply Lemma \ref{lem:appendix_sup_Xsym}, for the matrix $B:= \Omega-Z + C\Theta C - \Theta - \xi I_n$\,, where $B \in \mathbb{S}^n$. 
\end{itemize}
The result follows.
\end{proof}


\subsection{Proof of Theorem \ref{thm:dual-GNLP-Id}}\label{app:proof_dual_GNLP_Id}
\begin{proof}[Proof of Theorem  \ref{thm:dual-GNLP-Id}]
    Introducing the variable $E \in \mathbb{S}^p_{++}$, we consider  the problem
\begin{equation*}
\begin{array}{rrll}
&\max&\ldet(E)\\[2pt]
&\text{\rm s.t.} &I_p - H^\top X H - E = 0;\\
&&\mathbf{e}^\top x = s;\\
&&Ax \leq b;\\
&&\tr(X) = t;\\
&&x_i - \|X_{i\cdot}\|_2 \geq 0,~ \forall~ i \in N;\\
&&x\geq 0,~\mathbf{e} - x\geq 0;\\
&&X\succeq 0,~\Diag(x) - X\succeq 0,
\end{array}
\end{equation*}
and consider the Lagrangian function
\begin{align*}
    \!\!\!\!\!\!\!\!\mathcal{L}(E,X,x,\Theta,\upsilon,\nu,\pi,\tau,\xi,Z,\Omega,\eta)\!=\! 
    &\ldet(E) +  
    \tr(\Theta) - (H\Theta H^\top)\bullet X 
    - \Theta \bullet E +  \nu^\top(\mathbf{e}-x) ~+ \\
    &~~ \upsilon^\top x +\pi^\top (b-Ax) + \tau(s-\mathbf{e}^\top x)  + \xi(t-\tr(X)) ~+\\ &~~ \Omega\bullet X + Z\bullet (\Diag(x)-X) +\textstyle\sum_{i = 1}^n\eta_i(x_i - \|X_{i\cdot}\|_2)
\end{align*}
with $\mathrm{dom}\,\mathcal{L} = \mathbb{S}^p_{++}\times \mathbb{S}^n\times \mathbb{R}^n \times \mathbb{S}^p \times \mathbb{R}^n \times \mathbb{R}^n \times \mathbb{R}^m\times \mathbb{R} \times \mathbb{R}\times \mathbb{S}^{n}_{+}\times \mathbb{S}^{n}_{+}\times \mathbb{R}^n$. The corresponding dual function is 
\begin{equation}\label{eq:dual_func_op_sup}
    \mathcal{L}^*(\Theta,\upsilon,\nu,\pi,\tau,\xi,Z,\Omega,\eta) := \sup_{E\succ 0,\, x,\, X}\!\! \mathcal{L}(E,X,x,\Theta,\upsilon,\nu,\pi,\tau,\xi,Z,\Omega,\eta).
\end{equation}
Assuming that $\zop > -\infty$, we are justified to use minimum in the formulation of the Lagrangian dual problem, rather than infimum, because, in this case, Slater's condition holds for the primal (e.g., for $\hat{X}:= (t/n)I$ and $\hat{x}:=(s/n)\mathbf{e}$), and so the optimal value of the Lagrangian is attained.

We note that Lagrangian function is concave in $x$, $X$ and $E \succ 0$. Then the supremum in \eqref{eq:dual_func_op_sup} for $E$ and $x$ follows from  first-order optimality conditions when the supremum is finite, while the supremum over $X$ is given by using Lemma \ref{lem:appendix_sup_Xsym}.

\begin{itemize}
    \item Supremum over $E$. 
    \begin{align*}
        \sup_{E \succ 0}(\ldet(E) - \Theta \bullet E) = \begin{cases}
        -p - \ldet(\Theta),\quad &\Theta \succ 0,\\
        +\infty,&\text{otherwise.}
    \end{cases}
    \end{align*}
    \item Supremum over $x \in \mathbb{R}^n$
    \begin{align*}
        \sup_{x\in\mathbb{R}^n}((\upsilon - \nu - A^\top \pi - \tau\mathbf{e} + \eta)^\top x) = \begin{cases}
        0,\quad &\upsilon - \nu - A^\top \pi - \tau\mathbf{e} + \eta = 0,\\
        +\infty,&\text{otherwise.}
    \end{cases}
    \end{align*}
    \item Supremum over $X \in \mathbb{S}^n$. We apply Lemma \ref{lem:appendix_sup_Xsym}, for the matrix $B:= \Omega-Z - H\Theta H^\top - \xi I_n$\,, where $B \in \mathbb{S}^n$. 
\end{itemize}
The result follows.
\end{proof}

\subsection{Proof of Theorem \ref{thm:dual-GNLP-Id-comp}}\label{app:proof_dual_GNLP_Id_comp}
\begin{proof}[Proof of Theorem  \ref{thm:dual-GNLP-Id-comp}]
The result follows by simply adapting the proof of Theorem \ref{thm:dual-GNLP-Id} by replacing $I_p - H^\top X H$ by $I_q + G^\top X G$. 
\end{proof}


\section{Details on the specific instance used in Theorem \ref{thm:family_glinx}}\label{app:sols_family_glinx}
\begin{proof}[Instance used in Theorem  \ref{thm:family_glinx}]
Consider the following example:
    \[
    C:=\begin{pmatrix}
    217 & 220 & 156 & 110 & 106 & 230\\
    220 & 249 & 191 & 96 & 99 & 256\\
    156 & 191 & 154 & 58 & 64 & 194\\
    110 & 96 & 58 & 72 & 66 & 104\\
    106 & 99 & 64 & 66 & 62 & 106\\
    230 & 256 & 194 & 104 & 106 & 264
    \end{pmatrix},~ s:=4,~ t:=3.
    \]
    Consider the rational point $(\hat{x},\hat{X})\in   \Pwedgenst \setminus \Pnst$:
\[
\hat{x}:=\frac{1}{100000}\begin{pmatrix}
    99993\\
    21423\\
    99933\\
    66510\\
    99929\\
    12212
\end{pmatrix},\quad \hat{X}:=\frac{1}{100000}\begin{pmatrix}
    94959 & 5313 & -6586 & 6860 & -8610 & 3873\\
    5313 & 12411 & 13475 & 1108 & 2594 & -5850\\
    -6586 & 13475 & 78035 & -8172 & 2024 & 8974\\
    6860 & 1108 & -8172 & 34884 & 28990 & -252\\
    -8610 & 2594 & 2024 & 28990 & 71825 & 2748\\
    3873 & -5850 & 8974 & -252 & 2748 & 7886
\end{pmatrix},
\]
    and note that 
\begin{itemize}
    \item $\hat{x} \in [0,1]^n$,
    \item $\sum_{i=1}^n \hat{x}_i = 4$,
    \item $\sum_{i=1}^n \hat{X}_{ii} = 3$,
    \item $\lambda_{n}(\hat{X}) \approx 0.000393$,
    \item $\lambda_{n}(\Diag(\hat{x})-\hat{X}) \approx 2.5346\cdot 10^{-5}$,
    \item $~\|\hat{X}_{i\cdot}\|_2 \leq \hat{x}_i\,,~i \in \{1,\dots,n-1\}$, but $~\|\hat{X}_{n\cdot}\|_2 \geq 0.141265 > 0.1222 \geq \hat{x}_n$\,.
\end{itemize}
We have 
\begin{align*}
    C\hat{X}C + I - \hat{X} &= \begin{pmatrix}
\frac{1341527117}{12500} & \frac{5687052423}{50000} & \frac{8317460963}{100000} & \frac{2626422719}{50000} & \frac{5192973889}{100000} & \frac{11825322993}{100000} \\[5pt]
\frac{5687052423}{50000} & \frac{6091697029}{50000} & \frac{8982700827}{100000} & \frac{1366820209}{25000} & \frac{679755549}{12500} & \frac{6321248633}{50000} \\[5pt]
\frac{8317460963}{100000} & \frac{8982700827}{100000} & \frac{83307451}{1250} & \frac{985098193}{25000} & \frac{3938504459}{100000} & \frac{4653764917}{50000} \\[5pt]
\frac{2626422719}{50000} & \frac{1366820209}{25000} & \frac{985098193}{25000} & \frac{331475569}{12500} & \frac{1298017539}{50000} & \frac{712923211}{12500} \\[5pt]
\frac{5192973889}{100000} & \frac{679755549}{12500} & \frac{3938504459}{100000} & \frac{1298017539}{50000} & \frac{1274923559}{50000} & \frac{708324747}{12500} \\[5pt]
\frac{11825322993}{100000} & \frac{6321248633}{50000} & \frac{4653764917}{50000} & \frac{712923211}{12500} & \frac{708324747}{12500} & \frac{6561813519}{50000}
\end{pmatrix}.
\end{align*}
Let  
\[
\hat{L}:= \left(
\begin{smallmatrix}
1 & 0 & 0 & 0 & 0 & 0 \\[5pt]
\frac{5687052423}{5366108468} & 1 & 0 & 0 & 0 & 0 \\[5pt]
\frac{1188208709}{1533173848} & \frac{900310450438239687}{692283499714341286} & 1 & 0 & 0 & 0 \\[5pt]
\frac{2626422719}{5366108468} & -\frac{267592692414338513}{346141749857170643} & -\frac{591912634178085623519968}{260446043245878019400653} & 1 & 0 & 0 \\[5pt]
\frac{5192973889}{10732216936} & -\frac{351578676742071591}{692283499714341286} & -\frac{581934415229849258748264}{260446043245878019400653} & \frac{466611097417659357896625128915}{461755657789801734359843316874} & 1 & 0 \\[5pt]
\frac{11825322993}{10732216936} & \frac{589779855651186449}{692283499714341286} & -\frac{15689651694517513631476}{260446043245878019400653} & \frac{114325227305199597422110268483}{461755657789801734359843316874} & \frac{6558844605110015549951209798555933}{39718589495834992577088455301412726} & 1
\end{smallmatrix}\right)
\]
and 
\[
\hat{d} :=
\left(\begin{smallmatrix}
\frac{1341527117}{12500}\\[5pt]
\frac{346141749857170643}{268305423400000}\\[5pt]
\frac{260446043245878019400653}{69228349971434128600000}\\[5pt]
\frac{230877828894900867179921658437}{13022302162293900970032650000}\\[5pt]
\frac{19859294747917496288544227650706363}{11543891444745043358996082921850000}\\[5pt]
\frac{4478246380267821269292745691159420377611}{3971858949583499257708845530141272600000}
\end{smallmatrix}\right),
\]
then, we have $C\hat{X}C + I - \hat{X} = \hat{L}\Diag(\hat{d})\hat{L}^\top$, and therefore
\[
\frac{1}{2}\ldet(C\hat{X}C + I - \hat{X}) = \frac{1}{2}\sum_{i=1}^6 \log(\hat{d}_i)\approx 11.80439587.
\]

Next, consider the feasible solution for \ref{eq:dual_glinx}: $\hat{\nu}:= (131673/312500,\; 0,\; 562609/10000000,\;\allowbreak 19/312500,\;\allowbreak 58013/1000000,\; 0)^\top$,
$\hat{\upsilon}:= (0,\; 27/10000000,\; 0,\; 0,\; 0,\; 607/10000000)^\top$,
$\hat\eta:= (0,\; 231/25000,\; 0,\; 0,\; 0,\;\allowbreak 7039/100000)^\top$,
$\hat\tau:=5963/10000$, $\hat{\xi}:= -3170969/5000000$,
\begin{equation*}
    \begin{array}{ll}
       \hat\Theta := 10^{-7}
{\begin{pmatrix}
907052 ~&~ -891279 ~&~ 1276501 ~&~ -677235 ~&~ 926199 ~&~ -969631 \\[2pt]
-891279 ~&~ 4302195 ~&~ -2344586 ~&~ 70790 ~&~ -298866 ~&~ -1580259 \\[2pt]
1276501 ~&~ -2344586 ~&~ 3874474 ~&~ 1002743 ~&~ 172454 ~&~ -2149685 \\[2pt]
-677235 ~&~ 70790 ~&~ 1002743 ~&~ 3288708 ~&~ -2888223 ~&~ -351266 \\[2pt]
926199 ~&~ -298866 ~&~ 172454 ~&~ -2888223 ~&~ 3034885 ~&~ -724204 \\[2pt]
-969631 ~&~ -1580259 ~&~ -2149685 ~&~ -351266 ~&~ -724204 ~&~ 4386025
\end{pmatrix}}, \\[20pt]
       \hat{Z}:= 10^{-7}
{\begin{pmatrix}
10176536 ~&~ 2078316 ~&~ -1511788 ~&~ 1343043 ~&~ -1434442 ~&~ 2136729 \\[2pt]
2078316 ~&~ 5870573 ~&~ 5419707 ~&~ -51650 ~&~ 797717 ~&~ 5485309 \\[2pt]
-1511788 ~&~ 5419707 ~&~ 6525609 ~&~ -1797065 ~&~ 68161 ~&~ 4816399 \\[2pt]
1343043 ~&~ -51650 ~&~ -1797065 ~&~ 5963608 ~&~ 5687307 ~&~ 813706 \\[2pt]
-1434442 ~&~ 797717 ~&~ 68161 ~&~ 5687307 ~&~ 6543130 ~&~ 1570388 \\[2pt]
2136729 ~&~ 5485309 ~&~ 4816399 ~&~ 813706 ~&~ 1570388 ~&~ 5258493
\end{pmatrix}}, \\[20pt]
\hat{\Omega}:= 10^{-6}
\begin{pmatrix}
957 ~&~ -7721 ~&~ 2255 ~&~ 46 ~&~ 625 ~&~ -7477 \\[2pt]
-7721 ~&~ 62480 ~&~ -18244 ~&~ -376 ~&~ -5053 ~&~ 60493 \\[2pt]
2255 ~&~ -18244 ~&~ 5331 ~&~ 110 ~&~ 1475 ~&~ -17666 \\[2pt]
46 ~&~ -376 ~&~ 110 ~&~ 11 ~&~ 27 ~&~ -364 \\[2pt]
625 ~&~ -5053 ~&~ 1475 ~&~ 27 ~&~ 413 ~&~ -4893 \\[2pt]
-7477 ~&~ 60493 ~&~ -17666 ~&~ -364 ~&~ -4893 ~&~ 58580
\end{pmatrix},  \\[20pt]
\hat{W}:= 10^{-4}
\begin{pmatrix}
0 ~&~ 0 ~&~ 0 ~&~ 0 ~&~ 0 ~&~ 0 \\[2pt]
25 ~&~ 53 ~&~ 63 ~&~ 5 ~&~ 12 ~&~ -31 \\[2pt]
0 ~&~ 0 ~&~ 0 ~&~ 0 ~&~ 0 ~&~ 0 \\[2pt]
0 ~&~ 0 ~&~ 0 ~&~ 0 ~&~ 0 ~&~ 0 \\[2pt]
0 ~&~ 0 ~&~ 0 ~&~ 0 ~&~ 0 ~&~ 0 \\[2pt]
194 ~&~ -241 ~&~ 451 ~&~ -1 ~&~ 135 ~&~ 422
\end{pmatrix}.
    \end{array}
\end{equation*}
We have $\lambda_n(\hat{\Theta}) \approx 1.04\cdot 10^{-6}$, $\lambda_n(\hat{Z}) \approx 4.03\cdot 10^{-6}$ and $\lambda_n(\hat{\Omega}) \approx 2.12\cdot 10^{-6}$. Also, clearly $\hat\upsilon \geq 0$, $\hat\nu \geq 0$ and $\hat\eta \geq 0$. We can check that 
\begin{itemize}
    \item $\hat\upsilon - \hat\nu - \hat\tau\mathbf{e} + \hat\eta + \diag(\hat Z)= 0$
    \item $\frac{1}{2}(\hat{W}+\hat{W}^\top) - \hat\Omega +\hat Z - C\hat\Theta C + \hat\Theta + \hat\xi I_n = 0$
    \item $\|\hat{W}_{i\cdot}\|_2\leq \hat\eta_i\,,~ \forall~ i \in N$.
\end{itemize}
Let
\[
\hat{L}_\theta:= \left(
\begin{smallmatrix}
1 & 0 & 0 & 0 & 0 & 0 \\[5pt]
-\frac{891279}{907052} & 1 & 0 & 0 & 0 & 0 \\[5pt]
\frac{1276501}{907052} & -\frac{988942885693}{3107936323299} & 1 & 0 & 0 & 0 \\[5pt]
-\frac{677235}{907052} & -\frac{539395122485}{3107936323299} & \frac{5490473568355330862}{5380203476068630927} & 1 & 0 & 0 \\[5pt]
\frac{926199}{907052} & \frac{184804905163}{1035978774433} & -\frac{2910584700482330391}{5380203476068630927} & -\frac{6106081299522798751033888}{4718705572918752518333685} & 1 & 0 \\[5pt]
-\frac{969631}{907052} & -\frac{765862944839}{1035978774433} & -\frac{4945111681205475471}{5380203476068630927} & \frac{585864798084689586866776}{4718705572918752518333685} & -\frac{9928885245318851640114519161}{22985951912735606662851035783} & 1
\end{smallmatrix}\right)
\]
and 
\[
\hat{d}^\theta :=
\left(\begin{smallmatrix}
\frac{226763}{1250000}\\[5pt]
\frac{3107936323299}{4535260000000}\\[5pt]
\frac{5380203476068630927}{15539681616495000000}\\[5pt]
\frac{943741114583750503666737}{5380203476068630927000000}\\[5pt]
\frac{22985951912735606662851035783}{23593527864593762591668425000000}\\[5pt]
\frac{434120922176907067771605323383}{57464879781839016657127589457500000}
\end{smallmatrix}\right),
\]
then, we have $2\hat\Theta = \hat{L}_{\theta}\Diag(\hat{d}^{\theta})\hat{L}_{\theta}^\top$, and therefore $\frac{1}{2}\ldet(2\hat\Theta) = \frac{1}{2}\sum_{i=1}^6 \log(\hat{d}^{\theta}_i)$\,.

Finally, the objective value of \ref{eq:dual_glinx} at  $(\hat\Theta,\hat\upsilon,\hat\nu,\hat\pi,\hat\tau,\hat\xi,\hat Z,\hat \Omega,\hat\eta)$ is
\[
-\frac{1}{2}\ldet(2\hat\Theta) + \tr(\hat\Theta) + {\hat\nu}^\top \mathbf{e} + \hat\pi^\top b + \hat\tau s + \hat\xi t - \frac{n}{2} = -\frac{1}{2}\sum_{i=1}^6 \log(\hat{d}^{\theta}_i) -\frac{2949}{1250000} \approx 11.80435231.
\]
\end{proof}


\section{Constructing dual-feasible solutions for DDGFact}\label{app:construct_dgfact}
The Lagrangian dual of \ref{ddgfact} is
\[
\begin{array}{ll}
\multicolumn{2}{l}{z_{{\mbox{\tiny DGFact}}}(C,s,t,A,b;F)=} \\	
\qquad \min& -
 \sum_{\ell=k-t+1}^k 
\log\left(\lambda_{\ell} \left(\Theta\right)\right)
+ \nu^\top \mathbf{e}  + \pi^\top b +\tau s - t\\[4pt]
\qquad \mbox{s.t.}
     & \diag(F \Theta F^\top) + \upsilon - \nu  - A^\top \pi - \tau\mathbf{e}=0,\\[4pt]
& \Theta\succ 0, ~\upsilon\geq 0, ~\nu\geq 0, ~\pi\geq 0.
\end{array}
\tag{DGFact}\label{DGFact}
\]

To certify a valid upper bound for  GMESP and avoid drawing incorrect conclusions from \emph{near}-optimal solutions, following  \cite{Weijun}, we will show  how to rigorously construct a  feasible solution for \ref{DGFact} from  a feasible solution $\hat x$ of \ref{ddgfact}  with finite objective value, with the goal of producing a small gap.

Although in \ref{CGMESP},  the lower bounds are all zero  and the upper bounds are all one, we will consider the more general problem with lower and upper bounds on the variables given respectively by $l,c\in\{0,1\}^n$,
with $l\leq c$. 
So, we consider the constraints $l\leq\! x\!\leq c$ in \ref{ddgfact}, instead of $0\leq\! x\!\leq \mathbf{e}$. The motivation for this, is to derive the technique to fix variables at any subproblem considered during the execution of the B\&B algorithm, when some of the variables may already be fixed (i.e., $l_i=c_i$ fixes $x_i$). Instead of redefining the problem with fewer variables in our numerical experiments, we found that it was more efficient to change the upper bound $c_i$ from one to zero, when variable $i$ is fixed at zero in a subproblem, and similarly,  change the lower bound $l_i$ from zero to one, when variable $i$ is fixed at one. 

Then, to construct the dual solution, we  consider a feasible solution $\hat x$ of \ref{ddgfact}, with the constraints $0\leq x\leq \mathbf{e}$  replaced by $l\leq x\leq c$, and the spectral
decomposition $F(\hat{x})=\sum_{\ell=1}^{k} \hat \lambda_\ell \hat u_\ell \hat u_\ell^\top\,,$
with $\hat \lambda_1\geq\hat \lambda_2\geq\cdots\geq \hat \lambda_{\hat r}>\hat \lambda_{\hat{r}+1}=\cdots=\hat \lambda_k=0$.  Notice that $\rank(F(\hat x))= \hat{r}\geq t$.
Following \cite{Nikolov}, we  set 
$\hat{\Theta}:=\sum_{\ell=1}^{k} {\hat \beta}_\ell \hat{u}_\ell \hat{u}_\ell^\top$\,,
where
\begin{equation*}
\hat{\beta}_\ell:=\left\{
\begin{array}{ll}
        \textstyle 1/\hat{\lambda}_\ell\,,
       &~1\leq \ell\leq \hat{\iota};\\
     1/\hat{\delta},&~\hat{\iota}<\ell\leq \hat{r};\\
     (1+\epsilon)/\hat{\delta},&~\hat{r}<\ell\leq k,
\end{array}\right.
\end{equation*}
 for any $\epsilon>0$, where $\hat{\iota}$ is the unique integer defined  in Lemma \ref{Ni13} for $\lambda_\ell=\hat{\lambda}_\ell$\,, and
$
\hat \delta:=\frac{1}{t-\hat \iota}\sum_{\ell=\hat \iota+1}^{k}\hat \lambda_\ell
$\thinspace.
From Lemma \ref{Ni13}, we have that $\hat\iota<t$.   Then, 
\begin{equation*} 
\textstyle
- \sum_{\ell=1}^t \log(\hat{\beta}_{\ell})=  \sum_{\ell=1}^{\hat{\iota}} \log(\hat{\lambda}_{\ell}) + (t-\hat{\iota})\log(\hat{\delta})= \Gamma_t(F(\hat{x})).
\end{equation*}
Therefore, the minimum duality gap between $\hat x$ in the modified \ref{ddgfact} and feasible solutions
of its dual problem  of the form $(\hat\Theta,\upsilon,\nu,\pi,\tau)$,
is the optimal value of
\begin{equation}\label{mingapproba}\tag{$G(\hat\Theta)$}
\begin{array}{ll}
\min&~
    -\upsilon^\top l+ \nu^\top c   + \pi^\top b +\tau s - t\\
      \mbox{s.t.} 
&     ~ \upsilon - \nu  - A^\top \pi - \tau\mathbf{e}= - \diag(F \hat \Theta F^\top) ,\\
&~\upsilon\geq 0, ~\nu\geq 0, ~\pi\geq 0.
\end{array}
\end{equation}
We note that \ref{mingapproba} is always feasible (e.g., 
$\upsilon:=0$, $\nu:=\diag(F \hat \Theta F^\top)$, $\pi:=0$, $\tau:=0$ is a feasible solution). 

Also,   \ref{mingapproba} has a simple closed-form solution for $\GMESP$, that is when there are no $Ax \leq b$ constraints.
 To construct this  optimal solution to \ref{mingapproba}, 
we consider the permutation $\sigma$ of the indices in $N$, such that $\diag(F\hat\Theta F^\top)_{\sigma(1)} \geq \dots \geq \diag(F\hat\Theta F ^\top)_{\sigma(n)}$\,.
If $c_{\sigma(1)}+\sum_{i=2}^n l_{\sigma(i)}>s$, we let $\varphi:=0$; otherwise we let $\varphi:=\max\{j\in N: \sum_{i=1}^j c_{\sigma(i)} +  \sum_{i=j+1}^n  l_{\sigma(i)}\leq s\}$. 
We define $P := \{\sigma(1),\dots,\sigma(\varphi)\}$ and $Q :=\{\sigma(\varphi + 2),\dots,\sigma(n)\}$.
Then, to obtain an optimal solution of  \ref{mingapproba},   we consider its  optimality conditions 
\begin{equation}\label{kktgfact1f}
\begin{array}{l}
    \diag(F\hat \Theta F^\top)  + \upsilon - \nu - \tau\mathbf{e} = 0,~\upsilon\geq 0,~\nu\geq 0,\\
    \mathbf{e}^\top x = s,~l\leq x \leq c,\\
     - \upsilon^\top l + \nu^\top c + \tau s = \diag(F\hat \Theta F^\top)^\top x.
\end{array}
\end{equation}
We  can verify that the following solution satisfies \eqref{kktgfact1f}.
\begin{align*}
    &x^*_\ell:= \begin{cases}
        c_\ell\,,\quad&\text{for }\ell \in P;\\
        l_\ell\,,&\text{for }\ell \in Q;\\
        s-\sum_{\ell\in P}c_\ell -\sum_{\ell\in Q}l_\ell\,,&\text{for }\ell = \sigma(\varphi +1), ~ \text{if  }\varphi<n,
    \end{cases}\\ 
    &\tau^* :=  \begin{cases}
    \diag(F\hat\Theta F^\top)_{\sigma(\varphi+1)}\,,\quad &\text{if  }\varphi<n;\\
    0,&\text{otherwise},
    \end{cases}\\
    &\nu^*_\ell := \begin{cases}
         \diag(F\hat\Theta F^\top)_{\ell} - \tau^*,\quad &\text{for }\ell \in P;\\
         0,&\text{otherwise},
    \end{cases}\\
    &\upsilon^*_\ell := \begin{cases}
         \tau^*-\diag(F\hat\Theta F^\top)_{\ell}\,,\quad &\text{for } \ell \in Q;\\
         0,&\text{otherwise}.
    \end{cases}
\end{align*}

\end{document}